\def\ff#1{\mathsf #1}
\newtheorem{theorem}{Theorem}[section]
\newtheorem{definition}[theorem]{Definition}
\newtheorem{proposition}[theorem]{Proposition}
\newtheorem{lemma}[theorem]{Lemma}
\numberwithin{equation}{section}
\theoremstyle{remark}
\newtheorem{remark}[theorem]{Remark}
\newcommand{\capacity}{\, \textup{cap}}
\newcommand{\reg}{{t_*}}
\newcommand{\coup}{{Q}}
\newcommand{\rbd}{N^{-c_\epsilon}}
\newcommand{\rbdd}{N^{-c_\epsilon}}
\newcommand{\Rbd}{N^{-c_\epsilon}}
\def\supp{\mathop{\rm supp}\nolimits}
\def\diam{\mathop{\rm diam}\nolimits}
\def\ran{\mathop{\rm range}\nolimits}
\titleformat{\section}{\Large\bfseries}{\thesection}{1em}{}
\titleformat{\subsection}{\bfseries}{\thesubsection}{1em}{}
\definecolor{orange}{rgb}{1,0.5,0}
\begin{document}

\fontsize{12}{14}\rm
\addtolength{\abovedisplayskip}{.5mm}
\addtolength{\belowdisplayskip}{.5mm}
\AfterBegin{enumerate}{\addtolength{\itemsep}{2mm}}


\title{\LARGE \usefont{T1}{tnr}{b}{n} \selectfont O\lowercase{n the fragmentation of a torus by random walk}} 


\author{\normalsize \itshape A.T\lowercase{eixeira} $^1$ \color{white} \tiny}
\address{$^1$ETH Zurich, Department of Mathematics, \newline R\"amistrasse 101, 8092 Zurich, Switzerland, \itshape{\texttt{augusto.teixeira@math.ethz.ch}}.
}
\author{\color{black} \normalsize \itshape D.W\lowercase{indisch} $^2$}
\address{$^2$ The Weizmann Institute of Science, Faculty of Mathematics and Computer Science,\newline Rehovot 76100, Israel, \itshape{\texttt{david.windisch@weizmann.ac.il}}.}


  \date{\today}

\begin{abstract}
We consider a simple random walk on a discrete torus $({\mathbb Z}/N{\mathbb Z})^d$ with dimension $d \geq 3$ and large side length $N$. For a fixed constant $u \geq 0$, we study the percolative properties of the vacant set, consisting of the set of vertices not visited by the random walk in its first $[uN^d]$ steps. We prove the existence of two distinct phases of the vacant set in the following sense: if $u>0$ is chosen large enough, all components of the vacant set contain no more than $(\log N)^{\lambda(u)}$ vertices with high probability as $N$ tends to infinity. On the other hand, for small $u>0$, there exists a macroscopic component of the vacant set occupying a non degenerate fraction of the total volume $N^d$. In dimensions $d \geq 5$, we additionally prove that this macroscopic component is unique by showing that all other components have volumes of order at most $(\log N)^{\lambda(u)}$. Our results thus solve open problems posed by Benjamini and Sznitman \cite{BS08}, who studied the small $u$ regime in high dimension. The proofs are based on a coupling of the random walk with random interlacements on ${\mathbb Z}^d$. 
Among other techniques, the construction of this coupling employs a refined use of discrete potential theory. By itself, this coupling strengthens a result in \cite{W08}.

\end{abstract}

\maketitle

\section{Introduction}
\label{s:intro}

We consider a simple random walk on the $d$-dimensional torus ${\mathbb T}_N = ({\mathbb Z}/N{\mathbb Z})^d$ with large side length $N$ and fixed dimension $d \geq 3$. The aim of this work is to improve our understanding of the percolative properties of the set of vertices not visited by the random walk until time $uN^d$, where the parameter $u>0$ remains fixed and $N$ tends to infinity. We refer to this set as the vacant set. The vacant set occupies a proportion of vertices bounded away from $0$ and $1$ as $N$ tends to infinity, so it is natural to study the sizes of its components. At this point, the main results on the vacant set are the ones of Benjamini and Sznitman \cite{BS08}, showing that for high dimensions $d$ and small parameters $u>0$, there is a component of the vacant set with cardinality of order $N^d$ with high probability. As is pointed out in \cite{BS08}, this result raises several questions, such as:
\begin{enumerate}
\item
Do similar results hold for any dimension $d \geq 3$?
\item
For small parameters $u>0$, does the second largest component have a volume of order less than $N^d$?
\item
Provided $u>0$ is chosen large enough, do all components of the vacant set have volumes of order less than $N^d$?
\end{enumerate}
The results of this work in particular give positive answers to these questions, and thereby confirm observations made in computer simulations (see Figure~\ref{fig:giant}). We thus prove the existence of distinct regimes for the vacant set as $u$ varies, similar to the ones exhibited by Bernoulli percolation on the torus and other random graph models. 
\begin{figure}[h]
\begin{center}
\includegraphics[angle=0, width=0.395\textwidth]{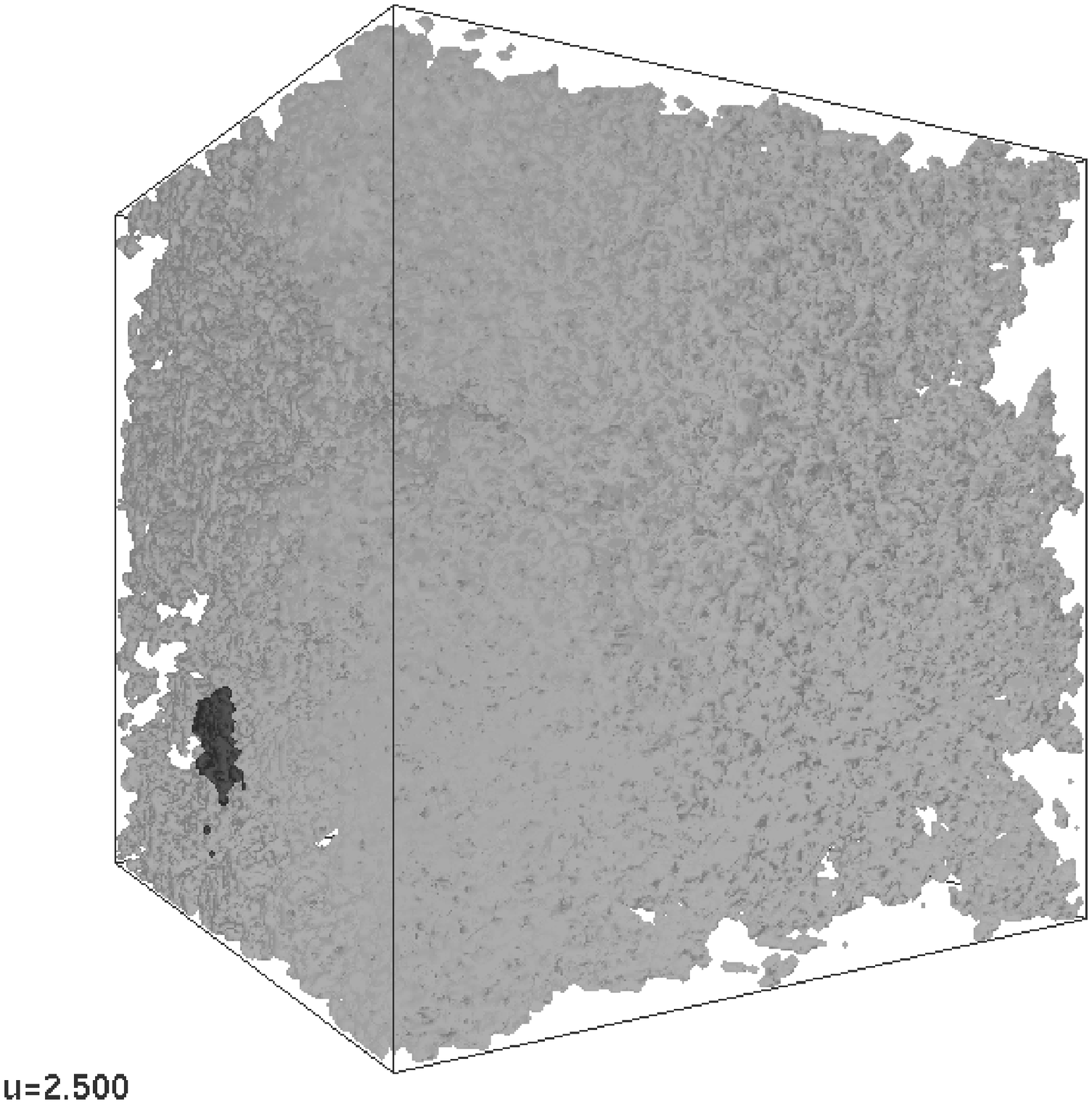}
\includegraphics[angle=0, width=0.395\textwidth]{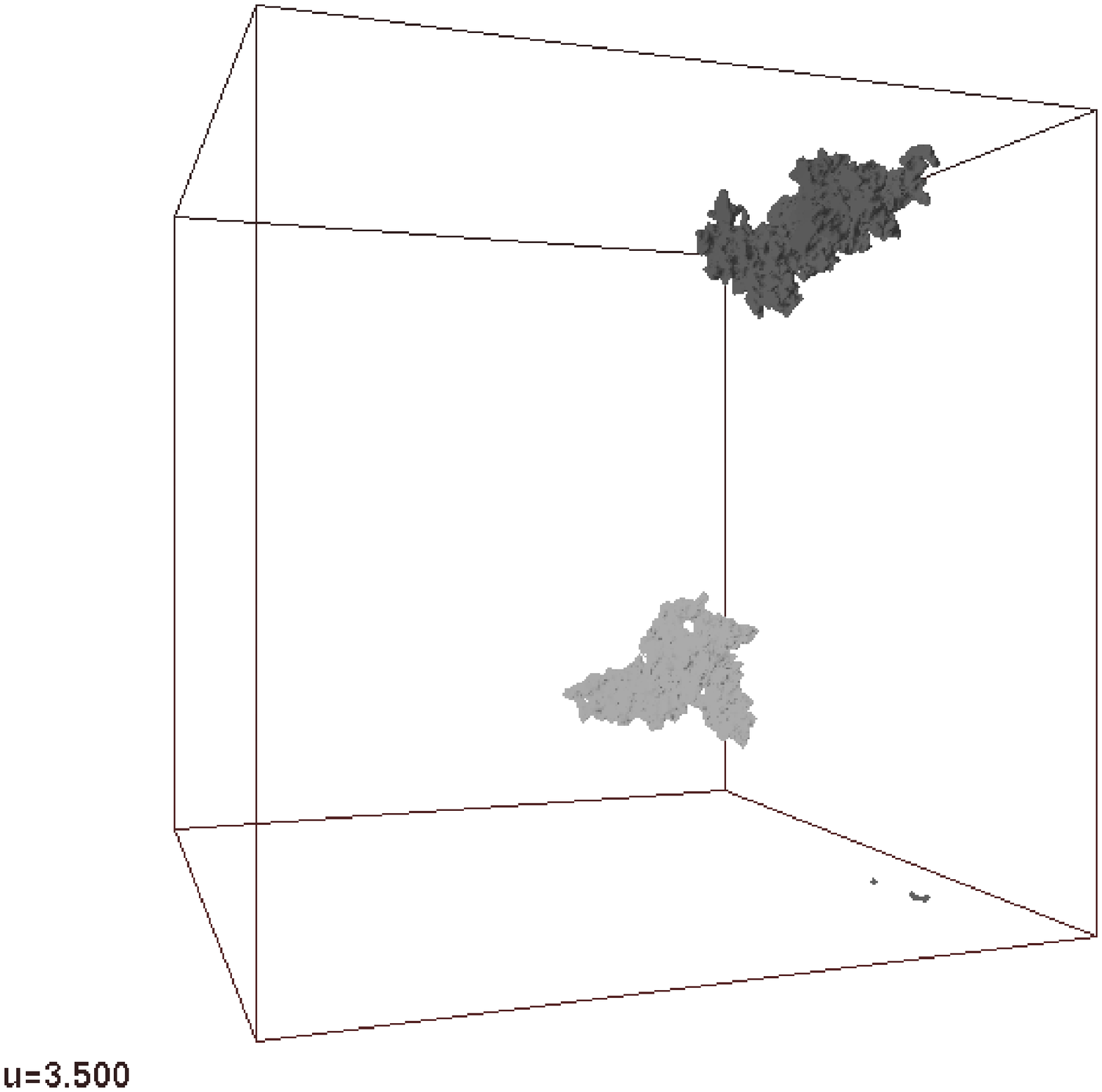}
\end{center}
\caption{A computer simulation of the largest component (light gray) and second largest component (dark gray) of the vacant set left by a random walk on $({\mathbb Z}/N{\mathbb Z})^3$ after $[uN^3]$ steps, for $N=200$. The picture on the left-hand side corresponds to $u=2.5$, the right-hand side to $u=3.5$. For more pictures, see \tiny\url{http://www.wisdom.weizmann.ac.il/\textasciitilde
davidw/torus.html}.} \label{fig:giant}
\end{figure}

Our answers are closely linked to Sznitman's model of random interlacements (cf.~\cite{Szn09}), which we now briefly introduce. The random interlacement ${\mathcal I}^u \subseteq {\mathbb Z}^d$ at level $u \geq 0$ is the trace left on ${\mathbb Z}^d$ by a cloud of paths constituting a Poisson point process on the space of doubly infinite trajectories modulo time-shift, tending to infinity at positive and negative infinite times. The parameter $u$ is a multiplicative factor of the intensity measure of this point process. In Section~\ref{s:prel} we give an explicit construction of the random interlacements process inside a box, see \eqref{e:interlac}. For now, let us just mention that the law $Q^u$ of $\mathcal{I}^u$ (regarded as a random subset of $\mathbb{Z}^d$) is characterized by the following equation:
\begin{align}
\label{e:char}
 Q^u \left[ {\mathcal I}^u \cap {\mathsf V} = \emptyset \right] = e^{-u \capacity ({\mathsf V})}, \text{ for all finite sets } {\mathsf V} \subset {\mathbb Z}^d,
\end{align}
where $\capacity ({\mathsf V})$ denotes the capacity of $\mathsf V$, defined in \eqref{d:cap} below, see (2.16) in \cite{Szn09}. The random interlacement describes the structure of the random walk trajectory on $\mathbb T$ in local neighborhoods. Indeed, for a fixed $\epsilon \in (0,1)$, consider the closed ball $A=B(0,N^{1-\epsilon}) \subset {\mathbb T}$ of radius $N^{1-\epsilon}$ centered at $0 \in {\mathbb T}$ with respect to the $\ell_\infty$-distance. Then $A$ is isomorphic to the ball ${\mathsf A} = B(0,N^{1-\epsilon}) \subset {\mathbb Z}^d$ via a graph isomorphism $\phi$, so we can consider the random subset of ${\ff A} \subseteq \mathbb{Z}^d$,
\begin{align}
\label{e:XuA}
X(u,{\mathsf A}) = \phi (X_{[0,uN^d]} \cap A),
\end{align}
where $X_{[0,uN^d]}$ is the random set of vertices visited in the first $[uN^d]$ steps of a simple random walk on $\mathbb T$ with uniformly distributed starting point. The following theorem shows that $X(u, {\mathsf A})$ can be approximated by random interlacements in a strong sense:

\medskip

\begin{theorem} \label{t:dom}
\textup{($d \geq 3$)}
For any $u>0$, $\alpha>0$, $\epsilon \in (0,1)$, there exists a constant $c$ depending on $d, u, \alpha, \epsilon$ and a coupling $(\Omega, {\mathcal A}, \coup)$ of $X_{[0,uN^d]}$ with random interlacements ${\mathcal I}^{u(1-\epsilon)}$ and ${\mathcal I}^{u(1+\epsilon)}$ on ${\mathbb Z}^d$, such that
\begin{align}
\label{e:dom} \coup \left[ {\mathcal I}^{u(1-\epsilon)} \cap {\mathsf A} \subseteq X(u,{\mathsf A}) \subseteq {\mathcal I}^{u(1+\epsilon)} \cap {\mathsf A} \right] \geq 1 - cN^{-\alpha}, \text{ for } N \geq 1.
\end{align}
\end{theorem}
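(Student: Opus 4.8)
The plan is to build the coupling locally by comparing the random walk's excursions into the box $A$ with the excursion structure generated by random interlacements, exploiting the fact that $N^{1-\epsilon}$ is much smaller than $N$ so that, on the relevant time scale, the walk on $\mathbb{T}$ behaves like the walk on $\mathbb{Z}^d$ near $A$. First I would fix a slightly larger concentric box $B = B(0, N^{1-\epsilon'})$ with $\epsilon' < \epsilon$ and decompose the trajectory $X_{[0,uN^d]}$ into successive excursions between $\partial A$ and $\partial B$: the walk enters $A$, wanders around, exits $B$, and comes back; between consecutive visits to $A$ it spends a long time (of order $N^{2-2\epsilon'}$ in expectation per excursion) travelling through the bulk of the torus. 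The key probabilistic input is that the number of such excursions up to time $uN^d$ concentrates sharply: its mean is asymptotically $u N^d \cdot \mathrm{cap}(A)/N^d$ up to lower-order corrections (by the standard relation between the stationary measure, the Green function on the torus, and capacity), so with probability $1 - O(N^{-\alpha})$ the number of excursions lies in the window $[(1-\delta)\bar{n}, (1+\delta)\bar{n}]$ for $\bar{n} \asymp u\,\mathrm{cap}(A)$ and $\delta$ a small multiple of $\epsilon$. This is precisely the regime where random interlacements arise: ${\mathcal I}^{v} \cap A$ has the law of the trace on $A$ of a Poisson($v\,\mathrm{cap}(A)$) number of i.i.d. excursions, each started from the normalized equilibrium measure $\bar{e}_A$ and run until it leaves a large box.

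The second ingredient is that a single torus-excursion into $A$, read off through the isomorphism $\phi$, is close in law to a single interlacement excursion. There are two sources of discrepancy: (i) the entrance distribution on $\partial A$ for the torus walk is not exactly the equilibrium measure $\bar{e}_A$, and (ii) the torus excursion, once it leaves $B$, could in principle re-enter $A$ through a "wrong" part of the boundary because of the torus wraparound — but on the scale $N^{1-\epsilon'}$ this happens with negligible probability. I would control (i) by a potential-theoretic estimate: after one full excursion out to $\partial B$ and back, the entrance law on $\partial A$ relaxes to within total variation distance $O(N^{-c_\epsilon})$ of $\bar{e}_A$, using sharp bounds on the difference between the torus Green function and the $\mathbb{Z}^d$ Green function restricted to $B$ (this is the "refined use of discrete potential theory" advertised in the abstract). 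Having matched both the number of excursions and the law of each excursion up to errors that are polynomially small in $N$, I would assemble the coupling excursion by excursion on a common probability space: couple the true number of torus excursions with a Poisson number with parameters $u(1-\epsilon)\,\mathrm{cap}(A)$ and $u(1+\epsilon)\,\mathrm{cap}(A)$ so that (Poisson lower) $\leq$ (torus count) $\leq$ (Poisson upper) with high probability, then couple the individual excursion traces so that they agree; monotonicity of the trace in the number of excursions then yields the sandwich ${\mathcal I}^{u(1-\epsilon)} \cap {\mathsf A} \subseteq X(u,{\mathsf A}) \subseteq {\mathcal I}^{u(1+\epsilon)} \cap {\mathsf A}$.

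Finally, a union bound over the $O(\mathrm{cap}(A)) = O(N^{(d-2)(1-\epsilon)})$ individual excursion couplings, each failing with probability $O(N^{-c_\epsilon})$ where $c_\epsilon$ can be taken arbitrarily large by shrinking $\epsilon' $ relative to $\epsilon$ (equivalently, by running excursions out to a box much larger than $A$), together with the concentration estimate for the excursion count, gives the overall bound $1 - cN^{-\alpha}$ for any prescribed $\alpha$. The main obstacle I anticipate is step (i): getting the entrance distribution into $A$ to equilibrate to within a polynomially small total-variation error requires quantitative control of the harmonic measure of $A$ as seen from $\partial B$ on the torus, and the naive one-excursion bound only gives an error of size a small negative power of $N$ — upgrading this to an arbitrary negative power, uniformly as the excursion endpoints vary, is where the delicate Green-function asymptotics enter, and it is what forces the separation of scales $\epsilon' \ll \epsilon$ and a careful bookkeeping of how the errors compound across the $\asymp N^{(d-2)(1-\epsilon)}$ excursions.
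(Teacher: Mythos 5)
Your overall architecture (excursions into $A$, concentration of the excursion count around $u\,\capacity({\mathsf A})$, equilibration of the entrance law, excursion-by-excursion coupling with a Poisson sandwich) matches the paper's Poissonization step, but the error accounting at the end hides a genuine gap. You claim each single-excursion coupling fails with probability $O(N^{-c_\epsilon})$ where $c_\epsilon$ can be made arbitrarily large by separating the scales of $A$ and $B$, and you then union-bound over the $\asymp N^{(d-2)(1-\epsilon)}$ excursions. This does not work: the equilibration of the harmonic measure of $A$ seen from $\partial B$ is governed by the ratio of the radii, which is at most $N^{\epsilon}$ no matter how you choose $\epsilon'$, so the per-excursion error is a \emph{fixed} negative power of $N$ (of order $N^{-c\epsilon}$ at best), and the union bound can never reach an arbitrary exponent $\alpha$; moreover pushing $B$ out towards macroscopic scale destroys the excursion-count concentration. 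The paper resolves this with two devices you do not have: first, consecutive excursions are decoupled \emph{in time}, not in space --- an excursion ends only after the walk has spent a time interval of length $(N\log N)^2$ outside $B$, after which its conditional law is within $e^{-c\log^2 N}$ of the quasistationary distribution (Lemma~\ref{l:quasi}), so the union bound over excursions costs only a super-polynomially small amount (Lemma~\ref{l:decouple}); second, the fixed-power discrepancy $N^{-c_\epsilon}$ between the entrance law and the normalized equilibrium measure (Lemma~\ref{l:quni}) is not union-bounded at all but absorbed multiplicatively into a sprinkling of the Poisson intensity, $(1\pm 2\epsilon)u \to (1\pm 3\epsilon)u$ (Lemma~\ref{l:intest}).

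The second missing ingredient is the truncation step, which is where the arbitrary $\alpha$ actually comes from. Even after Poissonization your excursions are still torus paths: within a single excursion the walk can leave $B$ (even $B(0,N/4)$), wrap around the torus, and re-enter $A$, which a $\mathbb{Z}^d$ interlacement trajectory killed upon leaving a large box cannot do. You dismiss this as negligible, but per excursion it has probability only $N^{-c_\epsilon}$ (a fixed power), and there are $\asymp \capacity({\mathsf A})$ excursions, so it is certainly not negligible at level $N^{-\alpha}$ for large $\alpha$. The paper's Sections~\ref{s:dom+} and \ref{s:dom-} handle this by splitting each Poisson cloud according to the number of returns between $A$ and the complement of $B'$, truncating at $m \asymp \alpha/c_\epsilon$ returns, showing the untruncated remainder is nonempty with probability at most $c\,u N^{-\alpha}$ (Proposition~\ref{p:pdom+}, and Proposition~\ref{p:szn} in the other direction), and paying for the truncation with yet another sprinkling, $u(1+3\epsilon)\to u(1+4\epsilon)$, via a Harnack-inequality comparison of the excursion kernels (Lemma~\ref{l:xidom}). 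Without some version of this truncation-plus-sprinkling argument, your proposed coupling only yields an error bounded by a fixed negative power of $N$, not $N^{-\alpha}$ for every $\alpha$.
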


\medskip

The above theorem indicates that percolative properties of the vacant set left by the random walk on $\mathbb{T}$ should be related to percolative properties of the vacant set 
\begin{align}
\label{e:Vu}
{\mathcal V}^u = {\mathbb Z}^d \setminus {\mathcal I}^u
\end{align}
left by the random interlacement. Indeed, our main theorems are applications of Theorem~\ref{t:dom} and results on random interlacements, some of which we now describe. It is known that ${\mathcal V}^u$ undergoes a phase transition at a critical threshold $u_\star \in (0, \infty)$, given by
\begin{align}
\label{e:ustar}
 u_\star = \inf \{ u \geq 0: \eta(u) = 0\},
\end{align}
where $\eta(u)$ is the percolation function
\begin{align}
\label{e:eta(u)}
 \eta(u) = Q^u \left[ 0 \text{ belongs to an infinite component of } {\mathcal V}^u \right], \, u \geq 0.
\end{align}
It is proved by Sznitman in \cite{Szn09} and by Sidoravicius and Sznitman in \cite{SS10} that indeed 
\begin{align*}
u_\star \in (0,\infty), \text{ for all } d \geq 3.
\end{align*}
Moreover, it is known that for $u>u_\star$, ${\mathcal V}^u$ consists of finite components, whereas for $u<u_\star$, ${\mathcal V}^u$ has a unique infinite component with probability $1$, see \cite{Szn09}, \cite{T08}.

For values of $u$ above another critical threshold $u_{\star \star} \geq u_\star$, the connectivity function of ${\mathcal V}^u$ is known to decay fast, see Theorem~0.1 of \cite{SS10}. For the precise definition of $u_{\star \star}$, we refer to \eqref{e:udouble} below. For now, let us just point out that
\begin{equation}
 \text{$u_{\star \star} < \infty$ for every $d \geq 3$,}
\end{equation}
and that it is an open problem whether $u_{\star \star}$ actually coincides with $u_{\star}$. We denote by ${\mathcal C}_{max}^u$ a connected component of $\mathbb{T} \setminus X_{[0,uN^d]}$ with largest volume and in the following result establish the existence of a large $u$ regime in which the vacant set consists of small components. This answers a question posed in \cite{BS08}, see the paragraph below (0.8).

\medskip

\begin{theorem}\label{t:torus}
\textup{($d \geq 3$)}
For all $u > u_\star$ and any $\eta >0$,
\begin{align}
\label{e:torus1} \lim_{N \to \infty} P[|{\mathcal C}_{max}^u| \geq \eta N^d ] =0,
\end{align}
and for all $u > u_{\star \star}$, there exists a $\lambda(u) > 0$ such that for any $\rho > 0$,
\begin{align}
\label{e:torus2} \lim_{N \to \infty} N^\rho P[|{\mathcal C}_{max}^u| \geq \log^\lambda N] =0.
\end{align}
\end{theorem}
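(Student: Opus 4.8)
The plan is to deduce both displays of Theorem~\ref{t:torus} from the coupling of Theorem~\ref{t:dom} together with the known decay of the connectivity function of the vacant set $\mathcal V^{u'}$ of random interlacements above $u_\star$, respectively above $u_{\star\star}$; only the inclusion $\mathcal I^{u(1-\epsilon)}\cap\mathsf A\subseteq X(u,\mathsf A)$ of \eqref{e:dom}, which bounds the walk's vacant set from above, will be used. The mechanism is that a ``large'' component of $\mathbb T\setminus X_{[0,uN^d]}$, being connected, forces inside some ball $B(p,N^{1-\epsilon})\subseteq\mathbb T$ a connected piece of the vacant set that joins $p$ to a point far from $p$, and the coupling turns this into a long crossing of $\mathcal V^{u(1-\epsilon)}$ on $\mathbb Z^d$. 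I will use the elementary fact that a connected set of cardinality $\ge M$ has diameter $\ge cM^{1/d}$ (it lies in a box of side $2\diam+1$). Fix $\epsilon\in(0,1)$ small enough that $u(1-\epsilon)>u_\star$ in the first part, resp.\ $u(1-\epsilon)>u_{\star\star}$ in the second, and write $u'=u(1-\epsilon)$, $s=N^{1-\epsilon}$, $\mathsf B=B(0,s)\subseteq\mathbb Z^d$. Since the walk's starting point is uniform on $\mathbb T$ its law is translation invariant, so Theorem~\ref{t:dom} applies to every ball $B(p,s)\subseteq\mathbb T$: for a suitable $\alpha$ there is a coupling under which, with probability $\ge 1-cN^{-\alpha}$, the image of $(\mathbb T\setminus X_{[0,uN^d]})\cap B(p,s)$ under the isomorphism $B(p,s)\to\mathsf B$ (which sends $p$ to $0$) is contained in $\mathcal V^{u'}\cap\mathsf B$.

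For \eqref{e:torus1} I argue as follows. If $|{\mathcal C}_{max}^u|\ge\eta N^d$, then for large $N$ the component ${\mathcal C}_{max}^u$ has diameter $>2N^{1-\epsilon}$, so for every $p\in{\mathcal C}_{max}^u$ it contains a point outside $B(p,s)$, and the sub-path of ${\mathcal C}_{max}^u$ from $p$ stopped when it first exits $B(p,s)$ is a connected vacant subset of $B(p,s)$ joining $p$ to $\partial B(p,s)$. By the coupling for $B(p,s)$ this forces the event $\{0\leftrightarrow\partial\mathsf B\ \text{in}\ \mathcal V^{u'}\}$ outside a set of probability $cN^{-\alpha}$, so
\[
 P[\,|{\mathcal C}_{max}^u|\ge\eta N^d,\ p\in{\mathcal C}_{max}^u\,]\ \le\ Q^{u'}[\,0\leftrightarrow\partial B(0,s)\ \text{in}\ \mathcal V^{u'}\,]+cN^{-\alpha}.
\]
Summing over the $N^d$ possible $p$, the left-hand side equals $E\big[\,|{\mathcal C}_{max}^u|\,;\ |{\mathcal C}_{max}^u|\ge\eta N^d\,\big]\ge\eta N^d\,P[\,|{\mathcal C}_{max}^u|\ge\eta N^d\,]$, whence $P[\,|{\mathcal C}_{max}^u|\ge\eta N^d\,]\le\eta^{-1}\big(Q^{u'}[0\leftrightarrow\partial B(0,N^{1-\epsilon})\ \text{in}\ \mathcal V^{u'}]+cN^{-\alpha}\big)$. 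Taking any $\alpha>0$ and using that for $u'>u_\star$ the vacant set $\mathcal V^{u'}$ has only finite components almost surely, so $Q^{u'}[0\leftrightarrow\partial B(0,r)]\downarrow\eta(u')=0$ as $r\to\infty$, both terms tend to $0$; note that the first-moment step is what makes a rate of decay superfluous here.

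For \eqref{e:torus2} the only changes are that the first-moment trick is replaced by a plain union bound and that one exploits the faster decay available above $u_{\star\star}$. If some component of $\mathbb T\setminus X_{[0,uN^d]}$ has cardinality $\ge\log^{\lambda}N$, then it has diameter $\ge c(\log N)^{\lambda/d}$, so for a vertex $p$ in it the vacant set inside $B(p,s)$ contains a connected set joining $p$ to $\partial B(p,r_N)$ with $r_N=\lfloor c(\log N)^{\lambda/d}\rfloor$ (this is a genuine constraint since $r_N\ll s$, and it holds whether or not the component stays inside $B(p,s)$). Transporting this via the coupling and summing over the $N^d$ choices of $p$, and taking $\alpha>\rho+d$, one gets
\[
 N^{\rho}\,P[\,|{\mathcal C}_{max}^u|\ge\log^{\lambda}N\,]\ \le\ N^{\rho+d}\,Q^{u'}[\,0\leftrightarrow\partial B(0,r_N)\ \text{in}\ \mathcal V^{u'}\,]+cN^{\rho+d-\alpha},
\]
and the last term vanishes. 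For the first term I invoke Theorem~0.1 of \cite{SS10}: for $u'>u_{\star\star}$ it bounds $Q^{u'}[0\leftrightarrow\partial B(0,r)]$ by $e^{-g(r)}$ with $g$ growing at least like a positive power of $r$, say $g(r)\ge c'r^{\beta}$; then $g(r_N)\ge c''(\log N)^{\beta\lambda/d}$, which dominates $(\rho+d)\log N$ as soon as $\beta\lambda/d>1$. Choosing $\lambda(u):=d/\beta(u)+1$ (which may be taken equal to $d+1$ when the decay is exponential) therefore makes the first term vanish too, and \eqref{e:torus2} follows.

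Given Theorem~\ref{t:dom}, neither part is long. The points that require care are the translation-invariant use of the coupling on an arbitrary ball, the geometric reduction of a macroscopic (respectively polylogarithmic) vacant component to a long crossing at scale $N^{1-\epsilon}$, and — for \eqref{e:torus2} — verifying that the decay estimate of \cite{SS10} is strong enough (a power of the radius inside the exponential) to both beat the $N^{\rho+d}$ prefactor and pin down an admissible constant $\lambda(u)$. The substantive work of the paper is Theorem~\ref{t:dom} itself.
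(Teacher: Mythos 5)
Your proposal is correct and follows essentially the same route as the paper: a Markov/first-moment (resp.\ union) bound over vertices reduces the global event to a local crossing event at scale $N^{1-\epsilon}$, the coupling of Theorem~\ref{t:dom} (only the inclusion $\mathcal I^{u(1-\epsilon)}\cap\mathsf A\subseteq X(u,\mathsf A)$) transfers it to $\mathcal V^{u(1-\epsilon)}$, and then one invokes $\eta(u(1-\epsilon))=0$ above $u_\star$ for \eqref{e:torus1} and the stretched-exponential connectivity decay of \cite{SS10} above $u_{\star\star}$, with $\lambda(u)$ chosen so the decay beats $N^{\rho+d}$, for \eqref{e:torus2}. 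The paper's proof differs only in inessential bookkeeping (it bounds $P[|\mathcal C_x|\geq\eta N^d]$ directly and takes $\lambda=2d/\kappa$), so there is nothing to add.
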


\medskip

As another application of Theorem~\ref{t:dom}, we prove the existence of a small $u$ regime with a macroscopic component of the vacant set for all dimensions $d \geq 3$, thereby extending the main result of \cite{BS08} to lower dimensions.

\medskip

\begin{theorem}
\label{t:d3} 
\textup{($d \geq 3$)}
For $\epsilon, u > 0$ chosen small enough,
\begin{equation}
\lim_{N \to \infty} P[|\mathcal{C}^u_{max}| > \epsilon N^d]= 1.
\end{equation}
\end{theorem}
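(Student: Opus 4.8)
The plan is to deduce Theorem~\ref{t:d3} from the coupling of Theorem~\ref{t:dom} together with the known fact that random interlacements $\mathcal{V}^u$ on $\mathbb{Z}^d$ percolate for small $u>0$ (i.e.\ $u_\star > 0$, hence $\eta(u)>0$ for $u$ small). The idea is that a macroscopic vacant component on the torus should be built by gluing together, along a suitable ``skeleton'' structure spanning $\mathbb{T}$, many local pieces of the interlacement vacant set $\mathcal{V}^{u(1+\epsilon)}$ seen through the balls $A = B(x, N^{1-\epsilon})$ of Theorem~\ref{t:dom}, which by that theorem are dominated from inside by $X(u,\mathsf{A})$ complements — more precisely, $\mathbb{T}\setminus X_{[0,uN^d]}$ restricted to $A$ contains $\phi^{-1}(\mathcal{V}^{u(1+\epsilon)} \cap \mathsf{A})$ with overwhelming probability.

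The key steps, in order, would be as follows. First, fix $u,\epsilon>0$ small enough that $u(1+\epsilon) < u_\star$, so that $\mathcal{V}^{u(1+\epsilon)}$ has a unique infinite component with positive density $\eta(u(1+\epsilon))>0$ in $\mathbb{Z}^d$. Second, choose a coarse-graining scale, tiling $\mathbb{T}$ by $O((N/N^{1-\epsilon})^d) = O(N^{d\epsilon})$ overlapping boxes $A_i = B(x_i, N^{1-\epsilon})$ with centers $x_i$ on a sublattice, arranged so that consecutive boxes overlap in a region of side comparable to $N^{1-\epsilon}$. Apply Theorem~\ref{t:dom} (with a single coupling, or with independence-type inputs controlled via the characterization \eqref{e:char}) to each $A_i$ to obtain, with probability $\geq 1 - c N^{d\epsilon} N^{-\alpha}$ (choosing $\alpha$ large), that inside every $A_i$ the vacant set contains a translate of the local interlacement vacant configuration. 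Third, invoke a standard renormalization/static-renormalization argument for interlacements (as in Sznitman or Sidoravicius--Sznitman): with high probability the interlacement vacant set has, in each box $A_i$, a ``good'' crossing cluster — a connected vacant component traversing $A_i$ and linking to the analogous clusters in the neighbouring boxes through the overlap regions — and moreover this good cluster occupies a fraction of $|A_i|$ bounded below by a constant. Fourth, a deterministic gluing argument: since consecutive good clusters agree on overlaps and each box's good cluster is connected and crosses, the union over all boxes is a single connected vacant subset of $\mathbb{T}$; counting gives it volume $\geq c' |A_i| \cdot N^{d\epsilon} \geq \epsilon' N^d$. Combining the probability estimates yields $P[|\mathcal{C}^u_{max}| > \epsilon' N^d] \to 1$.

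The main obstacle I expect is the percolation/renormalization input on $\mathbb{Z}^d$ giving simultaneously (a) a crossing of each local box by the vacant set, (b) matching of these crossings across overlaps so the pieces actually connect up, and (c) a linear lower bound on the volume of the resulting connected piece. Point (b) is subtle because Theorem~\ref{t:dom} only gives control of $X(u,\mathsf{A})$ inside each $A$ separately, so one must arrange the coupling (or use the spatial decorrelation of interlacements encoded in \eqref{e:char} and the independence of the coupling's sprinkling on well-separated boxes) so that the ``good event'' in a box determines a canonical crossing cluster that is forced to meet the canonical cluster of each neighbour inside the common overlap; this is typically handled by the uniqueness of the infinite cluster of $\mathcal{V}^{u}$ for $u<u_\star$ combined with a local uniqueness statement at the renormalized scale, i.e.\ that with high probability there is a unique ``giant'' vacant cluster in each mesoscopic box and it is the one that crosses. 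For (c), the volume bound, one uses that the infinite vacant cluster of interlacements has strictly positive density, so the crossing cluster in each box of side $N^{1-\epsilon}$ has volume of order $N^{(1-\epsilon)d}$ except on an event of stretched-exponentially small probability, which easily survives the union bound over the $N^{d\epsilon}$ boxes. Assembling these ingredients — the local domination from Theorem~\ref{t:dom}, the multiscale percolation estimates for $\mathcal{V}^u$ at $u<u_\star$, and the deterministic overlap-gluing — gives the claimed macroscopic component on $\mathbb{T}$.
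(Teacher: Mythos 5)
Your overall reduction (couple locally via Theorem~\ref{t:dom}, then import percolation facts about $\mathcal{V}^u$ on $\mathbb{Z}^d$ and glue over a union bound) is the right general philosophy, but the crucial third step is a genuine gap rather than a citable input. You ask for a ``standard renormalization'' statement for the interlacement vacant set at small $u$: in every box of side $N^{1-\epsilon}$ there is, with probability $1-o(N^{-d\epsilon})$, a crossing vacant cluster of positive density which is moreover \emph{locally unique}, so that the clusters selected in overlapping boxes necessarily coincide/connect. This is exactly the kind of quantitative local-uniqueness-plus-density statement that the paper isolates in Definition~\ref{d:strong} (``strongly supercritical''), and which at the time was only known for $d\geq 5$ and small $u$ (see \eqref{e:strong4}, Remark~\ref{r:strong}(2), and \cite{T09b}); whether it holds for $d=3,4$, or for all $u<u_\star$, is stated as open. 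Uniqueness of the infinite cluster of $\mathcal{V}^u$ gives no effective control at scale $N^{1-\epsilon}$, and the decay estimates of \cite{SS10} concern the regime $u>u_{\star\star}$, not the supercritical vacant phase. So your argument, as written, proves (a version of) Theorem~\ref{t:super}, which indeed the paper establishes under the strong supercriticality hypothesis, but it does not give Theorem~\ref{t:d3} for all $d\geq 3$ --- and producing a macroscopic component \emph{without} that hypothesis is precisely the point of this theorem.

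The paper's route avoids $d$-dimensional renormalization altogether: it slices $\mathbb{T}$ into two-dimensional planes $F_x$, and for small $u$ uses the planar facts from \cite{SS09} --- percolation of $\mathcal{V}^u\cap\mathbb{Z}^2$ \eqref{e:percplan} and the absence of long occupied $\star$-paths in a plane \eqref{e:starpath2} --- transferred to the torus by Theorem~\ref{t:dom}. Planar duality then yields in each plane a vacant crossing cluster; a deterministic argument shows all ``seeds'' (points vacantly connected to distance $\sqrt{N}/2$ within their plane) lie in these crossing clusters, and adjacent planes' crossing clusters are connected, so all seeds lie in one component of $\mathbb{T}\setminus X_{[0,uN^d]}$. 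The volume lower bound comes not from a density estimate for a local giant cluster but from the ergodic averaging statement Proposition~\ref{p:average}, which shows the number of seeds is at least a constant times $N^d$ with high probability. If you want to salvage your scheme, you would either have to restrict to $d\geq 5$ and strongly supercritical $u$ (duplicating Theorem~\ref{t:super}), or supply the missing local uniqueness input for $\mathcal{V}^u$ in low dimensions, which is not available in the references used by this paper.
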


\medskip

We can strengthen the last theorem for so-called strongly supercritical parameters $u>0$. This notion is defined via geometric properties of ${\mathcal I}^u$ and is made precise in Definition~\ref{d:strong} below. For the moment, let us mention that
\begin{equation}
\label{e:strong4}
 \text{for $d \geq 5$, there exists a ${\bar u}_d>0$, such that all $u < {\bar u}_d$ are strongly supercritical,}
\end{equation}
see Theorems~3.2 and 3.3 in \cite{T09b}. It is an open problem whether in fact all parameters $u<u_\star$ are strongly supercritical for every $d \geq 3$, see also Remark~\ref{r:strong} below. We denote by ${\mathcal C}^u_{sec}$ the second largest component of the vacant set left by the walk. Or more precisely, to avoid ties we let ${\mathcal C}^u_{sec}$ be a component of ${\mathbb T} \setminus (X_{[0,uN^d]} \cup {\mathcal C}^u_{max})$ with largest volume.

\medskip

\begin{theorem}
\label{t:super}
If $u$ is strongly supercritical (cf.~\eqref{e:strong4}, Definition~\ref{d:strong}), then for $\eta(u)$ defined in \eqref{e:eta(u)} and every $\epsilon > 0$,
\begin{equation}
\label{e:density}
\lim_{N \to \infty} P\Big[ \Big|\frac{|\mathcal{C}^u_{max}|}{N^d} - \eta(u) \Big| > \epsilon \Big] = 0.
\end{equation}
Moreover, for $u$ strongly supercritical, there is a $\lambda = \lambda(u) > 0$ such that for every $\rho > 0$,
\begin{equation}
\label{e:sec}
\lim_{N \to \infty} N^\rho P [|\mathcal{C}^u_{sec}| > \log^\lambda N] =0.
\end{equation}
\end{theorem}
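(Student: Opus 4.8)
The plan is to transfer, via the coupling of Theorem~\ref{t:dom}, the known geometric control of the vacant set ${\mathcal V}^u$ of random interlacements (encoded in the notion of strong supercriticality) to the torus. Fix $u$ strongly supercritical and pick $\epsilon>0$ small so that $u(1\pm\epsilon)$ remain strongly supercritical (by openness of this condition, or by monotonicity once $u<\bar u_d$). Cover $\mathbb T$ by boxes $A_x=B(x,N^{1-\epsilon})$ ranging over a sublattice of $\mathbb T$ of spacing of order $N^{1-\epsilon}$, so that there are $N^{d\epsilon}$ of them up to constants and each is isomorphic to a box ${\mathsf A}_x\subset\mathbb Z^d$. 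Apply Theorem~\ref{t:dom} simultaneously in each $A_x$ with $\alpha$ chosen large (larger than $\rho+d\epsilon$), so that with probability at least $1-cN^{-\alpha+d\epsilon}$ we have, in \emph{every} box $x$, the sandwiching ${\mathcal I}^{u(1-\epsilon)}\cap{\mathsf A}_x\subseteq X(u,{\mathsf A}_x)\subseteq{\mathcal I}^{u(1+\epsilon)}\cap{\mathsf A}_x$. On this event the vacant set on the torus is sandwiched, box by box, between copies of ${\mathcal V}^{u(1+\epsilon)}$ and ${\mathcal V}^{u(1-\epsilon)}$ of $\mathbb Z^d$.

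First I would prove \eqref{e:density}. The lower bound $|{\mathcal C}^u_{max}|\gtrsim(\eta(u)-\epsilon)N^d$ should come from the lower sandwich together with the strong-supercriticality input that the infinite cluster of ${\mathcal V}^{u(1-\epsilon)}$ occupies, in a box of side $\ell=N^{1-\epsilon}$, a density very close to $\eta(u(1-\epsilon))\approx\eta(u)$ with overwhelming probability, and moreover that its local clusters in neighbouring boxes are connected to each other through the overlaps of the $A_x$ (this is exactly the kind of "sprinkling + local uniqueness" statement that strong supercriticality is designed to deliver, cf.\ \cite{T09b}); summing the vacant densities over the $N^{d\epsilon}$ boxes and using that these giant local clusters all merge gives one torus component of volume at least $(\eta(u)-\epsilon)N^d$. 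The matching upper bound $|{\mathcal C}^u_{max}|\le(\eta(u)+\epsilon)N^d$ follows because on the sandwich event every torus-vacant vertex lies in some box inside a copy of ${\mathcal V}^{u(1+\epsilon)}$, and by strong supercriticality the vacant clusters of ${\mathcal V}^{u(1+\epsilon)}$ in a box of side $\ell$ that are \emph{not} part of the unique macroscopic one have volume at most $\log^{\lambda}\ell$ with probability $1-\ell^{-\rho'}$; hence apart from the merged giant, the torus vacant set is a union of at most $N^{d\epsilon}\cdot(\ell/\log\ell)^{\text{const}}$ pieces each of size $\le\log^\lambda N$, negligible compared to $\epsilon N^d$, and the giant itself cannot exceed the total vacant mass $(\eta(u)+\epsilon)N^d$. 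Continuity of $u\mapsto\eta(u)$ on $(0,u_\star)$ lets us send $\epsilon\to0$.

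The estimate \eqref{e:sec} is then essentially a byproduct: on the sandwich event, ${\mathcal C}^u_{sec}$ is contained in the union over boxes of the non-giant vacant clusters of ${\mathcal V}^{u(1+\epsilon)}$, together with possible "bridging" clusters living in a bounded number of adjacent boxes. Each such cluster has, by strong supercriticality, volume at most $(\log N)^{\lambda}$ except on an event of probability $N^{-\rho'}$ per box; a union bound over the $N^{d\epsilon}$ boxes, with $\rho'$ chosen larger than $\rho+d\epsilon$, yields $N^\rho P[|{\mathcal C}^u_{sec}|>\log^\lambda N]\to0$. One has to be mildly careful that a non-giant torus cluster, while not fitting in a single $A_x$, still fits in a bounded union of them because its diameter is at most $(\log N)^{\lambda}\ll N^{1-\epsilon}$, so it only ever sees finitely many boxes and the $\mathbb Z^d$ estimate applies after restricting to that bounded region.

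The main obstacle, as always with this renormalisation-by-coupling scheme, is the merging argument in the lower bound of \eqref{e:density}: one must show that the macroscopic local clusters in overlapping boxes are genuinely connected \emph{inside the torus vacant set}, not merely large separately. This requires that the definition of strong supercriticality (Definition~\ref{d:strong}) provides a connectivity/uniqueness statement robust under sprinkling $u(1-\epsilon)\to u$ and localisation to a box of polynomial side — precisely the content imported from \cite{T09b}. Granting that, the estimates above assemble routinely; the quantitative bookkeeping ($\alpha$ vs.\ $\rho+d\epsilon$, choice of $\lambda$, continuity of $\eta$) is the only remaining work.
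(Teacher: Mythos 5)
There are genuine gaps, and the most serious one is in your upper bound for \eqref{e:density}. You bound $|\mathcal{C}^u_{max}|$ by ``the total vacant mass $(\eta(u)+\epsilon)N^d$'' after arguing that the non-giant vacant pieces are negligible. Both halves fail: the density of the torus vacant set is approximately $Q^u[0\in\mathcal{V}^u]=e^{-u\,\capacity(\{0\})}$, which is \emph{strictly larger} than $\eta(u)$, and the non-giant part of the vacant set (isolated vacant vertices already suffice) has positive density, so it is not $o(N^d)$ and your count of ``pieces'' does not hold. The correct route, and the one the paper takes, is to identify $\mathcal{C}^u_{max}$ (up to the events excluded by the analogue of Proposition~\ref{p:sec}) with the set of vertices connected to $\partial_i B(x,N^\delta/2)$ in $\mathbb{T}\setminus X_{[0,uN^d]}$, and then prove a spatial law of large numbers for the density of that set via Proposition~\ref{p:average}, which rests on ergodicity of $\mathcal{V}^u$ under translations plus continuity of $\eta$. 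Your lower bound has the companion problem: you invoke ``the infinite cluster occupies density close to $\eta(u)$ in a box of side $N^{1-\epsilon}$ with overwhelming probability,'' but no such concentration statement is contained in Definition~\ref{d:strong} or cited, and your ``simultaneous'' application of Theorem~\ref{t:dom} cannot replace it: the theorem couples the walk with one interlacement per box, so the per-box copies are neither independent nor pieces of a single ergodic field, and summing densities over the $N^{d\epsilon}$ boxes has no law-of-large-numbers justification. The paper avoids needing any concentration by averaging bounded local functionals over \emph{all} $x\in\mathbb{T}$ and using only convergence in probability (Markov inequality on the number of bad centres), which is exactly what Proposition~\ref{p:average} packages.

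A second, more mechanical defect is the level bookkeeping. The sandwich of Theorem~\ref{t:dom} gives, locally, $\mathcal{V}^{u(1+\epsilon)}\subseteq \mathbb{T}\setminus X_{[0,uN^d]}\subseteq\mathcal{V}^{u(1-\epsilon)}$, so a non-giant torus-vacant cluster is only known to sit inside $\mathcal{V}^{u(1-\epsilon)}$; your repeated use of ``non-giant clusters of $\mathcal{V}^{u(1+\epsilon)}$ are small'' (and ``every torus-vacant vertex lies inside a copy of $\mathcal{V}^{u(1+\epsilon)}$'') is the wrong inclusion, and in any case Definition~\ref{d:strong} supplies no cluster-volume bound at a single level. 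What it does supply is the mixed-level statement \eqref{e:conn} (large connected subsets of $\mathcal{V}^{u(1-\mu)}$ in a box get wired together through $\mathcal{V}^{u(1+\mu)}$) together with \eqref{e:touch}; to convert these into ``there is a unique torus component of diameter $\geq\ell$ and everything else has volume $\leq\log^\lambda N$'' one still needs a deterministic gluing/uniqueness step — the paper's Lemma~\ref{l:sec} — applied at scale $\ell=\log^{2/\mu}N$ with a union bound over all $N^d$ centres, which is what produces the $N^\rho$-decay in \eqref{e:sec}. Your proposal gestures at the merging issue but defers exactly this missing lemma (``granting that\dots''), so as written both \eqref{e:density} and \eqref{e:sec} remain unproved.
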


\medskip

The above theorems give strong answers to questions 1-3 mentioned in the beginning of this section, and hence solve some open problems mentioned in \cite{BS08} (see Remark~4.7(1) and the introduction). Theorem~\ref{t:dom} also strengthens the result of \cite{W08}, where weak convergence of random walk trajectories to random interlacements is shown for microscopic neighborhoods only. Some of the auxiliary estimates on expected entrance times, hitting distributions and the quasistationary distribution we develop in the proof of Theorem~\ref{t:dom} could also be of use in other contexts, see Proposition~\ref{p:Gloc}, and Lemmas~\ref{l:quasi}, \ref{l:quni}.

\medskip

Results similar to the above are proved in the recent work \cite{CTW} for random walks on random regular graphs with the help of random interlacements on regular trees, as well as in the recent work
\cite{CF10} using different methods. In \cite{Szn09c} and \cite{Szn09d}, Sznitman proves results analogous to Theorem~\ref{t:dom} for random walk on a discrete cylinder for an analysis of disconnection times.

\medskip

We now comment on the proofs, beginning with Theorem~\ref{t:dom}. In order to convey the idea behind the proof at an intuitive level, we briefly describe a construction of the law of $\mathcal{I}^u \cap \ff A$, i.e. of the interlacement set at level $u$ inside a box $\ff A \subset \mathbb{Z}^d$ (for details, see Section~\ref{s:prel}): Consider first a Poisson random variable $J$ with parameter $u \, \text{cap}(\ff A)$, then run $J$ independent random walks starting at vertices distributed according to the normalized equilibrium measure $P_{e_{\ff A}}/\text{cap}(\ff A)$ (this distribution can be thought of as the hitting distribution of $\ff A$ by a random walk started at infinity, see \eqref{d:cap} for the definition). The trace left by these $J$ random walk trajectories in $\ff A$ has the same law as $\mathcal{I}^u \cap \ff A$.

\medskip

At a heuristic level, Theorem~\ref{t:dom} can now be understood as follows: the small ball $A \subseteq \mathbb{T}$ is only rarely visited by the random walk, so the total number of visits to it should approximately be Poisson distributed. By mixing properties of the random walk, the successive visits should be close to independent and start from a vertex in $A$ chosen roughly according to the normalized equilibrium measure on $\ff A$. Provided these approximations are valid, the trace of the successive visits to $A$ looks similar to $\mathcal{I}^u \cap \ff A$.

\medskip

Our proof of Theorem~\ref{t:dom} is inspired by \cite{Szn09c} and \cite{Szn09d}. In particular, it also consists of a poissonization and of a truncation step. We now describe these two steps.

\medskip

In the poissonization step, we need to identify suitable excursions of the random walk. These excursions should include all visits to $A$ made by the random walk and should be comparable with independent random walk paths entering $A$ (for the moment, we are not asking for the entrance points to have distributions similar to $P_{e_{\ff A}}/\text{cap}({\ff A})$). Unlike the discrete cylinder considered in \cite{Szn09c} and \cite{Szn09d}, the torus provides no natural geometric structure with respect to which appropriate excursions can be defined. Instead, each of our random walk excursions is defined to start by entering the ball $A$ and to end as soon as the random walk has spent a time interval of length $(N \log N)^2$ outside of a larger ball $B = B(0,N^{1-\epsilon/2}) \supset A$. We show that the distribution of the position of the random walk upon completion of such an excursion is close to the quasistationary distribution with respect to $B$, see Lemma~\ref{l:quasi} (due to periodicity issues, we work with the continuous-time random walk for this part of the argument). As a result, we can deduce in Lemma~\ref{l:decouple} that successive excursions are close to independent. In Proposition~\ref{p:c1}, we then use these observations to construct a coupling of the random walk trajectory with two Poisson random measures on the space of trajectories in the torus, such that the trace of the random walk paths in $A$ is bounded from above and from below by the traces of the Poisson random measures. With the estimate derived in Lemma~\ref{l:quni} on the hitting distribution of $A$ by the random walk started from the quasistationary distribution, we can modify this coupling in Proposition~\ref{p:c2}, such that the random trajectories appearing in the Poisson measures all start from the normalized equilibrium measure on ${\ff A} = \phi(A)$. 

\medskip



Finally, we come to the truncation step. The deficiency of the random measures described in the last paragraph is that, due to the finiteness of the torus, the appearing random excursions do not have the same
distributions as random walks in ${\mathbb Z}^d$. In the truncation step, we prove that it is possible to control the traces of these Poisson random measures in $A$ from above and from below by random interlacements with slightly changed intensities. This is achieved by truncation and sprinkling arguments from \cite{Szn09c} and \cite{Szn09d} with some modifications due to the different definition of our excursions.

\medskip

The proofs of the applications of Theorem~\ref{t:dom} roughly employ the following heuristics: we first reduce the proof of a global statement such as $|\mathcal{C}^u_{max}| \geq \epsilon N^d$ to several so-called `local estimates'. We use the term `local' to describe events which only depend on the configuration of visited sites inside a box of radius $N^{1-\epsilon}$ in $\mathbb{T}$. After this reduction, the desired results can be established using Theorem~\ref{t:dom}, together with known results on interlacements percolation. A more detailed description of the above strategy, together with the complete proofs of Theorems~\ref{t:torus}, \ref{t:d3} and \ref{t:super}, can be found in Section~\ref{s:not}. 

\medskip

The article is organized as follows: In Section~\ref{s:not}, we introduce some notation and use Theorem~\ref{t:dom} to prove Theorems~\ref{t:torus}, \ref{t:d3} and \ref{t:super}. The other Sections~\ref{s:prel}-\ref{s:dom-} prove Theorem~\ref{t:dom}. Section~\ref{s:prel} contains preliminary estimates on expected entrance times and the required properties of the quasistationary distribution. The poissonization of the random walk trace is performed in Section~\ref{s:dom} and the truncation and resulting coupling with random interlacements in Sections~\ref{s:dom+} and \ref{s:dom-}.

\medskip

Finally, we use the following convention concerning constants: Throughout the text, $c$ or $c'$ denote strictly positive constants depending only on $d$, with values changing from place to place. Dependence of constants on additional parameters appears in the notation. For example, $c_\alpha$ denotes a constant depending only on $d$ and $\alpha$.

\medskip

\paragraph{\textbf{Acknowledgments.}} The authors are grateful to Alain-Sol Sznitman for helpful discussions. A significant part of this work was accomplished when Augusto Teixeira was visiting the Weizmann Institute of Science and when David Windisch was visiting ETH Zurich. The authors would like to thank the Weizmann Institute and the FIM at ETH for financial support and hospitality during these visits. Augusto Teixeira's research has been supported by the grant ERC-2009-AdG 245728-RWPERCRI.

\section{Applications}
\label{s:not}

In this section we prove Theorems~\ref{t:torus}, \ref{t:d3} and \ref{t:super} which are the main applications of Theorem~\ref{t:dom} that we present in this paper. But first, let us introduce some notation.

\medskip

We consider the lattice ${\mathbb Z}^d$ and the discrete integer torus ${\mathbb T} = {\mathbb T}_N = ({\mathbb Z}/N{\mathbb Z})^d$, $d \geq 3$ ($N$ generally omitted), both equipped with edges between any two vertices at Euclidean distance $1$. For vertices $x, y$, we write $x \sim y$ to state that $x$ and $y$ are neighbors. For any vertex $x$ and $r \geq 0$, $B(x,r)$ denotes the closed ball centered at $x$ with radius $r$ with respect to the $\ell_\infty$-distance. The canonical projection from ${\mathbb Z}^d$ to $\mathbb T$ mapping $(x_1, \ldots, x_d)$ to $(x_1 \textup{ mod } N, \ldots, x_d \textup{ mod } N)$ is denoted $\Pi$. Given $x \in {\mathbb T}$, we introduce the bijection $\phi_x$ from $B(x,N/4) \subset {\mathbb T}$ to $B(0,N/4) \subset {\mathbb Z}^d$ satisfying $\Pi(\phi_x(x + x'))=x'$ for any $x' \in B(0,N/4) \subset {\mathbb T}$, and for simplicity of notation write $\phi$ for $\phi_0$. For any subsets $A, B, C, \ldots$ of $B(0,N/4) \subset {\mathbb T}$, we generally write ${\mathsf A} = \phi(A), {\mathsf B} = \phi(B)$ and ${\mathsf C} = \phi(C)$. Random sets of vertices are generally denoted $\mathcal A$, $\mathcal B$ and $\mathcal C$. For any set $V$ of vertices, the internal boundary $\partial_i V$ is defined as the set of vertices in $V$ with at least one neighbor in $V^c$, while the external boundary is denoted $\partial_e V = \partial_i (V^c)$. If $V$ is finite, we denote its cardinality by $|V|$ and its diameter with respect to the $\ell_\infty$-distance by $\diam (V)$. For real numbers $a$ and $b$, we write $a \wedge b$ for the minimum and $a \vee b$ for the maximum in $\{a,b\}$. Equations involving the symbol $\pm$ stand for two separate equations, one with $+$, one with $-$. For example, $\aleph_\pm = \beth_\pm$ is short-hand notation for $\aleph_+ = \beth_+$, $\aleph_- = \beth_-$.

\medskip

Finally, we write $P_x$ for the law on ${\mathbb T}^{\mathbb N}$ of the simple random walk on $\mathbb T$ started at $x \in {\mathbb T}$, and denote the canonical coordinate process by $(X_n)_{n \geq 0}$, where by simple random walk on $\mathbb T$ we mean the projection of the canonical simple random walk on ${\mathbb Z}^d$ under $\Pi$. We use $P$ to denote the law with uniformly chosen starting point, i.e. $P = \sum_{x \in {\mathbb T}} N^{-d} P_x$.

\medskip

The random interlacements $({\mathcal I}^u)_{u \geq 0}$ at levels $u \geq 0$ are all defined on a suitable probability space $(\Omega, {\mathcal F}, {\mathbb P})$, see \cite{Szn09} for details. For $x \in \mathbb{Z}^d$, we denote by $\mathcal{C}^u_x$, the connected component of $\mathcal{V}^u$ containing $x$ (cf.~\eqref{e:Vu}). We also use the same notation ($\mathcal{C}^u_x$) to denote the connected component of $\mathbb{T}^d \setminus X_{[0,uN^d]}$ containing $x$, but the two cases can be distinguished by the context. The event that there is a nearest-neighbor path from vertex $x \in {\mathbb Z}^d$ to vertex $y \in {\mathbb Z}^d$ using only vertices in ${\mathcal V}_u$ is denoted $\{x \stackrel{{\mathcal V}_u}{\longleftrightarrow} y\}$.

\medskip

Using only Theorem~\ref{t:dom} and known results on random interlacements, we now prove Theorems~\ref{t:torus}, \ref{t:d3} and \ref{t:super}, establishing the existence and some properties of the distinct phases for the sizes of components left by the simple random walk on $\mathbb T$. 

\medskip

The value $u_{\star \star}$ in the statement of Theorem~\ref{t:torus} is defined as in \cite{SS10} as follows:
\begin{equation}
\label{e:udouble}
\begin{array}{c}
u_{\star \star} = \inf\{u \geq 0; \alpha(u) > 0\}, \text{ where}\\
\alpha(u) = \sup\Big\{\alpha \geq 0; \lim_{L \to \infty} L^\alpha \mathbb{P}[B(0,L) \overset{\mathcal{V}^u}{\longleftrightarrow} \partial_e B(0,2L)] = 0\Big\}, \text{ for $u > 0$}.
\end{array}
\end{equation}
where by convention the supremum of an empty set is zero. It is shown in \cite{SS10}, Theorem~0.1 that there is a constant $\kappa>0$ depending only on $d$ and $u$, such that
\begin{equation}
\label{e:int2}
\text{for any $d \geq 3$ and } u > u_{\star \star}, \, Q^u [ 0 \stackrel{{\mathcal V}_u}{\longleftrightarrow} x] \leq c_u \exp (- c_u' |x|^\kappa),
\end{equation}
for $u_{**}$ defined in \eqref{e:udouble}.

\begin{remark}
It is currently unknown whether $u_\star$ differs from $u_{\star \star}$. If it turns out that these two values are in fact equal, then \eqref{e:torus2} will make \eqref{e:torus1} obsolete.
\end{remark}

\medskip

In the proof of Theorem~\ref{t:torus}, we first use the Markov inequality to reduce the desired tail estimates on $|{\mathcal C}_{max}|$ to tail estimates on the size of the vacant component containing $0$, intersected with the ball $B(0,N^{1-\epsilon})$. Theorem~\ref{t:dom} then allows us to deduce such tail estimates from bounds on the finite clusters in the random interlacements model. We obtain a stronger bound for $u>u_{\star \star}$, thanks to the strong connectivity decay guaranteed by this assumption.

\medskip

\begin{proof}[Proof of Theorem~\ref{t:torus}.] We start with the proof of \eqref{e:torus1}. For this we take any $\alpha, \epsilon \in (0,1)$ such that $u(1-\epsilon) > u_\star$. Write $A_x$ for $B(x,N^{1-\epsilon})$ and recall that ${\mathcal C}_x^u$ stands for the connected component of $\mathbb{T} \setminus X_{[0,uN^d]}$ containing $x \in \mathbb{T}$.
For $N \geq c_{\eta, \epsilon}$, we have $\eta N^d > |A_x|$, therefore, by the Chebychev inequality,
\begin{equation}
\label{e:Cmaxk}
\begin{split}
P[|{\mathcal C}_{max}^u| \geq \eta N^d] &= P \biggl[ \sum_{x \in \mathbb{T}} \mathbf{1}_{\{|{\mathcal C}_x| \geq \eta N^d\}} \geq \eta N^d \biggr] \leq \frac{1}{\eta N^d} \sum_{x \in \mathbb{T}} P[|{\mathcal C}_x| \geq \eta N^d] \\
&\leq \frac{1}{\eta N^d} \sum_{x \in \mathbb{T}} P \big[x \stackrel{\mathbb{T} \setminus X_{[0,uN^d]}}{\longleftrightarrow} \partial_i A_x \big],
\end{split}
\end{equation}
because a component of size $\eta N^d > |A_x|$ cannot be contained in $A_x$. Considering the coupling $Q$ provided by Theorem~\ref{t:dom}, the probability appearing in the last line of the inequality above equals
\begin{equation}
Q \Big[0 \stackrel{\mathsf X(u,\mathsf A)^c}{\longleftrightarrow} \partial_i B(0,N^{1-\epsilon}) \Big] \leq Q \big[0 \stackrel{\mathcal V^{u(1-\epsilon)}}{\longleftrightarrow} \partial_i B(0,N^{1-\epsilon})\big] + c_{u, \alpha, \epsilon} N^{-\alpha}.
\end{equation}
By the definition of $u_\star$, the continuity of probability measures and the fact that $u(1-\epsilon) > u_\star$, the above probability converges to zero as $N$ goes to infinity. Thus, we conclude \eqref{e:torus1}.

For the proof of \eqref{e:torus2}, given $u>u_{\star \star}$, take $\epsilon>0$ such that $(1-\epsilon)u > u_{\star \star}$ and choose $\lambda = 2d/\kappa$, c.f. \eqref{e:int2}. Note that $\lambda$ depends solely on $d$ and $u$. We use \eqref{e:int2} and the fact that every set $D \subset \mathbb{Z}^d$ (or $\mathbb{T}$) satisfies $\diam(D) \geq c |D|^{1/d}$ (for some $c > 0$), to obtain 
\begin{equation}
\label{e:smaller}
Q^u [ |\mathcal{C}^u_0| > \log^\lambda N] \leq Q^u [ \diam(\mathcal{C}^u_0) > c_u \log^{2/\kappa} N] \leq c_u' \exp (- c_u \log^2 N).
\end{equation}

We note that for $N \geq c_{\lambda, \epsilon}$, we have $\log^\lambda N < \frac{1}{2}N^{1-\epsilon}$ and hence $\{|\mathcal{C}^u_x| > \log^\lambda N\} \subseteq \{|\mathcal{C}^u_x \cap A_x| > \log^\lambda N\}$. Thus, by Theorem~\ref{t:dom} (with $\alpha = \rho + d$), we have
\begin{equation*}
P[|\mathcal{C}^u_{max}| > \log^\lambda N] \leq \sum_{x \in \mathbb{T}} P[|\mathcal{C}^u_x \cap A_x| > \log^\lambda N] \leq N^d (Q^u[|\mathcal{C}^u_0| > \log^\lambda N] + c_{u,\rho}N^{-\alpha}).
\end{equation*}
And we conclude \eqref{e:torus2} from \eqref{e:smaller}.
\end{proof}

Having proved the absence of a macroscopic component in the large $u$ regime, we now proceed with the small $u$ case.

\medskip

We now briefly describe the idea of the proof of Theorem~\ref{t:d3}. We first slice the torus $\mathbb{T}$ into $N^{d-2}$ parallel planes denoted by $\{F_x\}_{x \in ({\mathbb Z}/N{\mathbb Z})^{d-2}}$. Although our argument works for all $d \geq 3$, it is instructive to keep in mind the picture in the special case $d = 3$. Using the link with random interlacements from Theorem~\ref{t:dom}, together with known results on random interlacements, we show that with high probability, any such plane $F_x$ contains no occupied dual path longer than $\sqrt{N}/2$. By a geometric argument, we show that under these conditions, all vertices in in vacant components of $F_x$ with diameter at least $\sqrt{N}/2$ belong to the same component of $\mathbb T$ (we call such vertices `seeds'). Finally, we use Proposition~\ref{p:average} below to show that the number of seeds in $\mathbb{T}$ is at least $\epsilon N^d$.

\medskip

Let us now prove Proposition~\ref{p:average}. Roughly speaking, it states that if a given increasing event has positive probability under the random interlacements law and solely depends on what happens in a fixed box, then with high $P$-probability this event will be observed simultaneously in various boxes in the torus. We first need to introduce Definition~\ref{d:pull}, where for $\delta \in (0,1)$, we write $B_{x,\delta} = B(x,N^\delta/2) \subset {\mathbb T}$ and $\ff B_{x,\delta} = B(x,N^\delta/2) \subset {\mathbb Z}^d$.

\begin{definition}
\label{d:pull}
For a given function $f:2^{\mathbb{Z}^d} \to [0,1]$, measurable with respect to the Borel-$\sigma$-algebra on $[0,1]$ and the canonical $\sigma$-algebra on $2^{{\mathbb Z}^d}$ generated by the coordinate projections, and some $x \in \mathbb{T}$ (respectively, $x \in \mathbb{Z}^d$), we define the \textit{local pullback} $f^x_N:2^{\mathbb{T}} \to [0,1]$ (respectively, $f^x_N:2^{\mathbb{Z}^d} \to [0,1]$) by
\begin{equation}
f_N^x({U}) = \left\{ \begin{array}{ll} 
f\Big(\phi_x\big({U}\cap B_{x,\delta} \big)\cup \ff B_{0,\delta}^c\Big), & \text{for } x \in {\mathbb T}, U \subseteq {\mathbb T}, \\
f\Big( \big( \big({U}\cap {\mathsf B}_{x,\delta} \big) -x \big)\cup \ff B_{0,\delta}^c\Big), & \text{for } x \in {\mathbb Z}^d, U \subseteq {\mathbb Z}^d, 
\end{array} \right.
\end{equation}
where $\phi_x$ is the isomorphism between $B(x,N/4) \subset {\mathbb T}$ and $B(0,N/4) \subset {\mathbb Z}^d$ defined in the second paragraph of this section.
\end{definition}

\begin{proposition}
\label{p:average}
\textup{($d \geq 3$)}
Consider $\beta > 0$, $\delta \in (0,1)$ and let $f$ be a monotone non-decreasing function $f:2^{\mathbb{Z}^d} \to [0,1]$, such that for some $k > 0$, 
\[\alpha_1 := \mathbb{E}\big[f(\mathcal{V}^{u(1+\beta)})\big] \;\; \leq \;\; \alpha_2 := \mathbb{E}\big[f(\mathcal{V}^{u(1-\beta)} \cup B(0,k)^c)\big], \]
where $\mathbb E$ denotes $\mathbb P$-expectation (cf.~the beginning of this section).
Then for any $\epsilon > 0$,
\begin{equation}
\lim\limits_{N \to \infty} P \Big[ \alpha_1 - \epsilon \leq \bar f_N \leq \alpha_2 + \epsilon \Big] = 1,
\end{equation}
where $\bar f_N$ is the average of the local pullbacks: $\bar f_N = \frac{1}{N^d}\sum_{x \in \mathbb{T}} f^x_N\big(\mathbb{T} \setminus X_{[0,uN^d]}\big)$, see Definition~\ref{d:pull}.
\end{proposition}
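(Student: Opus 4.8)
The plan is to prove the two inequalities $\bar f_N \geq \alpha_1 - \epsilon$ and $\bar f_N \leq \alpha_2 + \epsilon$ separately, each holding with probability tending to $1$; the two arguments are symmetric, so I will describe the lower bound and indicate the modifications for the upper bound. First I would cover the torus $\mathbb{T}$ by roughly $N^{d}/N^{d\delta}$ translates $B_{x_i,\delta}$ of the box $B(0,N^\delta/2)$ which are pairwise at $\ell_\infty$-distance at least, say, $N^{1-\epsilon'}$ for a small $\epsilon'$, so that a linear fraction $c\,N^d$ of vertices of $\mathbb{T}$ lie in these well-separated boxes (the remaining vertices contribute a negligible error to the average since $f$ is $[0,1]$-valued). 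On this family of well-separated boxes I would invoke Theorem~\ref{t:dom} with the larger radius $N^{1-\epsilon}$ (choosing $\epsilon$ small enough relative to $\delta$ and $\epsilon'$ so that each $B_{x_i,\delta}$ sits comfortably inside the corresponding ball $A_{x_i}=B(x_i,N^{1-\epsilon})$), obtaining a coupling $Q$ of $X_{[0,uN^d]}$ with random interlacements such that, with probability at least $1-cN^{-\alpha}$, on each $A_{x_i}$ we have ${\mathcal I}^{u(1+\epsilon)}\cap {\mathsf A}_{x_i}\subseteq {\mathsf X}(u,{\mathsf A}_{x_i})$, i.e. $\mathbb{T}\setminus X_{[0,uN^d]}$ restricted to $B_{x_i,\delta}$ contains $\mathcal{V}^{u(1+\epsilon)}$ restricted to $\mathsf{B}_{x_i,\delta}$ (after transport by $\phi_{x_i}$). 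By monotonicity of $f$ this forces $f^{x_i}_N(\mathbb{T}\setminus X_{[0,uN^d]}) \geq f^{x_i}_N$ evaluated at the interlacement vacant set, which has the same law as $f(\mathcal{V}^{u(1+\epsilon)}\cup \ff B_{0,\delta}^c)$, and whose expectation is $\geq \mathbb{E}[f(\mathcal{V}^{u(1+\beta)})]=\alpha_1$ once $\epsilon\leq\beta$.

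The second ingredient is a law-of-large-numbers / concentration step: having reduced matters to an average of $\{0,1\}$-ish terms (bounded by $[0,1]$) over $m\asymp N^{d(1-\delta)}$ boxes, I would show this average concentrates near its mean. The key point is that the boxes are far apart, and random interlacements has good decoupling properties, so the joint distribution of the indicators $(\mathbf 1_{\{{\mathcal I}^{u(1+\epsilon)}\cap {\mathsf A}_{x_i}\subseteq {\mathsf X}(u,{\mathsf A}_{x_i})\}})$-reweighted terms is close to a product; combined with the coupling error this gives a Chebyshev-type bound on $\Var(\bar f_N)$ of order $o(1)$, hence $\bar f_N \geq \alpha_1-\epsilon$ w.h.p. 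Concretely, I would couple, for each box, the walk-vacant-set with an \emph{independent} copy of $\mathcal{V}^{u(1+\epsilon)}\cap\mathsf{B}_{x_i,\delta}$, using a union bound over the $m$ boxes against the $N^{-\alpha}$ error from Theorem~\ref{t:dom} (taking $\alpha$ large, say $\alpha > 2d$), so that w.h.p. $\bar f_N$ dominates $\frac1{N^d}\sum_i f(\text{i.i.d. copy})$, which by the classical weak law concentrates at $\alpha_1\cdot(|{\rm boxes}|/N^d)\geq \alpha_1 - \epsilon/2$ for the appropriate choice of box density. For the upper bound $\bar f_N\leq\alpha_2+\epsilon$, I would instead use the inclusion ${\mathsf X}(u,{\mathsf A}_{x_i})\subseteq {\mathcal I}^{u(1-\epsilon)}\cap{\mathsf A}_{x_i}$ from Theorem~\ref{t:dom}, so that $\mathbb{T}\setminus X_{[0,uN^d]}$ restricted to $B_{x_i,\delta}$ is \emph{contained} in $\mathcal{V}^{u(1-\epsilon)}$ restricted to $\mathsf{B}_{x_i,\delta}$; after filling in the complement $\ff B_{0,\delta}^c$ and using monotonicity this is bounded by $f(\mathcal{V}^{u(1-\epsilon)}\cup B(0,k)^c)$ provided $\epsilon\leq\beta$ and $N^\delta/2 \geq k$ (true for $N$ large), whose expectation is $\alpha_2$; then the same concentration argument applies.

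I expect the main obstacle to be the concentration step, specifically making precise the quantitative sense in which the configurations in well-separated boxes behave independently. Theorem~\ref{t:dom} gives a coupling for a \emph{single} box $A$ of radius $N^{1-\epsilon}$; to control an average over many boxes one needs either a version of the coupling that is simultaneously valid and jointly close to a product measure over all of the $N^{d(1-\delta)}$ boxes, or a separate decoupling argument. The cleanest route is probably to observe that the boxes $B_{x_i,\delta}$ have radius $N^\delta/2$, which is \emph{polynomially smaller} than the separation $N^{1-\epsilon'}$; one can then cite the sprinkled decoupling inequalities for random interlacements (as in \cite{SS10}) to decouple the interlacement configurations across boxes at the cost of an arbitrarily small change in the level, which is absorbed into the slack $\beta-\epsilon$. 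The remaining delicate bookkeeping is the hierarchy of parameters $\epsilon \ll \epsilon' \ll \delta$ and the requirement $\epsilon \leq \beta$, together with a choice of $\alpha$ in Theorem~\ref{t:dom} large enough that the union bound over all $N^{d(1-\delta)}$ boxes of the coupling error $N^{-\alpha}$ still tends to zero. None of this is conceptually hard, but it must be arranged carefully; once the decoupling and the parameter hierarchy are in place, the weak law of large numbers for the resulting nearly-independent bounded random variables finishes the proof.
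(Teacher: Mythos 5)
There is a genuine gap, in fact two. First, your covering scheme is internally inconsistent: boxes of radius $N^\delta/2$ that are pairwise at distance at least $N^{1-\epsilon'}$ can cover at most a fraction of order $N^{-d(1-\epsilon'-\delta)}$ of the torus, not a positive fraction. This matters quantitatively, because $\bar f_N$ is the average over \emph{all} $N^d$ vertices: for the lower bound the uncontrolled vertices can only be bounded below by $0$, and for the upper bound only by $1$, so unless your controlled family covers a $(1-O(\epsilon))$ fraction of vertices you do not get $\alpha_1-\epsilon \leq \bar f_N \leq \alpha_2+\epsilon$; but such a dense family cannot be polynomially separated. Second, and more fundamentally, the concentration step requires a \emph{joint} coupling of the walk with (nearly) independent interlacement configurations in all the boxes simultaneously. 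Theorem~\ref{t:dom} provides a coupling for a single box only, and a union bound over boxes cannot merge the different couplings $Q_{x_i}$ into one probability space with product structure. The fix you propose --- sprinkled decoupling for random interlacements as in \cite{SS10} --- is a statement about the interlacement alone; it says nothing about the correlations of the walk's vacant set across distant boxes (which come from the single trajectory), and controlling those jointly is precisely additional work of the type Theorem~\ref{t:dom} does for one box. So as written, the variance/Chebyshev step does not go through.

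The paper's proof avoids both problems with a softer argument that you may want to compare with. It introduces the mesoscopic local averages $N^u_w$ of the pullbacks over a box of radius $N^\delta/2$ around $w$. For the interlacement, the ergodic theorem (Theorem~2.1 of \cite{Szn09}) --- not any independence across boxes --- gives $\mathbb{P}[\alpha_1-\epsilon/2 \leq N_0^{u(1+\beta)} \leq N_0^{u(1-\beta)} \leq \alpha_2+\epsilon/2] \to 1$. A single application of Theorem~\ref{t:dom} per center $w$, at the level of probabilities (no joint coupling needed), transfers this to $P[\alpha_1-\epsilon/2 \leq N^u_w \leq \alpha_2+\epsilon/2] \to 1$ uniformly in $w$. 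Then, since $\bar f_N = N^{-d}\sum_w N^u_w$ is a $[0,1]$-bounded average, it suffices that the set $R$ of bad centers has density $o(1)$; this follows from the uniform bound by Markov's inequality, and the bad centers perturb the average by at most $|R|/N^d$. This density trick is exactly the ingredient your proposal is missing: it replaces the law of large numbers over independent boxes, for which no adequate joint coupling is available.
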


\begin{proof}
In this proof we omit the indices $\delta$ and $N$ from $B_{x,\delta}$, $f^x_N$ and $\bar f_N$. We define the local average $N_x^u$ by
\[N_x^u = \frac{1}{|\ff B_x|} \sum_{y \in {\mathsf B}_x} f^y\big(\mathcal{V}^u\big) \; \text{ or } \; N_x^u = \frac{1}{|B_x|} \sum_{y \in B_x} f^y\big(\mathbb{T} \setminus X_{[0,uN^d]}\big),\]
depending on whether $x$ belongs to $\mathbb{Z}^d$ or $\mathbb{T}$. Note that $N_x^u$ is monotone non-decreasing and it only depends on the configuration inside $B(x,N^\delta)$.

Monotonicity of $f$ implies that, if $x \in \mathbb{Z}^d$ and $N^\delta/2 \geq k$,
\[f\big(\mathcal{V}^{u(1+\beta)}-x\big) \leq  f^x(\mathcal{V}^{u(1+\beta)}) = f^x(\mathcal{V}^{u(1-\beta)}) \leq f\big((\mathcal{V}^{u(1-\beta)} \cup B(x,k)^c)-x\big).\]
Thus, we conclude that $\lim_{N \to \infty}\mathbb{P}\big[\alpha_1 - \epsilon/2 \leq N_0^{u(1+\beta)} \leq N_0^{u(1-\beta)} \leq \alpha_2 + \epsilon/2 \big] = 1$, using the fact that the set $\mathcal{V}^v$ is ergodic under translation maps, see \cite{Szn09}, Theorem~2.1. This implies, by Theorem~\ref{t:dom} and the monotonicity of $N^u_x$, that for any sequence $w_N \in \mathbb{T}_N$ (we omit the index $N$ in the notation below),
\begin{equation}
\lim\limits_{N\to \infty} P\Big[\alpha_1 - \epsilon/2 \leq {N_w^{u}} \leq \alpha_2 +\epsilon/2\Big] = 1.
\end{equation}

We now let $R = \{w \in \mathbb{T};  N_w^{u} \not \in (\alpha_1 - \epsilon/2, \alpha_2 + \epsilon/2)\}$ and note that $E[|R|]/N^d \to 0$ as $N \to \infty$. Which implies that $|R|/N^d$ converges in probability to zero as $N$ tends to infinity.

It is clear that $\bar f$ can also be written as $\bar f = \frac{1}{N^d} \sum_{w\in \mathbb{T}} {N^u_w}$, thus,
\begin{equation}
\label{e:Ndens}
\begin{split}
P\Big[ & \bar f \not \in (\alpha_1 - \epsilon, \alpha_2 + \epsilon) \Big]
 = P\Big[\sum_{w\in R} \tfrac{N^u_w}{N^d} + \sum_{w\in \mathbb{T} \setminus R} \tfrac{N^u_w}{N^d}   \not \in (\alpha_1 - \epsilon, \alpha_2 + \epsilon)\Big] \\
& \overset{0 \leq N_x^u \leq 1}{\leq} P\Big[ \sum_{w\in \mathbb{T} \setminus R} \tfrac{N^u_w}{N^d} \leq \alpha_1 - \epsilon \Big] + P\Big[\frac{|R|}{N^d} + \sum_{w\in \mathbb{T} \setminus R} \tfrac{N^u_w}{N^d} \geq \alpha_2+\epsilon \Big] \\
& {\leq} P\Big[ (\alpha_1 - \epsilon/2) \tfrac{|\mathbb{T} \setminus R|}{N^d} \leq \alpha_1 - \epsilon \Big] + P\Big[\frac{|R|}{N^d} \geq \epsilon/2\Big],
\end{split}
\end{equation}
which converges to zero as $N$ goes to infinity since $|R|/N^d$ converges in probability to zero. This proves Proposition~\ref{p:average}.
\end{proof}

In the proof of Theorem~\ref{t:d3}, we will show the existence of vacant crossings of two-dimensional planes in the torus, and then use a geometric argument to deduce the existence of a macroscopic component. To this end, we introduce the following notions: we define a $\star$-path to be a sequence of distinct points $x_1, \dots, x_k$ (in $\mathbb{Z}^d$ or in $\mathbb{T}$) such that $x_i$ and $x_{i+1}$ are at $\ell_\infty$-distance at most one, for all $i = 1, \dots, k-1$. It is known that there exists a $\tilde u > 0$, such that for all $u \leq \tilde u$,
\begin{equation}
\label{e:starpath2}
\lim\limits_{N\to \infty}N^{2d} \cdot \mathbb{P}[ \text{there is a $\star$-path in $\mathbb{Z}^2 \cap \mathcal{I}^{u}$ from $0$ to $\partial_i B(0,N^\epsilon/4)$}] = 0,
\end{equation}
see (3.28) of \cite{SS09}. Here $\mathbb{Z}^2 \subset \mathbb{Z}^d$ denotes the set of vertices with only the first two coordinates not equal to zero. Moreover, we can choose $\tilde u$ such that,
\begin{equation}
\label{e:percplan}
\mathbb{P}[0 \text{ belongs to an infinite cluster of $\mathcal{V}^u \cap \mathbb{Z}^2$}] > 0, \text{ for } 0 \leq u \leq {\tilde u}, 
\end{equation}
see Theorem~3.4 of \cite{SS09}.

\begin{proof}[Proof of Theorem~\ref{t:d3}.]
Consider any $0<u<{\tilde u}/2$. Given a point $x = (x_1,\dots, x_d) \in \mathbb{T}$, the set
\[F_x = \big\{y = (y_1,\dots,y_d) \in \mathbb{T}; y_i = x_i \text{ except for $i \in \{1,2\}$}\big\}\]
is called the horizontal plane though $x$.

We now fix $\epsilon = 1/2$. We say that a point $x \in \mathbb{T}$ is a \textit{seed} if $x$ is connected to $\partial_i B(x,N^\frac{1}{2}/2)$ though $F_x \setminus X_{[0,uN^d]}$, where $F_x$ is the horizontal plane passing through $x$. We say that a path (respectively a $\star$-path) in $F_x$ is \textit{projected}, if it is given by the image of a nearest neighbor (respectively $\star$-nearest neighbor) path in $\{0,\dots,N-1\}^2 \subset \mathbb{Z}^2$ under the map $(y_1,y_2) \mapsto (y_1,y_2,x_3,\dots,x_d)$. For instance, note that a jump from $(0,\dots,0)$ to $(N-1,0,\dots,0)$ is not allowed for a projected path. 
To establish the result, we need the following claim:
\begin{equation}
\label{e:joinseed}
\begin{array}{c}
\text{if for every horizontal plane $F_x \subset \mathbb{T}$, the longest $\star$-path in $F_x \cap X_{[0,uN^d]}$} \\
\text{has diameter smaller or equal to $N^\frac{1}{2}/2$, then $|\mathcal{C}^u_{max}| \geq |\{\text{seeds in $\mathbb{T}$}\}|$.}
\end{array}
\end{equation}

We first introduce the following definition:
\begin{equation}
\label{e:cross}
\begin{array}{c}
\text{we say that a connected set $\mathcal{C} \subseteq F_x$ has a \textit{crossing} in} \\
\text{the plane $F_x$, if one can find two projected paths in $\mathcal{C}$,} \\
\text{crossing the square $F_x$ along the vertical and horizontal directions.}
\end{array}
\end{equation}

Consider a horizontal plane $F_x$ as in \eqref{e:joinseed}. Since there is no $\star$-path in $F_x \cap X_{[0,uN^d]}$ with diameter strictly greater than $N^\frac{1}{2}/2$, there is no projected $\star$-path in $F_x \cap X_{[0,uN^d]}$ connecting two opposite sides of $F_x$. By a duality argument (see \cite{K81} Proposition~2.2 p.30 and Example~(i) p.18) this implies that $F_x \setminus X_{[0,uN^d]}$ has a component $\mathcal{C}_{F_x}$ with a crossing in the sense of \eqref{e:cross}. 

We now show that every seed $x$ of $F_x$ is contained in $\mathcal{C}_{F_x}$. For this, suppose that $\mathcal{C}_{F_x} \neq \mathcal{C}_x$ and let $\bar{\mathcal{C}}_1$ and $\bar{\mathcal{C}}_2$ be the preimages under the projection $\Pi:\mathbb{Z}^d \to \mathbb{T}$ of the components $\mathcal{C}_{F_x}$ and $\mathcal{C}_x$ respectively. By considering separately the case in which at least one of these sets has unbounded components, or both have only bounded components, we can use Proposition~2.1, p.~387, of \cite{K81} to find a $\star$-path of diameter at least $N^\frac{1}{2}/2$ in $F_x \cap X_{[0,uN^d]}$, which contradicts the hypothesis in \eqref{e:joinseed}. Hence, every seed in $F_x$ is contained in ${\mathcal C}_{F_x}$.

To conclude the proof of \eqref{e:joinseed}, we prove that for any pair of horizontal planes $F_x$, $F_y \subset \mathbb{T}$, the components $\mathcal{C}_{F_x}$ and $\mathcal{C}_{F_y}$ are connected by a path in $\mathbb{T} \setminus X_{[0,uN^d]}$. It is enough to show this in the case where $F_x$ and $F_y$ are adjacent to each other. Indeed, once we obtain this result for adjacent horizontal planes we can extend it to every pair $F_x$, $F_y$ by considering a sequence $F_x = F_{x_0}, F_{x_1},\dots,F_{x_{k-1}},F_{x_k}= F_{y}$ of adjacent horizontal planes.

So, consider two adjacent horizontal planes $F_x$ and $F_y \subset \mathbb{T}$, meaning that every point in $F_x$ has exactly one neighbor in $F_y$. Recall that $\mathcal{C}_{F_x}$ contains a projected path $\tau = x_0, \dots, x_k$ joining the top and the bottom sides of $F_x$. The respective neighbors $y_1,\dots,y_k$ of $x_1,\dots,x_k$ in $F_y$ also constitute a projected path $\tau'$ joining the top and bottom sides of $F_y$. By the fact that $F_y$ is crossed (from side to side) by projected paths in $\mathcal{C}_{F_y}$, we obtain that $\tau'$ meets $\mathcal{C}_{F_y}$ and therefore $\mathcal{C}_{F_x}$ and $\mathcal{C}_{F_y}$ are connected, see Figure~\ref{f:join}. This finishes the proof that every seed belongs to the same connected component of $\mathbb{T} \setminus X_{[0,uN^d]}$, hence of \eqref{e:joinseed}.

\begin{figure}
\begin{center}
\includegraphics[angle=0, width=0.5\textwidth]{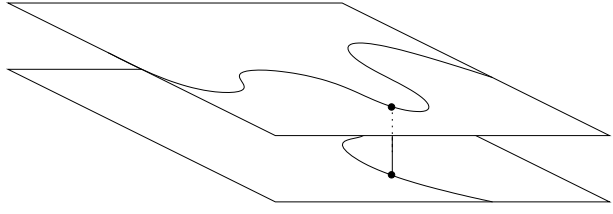}\\\label{f:join}
\caption{A connection between $\mathcal{C}_F$ and $\mathcal{C}_{F'}$.}
\end{center}
\end{figure}

We now have two remaining steps to establish Theorem~\ref{t:d3}. We need to show that the hypothesis in \eqref{e:joinseed} holds with high probability and that the number of seeds is, with overwhelming probability, larger or equal to $\epsilon N^d$ for some $\epsilon > 0$. We start proving the second part.

Recall that $u$ was chosen in a way that $u(1+\frac{1}{2}) < \tilde u$. Let $f:2^{\mathbb{Z}^d} \to [0,1]$ be given by $\ff{U} \mapsto 1\{0 \text{ belongs to an infinite cluster of $\ff{U} \cap \mathbb{Z}^2$}\}$. Using \eqref{e:percplan} we conclude that $\mathbb{E}[f(\mathcal{V}^{u(1+\frac{1}{2})})] := \gamma > 0$. The local pullback function $f^x_N$ of $f$ (see Definition~\ref{d:pull}) happens to be the indicator function that $x$ is a seed. Using Proposition~\ref{p:average} we obtain that
\begin{equation}
\label{e:seeds}
\lim\limits_{N\to\infty}P[ |\{\text{seeds in $\mathbb{T}$}\}| \geq (\gamma/2)N^d ] = 1.
\end{equation}

In view of the above and \eqref{e:joinseed}, to finish the proof of Theorem~\ref{t:d3} it is now enough to show that
\begin{equation}
\label{e:starpath}
\lim\limits_{N \to \infty}P \Big[
\begin{array}{c}
\text{for some horizontal plane $F \subset \mathbb{T}$, there is a $\star$-path} \\
\text{in $F \cap X_{[0,uN^d]}$ with diameter strictly larger than $N^\frac{1}{2}/2$, }
\end{array}\Big] = 0.
\end{equation}
For $u < \tilde u$, this probability is smaller or equal to
\begin{equation*}
\begin{split}
& N^d P[ \text{there is a $\star$-path in $F_x \cap X_{[0,uN^d]}$ from $x$ to $\partial_i B(x,N^\frac{1}{2}/4)$}] \\
& \overset{\text{Theorem~\tiny{\ref{t:dom}}}}{\leq} N^d \Big( \mathbb{P}[ \text{there is a $\star$-path in $\mathbb{Z}^2 \cap \mathcal{I}^{u(1+\frac{1}{2})}$ from $0$ to $\partial_i B(0,N^\frac{1}{2}/4)$}] + cN^{-2d} \Big).
\end{split}
\end{equation*}
The last term converges to zero as $N$ tends to infinity, due to \eqref{e:starpath2}. This finishes the proof of \eqref{e:starpath}, which together with \eqref{e:joinseed} and \eqref{e:seeds} establishes Theorem~\ref{t:d3}.
\end{proof}

We are now going to prove Theorem~\ref{t:super}, which is a stronger characterization of supercriticality. The estimates provided by this theorem hold for so-called strongly supercritical values of $u$ (cf.~Theorem~\ref{t:super}), which we introduce now.

\begin{definition}
\label{d:strong}
We say that $u \geq 0$ is strongly supercritical if there is a $\mu > 0$ such that, for large enough $N$ depending on $\mu$ and $u$, we have:
\begin{equation}
\label{e:touch}
\mathbb{P} \big[\text{there is a path in $\mathcal{V}^{u(1+\mu)}$ from $B(0,N)$ to infinity}\big] \geq 1 - e^{-N^\mu}, \text{ and }
\end{equation}
\begin{equation}
\label{e:conn}
\mathbb{P}\Big[
\begin{array}{c}
\text{any two connected subsets of $\mathcal{V}^{u(1-\mu)} \cap B(0,N)$ with} \\
\text{diameter $\geq N/8$ are connected through $\mathcal{V}^{u(1+\mu)} \cap B(0,2N)$}
\end{array}
\Big] \geq 1 - e^{-N^\mu}.
\end{equation}
\end{definition}

\begin{remark} \label{r:strong}
1) Note that the above mentioned connected sets need not be whole connected components of $\mathcal{V}^{u(1-\mu)}$. It is also important to note that \begin{equation}
\label{e:open}
\text{the set $\{u > 0; \text{ $u$ is strongly supercritical}\}$ is open.}
\end{equation}
To see this it is enough to note that under $\mathbb{P}$, $\mathcal{V}^u \subseteq \mathcal{V}^{u'}$ whenever $u \geq u'$.


%

2) It is important to note that for $d \geq 5$, one can prove the existence of some $\bar u(d) > 0$ such that every $u \leq \bar u(d)$ is strongly supercritical, see Theorems~3.2 and 3.3 of \cite{T09b}. We do not know if this holds for $d = 3,4$ (this is the main motivation for Theorem~\ref{t:d3}). It is an important question whether every $u < u_\star$ is strongly supercritical.

3) It is clear that if $u$ is strongly supercritical, and $\mu > 0$ is chosen as in Definition~\ref{d:strong}, then
\begin{equation}
\label{e:diamest}
\mathbb{P}[\text{$0$ is connected to $\partial_i B(0,2N)$ through $\mathcal{V}^{u(1+\mu)}$, but not to infinity}] \leq 2e^{-N^\mu},
\end{equation}
for $N$ large enough depending on $u$ and $\mu$.
\end{remark}

\begin{proof} [Proof of Theorem~\ref{t:super}.]
Theorem~\ref{t:super} follows from Propositions~\ref{p:sec} and \ref{p:density} below.
\end{proof}

We first need the following deterministic lemma, which gives a local criterion implying that a given set has a unique giant component.

\begin{lemma}
\label{l:sec}
\textup{($d \geq 3$)}
Consider $\ell \leq N/10$ and $A \subseteq \mathbb{T}$, such that for every $x \in \mathbb{T}$,
\begin{enumerate}
\item the set $A \cap B(x,2\ell)$ has a connected component with diameter at least $\ell$,
\item every pair of components in $A \cap B(x,6 \ell)$ with diameter at least $\ell$ belong to the same component of $A$.
\end{enumerate}
Then there exists a unique component of $A$ with diameter bigger or equal to $\ell$.
\end{lemma}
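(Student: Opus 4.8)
The plan is to show both \emph{existence} and \emph{uniqueness} of a component of $A$ with diameter at least $\ell$. For existence, apply hypothesis (1) with any fixed $x$, say $x = 0$: it immediately produces a component of $A \cap B(0,2\ell) \subseteq A$ with diameter at least $\ell$. The content of the lemma is therefore the uniqueness statement, and this is where hypothesis (2) will be used in an essential, ``chaining'' way.

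For uniqueness, suppose $\mathcal{D}_1$ and $\mathcal{D}_2$ are two components of $A$, each with $\diam(\mathcal{D}_i) \geq \ell$, and I want to show $\mathcal{D}_1 = \mathcal{D}_2$. The idea is to walk from a point of $\mathcal{D}_1$ to a point of $\mathcal{D}_2$ in $\mathbb{T}$ in steps of size comparable to $\ell$, and at each step use hypotheses (1) and (2) to certify that the ``local giant'' near the current point is connected to the ``local giant'' near the next point, all within $A$. Concretely: pick $p_1 \in \mathcal{D}_1$, $p_2 \in \mathcal{D}_2$, and choose a sequence of points $x_0 = p_1, x_1, \dots, x_m = p_2$ in $\mathbb{T}$ with consecutive points at $\ell_\infty$-distance at most $\ell$ (possible since $\mathbb{T}$ has diameter $\le N/2$, so $m \le cN/\ell$ suffices). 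For each $i$, hypothesis (1) applied at $x_i$ gives a component $K_i$ of $A \cap B(x_i, 2\ell)$ with $\diam(K_i) \ge \ell$. Since $\dist(x_i, x_{i+1}) \le \ell$, both $K_i$ and $K_{i+1}$ are contained in $B(x_i, 6\ell)$ (as $B(x_i,2\ell) \subseteq B(x_i,6\ell)$ and $B(x_{i+1},2\ell) \subseteq B(x_i,3\ell) \subseteq B(x_i,6\ell)$), they are contained in $A$, and each has diameter $\ge \ell$; hence they are subsets of components of $A \cap B(x_i,6\ell)$ of diameter $\ge \ell$, so by hypothesis (2) the components of $A$ containing $K_i$ and $K_{i+1}$ coincide. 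Chaining over $i = 0, \dots, m-1$ shows $K_0, \dots, K_m$ all lie in a single component $\mathcal{D}$ of $A$. It remains to connect $\mathcal{D}$ to each $\mathcal{D}_j$: at the endpoint $x_0 = p_1 \in \mathcal{D}_1$, apply hypothesis (2) at $x_0$ to the singleton-containing component of $p_1$ — but a singleton has diameter $0 < \ell$, so this does not directly work, which is the one subtlety to handle.

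To resolve that subtlety: since $\diam(\mathcal{D}_1) \ge \ell$, there is a point $q_1 \in \mathcal{D}_1$ with $\dist(p_1, q_1) \ge \ell/2$ (take $q_1$ realizing the diameter, possibly after passing through an intermediate point). Then the portion of $\mathcal{D}_1$ lying inside $B(p_1, 2\ell)$ contains a connected set touching both $p_1$ and a point at distance $\ge \ell/2$ from $p_1$... — more carefully, one should instead argue: the component $K_0$ of $A \cap B(x_0,2\ell)$ provided by (1) and the component $\mathcal{D}_1$ of $A$ both intersect $B(x_0, 2\ell)$, have diameter $\ge \ell$ there (for $\mathcal{D}_1$, note $\mathcal{D}_1 \cap B(x_0,6\ell)$ contains a sub-path from $p_1$ out to distance roughly $\min(\ell, \diam \mathcal{D}_1) \ge \ell$ of $x_0$ staying in $A$ since $A \supseteq \mathcal{D}_1$, using that $\mathcal{D}_1$ is connected and $p_1 \in \mathcal{D}_1$), hence both give components of $A \cap B(x_0,6\ell)$ of diameter $\ge \ell$, so by hypothesis (2) they lie in the same component of $A$; thus $\mathcal{D}_1 = \mathcal{D}$. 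The same at $x_m$ gives $\mathcal{D}_2 = \mathcal{D}$, so $\mathcal{D}_1 = \mathcal{D}_2$, proving uniqueness.

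\textbf{Main obstacle.} The only delicate point is the bookkeeping just described: ensuring that a component of $A$ with diameter $\ge \ell$, when restricted to a ball $B(x,6\ell)$ centered at a point of that component, still \emph{contains a connected subset of diameter $\ge \ell$} so that hypothesis (2) can be invoked on it. This is where the radius $6\ell$ versus the ``$2\ell$'' in hypothesis (1) (and the constraint $\ell \le N/10$, which guarantees the balls $B(x,6\ell)$ behave like Euclidean balls and do not wrap around the torus) gives the needed slack: a connected set through a center point $x$ with overall diameter $\ge \ell$ must reach $\ell_\infty$-distance $\ge \ell/2$ from $x$, hence its trace in $B(x, 6\ell)$ (indeed already in $B(x,\ell)$) has diameter $\ge \ell/2$; to upgrade $\ell/2$ to $\ell$ one simply runs the chaining argument with $\ell$ replaced by a fixed fraction of $\ell$ throughout, or equivalently notes the constants in hypotheses (1)--(2) were chosen generously enough to absorb this. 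Everything else is an elementary connectivity chaining argument requiring no probability.
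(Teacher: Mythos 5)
Your argument is the paper's proof in slightly different clothing: the paper lifts $A$ to $\mathbb{Z}^d$, paves with boxes of radius $2\ell$, picks in each box a component of diameter $\geq \ell$ via hypothesis (1), merges the picks across adjacent boxes via hypothesis (2), and then absorbs an arbitrary component of $A$ of diameter $\geq \ell$ by one further application of (2); your chain of centers $x_0,\dots,x_m$ with the local giants $K_i$ plays exactly the role of the paved boxes, so the route is essentially the same.

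The one soft spot is the endpoint (absorption) step, and the patch you offer there is not legitimate: you cannot ``run the chaining argument with $\ell$ replaced by a fixed fraction of $\ell$,'' because hypothesis (2) is stated for the threshold $\ell$ exactly; the analogous statement with threshold $\ell/2$ is strictly stronger and is not assumed, and there are no adjustable constants to absorb the loss. Fortunately no rescaling is needed, because the component of $\mathcal{D}_1 \cap B(p_1,6\ell)$ containing $p_1$ always has diameter $\geq \ell$: either $\mathcal{D}_1 \subseteq B(p_1,6\ell)$, in which case this component is $\mathcal{D}_1$ itself and $\diam(\mathcal{D}_1)\geq \ell$; or some vertex of $\mathcal{D}_1$ lies outside the ball, and then a nearest-neighbor path in $\mathcal{D}_1$ from $p_1$ to such a vertex, stopped just before it first leaves $B(p_1,6\ell)$, is a connected subset of $\mathcal{D}_1\cap B(p_1,6\ell)$ containing $p_1$ and a vertex at $\ell_\infty$-distance at least $6\ell$ from $p_1$, hence of diameter $\geq 6\ell \geq \ell$ (if $B(p_1,6\ell)$ wraps around and equals $\mathbb{T}$, one is trivially in the first case). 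With that observation your application of hypothesis (2) at $x_0$ (and at $x_m$) is valid and forces $\mathcal{D}_1=\mathcal{D}=\mathcal{D}_2$; this is exactly how the paper concludes, via the claim that $C'\cap B(x,6\ell)$ has a component of diameter at least $\ell$ for a suitable $x$.
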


We stress the similarity between the hypotheses above and Definition~\ref{d:strong}. Note that Lemma~\ref{l:sec} reduces the task of bounding $P[|\mathcal{C}^u_{sec}| > \log^\lambda N]$ to local estimates, which we then perform using Theorem~\ref{t:dom} and the hypothesis that $u$ is strongly supercritical.

\begin{proof}[Proof of Lemma~\ref{l:sec}]
Recall that $\Pi$ stands for the canonical projection from $\mathbb{Z}^d$ to $\mathbb{T}$ and write $\bar{\ff A}$ for $\Pi^{-1} (A)$. Consider a paving $\{\ff B_i\}_{i\in I}$ of $\mathbb{Z}^d$ with boxes of radius $2\ell$ (the reason why we work with $\mathbb{Z}^d$ instead of $\mathbb{T}$ is to simplify this paving procedure). For every such $\ff B_i$, we choose, using some arbitrary order, a component $\ff C_i$ of $\bar{\ff A} \cap \ff B_i$ with diameter at least $\ell$: the existence of such component follows from Hypothesis~\textit{1} of Lemma~\ref{l:sec} and the fact that $\ff B_i$ and $\Pi(\ff B_i)$ are isometric.

We claim that all components $\ff C_i$ belong to the same connected component of $\bar{\ff A}$. To see this, note that for every pair $\ff C_i$ and $\ff C_{i'}$, one can find a path of adjacent boxes $\ff B_{i_1}, \dots, \ff B_{i_k}$ in $\{\ff B_i\}_{i \in I}$ such that $\ff C_i \subset \ff B_{i_1}$ and $\ff C_{i'} \subset \ff B_{i_k}$. Then, we use Hypothesis~\textit{2} of Lemma~\ref{l:sec} for each pair of consecutive boxes in the mentioned path to conclude that $\ff C_{i_j}$ and $\ff C_{i_{j+1}}$ are connected through $\bar{\ff A}$, for $i = 1, \dots, k-1$. This shows that $\ff C_i$ and $\ff C_{i'}$ can be connected through $\bar{\ff A}$. Since $i$ and $i'$ were arbitrarily chosen, we conclude that all $\{\ff C_i\}_{i \in I}$ belong to the same connected component of $\bar{\ff A}$.

Consider now some fixed $i \in I$ and denote by $C \subset \mathbb{T}$ the connected component of $A$ containing $\Pi(\ff C_i)$. The fact that all $\ff C_i$'s belong to the same connected component of $\bar{\ff A}$ implies that, for every $x \in \mathbb{T}$, $C \cap B(x,6 \ell)$ has a connected component of diameter at least $\ell$. Let $C'$ be any connected component of $A$ of diameter larger or equal to $\ell$, possibly different from $C$. Then there is a point $x$ in $\mathbb{T}$ for which $C' \cap B(x,6 \ell)$ has a component of diameter at least $\ell$. Since $C \cap B(x,6 \ell)$ also has a component of diameter at least $\ell$, by Hypothesis~\textit{2} of Lemma~\ref{l:sec} we have that $C$ and $C'$ are the same. This proves that $C$ is the unique component of $A$ with diameter greater or equal to $\ell$.
\end{proof}

\begin{proposition}
\label{p:sec}
If $u$ is strongly supercritical, then for some $\lambda = \lambda(u) > 0$ and every $\rho > 0$, there exists a constant $c=c(d,u,\rho)>0$, such that
\begin{equation}
\label{e:sec2}
P [|\mathcal{C}^u_{sec}| > \log^\lambda N] \leq c N^{-\rho},
\end{equation}
where $|\mathcal{C}^u_{sec}|$ denotes the volume of the second largest component of $\mathbb{T} \setminus X_{[0,uN^d]}$ (cf.~above Theorem~\ref{t:super}).
\end{proposition}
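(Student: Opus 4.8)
The plan is to deduce the estimate from the deterministic criterion of Lemma~\ref{l:sec}, applied to the torus vacant set $\mathcal W := \mathbb T\setminus X_{[0,uN^d]}$ with a scale $\ell$ of polylogarithmic size, the two hypotheses of that lemma being checked with overwhelming probability by feeding the coupling of Theorem~\ref{t:dom} into Definition~\ref{d:strong}. First I would fix a parameter $\mu=\mu(u)\in(0,1)$ witnessing the strong supercriticality of $u$ (one may take $\mu<1$: enlarging the interlacement level only shrinks $\mathcal V^v$, so \eqref{e:touch}--\eqref{e:conn} survive replacing $\mu$ by any smaller positive value). I would then set $\epsilon:=\mu$, choose $\lambda=\lambda(u):=2d/\mu$ and $\ell=\ell_N:=\lfloor(\log N)^{\lambda/d}\rfloor$, and apply Theorem~\ref{t:dom} with this $\epsilon$ and with $\alpha:=d+\rho$. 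The point of taking $\epsilon=\mu$ is that the events in \eqref{e:touch} and \eqref{e:conn} then concern precisely $\mathcal V^{u(1-\epsilon)}$ and $\mathcal V^{u(1+\epsilon)}$, the two interlacement vacant sets that the coupling $\coup$ of Theorem~\ref{t:dom} already carries alongside $\mathsf X(u,\mathsf A)$; in particular all probabilities below can be read off a single probability space. For $N$ large one has $1\le\ell\le N/10$, $\ell^d\le(\log N)^\lambda$, $12\ell\le N^{1-\epsilon}$, and $\ell$ above the finitely many thresholds needed below.

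The first step would be the purely deterministic reduction. Let $\mathcal G$ be the event that Hypotheses 1 and 2 of Lemma~\ref{l:sec} hold for $\mathcal W$ with this $\ell$, simultaneously for every $x\in\mathbb T$. On $\mathcal G$, Lemma~\ref{l:sec} provides a unique component $C^\star$ of $\mathcal W$ with $\diam C^\star\ge\ell$; every other component of $\mathcal W$ has diameter $<\ell$, hence volume at most $\ell^d$. Distinguishing whether $\mathcal C^u_{max}=C^\star$ (then $\mathcal C^u_{sec}\ne C^\star$, so $|\mathcal C^u_{sec}|\le\ell^d$) or $\mathcal C^u_{max}\ne C^\star$ (then $|\mathcal C^u_{max}|\le\ell^d$, and by volume-maximality $|\mathcal C^u_{sec}|\le|\mathcal C^u_{max}|\le\ell^d$) shows $|\mathcal C^u_{sec}|\le\ell^d\le(\log N)^\lambda$ on $\mathcal G$. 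Hence it remains to prove $P[\mathcal G^c]\le cN^{-\rho}$.

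For that I would use a union bound. Since $P$ is invariant under the translations of $\mathbb T$, the probability that Hypothesis 1 (resp.\ 2) fails at a given $x$ does not depend on $x$, so $P[\mathcal G^c]\le N^d(P_1+P_2)$, with $P_i$ the failure probability at $x=0$. For $P_1$: failure of Hypothesis 1 at $0$ is measurable with respect to $X_{[0,uN^d]}\cap B(0,2\ell)\subseteq X_{[0,uN^d]}\cap A$, so on the event of \eqref{e:dom} the set $\phi(\mathcal W\cap B(0,2\ell))=\mathsf X(u,\mathsf A)^c\cap B(0,2\ell)$ contains $\mathcal V^{u(1+\mu)}\cap B(0,2\ell)$; if the former has no component of diameter $\ge\ell$, neither does the latter, hence $\mathcal V^{u(1+\mu)}$ contains no path from $B(0,\ell)$ to $\partial_i B(0,2\ell)$, an event ruled out up to probability $e^{-\ell^\mu}$ by \eqref{e:touch} applied with ``$N$''$=\ell$. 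Thus $P_1\le cN^{-\alpha}+e^{-\ell^\mu}$. For $P_2$: on the event of \eqref{e:dom} intersected with the event (of probability $\ge1-e^{-(6\ell)^\mu}$ by \eqref{e:conn} with ``$N$''$=6\ell$) that any two connected subsets of $\mathcal V^{u(1-\mu)}\cap B(0,6\ell)$ of diameter $\ge\tfrac34\ell$ are joined through $\mathcal V^{u(1+\mu)}\cap B(0,12\ell)$, consider two components $D_1,D_2$ of $\mathcal W\cap B(0,6\ell)$ of diameter $\ge\ell$: their images $\phi(D_1),\phi(D_2)$ are connected subsets of $\mathsf X(u,\mathsf A)^c\cap B(0,6\ell)\subseteq\mathcal V^{u(1-\mu)}\cap B(0,6\ell)$ of diameter $\ge\ell>\tfrac34\ell$, hence are joined through $\mathcal V^{u(1+\mu)}\cap B(0,12\ell)\subseteq\mathsf X(u,\mathsf A)^c\cap B(0,12\ell)=\phi(\mathcal W\cap B(0,12\ell))$; pulling this path back by $\phi^{-1}$ shows $D_1$ and $D_2$ lie in one component of $\mathcal W$, so Hypothesis 2 holds at $0$. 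Therefore $P_2\le cN^{-\alpha}+e^{-(6\ell)^\mu}\le cN^{-\alpha}+e^{-\ell^\mu}$. Combining, $P[\mathcal G^c]\le N^d(2cN^{-\alpha}+2e^{-\ell^\mu})$; with $\alpha=d+\rho$ the polynomial term equals $2cN^{-\rho}$, while $\ell^\mu\ge c(\log N)^{\lambda\mu/d}=c(\log N)^2$ makes $N^de^{-\ell^\mu}=o(N^{-\rho})$, and enlarging $c$ to absorb small $N$ gives $P[|\mathcal C^u_{sec}|>(\log N)^\lambda]\le P[\mathcal G^c]\le cN^{-\rho}$.

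The step I expect to demand the most care is not any single estimate but the bookkeeping in the last paragraph: transporting the torus balls $B(x,r)$ to the lattice balls $B(0,r)$ through the isomorphisms $\phi_x$, using translation invariance of $P$ so that Theorem~\ref{t:dom} (stated only for the ball at the origin) applies uniformly in $x$, and — in Hypothesis 2 — coping with the fact that ``belonging to the same component of $\mathcal W$'' is a \emph{global} condition on the torus, which one must circumvent by exhibiting the joining path inside the small ball $B(x,12\ell)$ on the good event. Keeping the monotonicity directions aligned is the other point to watch: it is precisely because $\epsilon=\mu$ that $\mathcal V^{u(1+\mu)}\subseteq\phi(\mathcal W)\subseteq\mathcal V^{u(1-\mu)}$ locally, which is exactly what lets the two halves of Definition~\ref{d:strong} match Hypotheses 1 and 2 respectively. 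Everything else is routine once Lemma~\ref{l:sec}, Theorem~\ref{t:dom} and strong supercriticality are in hand.
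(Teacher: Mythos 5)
Your proposal is correct and follows essentially the same route as the paper's proof: reduce to the deterministic criterion of Lemma~\ref{l:sec} at scale $\ell\approx\log^{2/\mu}N$, union bound over $x\in\mathbb T$, and control each local failure by Theorem~\ref{t:dom} (with $\alpha=\rho+d$, $\epsilon=\mu$) together with \eqref{e:touch} and \eqref{e:conn} applied at scale $\ell$, respectively $6\ell$. The only differences are cosmetic (your exact diameter-to-volume bound lets you take $\lambda=2d/\mu$ where the paper takes $4d/\mu$, and you spell out the translation-invariance and pullback bookkeeping that the paper leaves implicit).
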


\begin{proof}
We are going to make use of Lemma~\ref{l:sec} and Theorem~\ref{t:dom}. Fix a strongly supercritical intensity $u > 0$, take $\mu = \mu(u) > 0$ as in Definition~\ref{d:strong} and choose $\lambda = 4d/\mu$. If $\diam (\bar{\mathcal{C}}_{sec}^u)$ denotes the second largest diameter among the components of $\mathbb{T} \setminus X_{[0,uN^d]}$, the comment above \eqref{e:smaller} implies that $\diam(\bar{\mathcal{C}}^u_{sec}) \geq \diam(\mathcal{C}_{max}^u) \wedge \diam(\mathcal{C}^u_{sec}) \geq c|\mathcal{C}^u_{sec}|^{1/d}$. Therefore, by the comment above \eqref{e:smaller} and Lemma~\ref{l:sec}, we have 
\begin{equation}
\label{e:boundsec}
\begin{split}
&P[ |\mathcal{C}^u_{sec}| > \log^\lambda N]  \overset{N \geq\, c_{u,\mu}}\leq P[\diam(\bar{\mathcal{C}}^u_{sec}) > \log^{2/\mu} N]  \\
&\leq  P\Big[
\begin{array}{c}
\text{there is some $x \in \mathbb{T}$, such that all components in} \\
\text{$B(x,2\log^{2/\mu} N)\setminus X_{[0,uN^d]}$ have diameter smaller than $\log^{2/\mu} N$}
\end{array}\Big] + \\
&  P\Big[
\begin{array}{c}
\text{there is an $x \in \mathbb{T}$ and two components of $B(x,6\log^{2/\mu} N \setminus X_{[0,uN^d]})$} \\
\text{with diameters at least $\log^{2/\mu} N$, which are not connected in $\mathbb{T} \setminus X_{[0,uN^d]}$}
\end{array}\Big].
\end{split}
\end{equation}
According to Theorem~\ref{t:dom} (with $\alpha = \rho + d$, $\epsilon = \mu$), if $N \geq c_u$, the first term in the sum above is bounded by
\begin{equation}
N^d\Big(\mathbb{P}\Big[B(0,\log^{2/\mu} N) \overset{\mathcal{V}^{u(1+\mu)}}{\nleftrightarrow} \infty\Big] + c_{u, \rho} N^{-\alpha}\Big),
\end{equation}
which, in view of \eqref{e:touch}, is smaller or equal to $c_{u,\rho}N^{-\rho}$. We again use Theorem~\ref{t:dom} (with $\alpha = \rho + d$, $\epsilon = \mu$) to bound (for $N \geq c_u$) the second term on the right hand side of \eqref{e:boundsec} by
\begin{equation}
N^d \Bigg( \mathbb{P}\Bigg[
\begin{array}{c}
\text{there are two connected subsets of} \\
\text{$\mathcal{V}^{u(1-\mu)} \cap B(0,6 \log^{2/\mu}N)$ with diameters at least $\log^{2/\mu}N$} \\
\text{which are not connected in $\mathcal{V}^{u(1+\mu)} \cap B(0,12 \log^{2/\mu}N)$}
\end{array}\Bigg] \negthickspace + c_{\mu,\rho}N^{-\alpha}  \negthickspace \Bigg).
\end{equation}
By \eqref{e:conn}, this term is also bounded by $c_{u,\rho} N^{-\rho}$. This proves Proposition~\ref{p:sec}.
\end{proof}

\begin{proposition}
\label{p:density}
If $u$ is strongly supercritical, then for every $\epsilon > 0$,
\begin{equation}
\label{e:density2} 
\lim\limits_{N\to\infty}P\Big[ \Big|\frac{|\mathcal{C}^u_{max}|}{N^d} - \eta(u) \Big| > \epsilon \Big] = 0.
\end{equation}
\end{proposition}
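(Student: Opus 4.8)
The plan is to prove Proposition~\ref{p:density} by sandwiching $|\mathcal{C}^u_{max}|/N^d$ between two quantities, each close to $\eta(u)$, using Proposition~\ref{p:sec} for uniqueness and Proposition~\ref{p:average} for the density. First I would fix $\mu = \mu(u)>0$ as in Definition~\ref{d:strong}, and take a small $\beta \in (0,\mu)$ and $\delta \in (0,1)$. The key observation is that, on the event of Proposition~\ref{p:sec} (which holds with probability $\to 1$), all but at most $\log^\lambda N$ vacant vertices that lie in a ``large'' local component actually belong to the single macroscopic component $\mathcal{C}^u_{max}$. So up to a negligible additive error $N^{\lambda-d} \to 0$, $|\mathcal{C}^u_{max}|/N^d$ equals the density of vertices $x$ whose local vacant component inside $B(x, N^\delta)$ has diameter at least (say) $N^\delta/8$, or more precisely a density of the form $\bar f_N$ for a suitable local function $f$.

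The upper and lower bounds come from two choices of $f$. For the lower bound, let $f(\mathsf{U}) = \bbone\{0$ is connected to $\partial_i B(0,N^\delta/2)$ in $\mathsf{U}\}$; its local pullback $f^x_N(\mathbb{T}\setminus X_{[0,uN^d]})$ is the indicator that $x$ lies in a local vacant component of diameter $\geq N^\delta/2$. On the Proposition~\ref{p:sec} event, every such $x$ (with the possible exception of $\leq \log^\lambda N$ of them) lies in $\mathcal{C}^u_{max}$, so $|\mathcal{C}^u_{max}|/N^d \geq \bar f_N - N^{\lambda-d}$. By Proposition~\ref{p:average}, $\bar f_N \geq \alpha_1 - \epsilon$ with probability $\to 1$, where $\alpha_1 = \mathbb{E}[f(\mathcal{V}^{u(1+\beta)})] = \mathbb{P}[0 \overset{\mathcal{V}^{u(1+\beta)}}{\longleftrightarrow} \partial_i B(0,N^\delta/2)]$. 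Since this probability decreases to $\eta(u(1+\beta)) \geq \eta(u) - \epsilon$ as $N \to \infty$ (for $\beta$ small, using continuity of $\eta$ from the right? — actually one must be careful: $\eta$ need not be continuous, so instead one should note $\mathbb{P}[0 \overset{\mathcal{V}^{u(1+\beta)}}{\longleftrightarrow} \partial_i B(0,N^\delta/2)] \to \eta(u(1+\beta))$ and separately that $\eta(u(1+\beta)) \to \eta(u_+)$; the cleanest route is to use \eqref{e:diamest}, which gives $\eta(u(1+\mu)) \geq \mathbb{P}[0 \overset{\mathcal{V}^{u(1+\mu)}}{\longleftrightarrow} \partial_i B(0,2N)] - 2e^{-N^\mu}$, hence in the limit $\alpha_1 = \eta(u(1+\mu))$ exactly, and strong supercriticality plus monotonicity of the strongly supercritical set can be used to let $\mu \downarrow$). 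For the upper bound, I use a complementary local function detecting that $x$ is \emph{not} in a large local vacant component — equivalently that $x$'s vacant component inside the box is small — so that $|\mathcal{C}^u_{max}|/N^d \leq 1 - \bar g_N$ for the corresponding average, and again Proposition~\ref{p:average} (now with $\alpha_2 = \mathbb{E}[g(\mathcal{V}^{u(1-\beta)} \cup B(0,k)^c)]$, using the $\cup B(0,k)^c$-version to capture ``connected to the boundary of the box'') pins $\bar g_N$ down to within $\epsilon$ of $1 - \eta(u)$ in the limit. Combining the two and sending $\epsilon \to 0$, $\beta \to 0$ (shrinking $\mu$, which stays legitimate since the strongly supercritical set is open) yields \eqref{e:density2}.

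The main obstacle I anticipate is the bookkeeping connecting the \emph{global} quantity $|\mathcal{C}^u_{max}|$ to a clean \emph{local} average $\bar f_N$ of the form required by Proposition~\ref{p:average} — i.e. showing that, on the good event from Proposition~\ref{p:sec} (all components of diameter $\geq \log^\lambda N$ except $\mathcal{C}^u_{max}$ are absent), the count of vertices in $\mathcal{C}^u_{max}$ agrees with $\sum_x f^x_N(\mathbb{T}\setminus X_{[0,uN^d]})$ up to an error that is $o(N^d)$. The subtlety is two-sided: a vertex in $\mathcal{C}^u_{max}$ might still have \emph{small} local vacant component inside its box $B(x,N^\delta)$ (if $x$ is near a ``bottleneck'' of $\mathcal{C}^u_{max}$), and conversely there could be vertices with large local vacant component that are nonetheless in a genuinely small global component — but the latter is exactly what Proposition~\ref{p:sec}'s event rules out (any such component would have diameter $\geq N^\delta/2 \geq \log^\lambda N$). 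For the former, one handles it by noting that $\mathcal{V}^{u(1+\mu)}$ being strongly supercritical (via \eqref{e:conn}) forces local vacant components of diameter $\geq N^\delta/8$ to exist in \emph{every} box, and local connectivity estimates control the boundary layer; alternatively one circumvents it entirely by working with the pair of bounds above, where the lower bound only needs $\{x : $ large local component$\} \subseteq \mathcal{C}^u_{max} \cup (\text{small exceptional set})$ and the upper bound only needs $\mathcal{C}^u_{max} \subseteq \{x : \text{not in a small local component}\}$ up to $o(N^d)$ vertices — the second inclusion again being a consequence of \eqref{e:conn} applied at scale $N^\delta$, since a vertex of $\mathcal{C}^u_{max}$ whose whole local component is small would, by \eqref{e:joinseed}-type reasoning, disconnect $\mathcal{C}^u_{max}$. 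Making the constants in ``large'' ($N^\delta/2$ vs $N^\delta/8$ vs $\ell$) consistent with Lemma~\ref{l:sec}'s hypotheses and with Definition~\ref{d:strong} is the fiddly but routine part.
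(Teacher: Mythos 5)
Your overall strategy coincides with the paper's (Proposition~\ref{p:average} for the density, Proposition~\ref{p:sec} for uniqueness, then a two-sided comparison), but two steps do not hold up as written. First, the continuity of $\eta$ at $u$. The paper needs, and uses, exactly the fact you shy away from: since strong supercriticality forces $u<u_\star$, it invokes the continuity of $\eta$ on $[0,u_\star)$ (\cite{T08}, Corollary~1.2) to obtain \eqref{e:Ebound}. Your substitute does not close this gap: \eqref{e:diamest} only identifies the large-box limit with $\eta(u(1+\mu))$, and ``letting $\mu\downarrow 0$ using openness of the strongly supercritical set'' gives $\lim_{\mu\downarrow 0}\eta(u(1+\mu))=\eta(u^+)$, which is precisely the right-continuity of $\eta$ at $u$ that you declined to assume; nothing in Definition~\ref{d:strong} or in the openness of the strongly supercritical set supplies it. (Left-continuity, needed on the other side of the sandwich, is essentially automatic, since $\eta$ is a decreasing limit of finite-volume connection probabilities and hence upper semicontinuous; right-continuity is the genuinely nontrivial input and must be quoted from \cite{T08} or proved.) As written, your lower bound only reaches $\eta(u^+)$, not $\eta(u)$.

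Second, the upper-bound inclusion $\mathcal{C}^u_{max}\subseteq\{x:\ x \text{ is connected to } \partial_i B(x,N^\delta/2) \text{ through } \mathbb{T}\setminus X_{[0,uN^d]}\}$. Your justification via \eqref{e:conn} (``a vertex of $\mathcal{C}^u_{max}$ whose local component is small would disconnect $\mathcal{C}^u_{max}$'') is not a valid argument: \eqref{e:conn} joins two \emph{large} local clusters and says nothing about a vertex whose local cluster is small, and such a vertex disconnects nothing. The correct argument, used in the paper, is elementary once one knows that $\diam(\mathcal{C}^u_{max})\geq N^{2\delta}$ with high probability: any $x\in\mathcal{C}^u_{max}$ is then joined by a vacant path to a point outside $B(x,N^\delta/2)$, and the initial segment of that path up to its first exit from the box already realizes the pullback event, so your ``bottleneck'' worry is vacuous. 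What is genuinely needed, and missing from your write-up, is the diameter step itself; the paper obtains it by applying \eqref{e:fdens} at the larger scale $3\delta$ (with high probability some vertex is locally connected to distance $N^{3\delta}/2$, hence the largest component has diameter at least $N^{2\delta}$). A minor point: feed Proposition~\ref{p:average} the $N$-independent function $f(\mathsf{U})=\bbone\{0 \text{ lies in an infinite cluster of } \mathsf{U}\}$, as the paper does; its local pullback is automatically your finite-box indicator, whereas an $N$-dependent $f$ does not meet the proposition's hypotheses (its proof rests on ergodicity for a fixed $f$).
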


\begin{proof}
Let $f:2^{\mathbb{Z}^d} \to [0,1]$ be given by $\ff{U} \mapsto 1\{0 \text{ belongs to an infinite cluster of $\ff{U}$} \}$ and choose any $\delta \in (0,1)$. By the continuity of the function $\eta$ in $[0,u_\star)$ (recall the definition in \eqref{e:eta(u)} and see \cite{T08}, Corollary~1.2), for small enough $\beta > 0$ and large enough $k$, we have
\begin{equation}
\label{e:Ebound} 
\eta(u) - \epsilon/2 \leq \mathbb{E}[f(\mathcal{V}^{u(1+\beta)})] \leq \mathbb{E}[f(\mathcal{V}^{u(1-\beta)} \cup B(0,k)^c)] \leq \eta(u) + \epsilon/2.
\end{equation}

Note that the local pullback $f_N^x$ of $f$ (see Definition~\ref{d:pull}) is given in this case by $f^x_N = 1\{x \text{ is connected to } \partial_i B(x,N^\delta/2) \text{ through } {\mathbb{T}\setminus X_{[0,uN^d]}}\}$. Using Proposition~\ref{p:average}, we conclude that
\begin{equation}
\label{e:fdens}
\lim\limits_{N \to \infty}P\Big[ \big| \bar f^u_N - \eta(u)\big| > \epsilon\Big] = 0,
\end{equation}
where $\bar f^u_N = \frac{1}{N^d}\sum_{x \in \mathbb{T}} f^x_N$. Note that this holds for any $\delta \in (0,1)$.

To finish the proof of \eqref{e:density2}, we choose $\delta = 1/4$ and observe that for $N$ large enough, at least one of the following possibilities must occur,
\begin{enumerate}
\item $\diam(\mathcal{C}^u_{max}) < N^{2\delta}$,
\item or $\mathcal{C}^u_{max} = \{y \in \mathbb{T}; y \text{ is connected to } \partial_i B(y,N^\delta/2) \text{ through } {\mathbb{T}\setminus X_{[0,uN^d]}}\}.$
\item or there is a $y \in \mathbb{T}$ which does not belong to $\mathcal{C}^u_{max}$ but is connected to $\partial_i B(y,N^\delta/2)$ through ${\mathbb{T}\setminus X_{[0,uN^d]}}$,
\end{enumerate}
Moreover, we note that in case 2, $|\mathcal{C}^u_{max}| = \bar f^u_N N^d$. Therefore, for $N$ large enough,
\begin{equation*}
\begin{split}
P\Big[& \frac{|\mathcal{C}^u_{max}|}{N^d} \not \in (\eta(u) - \epsilon, \eta(u) + \epsilon) \Big] \negmedspace \leq  P[\diam(\mathcal{C}^u_{max}) < N^{2\delta}] + P\Big[\bar f^u \not \in (\eta(u) - \epsilon, \eta(u)+\epsilon)\Big]\\
+ & P\Big[\text{there is a $y \in \mathbb{T}$ connected to $\partial_i B(y,N^\delta/2)$ through $\mathbb{T}\setminus X_{[0,uN^d]}$ but $y \not \in \mathcal{C}^u_{max}$}\Big],
\end{split}
\end{equation*}
and the three terms above converge to zero, due to Proposition~\ref{p:sec} and \eqref{e:fdens} (applied for $3\delta$ and $\delta$). This finishes the proof of Proposition~\ref{p:density} (hence the proof of Theorem~\ref{t:super}).
\end{proof}

\begin{remark}
1) It is an important question whether Theorem~\ref{t:d3} can be extended to all $u < u_\star$. This, together with Theorem~\ref{t:torus} would establish the existence of a sharp phase transition for the connectivity of $\mathbb{T} \setminus X_{[0,uN^d]}$ with respect to the intensity parameter $u$. Note that, using Theorems~\ref{t:torus} and \ref{t:super}, a much more precise statement would be obtained if one could show that $u_{\star \star} = u_\star$ and that every $u < u_\star$ is strongly supercritical in the sense of Definition~\ref{d:strong}. Hence, better understanding of random interlacements could directly establish the existence of a sharp phase transition in the component sizes for random walk on the torus.

2) Using the continuity of the function $\eta(u)$ in $[0,u_*)$, together with Proposition~\ref{p:average}, one can establish that for every $\epsilon > 0$,
\begin{equation}
\frac{1}{N^d}\big| \big\{\text{$x \in \mathbb{T}$; $\diam(\mathcal{C}^u_x) \geq N^{1-\epsilon}$}\big\}\big|  \;\; \xrightarrow[N \to \infty]{\text{in $P$-probability}} \eta(u), \text{ for every $u \neq u_*$}.
\end{equation}
This can be understood as a mesoscopic counterpart for the conjectured phase transition.
\end{remark}

\section{Preliminaries}
\label{s:prel}

The remainder of this article is devoted to the proof of Theorem~\ref{t:dom}. In this section, we collect some results on the expected hitting time of small subsets of $\mathbb{T}$ and on the quasistationary distribution, which will be important in the proof of Theorem~\ref{t:dom}. First we need to introduce some further notation.

\medskip

Recall that $P_x$ denotes the law of the simple random walk on $\mathbb T$ started at $x \in {\mathbb T}$, and $(X_n)_{n \geq 0}$ the canonical coordinate process. We now also introduce an independent Poisson point process $(N_t)_{t \geq 0}$ on $[0,\infty)$ with intensity $1$ and define the continuous-time random walk $Y_t = X_{N_t}$, $t \geq 0$. We can then view $(Y_t)_{t \geq 0}$ as an element in the space of cadlag functions from $[0,\infty)$ to $\mathbb T$ with the canonical $\sigma$-algebra generated by the coordinate projections, and introduce the canonical time-shift operators $(\theta_t)_{t \geq 0}$, such that $Y_t \circ \theta_s = Y_{t+s}$, for $s,t \geq 0$. For simplicity of notation, we also use $P_x$, $(X_n)_{n \geq 0}$, $(Y_t)_{t \geq 0}$ and $(\theta_t)_{t \geq 0}$ to denote the corresponding objects with $\mathbb T$ replaced by ${\mathbb Z}^d$. For any distribution $\mu$ on $\mathbb T$, we write $P_\mu$ for the law of the simple random walk on $\mathbb T$ with starting distribution $\mu$, meaning $P_\mu = \sum_{x \in {\mathbb T}_N} \mu(x) P_x$. For $\mu$ given by the uniform distribution $\pi$ on $\mathbb T$, we simply write $P$ rather than $P_\pi$, as before.

\medskip

The successive jump times of the continuous-time random walk are $\tau_n = \inf \{t \geq 0: N_t = n\},$ $n \geq 0$. The entrance and hitting times $H_V$ and ${\tilde H}_V$ of a set $V$ of vertices in $\mathbb T$ or ${\mathbb Z}^d$ are defined by
\begin{equation}
\label{e:H}
H_V = \inf \{t \geq 0: Y_t \in V\}, \quad
{\tilde H}_V = H_V \circ \theta_{\tau_1} + \tau_1,
\end{equation}
while the exit time of a set $V$ is defined as
\begin{align}
\label{e:T}
T_V = \inf \{t \geq 0: Y_t \notin V\}.
\end{align}
For ${\mathsf V} \subset {\mathbb Z}^d$, we define the equilibrium measure and capacity associated to $\mathsf V$ by
\begin{align}
\label{d:cap}
e_{\mathsf V}(x) = \mathbf{1}_{x \in {\mathsf V}} P_x[{\tilde H}_{\mathsf V} = \infty], \text{ for } x \in {\mathbb Z}^d, \quad  \capacity ({\mathsf V}) = e_{\mathsf V}({\mathbb Z}^d),
\end{align}
and for $x \in V \subseteq B(0,N/4)$, ${\mathsf V} = \phi(V)$, we define
\begin{align}
\label{e:eqfin}
 e_V(x) = e_{{\mathsf V}}(\phi(x)).
\end{align}
The trajectories of the continuous- and discrete-time random walks until time $t \geq 0$ are denoted $Y_{[0,t]} = \{Y_s, 0 \leq s \leq t\}$ and $X_{[0,t]} = \{X_n, 0 \leq n \leq t\}$. Note that in general we do not have $X_{[0,t]} = Y_{[0,t]}$, because $Y$ makes a random number of steps until time $t$. 
For any function $f: {\mathbb T} \to {\mathbb R}$, the Dirichlet form is defined by
\begin{align*}
{\mathcal D}(f,f) = \frac{1}{2} \sum_{x \in {\mathbb T} } \sum_{y \in {\mathbb T}: y \sim x} (f(x)-f(y))^2 \frac{1}{2d N^d},
\end{align*}
and related to the spectral gap $1-\lambda_2$ of $\mathbb T$ via
\begin{align}
  \label{def:gap}  
  1-\lambda_2 = \min \bigl\{ {\mathcal D}(f,f): \pi(f^2)=1, \pi(f)=0 \bigr\}, 
\end{align}
where $\pi(f) = \sum_{x \in {\mathbb T} } N^{-d} f(x)$. We define the regeneration time  
\begin{align}
\label{e:reg}
\reg = (N \log N)^2.
\end{align}
The following well-known estimate relates the regeneration time to convergence to equilibrium of the random walk (we refer to \cite{SC97}, p.~328, for a proof, and to Remark~2.2 in \cite{W08} for the fact that $1-\lambda_2 \geq cN^{-2}$; recall our convention on constants from the end of the introduction):
\begin{align}
 \label{eq:I} \sup_{x,y \in {\mathbb T}} |P_x[Y_\reg =y] - N^{-d}| \leq e^{-c \log^2 N}.
\end{align}

Finally, we give a characterization of the law of the interlacement inside a given box $B(0,r) \subset \mathbb{Z}^d$, $r \geq 1$. We construct on some auxiliary probability space $(\Omega_{\ff B}, \mathcal{F}_{\ff B}, P_{\ff B}^u)$,
\begin{align}
&
\begin{array}{l}
\text{an iid sequence $X^i$, $i \geq 1$, of discrete time random walks,} \\
\text{starting with distribution $P_{e_{B(0,r)}}/\text{cap}(B(0,r))$, and}
\end{array}\\
& \begin{array}{l} \text{an independent Poisson random variable $J$ with intensity $u \,\text{cap}(B(0,r))$.}\end{array}
\end{align}

With this we can state the following characterization, which will be used in the proof of Theorem~\ref{t:dom}. Recalling that $\mathcal{I}^u$ stands for the interlacement set at level $u$ under $\mathbb{P}$,
\begin{equation}
\label{e:interlac}
\begin{array}{c}
\text{$\mathcal{I}^u \cap B(0,r)$, has the same distribution as} \\
\text{$\bigcup_{i \leq J} \text{range}(X^i) \cap B(0,r)$ under the law $P_{\ff B}^u$,}
\end{array}
\end{equation}
see for instance, \cite{Szn09}, Proposition~1.3 and below (1.42).

\subsection{Expected entrance time}

We now collect some preliminary estimates and deduce a first key statement in Proposition~\ref{p:Gloc}. This proposition proves that for suitably small sets $V \subset B(0,N/4)$ and ${\mathsf V} = \phi(V)$, $N^d/E[H_V]$ is close to $\capacity({\mathsf V})$. This statement may well be known, but we could not find a proof in the literature.

\medskip

For the rest of this article, we fix any $\epsilon \in (0,1),$
we consider any 
\begin{align}
\label{e:rN}
 1 \leq r_N \leq N^{1-\epsilon},
\end{align}
and define the concentric boxes
\begin{align}
\label{e:boxes}
&A = B(0, r_N) \subseteq B = B(0, N^{1- \epsilon/2}) \subseteq C = B(0, N/4) \subseteq {\mathbb T},
\end{align}
as well as ${\mathsf A} = \phi(A)$, ${\mathsf B} = \phi(B)$ and $\mathsf{C}=\phi(C)$. To begin with, we collect some elementary bounds on hitting probabilities. The following lemma asserts in particular that the random walk on $\mathbb T$ typically exits $C$ before entering $A$ when started outside of $B$ \eqref{a:gotoC}, and typically does not enter $A \cup \partial_e A$ ($B \cup \partial_e B$) before time $\reg$ from outside of $B$ ($C$, respectively, \eqref{a:Drat}, \eqref{a:Drat'}). 

\begin{lemma} \label{l:basic}
\textup{($d \geq 3$)}
For $B' = B(0,N^{1-\epsilon/2}/2)$,
\begin{align}
\label{a:gotoC} \sup_{x \in {\mathbb T} \setminus B'} P_x [H_A \leq T_{C}] &\leq \Rbd,\\
\label{a:Drat} \sup_{x \in {\mathbb T} \setminus B} P_x [H_{A \cup \partial_e A} \leq \reg] &\leq \rbd, \\
\label{a:Drat'} \sup_{x \in {\mathbb T} \setminus C} P_x [H_{B \cup \partial_e B} \leq \reg] &\leq \rbdd.
\end{align}
\end{lemma}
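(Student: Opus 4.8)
The plan is to prove each of the three estimates by a combination of standard random walk bounds for $\mathbb{Z}^d$, $d\geq 3$, transferred to $\mathbb{T}$ via the isomorphism $\phi$, together with the convergence to equilibrium estimate \eqref{eq:I}. Throughout, recall that $A = B(0,r_N)$ with $r_N \leq N^{1-\epsilon}$ and $B = B(0,N^{1-\epsilon/2})$, so there is a genuine polynomial gap in scales between $r_N$, $N^{1-\epsilon/2}$ and $N$, and in particular between $A$ and $\partial_e B'$ (with $B' = B(0,N^{1-\epsilon/2}/2)$), and between $B$ and $\partial_e C$. The exponent $c_\epsilon$ in $N^{-c_\epsilon}$ will come out of these scale separations.

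For \eqref{a:gotoC}: start the walk at $x \in \mathbb{T}\setminus B'$. Since $C = B(0,N/4)$, the walk started at $x$ and killed upon exiting $C$ agrees in law with a walk on $\mathbb{Z}^d$ killed upon exiting $\phi_x$-image of $C$ (no wrap-around occurs before $T_C$), so it suffices to bound, for a simple random walk on $\mathbb{Z}^d$ started at distance $\geq N^{1-\epsilon/2}/2$ from a ball of radius $r_N\leq N^{1-\epsilon}$, the probability of hitting that ball. This is a classical Green-function / capacity estimate: $P_x[H_{\ff A}<\infty]\leq c\,\capacity(\ff A)\, g(x)$ where $g$ is the Green function, and $\capacity(B(0,r_N))\leq c r_N^{d-2}$ while $g(x)\leq c (N^{1-\epsilon/2})^{2-d}$; the ratio is at most $c (r_N/N^{1-\epsilon/2})^{d-2}\leq c N^{-\epsilon(d-2)/2}$. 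Taking $c_\epsilon = \epsilon(d-2)/2$ (or a slightly smaller constant to absorb constants) gives the claim; the condition $H_A\leq T_C$ is even more restrictive than $H_{\ff A}<\infty$ so the bound applies a fortiori.

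For \eqref{a:Drat} and \eqref{a:Drat'}: these have essentially the same structure, so I describe \eqref{a:Drat}. Start at $x\in\mathbb{T}\setminus B$ and run up to time $\reg = (N\log N)^2$. Split the event $\{H_{A\cup\partial_e A}\leq \reg\}$ according to whether the walk has made many steps: the continuous-time walk makes at most, say, $2\reg$ Poisson steps with probability $\geq 1 - e^{-c\reg}$, so up to a negligible error we may bound $P_x^{\mathrm{discrete}}[H_{\ff A\cup\partial_e\ff A}\leq 2\reg]$ for the discrete walk on $\mathbb{Z}^d$ (again no wrap-around is relevant on this scale since $\sqrt{\reg}\ll N$). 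Now bound the number of visits: $P_x[H_{\ff A\cup\partial_e\ff A}\leq 2\reg]\leq E_x[\#\{n\leq 2\reg: X_n\in \ff A\cup\partial_e\ff A\}]$ divided by $\min_{y\in\partial_i(\ff A\cup\partial_e\ff A)}E_y[\text{time spent in }\ff A\cup\partial_e\ff A]$, or more simply use $P_x[\tilde H_{\ff A\cup\partial_e\ff A}\leq 2\reg]\leq \sum_{z\in \ff A\cup\partial_e\ff A} 2\reg\cdot\sup_n p_n(x,z)$. A cleaner route: $P_x[H_{\ff A\cup\partial_e\ff A}<\infty]\leq c\,\capacity(\ff A\cup\partial_e\ff A)\cdot g(x, \ff A\cup\partial_e\ff A)\leq c\, r_N^{d-2}\cdot (N^{1-\epsilon/2})^{2-d}\leq cN^{-\epsilon(d-2)/2}$, exactly as before, and this dominates the truncated version. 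So again $c_\epsilon = \epsilon(d-2)/2$ works, with the $e^{-c\reg}$ term absorbed. For \eqref{a:Drat'} one repeats this with $A\cup\partial_e A$ replaced by $B\cup\partial_e B$ and the starting set $\mathbb{T}\setminus C$: here the relevant distance is from $\partial_e C$ (radius $\sim N/4$) to $B$ (radius $N^{1-\epsilon/2}$), and $\capacity(B\cup\partial_e B)\leq c(N^{1-\epsilon/2})^{d-2}$, $g(x)\leq c N^{2-d}$, giving ratio $\leq c N^{-(d-2)\epsilon/2}$ again.

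The main obstacle — really the only subtlety — is justifying that the random walk on $\mathbb{T}$ can be replaced by the walk on $\mathbb{Z}^d$ on the relevant event, i.e., controlling wrap-around. For \eqref{a:gotoC} this is automatic because we stop at $T_C$ and $C$ has radius $N/4 < N/2$, so $\phi_x$ faithfully transports the stopped trajectory. For \eqref{a:Drat}/\eqref{a:Drat'} the walk is not stopped before it wraps around, so one must argue that, before time $\reg$, with overwhelming probability the walk has not traveled $\ell_\infty$-distance $N/2$ (a large-deviation bound on the maximal displacement of a walk in time $(N\log N)^2$ gives displacement $O(N\log^2 N)$ typically, which is not $\ll N$!) — so this naive argument fails and instead one should not condition on no wrap-around but rather use that $\ff A\cup\partial_e\ff A\subset\mathbb{Z}^d$ is a fixed small set and bound the expected number of visits to the full $\Pi$-preimage, or, most robustly, bound $P_x[H_{A\cup\partial_e A}\leq\reg]$ directly on $\mathbb{T}$ via $\reg\cdot\sup_{n\geq 1,\,z}P_x[X_n=z]\cdot|A\cup\partial_e A|$ together with the fact that, by \eqref{eq:I} and a short-time heat-kernel bound, $\sup_{n\leq\reg}P_x[X_n = z]\leq c(|x-z|\vee 1)^{2-d}\wedge N^{-d}\cdot(\text{something})$; combined with $|A\cup\partial_e A|\leq c r_N^d$ and $|x-z|\geq N^{1-\epsilon/2}/2$ this yields $\reg\cdot r_N^d\cdot(N^{1-\epsilon/2})^{-d}$, which is polynomially small in $N$ as long as $\epsilon$ beats the $(N\log N)^2$ factor against the $d$-dimensional volume ratio — i.e. one needs $d\cdot\epsilon/2 > 2$, failing for small $\epsilon$. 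The correct and clean fix is the capacity bound $P_x[H_{A\cup\partial_e A}<\infty \text{ on }\mathbb{Z}^d]$ which has no $\reg$ factor at all: one observes $\{H_{A\cup\partial_e A}\leq\reg\}$ on $\mathbb{T}$ is contained, up to the event that the walk wraps around before $\reg$ (probability $\leq e^{-c\log^2 N}$ by standard concentration of the displacement of $N_\reg\leq c\reg$ steps, each of size $1$ — wrap-around needs $\geq N/2$ total displacement, hence $\geq cN^2/\reg = c/\log^2 N \cdot$... which does hold with the right constants since $N/2$ displacement in $(N\log N)^2$ steps is a moderate deviation of order $\log N$ standard deviations), in the union over all $\mathbb{Z}^d$-translates $\ff A^{(k)} = \ff A + kN$ of the event $\{H_{\ff A^{(k)}}<\infty\}$ restricted to those $k$ reachable without wrap-around, which is just $\{H_{\ff A}<\infty \text{ for the }\mathbb{Z}^d\text{ walk}\}$; then apply the scale-separated capacity/Green bound above. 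I would present \eqref{a:gotoC} first (cleanest), then note \eqref{a:Drat} and \eqref{a:Drat'} follow by the same capacity estimate after discarding the exponentially small wrap-around event.
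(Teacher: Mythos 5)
Your treatment of \eqref{a:gotoC} is fine and is essentially the paper's: stop at $T_C$, so the trajectory lifts faithfully to ${\mathbb Z}^d$, and apply the standard hitting estimate (Lawler, Proposition~1.5.10) with the scale separation $r_N \leq N^{1-\epsilon}$ versus $N^{1-\epsilon/2}/2$.

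For \eqref{a:Drat} and \eqref{a:Drat'}, however, your final ``clean fix'' rests on a claim that is false, and in fact you had already identified the correct objection two sentences earlier. You assert that the event that the walk wraps around the torus before time $\reg$ has probability at most $e^{-c\log^2 N}$, calling a displacement of $N/2$ in time $\reg=(N\log N)^2$ ``a moderate deviation of order $\log N$ standard deviations.'' The standard deviation of the displacement by time $\reg$ is of order $\sqrt{\reg}=N\log N$, so a displacement of $N/2$ is of order $1/\log N$ standard deviations, i.e.\ no deviation at all: the walk typically wraps around many times before $\reg$, and it is the \emph{absence} of wrap-around that has probability $e^{-c\log^2 N}$ (a confinement estimate of order $e^{-c\,\reg/N^2}$). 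Consequently the inclusion of $\{H_{A\cup\partial_e A}\leq \reg\}$ (on ${\mathbb T}$) into $\{H_{\ff{A}\cup\partial_e \ff{A}}<\infty\}$ (on ${\mathbb Z}^d$) up to a negligible event is invalid: on the overwhelmingly likely wrap-around event, a hit of $A$ on the torus may correspond to a hit of a translate $\ff{A}+kN$ with $k\neq 0$, which your single-copy capacity bound does not see. Your alternative heat-kernel route, as you note yourself, also does not close for small $\epsilon$ (summing the kernel over all of $\ff{A}\cup\partial_e\ff{A}$ loses the capacity gain).

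The repair is the one the paper uses: lift to ${\mathbb Z}^d$ and write
\begin{equation*}
P_x\big[H_{A\cup\partial_e A}\leq \reg\big]\;\leq\; P_{\phi(x)}\big[T_{B(\phi(x),N\log^2N)}\leq \reg\big]\;+\;P_{\phi(x)}\big[H_{\Pi^{-1}(A\cup\partial_e A)\cap B(\phi(x),N\log^2N)}<\infty\big].
\end{equation*}
The first term \emph{is} a genuine large deviation ($N\log^2 N$ is $\log N$ standard deviations of $\sqrt{\reg}=N\log N$), bounded by $ce^{-c\log^2 N}$ via Azuma plus a Poisson bound on $N_\reg$. For the second term, the preimage of $A\cup\partial_e A$ inside the ball of radius $N\log^2N$ consists of at most $\log^{c}N$ translated copies of $B(0,N^{1-\epsilon})$, each at distance at least $cN^{1-\epsilon/2}$ from $\phi(x)$, so a union bound together with the same hitting estimate you used for \eqref{a:gotoC} gives $c_\epsilon(\log N)^c N^{-c_\epsilon}\leq N^{-c_\epsilon'}$. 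The proof of \eqref{a:Drat'} is analogous with $B$ in place of $A$ and ${\mathbb T}\setminus C$ as the starting set. In short: do not discard wrap-around; confine the lifted walk to a radius $N\log^2N$ ball and sum the capacity bound over the polylogarithmically many lifted copies of the target set.
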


\begin{proof}
The statements follow from classical random walk estimates, therefore we postpone their proofs to the Appendix.
\end{proof}

The next lemma collects basic facts on escape probabilities and capacities.

\begin{lemma}
\textup{($d \geq 3$)}
\begin{align}
\label{a:escest} c_\epsilon N^{\epsilon-1} &\leq  \inf_{x \in \partial_i {\mathsf A}} e_{\mathsf A}(x),\\
 \label{a:capest} 
c_\epsilon N^{(1-\epsilon)(d-2)} &\leq \capacity ({\mathsf A}) \leq c'_\epsilon N^{(1-\epsilon)(d-2)}, \\
 \label{a:capestB}
c_\epsilon N^{(1-\epsilon/2)(d-2)} &\leq \capacity ({\mathsf B}) \leq c'_\epsilon N^{(1-\epsilon/2)(d-2)}.
\end{align}
\end{lemma}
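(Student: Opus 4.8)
All three estimates reduce to two classical facts about a cube $Q_r := B(0,r) \subset \mathbb{Z}^d$, $d \geq 3$, $r \geq 1$: for dimensional constants $c,c'>0$,
\begin{equation}
c\, r^{d-2} \;\leq\; \capacity(Q_r) \;\leq\; c'\, r^{d-2}, \qquad\qquad \inf_{x \in \partial_i Q_r} e_{Q_r}(x) \;\geq\; c\, r^{-1}.
\end{equation}
Since $\mathsf A = B(0,r_N)\subset\mathbb Z^d$ and $\mathsf B = B(0,N^{1-\epsilon/2})\subset\mathbb Z^d$ are exactly such cubes, \eqref{a:capest} and \eqref{a:capestB} are the first display with $r=r_N$ and $r=N^{1-\epsilon/2}$ respectively (using $r_N\leq N^{1-\epsilon}$ for the upper bound in \eqref{a:capest}), and \eqref{a:escest} is the second display with $r=r_N$, again using $r_N\leq N^{1-\epsilon}$ so that $c\, r_N^{-1}\geq c\, N^{\epsilon-1}$. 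It thus remains to establish the two displays.

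For the capacity I would use the last-exit decomposition $P_z[H_{Q_r}<\infty] = \sum_{x\in\partial_i Q_r} g(z,x)\, e_{Q_r}(x)$, valid for every $z\in\mathbb Z^d$, together with the standard two-sided bound on the Green function $g$ of simple random walk on $\mathbb Z^d$, namely that $g(z,x)$ is comparable to $(1+|z-x|)^{2-d}$. Taking $z=0$ gives $1 = \sum_{x\in\partial_i Q_r} g(0,x)\,e_{Q_r}(x) \geq \big(\inf_{x\in Q_r} g(0,x)\big)\,\capacity(Q_r) \geq c\, r^{2-d}\,\capacity(Q_r)$, hence $\capacity(Q_r)\leq c^{-1} r^{d-2}$. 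For the lower bound, take $z_0 = (2r,0,\dots,0)$, so that $|z_0-x|\geq r$ and thus $g(z_0,x)\leq c'\, r^{2-d}$ for all $x\in Q_r$; since $P_{z_0}[H_{Q_r}<\infty]$ is bounded below by a dimensional constant (by the invariance principle, a walk started within $\ell_\infty$-distance $r$ of a cube of radius $r$ enters it with probability bounded away from $0$), we get $\capacity(Q_r)\geq P_{z_0}[H_{Q_r}<\infty]/(c' r^{2-d})\geq c\, r^{d-2}$.

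For the lower bound on the equilibrium measure, fix $x\in\partial_i Q_r$. With probability at least $1/(2d)$ the first step of the walk started at $x$ lands on a neighbour $y\in\partial_e Q_r$, so $e_{Q_r}(x)=P_x[\tilde H_{Q_r}=\infty]\geq \tfrac1{2d}\min_{y\in\partial_e Q_r}P_y[H_{Q_r}=\infty]$, and it suffices to show $P_y[H_{Q_r}=\infty]\geq c/r$ for $y\in\partial_e Q_r$. Such a $y$ satisfies $\|y\|_\infty=r+1$, so after relabelling coordinates and signs we may assume $y_1=r+1$. The first coordinate $(X_n)_1$ is a lazy one-dimensional simple random walk (a martingale) started at $r+1$; if it reaches level $2r+1$ before returning to level $r$, then up to that moment $(X_n)_1\geq r+1$, so the walk has stayed outside $Q_r$ while already leaving $B(0,2r)$. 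By gambler's ruin this happens with probability at least $\big((r+1)-r\big)/\big((2r+1)-r\big)=1/(r+1)$, whence by the strong Markov property at the exit time of $B(0,2r)$,
\begin{equation}
P_y[H_{Q_r}=\infty] \;\geq\; \frac{1}{r+1}\,\inf_{w\,\notin\, B(0,2r)} P_w[H_{Q_r}=\infty].
\end{equation}
Combining the two displays yields $e_{Q_r}(x)\geq c/r$, uniformly in $x\in\partial_i Q_r$, provided the last infimum is bounded below by a dimensional constant.

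That remaining infimum — uniform transience near a cube, i.e. that simple random walk in $d\geq 3$ started at $\ell_\infty$-distance at least $r$ from $Q_r$ avoids $Q_r$ forever with probability at least $c(d)>0$ — is the single ingredient above that is not entirely elementary, although it is completely classical (compare the nonnegative function $w\mapsto P_w[H_{Q_r}=\infty]$, harmonic off $Q_r$ and tending to $1$ at infinity, with $g(0,\cdot)$ on $Q_r^c$ and invoke transience, or argue via the invariance principle). I expect this to be the only mildly delicate point; the rest is gambler's ruin, the last-exit decomposition, and the two-sided Green function bound, and in particular the uniformity over boundary points in \eqref{a:escest} is taken care of by the one-dimensional step above. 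As with Lemma~\ref{l:basic}, I would record the short verification of uniform transience in an appendix.
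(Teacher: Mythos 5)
Your proposal is correct and follows essentially the same route as the paper: the escape-probability bound \eqref{a:escest} is obtained exactly as there, from a one-dimensional gambler's-ruin estimate giving escape to distance of order $N^{1-\epsilon}$ before returning with probability at least $cN^{\epsilon-1}$, followed by the strong Markov property at that exit time and the classical uniform bound on never hitting a ball from distance comparable to its radius (the paper's citation of \cite{L91}, Proposition~1.5.10). The only difference is cosmetic: for $\capacity(\mathsf A)\asymp r_N^{d-2}$ and $\capacity(\mathsf B)\asymp N^{(1-\epsilon/2)(d-2)}$ the paper simply cites \cite{L91}, Proposition~2.2.1(a) and (2.16), whereas you rederive these bounds via the last-exit decomposition and two-sided Green function estimates, which is a correct, standard argument.
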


\begin{proof}
A standard estimate on one-dimensional simple random walk, see \cite{D05}, Chapter~3, Example~1.5, p.~179, implies that $\inf_{x \in \partial_i {\mathsf A}} P_x [T_{B(0,2N^{1-\epsilon})} < {\tilde H}_{\mathsf A}] \geq c_\epsilon N^{\epsilon-1}$, so \eqref{a:escest} follows from \cite{L91}, Proposition 1.5.10 and the strong Markov property applied at time $T_{B(0,2N^{1-\epsilon})}$.
The proofs of \eqref{a:capest} and \eqref{a:capestB} are contained in \cite{L91}, Proposition~2.2.1 (a) and (2.16), p.~52 and 53.
\end{proof}

The following proposition, quoted from \cite{CTW}, will be instrumental in relating expected entrance times to capacities. The statement essentially results from the finite graph analogue of the Dirichlet principle and asserts that the Dirichlet form of the so-called equilibrium potential with respect to disjoint subsets $A_1$ and $A_2^c$ \eqref{e:potential} can be used to approximate the reciprocal of the expected entrance time of $A_1$.

\begin{proposition} \label{pr:EH}
\textup{($d \geq 3$)}
Let $A_1 \subseteq A_2 \subseteq {\mathbb T}$, and let $f$ and $g: {\mathbb T} \to {\mathbb R}$ be defined by
\begin{align}
\label{e:potential} g (x) = P_x[H_{A_1} \leq T_{A_2}], \text{ for } x \in {\mathbb T}, \text{ and}
\end{align}
\begin{align}
\label{e:fstar} f_{A_1} (x) = 1 - \frac{E_x[H_{A_1}]}{E[H_{A_1}]}, \text{ for } x \in {\mathbb T}.
\end{align}
Then
\begin{equation}
  \label{e:EH} 
  {\mathcal D}(g,g) \Bigl( 1 - 2 \sup_{x \in {\mathbb T} \setminus A_2} |f_{A_1}(x)| \Bigr) \leq
  \frac{1}{E[H_{A_1}]} \leq {\mathcal D}(g,g) \frac{1}{\pi({\mathbb T} \setminus A_2)^2}.
\end{equation}
\end{proposition}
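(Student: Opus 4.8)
The plan is to derive \eqref{e:EH} from the Dirichlet principle on the finite graph $\mathbb{T}$, exploiting the variational characterizations of $\mathcal{D}(g,g)$ on one side and of $1/E[H_{A_1}]$ on the other. First I would recall that the equilibrium potential $g$ in \eqref{e:potential} is the $\mathcal{D}$-orthogonal projection of $\mathbf{1}_{A_1}$ onto the space of functions that are harmonic off $A_1 \cup A_2^c$; concretely, $g$ minimizes $\mathcal{D}(h,h)$ among all $h$ with $h \equiv 1$ on $A_1$ and $h \equiv 0$ on $A_2^c$, and $\mathcal{D}(g,g)$ equals this minimal energy. This identifies $\mathcal{D}(g,g)$ with (a normalized version of) the conductance between $A_1$ and $A_2^c$.

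For the other side, the natural object to compare with is $f_{A_1}$ from \eqref{e:fstar}, which is (up to the affine normalization by $E[H_{A_1}]$) the function $x \mapsto E_x[H_{A_1}]$. The key identity is that $f_{A_1}$ satisfies $(I-P)f_{A_1} = \frac{1}{E[H_{A_1}]}\mathbf{1}_{A_1^c} \cdot(\text{something})$ — more precisely, $E_x[H_{A_1}]$ is harmonic off $A_1$ with a unit "source", so that $\mathcal{D}(f_{A_1}, h) = \frac{1}{E[H_{A_1}]} \pi(h \mathbf{1}_{\mathbb{T}\setminus A_1})$ for every $h$ vanishing on $A_1$; taking $h = f_{A_1}$ itself (which vanishes on $A_1$) gives $\mathcal{D}(f_{A_1},f_{A_1}) = \frac{1}{E[H_{A_1}]}\pi(f_{A_1}\mathbf{1}_{\mathbb{T}\setminus A_1}) = \frac{1}{E[H_{A_1}]}\pi(f_{A_1})$, and $\pi(f_{A_1}) = 1 - E[H_{A_1}]/E[H_{A_1}]$... one must be careful: actually $\pi(f_{A_1}) = 1 - \pi(E_\cdot[H_{A_1}])/E[H_{A_1}]$ and $\pi(E_\cdot[H_{A_1}]) = E[H_{A_1}]$ by definition of $P = P_\pi$, so $\pi(f_{A_1}) = 0$. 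So instead one pairs $f_{A_1}$ against $g$: since $g \equiv 1$ on $A_1$ and $f_{A_1} \equiv 0$ on $A_1$... hmm. The cleaner route is: $g - (1 - \mathbf{1}_{A_1})$, no — let me phrase the comparison directly. Use that $g$ and $f_{A_1}$ agree at the "boundary conditions" enough that $\mathcal{D}(g,g) \approx \mathcal{D}(g, f_{A_1}) = \frac{1}{E[H_{A_1}]}\pi(g\,\mathbf{1}_{\mathbb{T}\setminus A_1})$, and then $\pi(g\,\mathbf{1}_{\mathbb{T}\setminus A_1})$ is close to $1$ because $g = 1$ on $A_1$, $g = 0$ on $A_2^c$ (which has $\pi$-measure close to $1$)... this needs sign control, hence the appearance of $\sup_{x \notin A_2}|f_{A_1}(x)|$ and $\pi(\mathbb{T}\setminus A_2)$.

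So the main steps are: (1) write $\mathcal{D}(g,g) = \mathcal{D}(g, g - f_{A_1}) + \mathcal{D}(g, f_{A_1})$; (2) evaluate $\mathcal{D}(g,f_{A_1})$ via the source identity for $E_\cdot[H_{A_1}]$, getting $\mathcal{D}(g,f_{A_1}) = \frac{1}{E[H_{A_1}]}(1 - \pi(g\,\mathbf{1}_{A_1}))$ or a similar exact expression, and bound it between $\frac{1}{E[H_{A_1}]}\pi(\mathbb{T}\setminus A_2)$-type quantities using $0 \le g \le 1$; (3) bound the cross term $\mathcal{D}(g, g - f_{A_1})$: since $g - f_{A_1}$ vanishes on $A_1$ (both equal their boundary values there appropriately) and both are harmonic off $A_1\cup A_2^c$ resp.\ off $A_1$, one controls it by $\sup_{x\notin A_2}|f_{A_1}(x)|$ times the energy, which, after a Cauchy–Schwarz / minimality argument, yields the factor $(1 - 2\sup_{x\notin A_2}|f_{A_1}(x)|)$. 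Since this proposition is \emph{quoted} from \cite{CTW}, I would actually just cite it; but if a self-contained proof is wanted, the above Dirichlet-principle bookkeeping is the route.

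The step I expect to be the main obstacle is step (3): getting the clean one-sided inequality with precisely the constant $(1 - 2\sup_{x \in \mathbb{T}\setminus A_2}|f_{A_1}(x)|)$ rather than some messier error term. This requires carefully choosing the right test function in the Dirichlet principle — likely $g$ itself compared against a truncation or rescaling of $f_{A_1}$ — and using that $f_{A_1}$ is small (not just bounded) outside $A_2$, together with $f_{A_1} \le 1$ everywhere and $f_{A_1} = 1 - E_x[H_{A_1}]/E[H_{A_1}]$ having the right sign structure, to absorb all cross terms into that single supremum. The upper bound $\frac{1}{E[H_{A_1}]} \le \mathcal{D}(g,g)/\pi(\mathbb{T}\setminus A_2)^2$ should be comparatively routine: it follows from the variational formula for $1/E[H_{A_1}] = 1/\pi(\mathbb{T}\setminus A_1) \cdot(\text{capacity}) $ combined with the fact that any admissible competitor for the $g$-energy that is constant off $A_2$ can be rescaled, so that $\mathcal{D}(g,g) \ge \pi(\mathbb{T}\setminus A_2)^2/E[H_{A_1}]$.
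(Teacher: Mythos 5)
The paper offers no argument for this proposition at all: its ``proof'' is the single line ``See \cite{CTW}, Proposition~3.1''. Your primary recommendation --- simply cite \cite{CTW} --- therefore coincides exactly with what the paper does, and on that level the proposal is fine.

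The supplementary sketch, however, would not stand on its own as written, and since you offer it as the fallback it is worth flagging where it breaks. First, $f_{A_1}\equiv 1$ on $A_1$ (because $E_x[H_{A_1}]=0$ there), not $0$ as you assert at one point, and your ``source identity'' has the wrong sign: with $Lh(x)=\frac{1}{2d}\sum_{y\sim x}(h(y)-h(x))$ one has $Lf_{A_1}=+1/E[H_{A_1}]$ on ${\mathbb T}\setminus A_1$, so $\mathcal{D}(f_{A_1},h)=-\frac{1}{E[H_{A_1}]}\pi(h\mathbf{1}_{{\mathbb T}\setminus A_1})$ for $h$ vanishing on $A_1$. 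Second, the step you yourself call the main obstacle is genuinely missing, and your decomposition is not needed to close it: since $Lg$ vanishes on $A_2\setminus A_1$, $-\sum_{x\in A_1}\pi(x)Lg(x)=\mathcal{D}(g,g)$ and, by $\sum_x\pi(x)Lg(x)=0$ together with $Lg\geq 0$ off $A_2$, also $\sum_{x\in{\mathbb T}\setminus A_2}\pi(x)|Lg(x)|=\mathcal{D}(g,g)$; pairing $f_{A_1}$ against $-Lg$ then gives $\mathcal{D}(g,f_{A_1})\geq \bigl(1-\sup_{x\in{\mathbb T}\setminus A_2}|f_{A_1}(x)|\bigr)\mathcal{D}(g,g)$, while pairing $g$ against $-Lf_{A_1}$ gives the exact value $\mathcal{D}(g,f_{A_1})=(1-\pi(g))/E[H_{A_1}]\leq 1/E[H_{A_1}]$, which yields the left inequality of \eqref{e:EH} (in fact with the better constant $1-\sup|f_{A_1}|$). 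Third, the identity you invoke for the right inequality, $1/E[H_{A_1}]=\textup{cap}/\pi(\cdot)$, is false on the torus --- producing such a relation only approximately is exactly the point of Proposition~\ref{p:Gloc}; the correct tool is the extremal characterization $1/E[H_{A_1}]=\min\{\mathcal{D}(h,h):\ h\equiv 0\text{ on }A_1,\ \pi(h)=1\}$ (attained at $E_\cdot[H_{A_1}]/E[H_{A_1}]$, via $\mathcal{D}(h,E_\cdot[H_{A_1}])=\pi(h)$ and Cauchy--Schwarz), applied to $h=(1-g)/\pi(1-g)$ together with $\pi(1-g)\geq\pi({\mathbb T}\setminus A_2)$. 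So: as a citation the proposal matches the paper; as a standalone argument it has a real gap at the lower bound and an incorrect identity at the upper bound, though the overall Dirichlet-form strategy is sound and can be completed as just indicated.
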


\begin{proof}
See \cite{CTW}, Proposition~3.1.
\end{proof}

\begin{remark} \label{r:Dir}
A simple computation shows that, for $g$ as in \eqref{e:potential},
\begin{align}
\label{e:Dgg} {\mathcal D}(g, g) = \sum_{x \in A_1} P_x [T_{A_2} < {\tilde H}_{A_1} ] \frac{1}{N^d},
\end{align}
which will be useful in the sequel.
\end{remark}

First, we apply the right-hand estimate in \eqref{e:EH}, to obtain an estimate which is probably known, but does not seem to be proved anywhere in the literature.

\begin{lemma} \label{l:Hlbd}
\textup{($d \geq 3$)}
For any $V \subseteq B$,
  \begin{equation}
    \label{e:Hlbd} 
    \frac{1}{E[H_V]} \leq (1+ c_\epsilon \rbdd) \frac{\capacity({\mathsf V})}{N^d}. 
  \end{equation}
\end{lemma}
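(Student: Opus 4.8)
The plan is to apply the right-hand inequality of Proposition~\ref{pr:EH} with the choice $A_1 = V$ and $A_2 = C = B(0,N/4)$, which is legitimate since $V \subseteq B \subseteq C \subseteq {\mathbb T}$. This gives
\begin{align*}
\frac{1}{E[H_V]} \leq {\mathcal D}(g,g)\,\frac{1}{\pi({\mathbb T} \setminus C)^2},
\end{align*}
where $g(x) = P_x[H_V \leq T_C]$. Since $|C| = |B(0,N/4)| \leq (N/2)^d$, we have $\pi({\mathbb T}\setminus C) = 1 - |C|/N^d \geq 1 - 2^{-d}$, so $1/\pi({\mathbb T}\setminus C)^2 \leq 1 + c$ — but to get the sharp constant $(1 + c_\epsilon\rbdd)$ we need a better bound on $\pi({\mathbb T}\setminus C)$, or rather we should not use $C = B(0,N/4)$ but exploit the fact that the walk rarely enters $A \cup \partial_e A$... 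Actually, on reflection, the factor $1/\pi({\mathbb T}\setminus C)^2$ is genuinely bounded away from $1$ with this choice of $C$, so instead I would take $A_2$ to be a box only slightly larger than $V$: say $A_2 = B(0, N^{1-\epsilon/2}) = B$ (recall $V \subseteq B$), so that $\pi({\mathbb T}\setminus B) = 1 - |B|/N^d \geq 1 - c N^{-d\epsilon/2} \geq 1 - \rbdd$, giving $1/\pi({\mathbb T}\setminus B)^2 \leq 1 + c_\epsilon\rbdd$.

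The second ingredient is to estimate ${\mathcal D}(g,g)$ from above by $\capacity({\mathsf V})/N^d$. By Remark~\ref{r:Dir}, with $A_1 = V$, $A_2 = B$,
\begin{align*}
{\mathcal D}(g,g) = \frac{1}{N^d}\sum_{x \in V} P_x[T_B < {\tilde H}_V].
\end{align*}
Now I would compare this torus quantity with the corresponding quantity on ${\mathbb Z}^d$: identifying $V$ with ${\mathsf V} = \phi(V) \subset {\mathsf B} \subset {\mathbb Z}^d$ via the graph isomorphism $\phi$, a simple random walk path on ${\mathbb T}$ started in $V$ agrees with a simple random walk path on ${\mathbb Z}^d$ started in ${\mathsf V}$ until the first time it exits $B$ (equivalently ${\mathsf B}$). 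Hence for each $x \in V$, $P_x[T_B < {\tilde H}_V] = P_{\phi(x)}[T_{\mathsf B} < {\tilde H}_{\mathsf V}]$, and the latter is bounded above by $P_{\phi(x)}[{\tilde H}_{\mathsf V} = \infty] = e_{\mathsf V}(\phi(x))$. Summing over $x \in V$ (equivalently over $\phi(x) \in {\mathsf V}$) yields
\begin{align*}
{\mathcal D}(g,g) \leq \frac{1}{N^d}\sum_{y \in {\mathsf V}} e_{\mathsf V}(y) = \frac{\capacity({\mathsf V})}{N^d}.
\end{align*}
Combining the two estimates gives $1/E[H_V] \leq (1 + c_\epsilon\rbdd)\capacity({\mathsf V})/N^d$, as claimed.

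The main obstacle, such as it is, is making the coupling/identification between the torus walk and the ${\mathbb Z}^d$ walk precise enough: one must check that $B$ is ``embedded isometrically'' in ${\mathbb T}$ in the sense that the ball $B = B(0,N^{1-\epsilon/2})$ and its image ${\mathsf B} = \phi(B)$ really do support identical law of the walk up to the exit time, which requires $N^{1-\epsilon/2} < N/4$ (true for large $N$) so that $\phi$ is a genuine graph isomorphism on a neighborhood of $B$, and that the event $\{T_B < {\tilde H}_V\}$ is measurable with respect to the path up to time $T_B$. The inequality $P_{\phi(x)}[T_{\mathsf B} < {\tilde H}_{\mathsf V}] \leq P_{\phi(x)}[{\tilde H}_{\mathsf V} = \infty] = e_{\mathsf V}(\phi(x))$ is immediate once one notes that $\{T_{\mathsf B} < {\tilde H}_{\mathsf V}\} \subseteq \{{\tilde H}_{\mathsf V} = \infty\} \cup \{{\tilde H}_{\mathsf V} < \infty, T_{\mathsf B} < {\tilde H}_{\mathsf V}\}$ — wait, that is not a subset relation; rather one should argue $P_{\phi(x)}[T_{\mathsf B} < {\tilde H}_{\mathsf V}] \leq P_{\phi(x)}[{\tilde H}_{\mathsf V} = \infty] + (\text{small error})$, or more cleanly observe directly that a return to ${\mathsf V}$ forces the path to still be inside a bounded region, so $\{{\tilde H}_{\mathsf V} = \infty\} \supseteq$... the cleanest route is: $P_{\phi(x)}[T_{\mathsf B}< {\tilde H}_{\mathsf V}] = P_{\phi(x)}[{\tilde H}_{\mathsf V} > T_{\mathsf B}] \geq P_{\phi(x)}[{\tilde H}_{\mathsf V} = \infty] = e_{\mathsf V}(\phi(x))$ is the wrong direction, so in fact one wants the reverse and should instead bound $P_{\phi(x)}[T_{\mathsf B} < {\tilde H}_{\mathsf V}]$ directly by the full escape probability using that the complement event $\{{\tilde H}_{\mathsf V} \le T_{\mathsf B}\}$ only shrinks $\{{\tilde H}_{\mathsf V} < \infty\}$; concretely $P_{\phi(x)}[T_{\mathsf B}<{\tilde H}_{\mathsf V}] \le P_{\phi(x)}[{\tilde H}_{\mathsf V}=\infty] + P_{\phi(x)}[T_{\mathsf B} < {\tilde H}_{\mathsf V}<\infty]$, and the last probability is at most $\sup_{y \notin {\mathsf B}} P_y[{\tilde H}_{\mathsf V}<\infty] \le \sup_{y\notin {\mathsf B}}P_y[H_{\mathsf V}<\infty]$, which by transience and a standard harmonic-measure estimate is $O(\rbdd \cdot \capacity({\mathsf V})/\capacity({\mathsf B}))$-type small — feeding precisely the multiplicative error $c_\epsilon\rbdd$. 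This is where the estimates \eqref{a:capest}, \eqref{a:capestB} and Lemma~\ref{l:basic} enter, and organizing this error term is the only real work in the proof.
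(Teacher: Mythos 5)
Your overall strategy --- the right-hand inequality of Proposition~\ref{pr:EH} combined with Remark~\ref{r:Dir}, then transferring the escape probabilities to ${\mathbb Z}^d$ via $\phi$ --- is the same as the paper's, but your choice $A_2 = B$ creates a genuine gap. The lemma is stated for \emph{arbitrary} $V \subseteq B$, and it is in fact applied with $V = B$ itself in the Appendix (proof of Lemma~\ref{l:AB}), so $V$ may touch or equal $B$. Your error term is $P_{\phi(x)}[T_{\mathsf B} < {\tilde H}_{\mathsf V} < \infty]$, and you propose to control it by the smallness of $\sup_{y \notin {\mathsf B}} P_y[H_{\mathsf V} < \infty]$; this is false in general: if $V$ has points on $\partial_i B$ (e.g.\ $V = B$), a walk starting at a vertex of $\partial_e {\mathsf B}$ adjacent to such a point hits ${\mathsf V}$ with probability of order $1$, not $O(\rbdd\,\capacity({\mathsf V})/\capacity({\mathsf B}))$. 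Quantitatively, for $V = B$ one has $\sum_{x} P_x[T_{\mathsf B} < {\tilde H}_{\mathsf B}] \asymp N^{(1-\epsilon/2)(d-1)}$ while $\capacity({\mathsf B}) \asymp N^{(1-\epsilon/2)(d-2)}$, so with $A_2 = B$ the bound ${\mathcal D}(g,g) \leq (1+c_\epsilon\rbdd)\capacity({\mathsf V})/N^d$ simply fails; your argument could at best cover sets $V$ lying well inside $B$, such as $V \subseteq A$. A secondary flaw: when bounding the error you drop the prefactor $P_x[T_{\mathsf B}<{\tilde H}_{\mathsf V}]$, turning the error into an additive one per boundary point; since $|\partial_i V|$ can far exceed $\capacity({\mathsf V})$, this is too lossy even when the sup is small --- keep the strong Markov factorization so the error stays multiplicative against $e_{\mathsf V}(x)$.

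The paper avoids both problems by taking the intermediate set polynomially larger than $B$: $A_2 = B(0,N^{1-\epsilon/4})$. One still has $\pi({\mathbb T}\setminus A_2)^{-2} \leq 1 + cN^{-d\epsilon/4}$, and, crucially, from $\partial_e {\mathsf {A_2}}$ the walk hits ${\mathsf B} \supseteq {\mathsf V}$ with probability at most $N^{-c_\epsilon}$ while escaping to infinity with probability bounded below by $c_\epsilon>0$; applying the strong Markov property at $T_{\mathsf {A_2}}$ then yields $P_x[T_{\mathsf {A_2}}<{\tilde H}_{\mathsf V},\, H_{\mathsf V}<\infty] \leq c_\epsilon\rbdd\, P_x[{\tilde H}_{\mathsf V}=\infty]$, i.e.\ a multiplicative error against $e_{\mathsf V}(x)$ that is uniform over all $V \subseteq B$. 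Replacing your $A_2 = B$ by such an intermediate box and retaining the strong Markov factorization repairs the proof and recovers the paper's argument.
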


\begin{proof}
We apply the right-hand estimate in \eqref{e:EH} with $A_1 = V$ and $A_2 = B(0,N^{1-\epsilon/4})$, and Remark~\ref{r:Dir}: 
\begin{align*}
\frac{1}{E[H_V]} &\leq (1+ cN^{-d \epsilon/4})  {\mathcal D}(g, g) =(1+ cN^{-d \epsilon/4}) \sum_{x \in \partial_i V} P_x [T_{A_2}<{\tilde H}_V] \frac{1}{N^d} \\
&= (1+ cN^{-d \epsilon/4}) \sum_{x \in \partial_i {\mathsf V}} \bigl( P_x [{\tilde H}_{\mathsf V} = \infty] + P_x [T_{{\mathsf {A_2}}}<{\tilde H}_{\mathsf V} , H_{\mathsf V} < \infty ] \bigr) \frac{1}{N^d},
\end{align*}
where we have used the isomorphism $\phi$ in the last step.
Applying the strong Markov property at time $T_{\mathsf B}$, we obtain for any $x \in \partial_i {\mathsf V}$,
\begin{equation}
\label{Hlbd1}
\begin{split}
 P_x [ T_{\mathsf {A_2}} < {\tilde H}_{\mathsf V}, H_{\mathsf V} < \infty] \leq P_x [T_{\mathsf {A_2}} < {\tilde H}_{\mathsf V}] \sup_{y \in \partial_e {\mathsf {A_2}}} P_y [ H_{\mathsf B} < \infty ].
\end{split}
\end{equation}
By \cite{L91}, Proposition 1.5.10, p.~36, we have for any $y \in \partial_e {\mathsf {A_2}}$, $P_y [ H_{\mathsf B} < \infty ] \leq N^{-c_\epsilon}$ and thus also $P_y [ H_{\mathsf B} = \infty ] \geq c_\epsilon >0$. In particular, we deduce from \eqref{Hlbd1} that 
\begin{align*}
 P_x [ & T_{\mathsf {A_2}} < {\tilde H}_{\mathsf V} , H_{\mathsf V} < \infty] \leq \Rbd P_x [ T_{\mathsf {A_2}} < {\tilde H}_{\mathsf V}] \\
& \leq  \Rbd P_x [ T_{\mathsf {A_2}} < {\tilde H}_{\mathsf V}] \inf_{y \in \partial_e {\mathsf {A_2}}} P_y [ H_{\mathsf B} = \infty ] / c_\epsilon \leq \rbdd P_x [{\tilde H}_{\mathsf V} = \infty] / c_\epsilon ,
\end{align*}
which with the first estimate in this proof yields \eqref{e:Hlbd}.
\end{proof}

We now control the function $f_V$ appearing on the left-hand side of \eqref{e:EH}, which essentially amounts to showing that the precise location of the starting point does not matter for expected entrance times of subsets $V$ of $A$, provided the random walk starts outside of $B$. The idea is that, due to \eqref{a:Drat}, the random walk typically does not enter $A$ until well after the regeneration time, at which time the distribution of the walk is close to uniform regardless of the starting point.

\begin{proposition} \label{pr:Eratio}
\textup{($d \geq 3$)}
  For any $V \subseteq A$ and $f_V$ defined in \eqref{e:fstar}, 
  \begin{align}
    \label{e:flbd} \inf_{x \in {\mathbb T}} f_V (x) &\geq - c_\epsilon N^{-c_\epsilon},\\
    \label{e:fbd} \sup_{x \in {\mathbb T} \setminus B} |f_V (x)| &\leq c_\epsilon \rbd.
    \end{align}
\end{proposition}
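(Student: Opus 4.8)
The plan is to prove \eqref{e:flbd} and \eqref{e:fbd} using the representation $f_V(x) = 1 - E_x[H_V]/E[H_V]$, which means I need good control on $E_x[H_V]$ relative to the average $E[H_V]$, uniformly in $V \subseteq A$.

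For the lower bound \eqref{e:flbd}, I want to show $E_x[H_V] \leq (1 + c_\epsilon N^{-c_\epsilon}) E[H_V]$ for every $x$. The natural route is to run the walk for the regeneration time $\reg$ first and apply \eqref{eq:I}: unless the walk hits $V$ before time $\reg$, its position at time $\reg$ is within $e^{-c\log^2 N}$ in total variation of uniform, so $E_x[H_V] \leq \reg + E_x[H_V \circ \theta_\reg \,;\, H_V > \reg]$, and the second term is at most $(1 + e^{-c\log^2 N} E[H_V]$-type bound) $E[H_V]$ after comparing the law of $Y_\reg$ to $\pi$. Here $E[H_V]$ is of order $N^d/\capacity(\mathsf V) \geq c N^{d} / (c'_\epsilon N^{(1-\epsilon)(d-2)}) = c_\epsilon N^{2 + \epsilon(d-2)}$ by Lemma~\ref{l:Hlbd} together with a matching lower bound (which follows from the left side of \eqref{e:EH} once \eqref{e:fbd} is in hand — so I should prove \eqref{e:fbd} first, or use a cruder standalone lower bound on $E[H_V]$ such as $E[H_V] \geq c_\epsilon N^d/\capacity(\mathsf A) \geq c_\epsilon N^{2+\epsilon(d-2)}$, which is more than enough to dominate $\reg = (N\log N)^2$). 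Since $\reg / E[H_V] \to 0$ polynomially and the total-variation error is stretched-exponentially small, both correction terms are $O(N^{-c_\epsilon})$, giving \eqref{e:flbd}.

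For the upper bound \eqref{e:fbd}, I need the two-sided estimate $|E_x[H_V] - E[H_V]| \leq c_\epsilon \rbd \cdot E[H_V]$ for $x \in {\mathbb T}\setminus B$. The upper direction $E_x[H_V] \leq (1 + c_\epsilon\rbd)E[H_V]$ comes from the same argument as above (in fact it's already covered). For the lower direction $E_x[H_V] \geq (1 - c_\epsilon\rbd) E[H_V]$, the key point is that for $x \notin B$, the walk is unlikely to hit $A \supseteq V$ before the regeneration time: by \eqref{a:Drat}, $P_x[H_A \leq \reg] \leq \rbd$. So write $E_x[H_V] \geq E_x[H_V \,;\, H_A > \reg] \geq E_x[(\reg + H_V\circ\theta_\reg)\mathbf 1_{H_A > \reg}]$, and on the event $\{H_A > \reg\}$ the position $Y_\reg$ has law close to $\pi$ (conditioning on a probability-$(1-\rbd)$ event perturbs the law by at most $O(\rbd)$ in total variation, and $Y_\reg$ is already $e^{-c\log^2 N}$-close to $\pi$ unconditionally); hence $E_x[H_V \circ \theta_\reg \,;\, H_A > \reg] \geq (1 - c_\epsilon\rbd) E[H_V] - \reg$, and again $\reg = o(N^{-c_\epsilon} E[H_V])$. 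Averaging / comparing these bounds against $E[H_V] = \sum_y N^{-d} E_y[H_V]$ then yields $|f_V(x)| \leq c_\epsilon \rbd$.

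The main obstacle I anticipate is bookkeeping the total-variation comparisons carefully — specifically, transferring "$Y_\reg$ close to $\pi$" through the conditioning on $\{H_A > \reg\}$ (resp.\ $\{H_V > \reg\}$) without the conditioning degrading the estimate, and making sure all the additive $\reg$ errors are genuinely negligible against $E[H_V]$, which forces me to pin down $E[H_V] \asymp N^d/\capacity(\mathsf V)$ from both sides (using Proposition~\ref{pr:EH}, Remark~\ref{r:Dir}, Lemma~\ref{l:Hlbd}, and the capacity bounds \eqref{a:capest}). A clean way to organize this is: first establish the crude two-sided bound $c_\epsilon N^{2+\epsilon(d-2)} \leq E[H_V] \leq c'_\epsilon N^{2+\epsilon(d-2)}$ (the upper bound from Lemma~\ref{l:Hlbd} and \eqref{a:capest}, the lower bound from a direct estimate showing the walk started from $\pi$ takes at least $c_\epsilon N^{2+\epsilon(d-2)}$ steps on average to reach the tiny set $A$), then feed this into the regeneration arguments above; the constant $c_\epsilon$ in the exponent $\rbd = N^{-c_\epsilon}$ can be taken small enough to absorb the various $\epsilon$-dependent losses.
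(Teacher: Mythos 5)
Your route is the paper's: run the walk up to the regeneration time $\reg$, use \eqref{eq:I} to replace the law of $Y_\reg$ by the uniform distribution up to a superpolynomially small error, check that the additive $\reg$-terms are negligible against $E[H_V]$ via Lemma~\ref{l:Hlbd}, \eqref{a:capest} and monotonicity of capacity, and, for $x \in {\mathbb T}\setminus B$, discard the event $\{H_A \leq \reg\}$ using \eqref{a:Drat}. Two points should be corrected, though. First, you have the direction of Lemma~\ref{l:Hlbd} backwards: it is an upper bound on $1/E[H_V]$, hence it \emph{is} the ``cruder standalone lower bound'' $E[H_V] \geq N^d/\bigl((1+o(1))\capacity({\mathsf V})\bigr) \geq c_\epsilon N^{2+\epsilon(d-2)}$ that you say you would have to prove separately (use $\capacity({\mathsf V}) \leq \capacity({\mathsf A})$); there is no circularity with \eqref{e:fbd} and no extra estimate to supply. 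Conversely, the upper half of your proposed two-sided bound, $E[H_V] \leq c'_\epsilon N^{2+\epsilon(d-2)}$ uniformly in $V \subseteq A$, is false (take $V$ a single vertex, for which $E[H_V] \asymp N^d$), but fortunately it is never needed: the only a priori upper bounds required are the crude $\sup_y E_y[H_V] \leq cN^d$, to be multiplied by the $e^{-c\log^2 N}$ total-variation error, and the refined $\sup_y E_y[H_V] \leq \reg + e^{-c\log^2 N} + E[H_V]$ which your first-part argument itself produces.

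Second, and this is the one spot where a naive execution of your plan would fail: when you discard $\{H_A \leq \reg\}$ for $x \notin B$ by saying that conditioning on a probability-$(1-\rbd)$ event perturbs the law by $O(\rbd)$ in total variation, the resulting error is $O(\rbd)\cdot \sup_y E_y[H_V]$, and here the crude bound $cN^d$ is not good enough: the exponent $c_\epsilon$ in $\rbd$ coming from Lemma~\ref{l:basic} is of order $\epsilon(d-2)$, so $\rbd\, N^d$ can be much larger than $E[H_V]$ when $\capacity({\mathsf V})$ is comparable to $\capacity({\mathsf A})$ (e.g.\ $V=A$). You must multiply $\rbd$ by the refined bound $\sup_y E_y[H_V] \leq \reg + e^{-c\log^2 N} + E[H_V]$ from the first part, which turns the error into $c\,\rbd\,E[H_V](1+o(1))$ and yields \eqref{e:fbd}; this is exactly how the paper concludes, and your stated final inequality implicitly presupposes it. With these repairs your argument coincides with the paper's proof.
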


\begin{proof}
Let us first consider the expectation of $H_V$ when starting from $Y_{\reg}$. From \eqref{eq:I} we obtain, for any $x \in {\mathbb T}$,
  \begin{equation}
    \begin{split}
      \label{Er2} 
      \bigl| E_x[ E_{Y_{\reg}}[H_V] ] - E[H_V] \bigr| \leq \sum_{y \in {\mathbb T}} |P_x[Y_\reg=y] - N^{-d}| \sup_{z \in {\mathbb T}} E_z [H_V] \le e^{-c \log^2 N}, 
    \end{split}
  \end{equation}
  where we have bounded the expected entrance time of $V$ by $cN^d$ (see, for example, \cite{LPW09}, Proposition~10.13, p.~133).
  We now apply this inequality to find an upper bound on $E_x[H_V]$. Since $H_V \leq \reg + H_{V} \circ \theta_{\reg}$, the simple Markov property applied at time $\reg$ and \eqref{Er2} imply that 
  \begin{equation}
    \label{Er3} \sup_{x \in {\mathbb T}} E_x[H_{V}] \leq \reg + e^{-c\log^2 N} + E[H_{V}]. 
  \end{equation}
  With \eqref{e:Hlbd} and \eqref{a:capest} (as well as monotonicity of $\capacity(.)$, see for instance Proposition~2.3.4 $(a)$ of \cite{L91}), we deduce that
  \begin{align}
\label{Er4}
     \sup_{x \in {\mathbb T}} \frac{E_x[H_{V}]}{E[H_{V}]} - 1 &\leq  (\reg + e^{-c\log^2 N})   c_\epsilon \frac{\capacity({\mathsf V})}{N^d} \leq (\reg + e^{-c\log^2 N})  c_\epsilon N^{-2-\epsilon (d-2)}. 
  \end{align}
Since $\reg = N^2 \log^2 N$ and $d \geq 3$,  this proves \eqref{e:flbd}.

  We now consider any $x \in {\mathbb T} \setminus B$. By the simple Markov property applied at time $\reg$, 
\begin{equation*}
    \begin{split}
      E_x[H_{V}] 
      &\geq E_x [\mathbf{1}_{\{H_{B} > \reg\}} E_{Y_{\reg}}[H_{V}] ] = E_x [ E_{Y_{\reg}}[H_{V}] ] - E_x [\mathbf{1}_{\{H_{B} \leq \reg\}} E_{Y_{\reg}}[H_{V}] ] \\
      &\stackrel{\eqref{Er2}}{\geq} E[H_{V}] - e^{-c\log^2 N} -
      P_x[H_{B} \leq \reg]  \sup_{y \in {\mathbb T}} E_y[H_{V}]\\
      &\stackrel{\eqref{Er3}}{\geq} E[H_{V}] - 2e^{-c\log^2 N} -
      P_x[H_{B} \leq \reg]  ( \reg + E[H_{V}]).
    \end{split}
  \end{equation*}
  With \eqref{a:Drat}, \eqref{a:capest} and  \eqref{e:Hlbd}, this yields
\begin{equation*}
    \inf_{x \in {\mathbb T} \setminus B} \frac{E_x[H_{V}]}{E[H_{V}]} -1 \geq - 2e^{-c\log^2 N} -  \rbd \Bigl( c_\epsilon (\log N)^2 N^{-\epsilon(d-2)} + 1 \Bigr).
  \end{equation*}
Together with \eqref{e:flbd}, this proves \eqref{e:fbd}.
\end{proof}

Finally, we combine the above estimates to exhibit the link between expected entrance times and capacities. 

\begin{proposition} \label{p:Gloc}
\textup{($d \geq 3$)}
For any $V \subseteq A$,
 \begin{align}
  \label{e:Gloc}  \biggl| \frac{N^d}{E[H_{V}] \capacity({\mathsf V})} - 1 \biggr| \leq c_\epsilon \rbd.
 \end{align}
\end{proposition}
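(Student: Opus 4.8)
The plan is to bound $\dfrac{N^d}{E[H_V]\,\capacity(\ff V)}$ both from above and from below by $1 + c_\epsilon \rbd$ and $1 - c_\epsilon \rbd$ respectively, by playing the two sides of Proposition~\ref{pr:EH} against the capacity estimate for the Dirichlet form and the control on $f_V$ established in Proposition~\ref{pr:Eratio}. The upper bound $\frac{N^d}{E[H_V]\capacity(\ff V)} \leq 1 + c_\epsilon \rbd$ is exactly Lemma~\ref{l:Hlbd} (note $V \subseteq A \subseteq B$), so nothing new is needed there. For the lower bound, I would start from the left-hand inequality in \eqref{e:EH}, applied with $A_1 = V$ and a suitable $A_2$ squeezed between $B$ and $C$ — say $A_2 = B(0,N^{1-\epsilon/4})$, matching the choice in the proof of Lemma~\ref{l:Hlbd}, so that $\pi(\mathbb T \setminus A_2)$ is essentially $1$ and the factor $1 - 2\sup_{x \notin A_2}|f_V(x)|$ is controlled by \eqref{e:fbd} (since $A_2 \supseteq B$ forces the supremum to be over $\mathbb T \setminus A_2 \subseteq \mathbb T \setminus B$, where $|f_V| \leq c_\epsilon \rbd$). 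This gives
\[
\frac{1}{E[H_V]} \geq {\mathcal D}(g,g)\bigl(1 - c_\epsilon \rbd\bigr).
\]

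The remaining task is to show ${\mathcal D}(g,g) \geq (1 - c_\epsilon \rbd)\,\capacity(\ff V)/N^d$. Using Remark~\ref{r:Dir}, ${\mathcal D}(g,g) = \frac{1}{N^d}\sum_{x \in \partial_i V} P_x[T_{A_2} < {\tilde H}_V]$, and transporting to $\mathbb Z^d$ via $\phi$, this is $\frac{1}{N^d}\sum_{x \in \partial_i \ff V} P_x[T_{\ff A_2} < {\tilde H}_{\ff V}]$, where $P_x$ on the right is the $\mathbb Z^d$-walk (the isomorphism $\phi$ identifies the torus walk killed on exiting $C$ with the $\mathbb Z^d$-walk, and $A_2 \subseteq C$). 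The point is then that $P_x[T_{\ff A_2} < {\tilde H}_{\ff V}] \geq P_x[{\tilde H}_{\ff V} = \infty] = e_{\ff V}(x)$ is an underestimate that is only off by the probability of the walk returning to $\ff V$ after exiting $\ff A_2$, and by \cite{L91}, Proposition~1.5.10 this return probability from $\partial_e \ff A_2$ is at most $N^{-c_\epsilon}$ — exactly the argument already carried out in the proof of Lemma~\ref{l:Hlbd}, run in the opposite direction. More precisely, $P_x[{\tilde H}_{\ff V} = \infty] \leq P_x[T_{\ff A_2} < {\tilde H}_{\ff V}] \leq P_x[{\tilde H}_{\ff V} = \infty] + P_x[T_{\ff A_2}<{\tilde H}_{\ff V}, H_{\ff V}<\infty]$ and the last term is at most $\rbd\, P_x[{\tilde H}_{\ff V}=\infty]/c_\epsilon$ by the same strong Markov step at time $T_{\ff A_2}$. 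Summing over $x \in \partial_i \ff V$ and dividing by $N^d$ gives
\[
{\mathcal D}(g,g) \geq \frac{1}{N^d}\sum_{x \in \partial_i \ff V} e_{\ff V}(x) = \frac{\capacity(\ff V)}{N^d},
\]
using $e_{\ff V}$ supported on $\partial_i \ff V$. Combining the three displayed bounds and absorbing the cross terms into a single $c_\epsilon \rbd$ error yields \eqref{e:Gloc}.

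I do not expect a genuine obstacle here: the proposition is essentially the synthesis of Lemma~\ref{l:Hlbd}, Proposition~\ref{pr:EH}, and Proposition~\ref{pr:Eratio}, with the matching lower bound on ${\mathcal D}(g,g)$ being a one-line consequence of the escape-probability estimate from \cite{L91} that was already used above. The only mild care needed is bookkeeping: checking that $A_2$ can be chosen simultaneously large enough that $\pi(\mathbb T \setminus A_2)^{-2} = 1 + o(\rbd)$ and $\mathbb T \setminus A_2 \subseteq \mathbb T \setminus B$ (so \eqref{e:fbd} applies), while small enough that $\partial_e \ff A_2$ still lies far from $\ff V$ for the return-probability bound — the choice $A_2 = B(0,N^{1-\epsilon/4})$ does all of this since $r_N \leq N^{1-\epsilon}$. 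One also has to make sure all the sets involved stay inside $C = B(0,N/4)$ so that $\phi$ is a genuine isomorphism between the relevant killed walks; this is automatic for $\epsilon$ small. Finally, one collects the errors: $(1-c_\epsilon\rbd)(1-c_\epsilon\rbd) \geq 1 - c_\epsilon'\rbd$, giving the lower bound, which together with Lemma~\ref{l:Hlbd} completes the proof.
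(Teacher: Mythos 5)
Your proposal is correct and follows essentially the same route as the paper: both bounds come from Proposition~\ref{pr:EH} together with the comparison of ${\mathcal D}(g,g)$ to $\capacity({\mathsf V})/N^d$ via Remark~\ref{r:Dir}, the isomorphism $\phi$ and the escape-probability estimate, and the control of $f_V$ from \eqref{e:fbd}. The only cosmetic differences are that you take $A_2=B(0,N^{1-\epsilon/4})$ and quote Lemma~\ref{l:Hlbd} for the upper bound, whereas the paper takes $A_2=B$ and obtains both bounds from the single two-sided estimate $\bigl|N^d{\mathcal D}(g^*,g^*)-\capacity({\mathsf V})\bigr|\leq c_\epsilon \rbd\,\capacity({\mathsf V})$; your remark that the lower bound only needs the inclusion $\{\tilde H_{\mathsf V}=\infty\}\subseteq\{T_{{\mathsf A}_2}<\tilde H_{\mathsf V}\}$ is a valid (slight) simplification, and the remaining bookkeeping (sets inside $C$, small $N$ absorbed into $c_\epsilon$) is as you describe.
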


\begin{proof}
We use $g^*$ to denote the function defined in \eqref{e:potential} with $A_1 = V$ and $A_2 = B$. Let us first compare the effective conductance ${\mathcal D}(g^*,g^*)$ between $V$ and ${\mathbb T} \setminus B$ with the capacity of the set ${\mathsf V}$. In view of \eqref{e:Dgg}, $ \Bigl| N^d {\mathcal D}(g^*,g^*) - \capacity ({\mathsf V}) \Bigr|$ is equal to
\begin{align*}
 \biggl| \sum_{x \in \partial_i V} \Bigl( P_x [T_{B} < {\tilde H}_{V}] - P_{\phi(x)} [ {\tilde H}_{{\mathsf V}} = \infty ] \Bigr)  \biggr| = \sum_{x \in \partial_i {\mathsf V}} P_x [T_{{\mathsf B}} < {\tilde H}_{{\mathsf V}}, H_{{\mathsf V}} < \infty] .
\end{align*}
With the strong Markov property applied at time $T_{{\mathsf B}}$ and the same argument as below \eqref{Hlbd1}, it follows that 
\begin{align}
\label{loc1.2} \Bigl| N^d {\mathcal D}(g^*,g^*) - \capacity ({\mathsf V}) \Bigr| &\leq   c_\epsilon \Rbd \capacity ({\mathsf V}).
\end{align}
We now use this estimate in the right-hand inequality in \eqref{e:EH} and obtain
\begin{align}
\label{loc1.3} \frac{N^d}{E[H_{V}] \capacity ({\mathsf V})} \leq 1 +  c_\epsilon \Rbd.
\end{align}
On the other hand, applying \eqref{loc1.2} to the left-hand inequality in \eqref{e:EH}, we have
\begin{align*}
 \frac{N^d}{E[H_{V}] \capacity ({\mathsf V})} \geq (1- c_\epsilon \Rbd) \bigl(1 - 2 \sup_{x \in {\mathbb T} \setminus B} |f^*_{V}(x)| \bigr) .
\end{align*}
Together with \eqref{e:fbd} and \eqref{loc1.3}, this proves Proposition~\ref{p:Gloc}.
\end{proof}

The following is a discrete version of the Kac moment formula, also known as \\Kha\'sminskii's Lemma (cf.~\cite{K59}):

\begin{lemma} \label{l:kac}
\textup{($d \geq 3$)}
 For any $V \subseteq {\mathbb T}$, $x \in {\mathbb T}$ and $k \geq 1$,
\begin{align}
 \label{e:kac} E_x[H_V^k] \leq k \textup{!} \sup_{y \in {\mathbb T}} E_y [H_V]^k.
\end{align}
\end{lemma}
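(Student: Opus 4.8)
The plan is to prove Khas'minskii's Lemma \eqref{e:kac} by a standard inductive argument on $k$, using the strong Markov property applied at successive returns to (or entrances into) $V$, or equivalently by exploiting the decomposition of $H_V^k$ via the Markov property at time $\tau_1$. Let me write $M = \sup_{y \in {\mathbb T}} E_y[H_V]$, which is finite (indeed $M \leq c N^d$, as recalled below \eqref{Er2}). The case $k=1$ is trivial since $E_x[H_V] \leq M = 1! \cdot M^1$.

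For the inductive step, I would proceed as follows. Assume \eqref{e:kac} holds for $k-1$, i.e. $\sup_y E_y[H_V^{k-1}] \leq (k-1)! \, M^{k-1}$. Using the identity $H_V^k = k \int_0^{H_V} (H_V - s)^{k-1} \, \d s$ together with the Markov property at time $s$ (more precisely, writing $H_V = s + H_V \circ \theta_s$ on the event $\{s < H_V\}$), one obtains for any $x \in {\mathbb T}$,
\begin{align*}
E_x[H_V^k] = k \, E_x\Bigl[ \int_0^{H_V} (H_V - s)^{k-1} \, \d s \Bigr] = k \int_0^\infty E_x\bigl[ \mathbf{1}_{\{s < H_V\}} E_{Y_s}[H_V^{k-1}] \bigr] \, \d s.
\end{align*}
By the induction hypothesis the inner expectation is at most $(k-1)! \, M^{k-1}$, so this is bounded by
\begin{align*}
k \cdot (k-1)! \, M^{k-1} \int_0^\infty P_x[s < H_V] \, \d s = k! \, M^{k-1} \, E_x[H_V] \leq k! \, M^{k-1} \cdot M = k! \, M^k,
\end{align*}
which is precisely \eqref{e:kac} for $k$. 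Taking the supremum over $x \in {\mathbb T}$ on the left closes the induction. (One must be slightly careful to justify interchanging expectation and the time integral and applying the Markov property inside; this is routine by Tonelli's theorem since everything is nonnegative, and by conditioning on $\mathcal{F}_s$.)

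I do not expect a genuine obstacle here — the statement is classical and the proof is a short induction. The only point requiring a little care is the measure-theoretic justification of the Fubini-type interchange and the application of the Markov property at a deterministic time $s$ for the continuous-time walk $(Y_t)$; since $H_V^{k-1}$ is a nonnegative functional and $E_y[H_V^{k-1}] < \infty$ uniformly in $y$ (which itself follows by the same induction, the base case being the bound $M < \infty$), there are no integrability issues. An alternative route, if one prefers to avoid the integral representation, is to condition on $\tau_1$ and the first step: writing $H_V = \tau_1 \mathbf{1}_{\{Y_0 \notin V\}} + (\text{remaining time})$ and expanding $(\,\cdot\,)^k$ by the binomial theorem leads, after using the Markov property at $\tau_1$ and the induction hypothesis on the lower-order terms, to the same bound; but the integral-representation argument above is cleaner and I would present that one.
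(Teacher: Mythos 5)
Your proof is correct. Note that the paper does not give an argument for Lemma~\ref{l:kac} at all: it simply cites \cite{FP99} (Kac's moment formula, equation (4) and the special case (6)). What you have written out is essentially the classical argument behind that citation — the induction on $k$ via the identity $H_V^k = k\int_0^{H_V}(H_V-s)^{k-1}\,\d s$, Tonelli, and the simple Markov property at the deterministic time $s$, using $\{s<H_V\}\in\mathcal{F}_s$ and $H_V = s + H_V\circ\theta_s$ on that event — so there is no genuine divergence in method, only the difference that yours is self-contained where the paper delegates to the literature. All the steps check out: the base case is trivial, $M=\sup_y E_y[H_V]\leq cN^d<\infty$ on the finite torus (the degenerate case $V=\emptyset$ makes both sides infinite and the bound vacuous), and the interchange and conditioning are justified exactly as you say. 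No gap.
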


\begin{proof}[Proof of Lemma~\ref{l:kac}.]
See \cite{FP99}, equation (4) and the relevant special case (6). 
\end{proof}

\subsection{The quasistationary distribution} \label{s:quasi}

We now introduce the quasistationary distribution on ${\mathbb T} \setminus B$ and some of its key properties. The importance of the quasistationary distribution is highlighted by Lemma~\ref{l:quasi}, showing that it is characterized as the equilibrium distribution of the random walk conditioned not to enter $B$. This fact will later allow us to show approximate independence between appropriately defined sections of the random walk trajectories and thereby make the random interlacements appear.

\medskip

In order to define the quasistationary distribution, we consider, for $B$ as in \eqref{e:boxes}, the $(N-|B|) \times (N-|B|)$-matrix 
\begin{align}
\label{e:transB}
P^B = \Big( \frac{1}{2d} \mathbf{1}_{\{x \sim y\}} \Big)_{x,y \in {\mathbb T} \setminus B}.
\end{align}
By the Perron-Frobenius theorem, the symmetric and irreducible matrix $P^B$ has a unique largest eigenvalue $\lambda^B_1$, whose associated eigenvector $v_1$ has non-negative entries (see \cite{S02}, Theorem~5.3.1, p.~82). The quasistationary distribution $\sigma$ on ${\mathbb T} \setminus B$ is then defined by
\begin{align}
\label{d:quasi}
\sigma(x) = \frac{(v_1)_x}{v_1^T \mathbf{1}},
\end{align}
where $(v_1)_x$ denotes the $x$-entry of the column vector $v_1$, and $\mathbf{1}$ denotes the vector with all entries equal to $1$. 
We now come to the key lemma, showing that the distribution of the random walk at time $\reg$ conditioned not to have entered $B$ is close to the quasistationary distribution.

\begin{lemma}
\textup{($d \geq 3$)}
\label{l:quasi}
\begin{align}
\label{e:quasi}
&\sup_{x, y \in {\mathbb T} \setminus B} \left| P_x[Y_\reg = y| H_B > \reg] - \sigma(y) \right| \leq e^{-c_\epsilon \log^2 N}.
\end{align}
\end{lemma}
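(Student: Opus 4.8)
The plan is to analyze the sub-Markovian semigroup generated by $P^B$ via its spectral decomposition and show that, after time $\reg = (N\log N)^2$, everything is dominated by the top Perron--Frobenius eigenvector $v_1$ up to an exponentially small error. First I would write, for $x,y \in {\mathbb T}\setminus B$ and $n\geq 0$, the identity $P_x[X_n = y, H_B > n] = ((P^B)^n)_{x,y}$, and then pass to continuous time by averaging against the Poisson clock: $P_x[Y_t = y, H_B > t] = \sum_{n\geq 0} e^{-t}\frac{t^n}{n!}((P^B)^n)_{x,y} = (e^{t(P^B - I)})_{x,y}$. Since $P^B$ is symmetric (with respect to counting measure on ${\mathbb T}\setminus B$) and irreducible, I would diagonalize it as $P^B = \sum_{i} \lambda_i^B \, w_i w_i^T$ with an orthonormal eigenbasis $w_1 = v_1/\|v_1\|_2, w_2, \dots$, and $1 > \lambda_1^B > \lambda_2^B \geq \cdots \geq \lambda_{N^d - |B|}^B > -1$, where $\lambda_1^B$ is simple with strictly positive eigenvector by Perron--Frobenius (as quoted from \cite{S02}). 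This gives
\begin{align*}
P_x[Y_\reg = y, H_B > \reg] = \sum_i e^{\reg(\lambda_i^B - 1)} (w_i)_x (w_i)_y.
\end{align*}

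The heart of the argument is then a spectral gap estimate: I need $1 - \lambda_1^B$ small (so the leading term does not itself decay too fast) and $\lambda_1^B - \lambda_2^B$ bounded below (so the subleading terms are negligible relative to the leading one). For the first, since ${\mathbb T}\setminus B$ occupies all but a vanishing fraction of the torus, a test-function argument (plugging a function supported away from $B$ into a Rayleigh quotient, or comparing with the full-torus spectral gap $1-\lambda_2 \geq cN^{-2}$ from \eqref{eq:I} and the remark cited there) should give $1 - \lambda_1^B \leq c N^{-2}$ up to logarithmic factors, whence $\reg(1 - \lambda_1^B) \leq c\log^2 N$ — crucially $\reg$ is chosen exactly so this is only polylogarithmic, not polynomial. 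For the gap $\lambda_1^B - \lambda_2^B$, I would use the connection to the mixing/return behavior of the killed walk: the killed walk, once it has survived a time of order $\reg$ without hitting $B$, has had ample time to ``mix'' on ${\mathbb T}\setminus B$ (which is still almost the whole torus of diameter $N$ and has spectral gap of order $N^{-2}$ in the unkilled dynamics), so $\lambda_2^B/\lambda_1^B$ should be bounded away from $1$ by something like $cN^{-2}$, giving $\reg(\lambda_1^B - \lambda_2^B) \geq c\log^2 N$, i.e. $e^{\reg(\lambda_2^B - \lambda_1^B)} \leq e^{-c\log^2 N}$. I expect this gap estimate to be the main obstacle: unlike the top eigenvalue, which only needs a one-sided test-function bound, separating $\lambda_1^B$ from $\lambda_2^B$ genuinely requires quantitative control of the sub-Markovian dynamics, and one must be careful that ``$B$ has small capacity/is easy to avoid'' translates into the spectral statement. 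A clean route is to bound $\lambda_2^B \leq \lambda_1^B \cdot (\text{relaxation of }Y\text{ conditioned to avoid }B)$, or alternatively to compare $(P^B)^k$ for a suitable $k \ll \reg$ directly with the rank-one operator $\lambda_1^B{}^k w_1 w_1^T$ using that all matrix entries become comparable after $k$ of order $N^2\log N$ steps.

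With the two spectral inputs in hand, the rest is bookkeeping. Write $P_x[Y_\reg = y, H_B > \reg] = e^{\reg(\lambda_1^B - 1)}\big[(w_1)_x(w_1)_y + R_{x,y}\big]$ where $|R_{x,y}| \leq \sum_{i\geq 2} e^{\reg(\lambda_i^B - \lambda_1^B)} |(w_i)_x(w_i)_y| \leq e^{\reg(\lambda_2^B - \lambda_1^B)} \sum_i |(w_i)_x||(w_i)_y| \leq e^{\reg(\lambda_2^B - \lambda_1^B)}$ by Cauchy--Schwarz and orthonormality. Dividing by $P_x[H_B > \reg] = \sum_y P_x[Y_\reg = y, H_B > \reg] = e^{\reg(\lambda_1^B-1)}\big[(w_1)_x \sum_y (w_1)_y + \sum_y R_{x,y}\big]$, the common factor $e^{\reg(\lambda_1^B-1)}$ cancels, the leading terms combine to $(w_1)_y / \sum_z (w_1)_z = \sigma(y)$ by the definition \eqref{d:quasi}, and the error terms contribute at most $e^{\reg(\lambda_2^B-\lambda_1^B)}$ divided by $(w_1)_x \sum_z (w_1)_z$. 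Here I must also control $(w_1)_x = \|v_1\|_2\,\sigma(x)\cdot(\text{normalization})$ from below uniformly in $x$; since $\sigma$ is the stationary law of an irreducible finite chain on a set of size comparable to $N^d$ with controlled geometry, $\min_x \sigma(x)$ is at worst exponentially small in a way that is still beaten by $e^{-\reg(\lambda_1^B - \lambda_2^B)} \leq e^{-c\log^2 N}$ only if we are careful — more robustly, one shows $\sigma(x) \geq e^{-c\log^2 N}$ or simply absorbs everything into the claimed bound $e^{-c_\epsilon \log^2 N}$ by taking $\reg$'s exponent with room to spare (indeed one can run the eigenvalue comparison at a time slightly less than $\reg$ and use the remaining $\reg$-time to smooth out $\sigma$ itself). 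Collecting the bounds yields $\big|P_x[Y_\reg = y \mid H_B > \reg] - \sigma(y)\big| \leq e^{-c_\epsilon \log^2 N}$ uniformly in $x,y \in {\mathbb T}\setminus B$, which is \eqref{e:quasi}.
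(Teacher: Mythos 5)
Your overall skeleton (spectral decomposition of the symmetric sub-Markovian matrix $P^B$ in continuous time, cancellation of the common factor $e^{-\reg(1-\lambda^B_1)}$ in the ratio, and an error controlled by the gap $\lambda^B_1-\lambda^B_2$ together with a lower bound on the entries of $v_1$) is exactly the paper's strategy, and your bookkeeping at the end is fine. But the two quantitative inputs you leave open are precisely where all the work lies, and as stated neither is established. First, the gap $\lambda^B_1-\lambda^B_2\geq c_\epsilon N^{-2}$: your heuristic that ``the killed walk has had time to mix'' is not an argument, and your fallback of comparing $(P^B)^k$ to a rank-one operator by showing ``all matrix entries become comparable'' is essentially circular (uniform comparability of the entries of the conditioned kernel is the kind of statement Lemma~\ref{l:quasi} itself encodes). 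The paper gets the gap cheaply but from two specific tools you do not invoke: the eigenvalue interlacing inequality, which gives $\lambda^B_2\leq\lambda_2\leq 1-cN^{-2}$ (the second eigenvalue of the \emph{full} torus kernel), and the Aldous--Brown identity $\lambda^B_1=1-1/E_\sigma[H_B]\geq 1-1/E[H_B]$, combined with $E[H_B]\geq c_\epsilon^{-1}N^{2+\epsilon(d-2)/2}$ from \eqref{e:Hlbd} and \eqref{a:capestB}. Note also that your proposed test-function bound $1-\lambda^B_1\leq cN^{-2}$ would \emph{not} suffice here even if proved: to subtract it from $1-\lambda_2\geq cN^{-2}$ you need $1-\lambda^B_1$ of strictly smaller order, which is exactly what the capacity of $B$ buys (this is Lemma~\ref{l:AB}).

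Second, the lower bound on $(v_1)_x$, equivalently on $\min_x\sigma(x)$, is not a detail you can ``absorb into the claimed bound'': if $\sigma(x)$ were genuinely exponentially small in $N$, the error term $e^{-\reg(\lambda^B_1-\lambda^B_2)}/\bigl((v_1)_x\,v_1^T\mathbf{1}\bigr)$ would blow up and the argument would fail, since $e^{-c\log^2N}$ only beats polynomial factors. Your suggested remedies do not work: $\reg$ is fixed by \eqref{e:reg}, so there is no ``room to spare'' in the exponent, and ``using the remaining time to smooth out $\sigma$'' is vacuous because $\sigma$ is the exact quasistationary law (invariant under the conditioned evolution) and, in any case, controlling the conditioned kernel over that remaining time is again circular. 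The paper devotes a separate lemma (Lemma~\ref{l:qell}) to proving the polynomial bound $\sigma(x)\geq c_\epsilon N^{-2d}$, via reversibility of the killed walk, a Harnack-chaining estimate $\inf_{y,z}P_y[H_z<H_B]\geq c_\epsilon N^{2-d}$ outside $B(0,N/4)$, and the qualitative convergence $P_{x'}[Y_t=x\,|\,H_B>t]\to\sigma(x)$ (Lemma~\ref{l:qconv}); with that and \eqref{d:quasi} one gets $\delta_x^TJ\delta_y,\ \delta_x^TJ\mathbf{1}\geq c_\epsilon N^{-4d}\gg e^{-c_\epsilon\log^2N}$, which is what actually closes the estimate. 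So the proposal identifies the right structure but has genuine gaps at both of its load-bearing steps.
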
 

\begin{proof}
Although we expected to find a proof of this lemma in the literature, we did not. A complete proof is given in the Appendix.
\end{proof}

Finally, we prove that the hitting distribution of $A$ by the random walk started from the quasistationary distribution with respect to $B$ is close to the normalized equilibrium measure on $A$. Together with the previous lemma, this shows in particular that successive visits to the set $A$ by the random walk, when separated by time intervals of length $\reg$ in which the walk is conditioned not to have hit $B$, are close to independent.

\begin{lemma} \label{l:quni}
\textup{($d \geq 3$)}
\begin{equation}
 \label{e:quni}
 \sup_{x \in \partial_i A} \left| \frac{P_\sigma [Y_{H_A}=x] \capacity({\mathsf A})}{e_A(x)} - 1 \right| \leq c_\epsilon \rbdd.
\end{equation}
\end{lemma}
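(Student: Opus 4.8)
The plan is to decompose the hitting distribution $P_\sigma[Y_{H_A}=x]$ according to whether the walk reaches $A$ before or after a long excursion away from $B$, and to use the fact that, by then, the walk has essentially forgotten its quasistationary starting point and looks like a walk started ``from infinity.'' Concretely, I would first split $P_\sigma[Y_{H_A}=x]$ over the value of $H_A$ relative to the regeneration time $\reg = (N\log N)^2$. On the event $\{H_A \leq \reg\}$, I would use Lemma~\ref{l:basic}, specifically \eqref{a:Drat}: since $\sigma$ is supported on ${\mathbb T}\setminus B$, the probability that the walk hits $A\cup\partial_e A$ before time $\reg$ is at most $\rbd$, so this contributes an error of order $\rbd$ uniformly in $x$. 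On the complementary event $\{H_A > \reg\}$, I would apply the simple Markov property at time $\reg$ and then invoke Lemma~\ref{l:quasi}: conditionally on $\{H_B > \reg\}$, the law of $Y_\reg$ is within $e^{-c_\epsilon \log^2 N}$ of $\sigma$ itself. Thus $P_\sigma[Y_{H_A}=x]$ equals, up to a multiplicative factor $1+O(\rbd)$ and additive errors of the above orders, $\sum_{z} \sigma(z) P_z[Y_{H_A}=x]$ again — but the real gain is that we may instead push the walk past $\reg$ out of a much larger ball so that it starts its final approach to $A$ from far away.

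The core of the argument is therefore the following standard comparison: if the walk is at a vertex $y$ at distance $\geq N^{1-\epsilon/4}$ (say) from $A$ and does not hit $B\cup\partial_e B$ for a time $\reg$, then by \eqref{a:Drat'} and \eqref{eq:I} its position is within $e^{-c\log^2 N}$ of uniform on ${\mathbb T}$; starting from uniform, the normalized hitting distribution of $A$ is close to $e_A(\cdot)/\capacity({\mathsf A})$. To make this last statement precise on the torus, I would transfer to ${\mathbb Z}^d$ via the isomorphism $\phi$ on the ball $C = B(0,N/4)$, using \eqref{a:gotoC} to ensure that a walk started away from $B'$ exits $C$ before hitting $A$ with overwhelming probability, so that the event $\{H_A < T_C\}$ can be identified with the corresponding event for walk on ${\mathbb Z}^d$ up to error $\Rbd$. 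On ${\mathbb Z}^d$, the classical identity $P_y[Y_{H_{\mathsf A}} = x,\, H_{\mathsf A}<\infty]$ is, by reversibility and the last-exit decomposition (see \cite{L91}, Proposition~2.2.1 or the hitting-distribution formula there), comparable to $e_{\mathsf A}(x) \cdot (\text{a factor depending only on } y \text{ weakly})$; averaging over the near-uniform distribution of the entrance point $y$ to the region around $A$, and using the escape-probability lower bound \eqref{a:escest} to control the denominators, yields that the normalized hitting distribution is $\tfrac{e_{\mathsf A}(x)}{\capacity({\mathsf A})}(1+O(\rbdd))$ uniformly in $x \in \partial_i {\mathsf A}$.

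Assembling the pieces: write $P_\sigma[Y_{H_A}=x] = P_\sigma[Y_{H_A}=x, H_A\leq\reg] + P_\sigma[Y_{H_A}=x, H_A>\reg]$; bound the first term by $\rbd$ using \eqref{a:Drat}; for the second, apply the Markov property at $\reg$, replace the conditional law of $Y_\reg$ by $\sigma$ (error $e^{-c_\epsilon\log^2 N}$ via Lemma~\ref{l:quasi}), and iterate this ``advance by $\reg$'' step a bounded number $O(\log N)$ of times — or once, after pushing out of a large ball — until the walk's law at the relevant stopping time is within $N^{-c_\epsilon}$ of uniform on ${\mathbb T}\setminus B$. Then use the ${\mathbb Z}^d$-comparison above, together with \eqref{a:gotoC}, \eqref{a:escest}, \eqref{a:capest}, to conclude that the resulting normalized hitting distribution of $A$ is $e_A(x)\capacity({\mathsf A})^{-1}(1+O(\rbdd))$. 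Since $P_\sigma[Y_{H_A}=x]$ sums to $1$ over $x\in\partial_i A$ and $e_A$ sums to $\capacity({\mathsf A})$, the additive errors can be absorbed into multiplicative ones, giving \eqref{e:quni}.

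I expect the main obstacle to be the ${\mathbb Z}^d$ hitting-distribution comparison with explicit $N^{-c_\epsilon}$ control: one must show that for all entrance points $y$ to an intermediate annulus around $A$, the hitting distribution $P_y[Y_{H_{\mathsf A}}=x \mid H_{\mathsf A}<\infty]$ is close to $e_{\mathsf A}(x)/\capacity({\mathsf A})$ uniformly, which requires quantitative harmonicity/Harnack-type estimates for the equilibrium potential of ${\mathsf A}$ on the complement of a slightly larger ball, and care that the error is polynomially small in $N$ rather than merely $o(1)$. The other delicate point is bookkeeping the accumulation of the $O(\rbd)$ errors across the (bounded or logarithmic number of) regeneration steps so that the final bound is still of order $\rbdd$; this is handled by noting that each step's error is of that order and there are at most polylogarithmically many of them, or by arranging a single regeneration step after exiting a ball of radius $N^{1-\epsilon/4}$.
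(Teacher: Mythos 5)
Your route (regenerate, push the walk ``far away,'' and compare the hitting distribution of $\mathsf A$ with the normalized equilibrium measure via a $\mathbb{Z}^d$ far-field estimate) is genuinely different from the paper's, but as written it has a fatal error-bookkeeping gap. The quantity you must control, $P_\sigma[Y_{H_A}=x]$, is itself polynomially small: it is of order $e_A(x)/\capacity({\mathsf A})\le c\,\capacity({\mathsf A})^{-1}\le c_\epsilon N^{-(1-\epsilon)(d-2)}$. Several of your error terms, however, are only additive of size $\rbd$ with an unspecified (small) exponent: the contribution of $\{H_A\le \reg\}$, which you bound by \eqref{a:Drat}, and the ``within $N^{-c_\epsilon}$ of uniform'' total-variation error after your regeneration steps. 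An additive error of order $N^{-c_\epsilon}$ can dominate the main term outright, so it cannot yield the multiplicative bound \eqref{e:quni}; and your closing claim that, because $\sum_x P_\sigma[Y_{H_A}=x]=1$ and $\sum_x e_A(x)=\capacity({\mathsf A})$, ``additive errors can be absorbed into multiplicative ones'' is not a valid step --- normalization gives no pointwise control. This is precisely why the paper's argument is structured so that every additive error is of scale $e^{-c_\epsilon\log^2 N}$ (coming from Lemma~\ref{l:quasi}) and the only divisions are by $e_A(x)\ge c_\epsilon N^{\epsilon-1}$ (via \eqref{a:escest}), so super-polynomially small additive errors become harmless multiplicative ones. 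A secondary but real issue is a conditioning mismatch: Lemma~\ref{l:quasi} concerns the law of $Y_\reg$ conditioned on $\{H_B>\reg\}$, not on $\{H_A>\reg\}$; on the event $\{H_B\le\reg<H_A\}$ the walk may be deep inside $B$, arbitrarily close to $A$, and your Markov-property-at-$\reg$ step gives no control there.

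For comparison, the paper avoids any ``hitting distribution from far away'' estimate (the step you yourself flag as the main obstacle, and which would need to be proved at polynomial precision). Instead it uses reversibility on the excursion event $\{U<{\tilde H}_A\}$, with $U$ as in \eqref{d:U}: summing $P_x[U<{\tilde H}_A, Y_{{\tilde H}_A}=y]$ over $y\ne x$ and exchanging $x$ and $y$, then factorizing $P_x[U<{\tilde H}_A, Y_{{\tilde H}_A}=y]\approx P_x[U<{\tilde H}_A]\,P_\sigma[Y_{H_A}=y]$ up to $e^{-c_\epsilon\log^2 N}$ by decomposing at the last visit to $B$ and applying Lemma~\ref{l:quasi}. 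This produces the identity $P_x[U<{\tilde H}_A]\approx P_\sigma[Y_{H_A}=x]\sum_{y\in\partial_i A}P_y[U<{\tilde H}_A]$, and the two-sided comparison $P_x[U<{\tilde H}_A]=(1\pm c_\epsilon N^{-c_\epsilon})\,e_A(x)$ follows from \eqref{a:gotoC} and \eqref{a:Drat'}; dividing by $e_A(x)\,\sum_y P_y[U<{\tilde H}_A]$, which is bounded below polynomially, gives \eqref{e:quni}. If you want to salvage your approach, you would have to prove the $\mathbb{Z}^d$ comparison $P_y[Y_{H_{\mathsf A}}=x\,|\,H_{\mathsf A}<T_{\mathsf C}]=(1+O(N^{-c_\epsilon}))\,e_{\mathsf A}(x)/\capacity({\mathsf A})$ uniformly over the relevant entrance points $y$, control the iteration over successive returns to the neighbourhood of $A$ multiplicatively (not additively), and bound the quick-hit contribution $P_\sigma[Y_{H_A}=x, H_A\le\reg]$ relative to $e_A(x)/\capacity({\mathsf A})$ rather than by $\rbd$; none of these is supplied by the lemmas you cite.
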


\begin{proof}
Let us consider the probability that the random walk started at $x \in \partial_i A$ stays outside of $B$ for a time interval of length $\reg$ before returning to $A$, and then returns to $A$ through some vertex other than $x$. By reversibility of the random walk with respect to the uniform distribution on $\mathbb T$, this probability can be written as
\begin{align}
 \label{quni1}
\sum_{y \in \partial_i A \setminus \{x\}} P_x [ U< {\tilde H}_A , Y_{{\tilde H}_A} =y] = \sum_{y \in \partial_i A \setminus \{x\}} P_y [ U<{\tilde H}_A , Y_{{\tilde H}_A} =x],
\end{align}
where
\begin{align}
\label{d:U} U = \inf \{t \geq \reg: Y_{[t- \reg,t]} \cap B = \emptyset \}.
\end{align}
We now denote the step of the last visit to $B$ before $U$ as (cf.~the second paragraph of this section for the notation)
\begin{align}
\label{d:L} 
L = \sup \{0 \leq l \leq N_U: Y_{\tau_l} \in B\}.
\end{align}
Summing over all possible values of $L$ and $Y_{\tau_L}$, we have
\begin{align*}
 P_x [ U< {\tilde H}_A , Y_{{\tilde H}_A} =y] &= \sum_{l \geq 0, z \in \partial_i B} P_x [L=l, Y_{\tau_l} = z, U< {\tilde H}_A , Y_{{\tilde H}_A} =y] \\
&= \sum_{l \geq 0, z \in \partial_i B} P_x [Y_{\tau_l} = z, \tau_l < {\tilde H}_A \wedge U, H_B \circ \theta_{\tau_{l+1}} > \reg , Y_{{\tilde H}_A} =y].
\end{align*}
Applying the simple Markov property at the times $\tau_{l+1}$ and $\tau_{l+1} + \reg$, the probability on the right-hand side becomes
\begin{align*}
\sum_{x' \in {\mathbb T} \setminus B} E_x \Big[ Y_{\tau_l}=z, \tau_l < {\tilde H}_A \wedge U , P_{Y_{\tau_{l+1}}} [H_B> \reg]  P_{Y_{\tau_{l+1}}} [Y_\reg = x' | H_B> \reg]  \Big] P_{x'} [Y_{H_A}=y],
\end{align*}
hence by Lemma~\ref{l:quasi},
\begin{align*}
 \left| P_x [ U< {\tilde H}_A , Y_{{\tilde H}_A} =y] - P_x [U< {\tilde H}_A ] P_\sigma [Y_{H_A}=y] \right| \leq e^{-c_\epsilon \log^2 N}.
\end{align*}
Applying this estimate to both sides in \eqref{quni1}, we obtain
\begin{align*}
\bigg| P_x [U< {\tilde H}_A] P_\sigma [Y_{H_A} \neq x] -  P_\sigma [Y_{H_A}=x] \sum_{y \in \partial_i A \setminus \{x\}} P_y [U<{\tilde H}_A ] \bigg| \leq e^{-c_\epsilon \log^2 N},
\end{align*}
or equivalently,
\begin{align}
\label{quni2}
\bigg| P_x [U<{\tilde H}_A] -  P_\sigma [Y_{H_A}=x] \sum_{y \in \partial_i A} P_y [U<{\tilde H}_A ] \bigg| \leq e^{-c_\epsilon \log^2 N}.
\end{align}
For any $x \in \partial_i A$, we have by \eqref{a:Drat'} and the strong Markov property applied at time $T_C$,
\begin{equation}
 \label{quni3}
\begin{split}
 P_x [U< {\tilde H}_A ] &\geq P_x [T_C < {\tilde H}_A] \inf_{z \in {\mathbb T} \setminus C} P_z [H_B > \reg] \geq e_A(x) (1- \rbdd), \text{ cf.~\eqref{d:cap}.}
\end{split}
\end{equation}
On the other hand, $P_x [U<{\tilde H}_A]$ is bounded from above by
\begin{align*}
P_x & [T_B<{\tilde H}_A] = P_{\phi(x)} [{\tilde H}_{\mathsf A} = \infty] + P_{\phi(x)} [T_{\mathsf B} < {\tilde H}_{\mathsf A}, {\tilde H}_A < \infty] \\
&\leq P_{\phi(x)} [{\tilde H}_{\mathsf A} = \infty] + P_{\phi(x)} [ T_{\mathsf B} < {\tilde H}_{\mathsf A}] \sup_{z \in {\mathbb Z}^d \setminus {\mathsf B}} P_z [{\tilde H}_{\mathsf A} < \infty] \leq e_A(x) (1 + c_\epsilon \Rbd),
\end{align*}
by \eqref{a:gotoC}. Together with \eqref{quni3}, we obtain that for any $x \in \partial_i A$,
\begin{align*}
(1-\rbdd) e_A(x) \leq P_x [U<{\tilde H}_A] \leq (1+ c_\epsilon \Rbd) e_A(x),
\end{align*}
which implies that 
\begin{align}
\label{quni4} 
\bigg| \frac{P_x [U<{\tilde H}_A] \capacity({\mathsf A})}{\sum_{y \in \partial_i A} P_y [U<{\tilde H}_A] e_A(x)} - 1 \bigg| \leq  c_\epsilon \rbdd.
\end{align}
Since $e_A(x) \geq c_\epsilon N^{\epsilon-1}$ by \eqref{a:escest}, multiplication of \eqref{quni2} by $\frac{\capacity({\mathsf A})}{e_A(x)\sum\limits_{y \in \partial_i A} P_y [U<{\tilde H}_A]}$ yields
\begin{align*}
\bigg| \frac{P_x[U<{\tilde H}_A ] \capacity({\mathsf A})}{\sum_{y \in \partial_i A} P_y [U<{\tilde H}_A] e_A(x)} - \frac{P_\sigma [Y_{H_A}=x] \capacity ({\mathsf A})}{e_A(x)} \bigg| \leq e^{-c_\epsilon \log^2 N},
\end{align*}
and together with \eqref{quni4} completes the proof.
\end{proof}

\section{Poissonization}
\label{s:dom}

We now come to the Poissonization step of the domination argument, culminating in Proposition~\ref{p:c2}. This proposition provides a coupling between the random walk trajectory and two Poisson random measures on the space $\Gamma$ of trajectories in $\mathbb T$, in such a way that the traces of these random measures dominate the random walk trajectory intersected with $A$ from above and from below with high probability. This coupling will then be a crucial part for the domination of $X(u, {\mathsf A})$ by random interlacements, carried out in Sections~\ref{s:dom+} and \ref{s:dom-}.

\medskip

We begin by chopping up the random walk into suitable excursions. In words, the random walk starts an excursion by entering $A$, and ends the excursion as soon as it has not visited $B$ for a time interval of length $\reg$ for $A, B$ defined in \eqref{e:boxes}, see Figure~\ref{f:timeU}. Formally, we recall the definition of $U$ from \eqref{d:U} and define the successive return and end times by (cf.~Figure~\ref{f:timeU})
\begin{equation}
\label{e:exc}
\begin{split}
&R_1 = H_A, \, U_1 = R_1 + U \circ \theta_{R_1} \,\text{ and for $k \geq 2$,}\\
&R_k = U_{k-1} + R_1 \circ \theta_{U_{k-1}}, \,  \, U_k = U_{k-1} + U_1 \circ \theta_{U_{k-1}}.
\end{split}
\end{equation}
\begin{figure}
\psfrag{A}[cl][cl][2][0]{$A$}
\psfrag{B}[cl][cl][2][0]{$B$}
\psfrag{R1}[cl][cl][2][0]{$R_1$}
\psfrag{R2}[cl][cl][2][0]{$R_2$}
\psfrag{t}[cl][cl][2][0]{$t_*$}
\psfrag{U}[cl][cl][2][0]{$U_1$}
\begin{center}
\includegraphics[angle=0, width=0.3\textwidth]{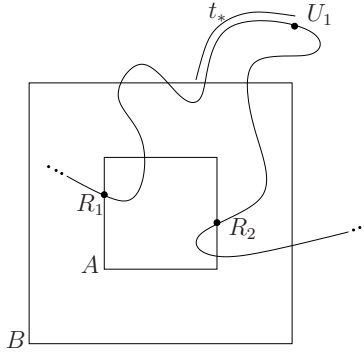}\\
\caption{The times defined in \eqref{e:exc}.}\label{f:timeU}
\end{center}
\end{figure}
The random walk trajectories between the times $R_i$ and $U_i$ will then be compared with independent trajectories. On an auxiliary probability space $({\bar \Omega}, {\bar {\mathcal F}}, {\bar P}_\sigma)$, we thus introduce
\begin{equation}
\label{e:Yhat}
\begin{split}
&\text{iid random walks $(\bar Y^i)_{i \geq 1}$, distributed as $(Y_{t \wedge U_1})_{t \geq 0}$ under $P_\sigma$,}
\end{split}
\end{equation}
as well as, for any $u>0$ and $\epsilon \in (0,1/3)$ that remain fixed throughout this section, 
\begin{equation}
\label{e:J}
\begin{split}
&\text{independent random variables $J^-$ and $J^+$ with Poisson}\\
&\text{distribution with parameters $(1-2 \epsilon) u \capacity ({\mathsf A})$ and $(1+2 \epsilon) u \capacity ({\mathsf A})$.} 
\end{split}
\end{equation}

We need a basic large deviations estimate on $J^\pm$:

\begin{lemma} \label{l:poissonld}
\textup{($d \geq 3$)}
\begin{align}
\label{e:poissonld}
{\bar P}_\sigma[ J^- \leq (1-3\epsilon/2) u \capacity ({\mathsf A}) \leq (1+3\epsilon/2) u \capacity ({\mathsf A}) \leq J^+ ] \geq 1 - e^{-c_{u,\epsilon} \capacity({\mathsf A})}. 
\end{align}
\end{lemma}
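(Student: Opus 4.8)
This is a routine large deviations estimate, and the plan is to treat $J^-$ and $J^+$ separately by the Chernoff bound and then combine. Abbreviate $m = u\capacity({\mathsf A})$, so that $J^-$ is Poisson with mean $(1-2\epsilon)m$ and $J^+$ is Poisson with mean $(1+2\epsilon)m$ (cf.~\eqref{e:J}). The event in \eqref{e:poissonld} fails precisely when $J^- > (1-3\epsilon/2)m$ or $J^+ < (1+3\epsilon/2)m$. Since $J^-$ and $J^+$ are independent, a union bound reduces the claim to showing that each of these two events has ${\bar P}_\sigma$-probability at most $e^{-c_{u,\epsilon}\capacity({\mathsf A})}$, the factor $2$ being absorbed into the constant.

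For the first event, the key point is that $\epsilon>0$ makes $1-3\epsilon/2 > 1-2\epsilon$, so the threshold $(1-3\epsilon/2)m$ lies strictly above the mean of $J^-$; hence this is a genuine upper-tail event. I would apply the exponential Markov inequality together with the Poisson moment generating function ${\bar E}_\sigma[e^{\lambda J^-}] = e^{(1-2\epsilon)m(e^\lambda-1)}$: for every $\lambda>0$,
\[
{\bar P}_\sigma\big[J^- \geq (1-3\epsilon/2)m\big] \leq \exp\Big( m\big((1-2\epsilon)(e^\lambda-1) - \lambda(1-3\epsilon/2)\big)\Big),
\]
and then optimize over $\lambda$. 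Choosing $\lambda = \log\frac{1-3\epsilon/2}{1-2\epsilon} > 0$ turns the coefficient of $m$ into $-(1-2\epsilon)\,h\!\big(\tfrac{1-3\epsilon/2}{1-2\epsilon}\big)$ with $h(t) = t\log t - t + 1$, which is a strictly negative constant depending only on $\epsilon$; multiplying by $u$ through $m=u\capacity({\mathsf A})$ gives ${\bar P}_\sigma[J^- \geq (1-3\epsilon/2)m] \leq e^{-c_{u,\epsilon}\capacity({\mathsf A})}$.

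The second event is handled symmetrically: since $1+3\epsilon/2 < 1+2\epsilon$, the threshold $(1+3\epsilon/2)m$ lies strictly below the mean of $J^+$, so it is a genuine lower-tail event, and applying the exponential Markov inequality to $-J^+$, i.e.\ ${\bar P}_\sigma[J^+ \leq (1+3\epsilon/2)m] \leq e^{\lambda(1+3\epsilon/2)m}\,{\bar E}_\sigma[e^{-\lambda J^+}]$ for $\lambda>0$ and optimizing, again yields a bound of the form $e^{-c_{u,\epsilon}\capacity({\mathsf A})}$. Combining the two estimates via the union bound established above finishes the proof. There is no substantive obstacle here; the only thing requiring care is the (elementary) check that $\epsilon>0$ produces a genuine gap between each Poisson mean and the corresponding threshold, which is exactly what makes the Cramér exponents strictly positive.
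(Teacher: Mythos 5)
Your proof is correct and follows essentially the same route as the paper, which simply invokes a standard exponential (Chernoff-type) bound on the probability that the Poisson variables $J^\pm$ deviate by $\epsilon u \capacity({\mathsf A})/2$ from their means $(1\pm 2\epsilon)u\capacity({\mathsf A})$. Your explicit optimization of the exponential Markov inequality, with the strictly positive Cram\'er exponent coming from the gap $\epsilon/2$ between each mean and its threshold, is exactly the computation the paper leaves implicit.
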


\begin{proof}
The statement follows from a standard exponential bound on the probability that the Poisson-distributed random variable $J^\pm$ does not take a value in the interval $(1 \pm 2 \epsilon - \epsilon/2) u \capacity({\mathsf A}), (1 \pm 2 \epsilon + \epsilon/2) u \capacity({\mathsf A}) )$.
\end{proof}

The estimates derived in the previous section now allow us to relate in the following lemma the (dependent) random walk excursions $Y_{[R_i,U_i]}$ to the independent excursions ${\bar Y}^i_{[R_i,U_i]}$. Note that the first excursion $Y_{[R_1,U_1]}$ does not feature in the statement. The reason is that the uniformly chosen starting point of the random walk makes $Y_{R_1}$ behave differently from the other entrance points in $A$.

\begin{lemma}
\label{l:decouple}
\textup{($d \geq 3$)}
For any $k \geq 2$, there exists a coupling $(\Omega_0, {\mathcal F}_0, Q_0)$ of \\ $\left( Y_{[R_i,U_i]} \cap A \right)_{i=2}^k$ under $P$ and $\big( \bar Y^{i}_{[R_1,U_1]} \cap A \big)_{i=2}^k$ under ${\bar P}_\sigma$, such that
\begin{align}
\label{e:decouple}
Q_0 \left[ \left( Y_{[R_i,U_i]} \cap A \right)_{i=2}^k = \left( \bar Y^{i}_{[R_1,U_1]} \cap A \right)_{i=2}^k \right] \geq 1- k e^{-c_\epsilon \log^2 N}.
\end{align}
\end{lemma}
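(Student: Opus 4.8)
The plan is to build the coupling step by step along the successive excursions, at each step feeding in a fresh independent excursion $\bar Y^i$ and paying a small error for the discrepancy between the true conditional law of the walk just before time $R_i$ and the quasistationary distribution $\sigma$. Concretely, I would argue by induction on $k$. Let $\mathcal{G}_i = \sigma(Y_{[0,U_i]})$ be the $\sigma$-algebra generated by the walk up to the end of the $i$-th excursion. The key observation is that, conditionally on $\mathcal{G}_{i-1}$, the walk after time $U_{i-1}$ evolves as a walk started from $Y_{U_{i-1}} \in {\mathbb T} \setminus B$; by definition of $U$ in \eqref{d:U}, at time $U_{i-1}$ the walk has just spent a time interval of length $\reg$ outside $B$, so $Y_{U_{i-1}}$ is distributed (conditionally on the position $\reg$ steps earlier and on not hitting $B$) exactly as $Y_\reg$ under $P_z[\,\cdot \mid H_B > \reg]$ for the relevant $z \in {\mathbb T}\setminus B$. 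By Lemma~\ref{l:quasi}, the law of $Y_{U_{i-1}}$ is therefore within total variation distance $e^{-c_\epsilon \log^2 N}$ of $\sigma$, uniformly in the past. Hence one can couple the continuation $(Y_{U_{i-1}+t})_{t\geq 0}$ with an independent walk $\bar Y^i$ started from $\sigma$ (stopped at its own time $U_1$, i.e. as in \eqref{e:Yhat}) so that they agree with probability at least $1 - e^{-c_\epsilon \log^2 N}$; on the agreement event, $Y_{[R_i,U_i]} = \bar Y^i_{[R_1,U_1]} \circ (\text{shift})$, and in particular the traces in $A$ coincide.

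Carrying this out requires a little care about the conditioning, since the events $\{R_i, U_i\}$ and the target excursions are measurable with respect to data after $U_{i-1}$, while the coupling error at step $i$ must be controlled \emph{uniformly over $\mathcal{G}_{i-1}$}. The clean way is to apply the strong Markov property at time $U_{i-1}$: conditionally on $\mathcal G_{i-1}$ and on $Y_{U_{i-1}} = w$, the shifted process $(Y_{U_{i-1}+t})_{t\ge 0}$ has law $P_w$, and $R_i - U_{i-1} = H_A\circ\theta_{U_{i-1}}$, $U_i - U_{i-1} = U_1\circ\theta_{U_{i-1}}$. But to get the \emph{distribution} of $Y_{U_{i-1}}$ one needs to back up one more regeneration window: write $U_{i-1}$ in terms of its associated time $L$ (as in \eqref{d:L}), apply the simple Markov property at the step of the last visit to $B$ and again $\reg$ steps later, exactly as in the proof of Lemma~\ref{l:quni} (the displayed computation there with $U$ and $L$ is precisely the tool), and then invoke Lemma~\ref{l:quasi} to replace the conditional law of the position at the end of that window by $\sigma$ at the cost $e^{-c_\epsilon\log^2 N}$. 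This produces, for each $i=2,\dots,k$, a coupling of $Y_{[R_i,U_i]}$ with an independent $\bar Y^i \laweq (Y_{t\wedge U_1})_{t\ge 0}$ under $P_\sigma$ that succeeds with probability $\ge 1 - e^{-c_\epsilon\log^2 N}$ conditionally on everything before; the $\bar Y^i$ can be taken independent of each other and of $\mathcal G_{i-1}$ by enlarging the space with auxiliary randomness.

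Finally I would assemble the global coupling $(\Omega_0,\mathcal F_0,Q_0)$ by concatenating these conditional couplings: build $\bar Y^2,\bar Y^3,\dots,\bar Y^k$ sequentially, each time using the above to match $Y_{[R_i,U_i]}\cap A$ with $\bar Y^i_{[R_1,U_1]}\cap A$, and let the failure events $F_i$ be the (conditional) events of non-agreement at step $i$. Then on $(\bigcup_{i=2}^k F_i)^c$ we have $\left(Y_{[R_i,U_i]}\cap A\right)_{i=2}^k = \left(\bar Y^i_{[R_1,U_1]}\cap A\right)_{i=2}^k$, and by a union bound $Q_0\big[\bigcup_{i=2}^k F_i\big] \le \sum_{i=2}^k Q_0[F_i] \le (k-1)e^{-c_\epsilon\log^2 N} \le k e^{-c_\epsilon\log^2 N}$, giving \eqref{e:decouple}. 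The main obstacle is purely bookkeeping: making precise that the error in Lemma~\ref{l:quasi} applies \emph{uniformly over the past} at each step, so that the conditional failure probabilities are deterministically bounded and the union bound goes through; the regeneration structure built into the definition \eqref{d:U} of $U$ (a full window of length $\reg$ away from $B$ before each excursion ends) is exactly what makes this possible, since it guarantees that the entrance point of the next excursion is seen only after the walk has had time $\reg$ to mix while avoiding $B$.
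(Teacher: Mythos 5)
Your proposal is correct and follows essentially the same route as the paper: at the end of each excursion you back up to the step after the last visit to $B$ (the time $L$ of \eqref{d:L}, handled via the Markov property since it is not a stopping time), use Lemma~\ref{l:quasi} together with a maximal coupling to match the conditional law of $Y_{U_{i-1}}$ with $\sigma$ up to an error $e^{-c_\epsilon \log^2 N}$ uniformly in the past, run the next excursion with shared randomness on the agreement event, and conclude by a union bound over the $k-1$ steps. The paper merely makes this sequential construction explicit by writing the product formula \eqref{dec0} for $Q_0$ and verifying the two marginals via the identity \eqref{dec1}, which is the same bookkeeping you describe.
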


\begin{proof}
For each $x \in {\mathbb T} \setminus B$, we use Lemma~\ref{l:quasi} and \cite{LPW09}, Proposition~4.7, p.~50, to construct a coupling $q_x$ of $Y_\reg$ under $P_x [.|H_B>\reg]$ and a $\sigma$-distributed random variable $\Sigma$ such that
\begin{align}
\label{dec00}
q_x [Y_\reg \neq \Sigma] \leq e^{-c_\epsilon \log^2 N}.
\end{align}
For $L$ as in \eqref{d:L} and $i \geq 1$, we define $L_i = L \circ \theta_{R_i} + N_{R_i}$ as the last step at which the $i$-th excursion is in $B$.
For simplicity, we write
\begin{align*}
{\mathcal A}_i = Y_{[R_i,U_i]} \cap A = Y_{[R_i,\tau_{L_i}]} \cap A, \text{ and } {\bar {\mathcal A}}_i = {\bar Y}^i_{[R_1,U_1]} \cap A = {\bar Y}^i_{[R_1,\tau_{L_1}]} \cap A, 
\end{align*}
as well as ${\mathcal A} =  \left( {\mathcal A}_i  \right)_{i=2}^k$  and $\bar{\mathcal A} = \left( {\bar {\mathcal A}}_i\right)_{i=2}^k$ 
throughout this proof. In particular, our task is to construct a coupling of $\mathcal A$ and $\bar{\mathcal A}$.
We use the coupling in \eqref{dec00} to couple $\mathcal A$ and $\bar{\mathcal A}$ together with two $({\mathbb T} \setminus B \times \partial_e B)^{k-1}$-valued random variables ${\mathcal X}$ and ${\bar {\mathcal X}}$, distributed as $(Y_{U_{i-1}},X_{L_i+1})_{i=2}^k$ under $P$ and as $(Y_0^i,X^i_{L_1+1})_{i=2}^k$ under ${\bar P}_\sigma$. In words, the construction goes as follows: given any $x_1^+ \in \partial_e B$ chosen according to $P[X_{L_1 +1}=\cdot]$, we choose $x_2$ and ${\bar x}_2 \in {\mathbb T} \setminus B$ according to $q_{x_1^+}[Y_{t_*}=\cdot, \Sigma = \cdot]$. If $x_2$ and ${\bar x}_2$ are equal (which is the typical case, cf.~\eqref{dec00}), then we choose $S_2 = \bar{S}_2 \in 2^A$ and $x_2^+ = {\bar x}_2^+ \in \partial_e B$ according to $P_{x_2}[{\mathcal A}_1 = \cdot, X_{L_1 +1}=\cdot]$. If $x_2$ and ${\bar x}_2$ differ, then we choose $(S_2,x_2^+)$ and $(\bar{S}_2,{\bar x}_2^+)$ independently according to $P_{x_2}[{\mathcal A}_1 = \cdot, X_{L_1+1}=\cdot]$ and $P_{{\bar x}_2}[{\mathcal A}_1 = \cdot X_{L_1 +1}=\cdot]$. In any case, we repeat the above with $x_2^+$ in place of $x_1^+$ and iterate until step $k$. Formally, for $S = (S_2, \ldots, S_k)$ and $\bar{S} = (\bar{S}_2, \ldots, \bar{S}_k) \in (2^A)^{k-1}$, and $\mathbf{x} = (x_2, x_2^+, \ldots, x_k, x_k^+)$ and $\bar{\mathbf{x}} = ({\bar x}_2, {\bar x}_2^+, \ldots, {\bar x}_k , {\bar x}_k^+ ) \in ({\mathbb T} \setminus B \times \partial_e B)^{k-1}$, we set
\begin{equation}
\label{dec0}
\begin{split}
&Q_0 \left[{\mathcal A} = S, {\mathcal X} = \mathbf{x} , \bar{\mathcal A} = \bar{S}, \bar{\mathcal{X}} = \bar{\mathbf{x}} \right]\\
&= \sum_{x_1^+ \in \partial_e B} P[X_{L_1+1} = x_1^+] \prod_{i=2}^k \bigg( q_{x_{i-1}^+} \left[ Y_{t_*} = x_i, \Sigma = {\bar x}_i \right]\\
&\qquad \qquad \qquad \Big( \mathbf{1}_{x_i = \bar{x}_i} P_{x_i} [{\mathcal A}_1 = S_i, X_{L_1 +1} = x_i^+ ] \mathbf{1}_{x_i^+ = \bar{x}_i^+, S_i = \bar{S}_i}\\
&\qquad \qquad \qquad + \mathbf{1}_{x_i \neq \bar{x}_i} P_{x_i} [{\mathcal A}_1 = S_i, X_{L_1 +1} = x_i^+ ] P_{{\bar x}_i} [{\mathcal A}_1 = \bar{S}_i, X_{L_1 +1} = {\bar x}_i^+ ] \Big) \bigg) .
\end{split}
\end{equation}
Let us check that $\mathcal A$ and $\bar{\mathcal A}$ indeed have the claimed distributions under $Q_0$. 
Summing \eqref{dec0} over $S$ and $\mathbf x$, one obtains 
\begin{equation*}
\begin{split}
Q_0[\bar{\mathcal A} = \bar{S}, \bar{\mathcal X} = \bar{ \mathbf x}] &= \prod_{i=2}^k \sigma(\bar{x}_i)P_{\bar{x}_i} [\bar{\mathcal A}_1 = \bar{S}_i, X_{L_1+1}= \bar{x}_i^+]\\
&= \bar{P}_\sigma \left[ \bar{\mathcal A} = \bar{S}, (Y^i_0,X^i_{L_1+1})_{i=2}^k = \bar{\mathbf x} \right],
\end{split}
\end{equation*}
which upon summation over $\bar{\mathbf x}$ yields $Q_0 [\bar{\mathcal A} = \bar{S}] = \bar{P}_\sigma [\bar{\mathcal A}=\bar{S}]$, as required.
On the other hand, observe that, although $L_1$ is not a stopping time, we have $\{X_l \in B, L_1 \geq l\} \in {\mathcal F}_{\tau_l}$, and that $\{L_1 = l\} = \{X_l \in B, L_1 \geq l\} \cap \theta^{-1}_{\tau_l} \{H_B > t_*\}$ for $l \geq 0$. Hence, the Markov property shows that for any $2 \leq i \leq k$ and any $x \in {\mathbb T}$ and $S' \subseteq A$,
\begin{equation}
\label{dec1}
\begin{split}
&P_x [{\mathcal A}_1 = S', X_{L_1+1}= x_i^+ ] q_{x_i^+} [Y_\reg=x_{i+1}] \\
&\qquad = \sum_{l \geq 0} P_x \left[{\mathcal A}_1 = S', X_{l+1} = x_i^+, X_l \in B, L_1 \geq l  \right]  P_{x_i^+}[Y_\reg = x_{i+1} , H_B > \reg]\\
&\qquad = P_x [{\mathcal A}_1 =S', X_{L_1+1} = x_i^+, Y_{U_1} = x_{i+1}].
\end{split}
\end{equation}
Summing \eqref{dec0} over $\bar{S}$ and $\bar{\mathbf x}$ and making inductive use of \eqref{dec1}, we infer that
\begin{equation*}
\begin{split}
&Q_0 [{\mathcal A}=S, \mathcal{X} = \mathbf{x}] = \sum_{x_1^+ \in \partial_e B} P[X_{L_1+1} = x_1^+] \prod_{i=2}^k \left( q_{x_{i-1}^+} [Y_\reg = x_i] P_{x_i}[{\mathcal A}_1 = S_i, X_{L_1+1}=x_i^+] \right)\\
&=P[Y_{U_1} = x_2] \left(\prod_{i=2}^{k-1} P_{x_i} [{\mathcal A}_1 = S_i, X_{L_1+1}= x_i^+, Y_{U_i} = x_{i+1} ] \right) P_{x_k} [{\mathcal A}_1 = S_k, X_{L_1+1}=x_k^+]\\
&= P \left[{\mathcal A}=S, (Y_{U_{i-1}}, X_{L_i+1})_{i=2}^k = \mathbf{x} \right] \text{ (by the strong Markov property),}
\end{split}
\end{equation*}
which implies the required identity $Q_0[{\mathcal A}=S] = P[{\mathcal A}=S]$.
Finally, by \eqref{dec0}, $\mathcal A$ and $\bar{\mathcal A}$ are different under $Q_0$ only on the event $\{ \mathcal{X} \neq \bar{\mathcal X}\}$, which by \eqref{dec00} and \eqref{dec0} occurs with probability at most $ke^{-c_\epsilon \log^2 N}$, proving \eqref{e:decouple}. 
\end{proof}

Next, we estimate how many of the excursions defined in \eqref{e:exc} typically occur until time $uN^d$. We set
\begin{align}
\label{e:k}
 k^\pm = \left[ (1 \pm \epsilon) u \, \capacity ({\mathsf A}) \right],
\end{align}
and prove the following estimate:

\begin{lemma}
\label{l:poisson}
\textup{($d \geq 3$)}
\begin{align}
\label{e:poisson1} 
&P [ R_{k^+} \leq uN^d ] \leq
e^{-c_{u, \epsilon} \capacity ({\mathsf A})},   \\
\label{e:poisson2} 
&P [ R_{k^-} \geq uN^d] \leq e^{-c_{u, \epsilon} \capacity ({\mathsf A})}.
\end{align}
\end{lemma}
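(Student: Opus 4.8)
The plan is to control the return times $R_k$ by comparing them with sums of roughly independent excursion durations, and then applying a large-deviations estimate. The key point is that between consecutive returns to $A$, the random walk spends a time of length at least $\reg$ away from $B$ (by definition of $U$), and in fact typically a much longer time, since by \eqref{a:Drat} the walk started from outside $B$ rarely hits $A \cup \partial_e A$ within a time interval of length $\reg$. Thus the expected inter-return time should be of order $E[H_A] \approx N^d/\capacity({\mathsf A})$ by Proposition~\ref{p:Gloc}, and summing $k^\pm \approx (1 \pm \epsilon) u\capacity({\mathsf A})$ such durations gives something concentrated around $(1\pm\epsilon)uN^d$, which is on the correct side of $uN^d$.

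Concretely, first I would write $R_{k} = H_A + \sum_{i=2}^{k} (R_i - U_{i-1}) + \sum_{i=1}^{k-1}(U_i - R_i)$, where $R_i - U_{i-1} = H_A \circ \theta_{U_{i-1}}$ is the hitting time of $A$ from the post-excursion position, and $U_i - R_i = U \circ \theta_{R_i} \geq \reg$ is the excursion length. For the lower bound \eqref{e:poisson1}, since all terms are nonnegative it suffices to bound $\sum_{i=2}^{k^+}(R_i-U_{i-1})$ from below. Using the strong Markov property at the times $U_{i-1}$, each increment $R_i - U_{i-1}$ stochastically dominates $\reg \cdot (\text{number of failed attempts})$: more precisely, writing the walk's progress in blocks of length $\reg$ starting from positions outside $B$, \eqref{a:Drat} shows that in each such block the walk hits $A$ with probability at most $\rbd$, so $R_i - U_{i-1} \geq \reg \cdot G_i$ where $G_i$ is stochastically larger than a geometric variable with success probability $\rbd$, hence $E[R_i - U_{i-1}] \geq c\reg N^{c_\epsilon} = cN^2\log^2 N \cdot N^{c_\epsilon}$. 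Actually the cleaner route: by Proposition~\ref{p:Gloc} and \eqref{a:capest}, $E_x[H_A] \geq (1 - c_\epsilon\rbd)N^d/\capacity({\mathsf A})$ for $x$ suitably distributed (uniform or quasistationary), so $E[\sum_{i=2}^{k^+}(R_i-U_{i-1})] \geq (k^+-1)(1-c_\epsilon\rbd)N^d/\capacity({\mathsf A}) \geq (1 + \epsilon/2)uN^d$ for large $N$. Then apply Lemma~\ref{l:kac} (Khas'minskii) to get an exponential moment bound on each $H_A$ increment, yielding exponential concentration of the sum around its mean; since the mean exceeds $uN^d$ by a factor $(1+\epsilon/2)$, the probability that the sum (hence $R_{k^+}$) is at most $uN^d$ is at most $e^{-c_{u,\epsilon}\capacity({\mathsf A})}$. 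The number of summands is $\sim\capacity({\mathsf A})$, which is exactly the rate we want.

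For the upper bound \eqref{e:poisson2}, I would bound $R_{k^-} \leq H_A + \sum_{i=2}^{k^-}(R_i - U_{i-1}) + \sum_{i=1}^{k^--1}(U_i-R_i)$ and control both sums. The hitting-time sum is handled as above but using the upper bound $E_x[H_A] \leq (1+c_\epsilon\rbd)N^d/\capacity({\mathsf A})$ from Proposition~\ref{p:Gloc} together with \eqref{Er3}, so its expectation is at most $(1-\epsilon/2)uN^d$ for large $N$, with exponential concentration again from Lemma~\ref{l:kac}. For the excursion-length sum, I need that each $U_i - R_i$ is not too large: $U - \reg$ is the additional time spent until the walk accumulates an interval of length $\reg$ without visiting $B$, and starting from $\partial_i A$ the walk exits $C$ before returning to $B \cup \partial_e B$ with good probability (via \eqref{a:gotoC}, \eqref{a:Drat'}), so $U_i - R_i$ has an exponential tail with mean $O(\reg \cdot N^{c})$ for some controlled power — certainly $o(N^d/\capacity({\mathsf A}))$ since $\reg = N^2\log^2 N$ and $\capacity({\mathsf A}) \leq c'_\epsilon N^{(1-\epsilon)(d-2)}$. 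Summing $k^-$ of these stays well below $\epsilon u N^d/2$ with probability $1 - e^{-c_{u,\epsilon}\capacity({\mathsf A})}$. Combining, $R_{k^-} < uN^d$ except on an event of probability $e^{-c_{u,\epsilon}\capacity({\mathsf A})}$.

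The main obstacle I anticipate is getting the exponential concentration at the right rate $e^{-c_{u,\epsilon}\capacity({\mathsf A})}$ rather than something weaker: the increments $R_i - U_{i-1}$ are not i.i.d. (the post-excursion distributions vary and are only approximately quasistationary), so the standard i.i.d. Cramér bound does not apply directly. The fix is to use Khas'minskii's lemma (Lemma~\ref{l:kac}) conditionally via the Markov property at each $U_{i-1}$: $E_x[e^{\lambda H_A}] \leq (1 - \lambda\sup_y E_y[H_A])^{-1}$ uniformly in the starting point for $\lambda$ small enough, which gives a uniform bound on the conditional exponential moments and hence, by iterating the tower property over the $k^\pm$ excursions, an exponential moment bound for the whole sum; Chernoff then yields the claimed rate since the number of excursions is proportional to $\capacity({\mathsf A})$. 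A secondary technical point is bounding the tail of $U_i - R_i$, but this follows routinely from Lemma~\ref{l:basic} by a block argument identical in spirit to the one used for $H_A$.
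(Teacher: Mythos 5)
Your proposal is correct and follows essentially the same route as the paper: bound $R_{k^\pm}$ by sums of inter-return hitting times (plus, for \eqref{e:poisson2}, the excursion lengths $U\circ\theta_{R_i}$), then apply the exponential Chebychev inequality with the strong Markov property at the times $U_i$, controlling the conditional exponential moments of $H_A$ via Lemma~\ref{l:kac} and the uniform comparability of $E_x[H_A]$ for $x\notin B$ (Propositions~\ref{pr:Eratio} and \ref{p:Gloc}), and the excursion lengths via a block argument based on Lemma~\ref{l:basic}. The only cosmetic difference is that the paper bounds $\sup_{x\in B}E_x[e^{U/\reg\cdot(\reg/t_N)}]$ directly rather than through a tail bound on each $U_i-R_i$, and the uniformity you need for the lower-tail estimate is exactly \eqref{e:fbd} (uniform over all starting points outside $B$, not just "suitably distributed" ones).
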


\begin{proof}
For ease of notation we write
\begin{align*}
 s_N = \inf_{y \in {\mathbb T} \setminus C} E_y [H_A], \text{ and } t_N = \sup_{y \in {\mathbb T}} E_y [H_A]
\end{align*}
throughout this proof. We begin with the proof of \eqref{e:poisson1}.
The observation that $R_k \geq H_A \circ \theta_{U_1} + \cdots + H_A \circ \theta_{U_{k-1}}$, $P$-a.s., the exponential Chebychev inequality and an inductive application of the strong Markov property yield, for any $\nu>0$,
\begin{align}
\label{eld1}
P [ R_{k^+} \leq uN^d] \leq e^{\nu u \frac{N^d}{s_N}} \sup_{y \in {\mathbb T} \setminus B} E_y \left[ e^{- \frac{\nu}{s_N} H_A} \right]^{k^+ -1}.
\end{align}
Next, we bound the expectation with help of the inequality $e^{-t} \leq 1 - t + \frac{t^2}{2}$, valid for all $t \geq 0$, and find
\begin{align*}
 \sup_{y \in {\mathbb T} \setminus B} E_y \left[ e^{-\frac{\nu}{s_N} H_A} \right] \leq 1 - \nu + \frac{\nu^2}{2} \frac{\sup_{y \in {\mathbb T}} E_y [H_A^2]}{s_N^2}.
\end{align*}
In the following estimate, we apply Lemma~\ref{l:kac} to the numerator and \eqref{e:fbd} to the denominator in the first, then \eqref{e:flbd} in the second step,
\begin{align*}
 \frac{\sup_{y \in {\mathbb T}} E_y [H_A^2]}{\inf_{y \in {\mathbb T} \setminus B} E_y [H_A]^2} \leq c_\epsilon \frac{\sup_{y \in {\mathbb T}} E_y [H_A]^2}{E[H_A]^2} \leq c_\epsilon'.
\end{align*}
Hence, we can infer with \eqref{eld1} that
\begin{align}
 \label{eld2}
 P[R_{k^+} \leq u N^d] & \leq \exp \left( \nu u \frac{N^d}{s_N} - \nu (k^+ -1) + c_\epsilon \nu^2 (k^+ -1) \right)\\
&\stackrel{\eqref{e:k}}{\leq} \exp \left( \nu u \frac{N^d}{s_N} - (\nu + c_\epsilon \nu^2) (1 + \epsilon) u \capacity ({\mathsf A}) + c_{\nu, \epsilon} \right). \nonumber
\end{align}
By \eqref{e:fbd} and Proposition~\ref{p:Gloc}, we have $
 \frac{N^d}{s_N} \leq \capacity({\mathsf A}) (1+ \epsilon/2), \text{ for } N \geq c_{\epsilon}.$
The desired estimate \eqref{e:poisson1} follows from \eqref{eld2} by setting $\nu$ equal to a small constant $c_{u,\epsilon}>0$.

In order to prove \eqref{e:poisson2}, we use that, $P$-a.s.,
\begin{equation}
\label{eld3}
\begin{split}
\{R_k \geq uN^d \} \subseteq & \left\{H_A+ H_A \circ \theta_{U_1} + \cdots + H_A \circ \theta_{U_{k-1}} \geq (1- \epsilon/2) u N^d \right\} \\
&\cup \left\{ U \circ \theta_{R_1} + \cdots + U \circ \theta_{R_{k-1}} \geq (\epsilon/2)u N^d \right\}.
\end{split}
\end{equation}
Using again the exponential Chebychev inequality and inductive applications of the strong Markov property, we deduce from \eqref{eld3} that, for any $\theta>0$,
\begin{align}
\label{poisson0}
P [ R_{k^-} \geq uN^d ] &\leq e^{- \theta (1- \epsilon/2) u \frac{N^d}{t_N} } \sup_{x \in {\mathbb T}} E_x \left[ e^{\theta \frac{H_A}{t_N}} \right]^{k^-}+ e^{-(\epsilon/2)u \frac{N^d}{t_N}} \sup_{x \in A} E_x \left[e^{ \frac{U}{t_N}} \right]^{k^-}.  
\end{align}
In order to bound the first expectation on the right-hand side, note that, by Lemma~\ref{l:kac}, we have for $\theta \in (0,\frac{1}{2})$,
\begin{equation}
\label{poisson1}
E \left[ e^{ \theta \frac{H_A}{t_N} } \right] = \sum_{k=0}^\infty \frac{\theta^k}{k \text{!} t_n^k } E[H_A^k] \leq \sum_{k=0}^\infty \theta^k = \frac{1}{1- \theta}.
\end{equation}
In order to deal with the second expectation on the right-hand side of \eqref{poisson0}, we note that, $P_x$-a.s.~for any $x \in A$,
\begin{align*}
U &\leq (\reg + T_C) \mathbf{1} \{H_B \circ \theta_{T_C} > \reg\} + \left( \reg + T_C + U \circ \theta_{H_B} \circ \theta_{T_C} \right) \mathbf{1} \{H_B \circ \theta_{T_C} \leq \reg\}\\
&= \reg + T_C + U \circ \theta_{H_B} \circ \theta_{T_C} \mathbf{1} \{H_B \circ \theta_{T_C} \leq \reg\},
\end{align*}
hence by the strong Markov property,
\begin{equation}
\label{poisson2}
\begin{split}
\sup_{x \in B} E_x [e^{U /t_N}] &\leq \sup_{x \in B} E_x[e^{(\reg+T_{C})/t_N}] \left(1 +  \sup_{y \in {\mathbb T} \setminus C} P_y [H_B \leq \reg] \sup_{x \in B} E_x [e^{U /t_N}] \right)\\
&\leq \sup_{x \in B} E_x[e^{(\reg+T_{C})/t_N}] \left(1 +  \rbdd \sup_{x \in B} E_x [e^{U /t_N}] \right),
\end{split}
\end{equation}
where we have used \eqref{a:Drat} for the second line. By an elementary estimate on simple random walk, we have $cN^2 \leq \sup_{x \in B} E_x[T_{C}] \leq N^2$, hence by Lemma~\ref{l:Hlbd} and \eqref{a:capest},
\begin{align}
 \label{poisson3}
 \frac{1}{t_N} \leq \frac{1}{E[H_A]} \leq c_\epsilon \frac{N^{-\epsilon(d-2)}}{N^2} \leq c_\epsilon \frac{N^{-\epsilon (d-2)}}{\sup_{x \in B} E_x[T_{C}]}.
\end{align}
If we apply Lemma~\ref{l:kac} with $V = {\mathbb T} \setminus C$, we therefore find that $\sup_{x \in B} E_x[e^{T_{C}/t_N}] \leq e^{c_\epsilon N^{-\epsilon}}.$
With this estimate and $\reg/t_N \leq c N^{-\epsilon/2}$ (cf.~\eqref{poisson3}) applied to the right-hand side of \eqref{poisson2}, we obtain
\begin{align}
 \label{poisson4}
\sup_{x \in B} E_x [e^{U /t_N}] \leq e^{c_\epsilon N^{-\epsilon/2}}  \left( 1 +  c_\epsilon \rbdd  \right) \leq e^{c_\epsilon'N^{-\epsilon/2}}.
\end{align}
Substituting \eqref{poisson1} and \eqref{poisson4} into \eqref{poisson0} and using that $(1- \theta)^{-1} \leq 1 + \theta + 2 \theta^2$ for $0 \leq \theta \leq \frac{1}{2}$, we deduce that
\begin{align}
\nonumber
P [ R_{k^-} \geq uN^d ] &\leq \exp \Big(- \theta \Big(1- \frac{\epsilon}{2} \Big) u \frac{N^d}{t_N}  + (\theta + 2 \theta^2) k^- \Big) + \exp \Big( - \frac{\epsilon}{2} u \frac{N^d}{t_N} + c_\epsilon N^{-\epsilon/2} k^- \Big)\\
\label{poisson5} 
&\stackrel{\eqref{e:k}}{\leq} \exp \left(- \theta \Big(1- \frac{\epsilon}{2} \Big) u \frac{N^d}{t_N} + (\theta + 2 \theta^2) ( 1 - \epsilon) u \capacity({\mathsf A}) + c_\theta \right)  \\
\nonumber
&\qquad + \exp \left( - \frac{\epsilon}{2} u \frac{N^d}{t_N} + c_\epsilon N^{-\epsilon/2} (1 - \epsilon ) u \capacity({\mathsf A}) + c_\epsilon \right).
\end{align}
Again, we apply \eqref{e:fbd} and Proposition~\ref{p:Gloc} and find that for $N \geq c_{\epsilon}$,
$\frac{N^d}{t_N} \geq \capacity({\mathsf A}) \Big( 1 - \frac{\epsilon}{2} \Big),$
so that \eqref{e:poisson2} follows from \eqref{poisson5} upon choosing $\theta$ as a small constant $c_{u, \epsilon}>0$.
\end{proof}

We now introduce
\begin{equation}
\label{d:nnt}
\begin{split}
&\text{the space $\Gamma$ of cadlag functions $w$ from $[0,\infty)$ to $\mathbb T$ with at most finitely many} \\
&\text{discontinuities and such that $w_0 \in \partial_i A$,}
\end{split}
\end{equation}
endowed with the canonical $\sigma$-algebra ${\mathcal F}_\Gamma$ generated by the coordinate projections, as well as
\begin{align}
\label{d:meas}
\text{the space $M(\Gamma)$ of finite point measures on $\Gamma$,}
\end{align}
endowed with the $\sigma$-algebra ${\mathcal F}_{M(\Gamma)}$ generated by the evaluation maps $e_A: \mu \mapsto \mu(A)$, $A \in {\mathcal F}_\Gamma$. On the space $({\bar \Omega}, {\bar {\mathcal F}}, {\bar P}_\sigma)$ (cf.~\eqref{e:Yhat}, \eqref{e:J}), we define $\mu^\pm_1$ by
\begin{align}
\label{d:mu1}
\mu_1^\pm = \sum_{2 \leq i \leq 1+J^\pm} \delta_{{\bar Y}^{i}} \in M(\Gamma),
\end{align}
where $\delta_w$ denotes the Dirac mass at $w \in \Gamma$. We then define the random sets
\begin{align}
\label{d:I1}
{\mathcal I}_1^\pm = \bigcup_{w \in \supp (\mu_1^\pm)} \ran (w) \subseteq {\mathbb T}.
\end{align}
Note that by \eqref{e:Yhat} and \eqref{e:J},
\begin{equation}
\label{e:mu1}
\begin{split}
&\text{the random measures $\mu_1^\pm$ are Poisson point measures on $\Gamma$ with intensity}\\
&\text{measures $(1 \pm 2 \epsilon)u \capacity( {\mathsf A})  \kappa_1$, where $\kappa_1$ is the law of $(Y_{t \wedge U_1})_{t \geq 0}$ under $P_\sigma$.}
\end{split}
\end{equation}
The following proposition contains a first coupling of the trajectory $Y_{[R_2, uN^d]}$ with random point measures. Note that we do not consider the trajectory before time $R_2$. The reason is that Lemma~\ref{l:decouple} does not provide an estimate on the distribution of the first entrance point $Y_{R_1}$. This problem will be dealt with separately in Lemma~\ref{l:f} below.

\begin{proposition} \label{p:c1}
\textup{($d \geq 3$)}
There is a coupling $(\Omega_1, {\mathcal F}_1, Q_1)$ of $Y_{[R_2,uN^d]}$ under $P$ with $\mu_1^\pm$ under ${\bar P}_\sigma$, such that
\begin{align}
\label{e:c1}
&Q_1 \left[ {\mathcal I}_1^- \cap A \subseteq Y_{[R_2,uN^d]} \cap A \subseteq  {\mathcal I}_1^+ \cap A\right]   \geq 1 - e^{-c_{u, \epsilon} \log^2 n}.
\end{align}
\end{proposition}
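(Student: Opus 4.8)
The plan is to build $Q_1$ by enriching the coupling $Q_0$ supplied by Lemma~\ref{l:decouple}. First I would apply that lemma with $k=k^+$ (cf.~\eqref{e:k}) to couple the dependent excursion traces $\left(Y_{[R_i,U_i]}\cap A\right)_{i=2}^{k^+}$ under $P$ with the iid traces $\left(\bar Y^i_{[R_1,U_1]}\cap A\right)_{i=2}^{k^+}$ under $\bar P_\sigma$, so that they coincide with probability at least $1-k^+e^{-c_\epsilon\log^2 N}$. Since $\bar Y^i$ has the law of $(Y_{t\wedge U_1})_{t\ge 0}$ under $P_\sigma$ and the endpoint $Y_{U_1}$ lies outside $B\supseteq A$, one has $\ran(\bar Y^i)\cap A=\bar Y^i_{[R_1,U_1]}\cap A$. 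I would then extend this trace-coupling, by conditionally independent resampling of the remaining randomness on each side (without affecting the marginal laws), to a coupling of the full walk $Y$ under $P$ with the full iid sequence $(\bar Y^i)_{i\ge1}$ under $\bar P_\sigma$, and finally adjoin independent copies of $J^\pm$ as in \eqref{e:J}. On the resulting space $(\Omega_1,\mathcal F_1,Q_1)$, the point measures $\mu_1^\pm$ and the sets $\mathcal I_1^\pm$ of \eqref{d:mu1}--\eqref{d:I1} are well defined with $\mu_1^\pm$ of the law described in \eqref{e:mu1}, and $Y_{[R_2,uN^d]}$ is a function of $Y$.

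Next I would introduce three events and verify that the sandwich in \eqref{e:c1} holds on their intersection: the event $D$ that the coupled traces agree, $Y_{[R_i,U_i]}\cap A=\ran(\bar Y^i)\cap A$ for all $2\le i\le k^+$; the event $E=\{R_{k^-}<uN^d<R_{k^+}\}$, controlling the number of excursions started before time $uN^d$; and the event $F=\{J^-\le(1-\tfrac{3\epsilon}{2})u\capacity(\mathsf A)\le(1+\tfrac{3\epsilon}{2})u\capacity(\mathsf A)\le J^+\}$, which on account of \eqref{e:k} forces $1+J^-\le k^- - 1$ and $1+J^+\ge k^+-1$ for $N$ large. For the upper inclusion I would note that any vertex of $A$ visited during $[R_2,uN^d]$ is visited inside some excursion $[R_i,U_i]$ with $i\ge 2$, and $R_{k^+}>uN^d$ forces $i\le k^+-1\le 1+J^+$, so on $D$ the vertex belongs to $\ran(\bar Y^i)\cap A\subseteq\mathcal I_1^+\cap A$. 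For the lower inclusion, for each $2\le i\le 1+J^-\le k^- - 1$ one has $U_i\le U_{k^- - 1}\le R_{k^-}<uN^d$, so on $D$, $\ran(\bar Y^i)\cap A=Y_{[R_i,U_i]}\cap A\subseteq Y_{[R_2,uN^d]}\cap A$; taking the union over such $i$ gives $\mathcal I_1^-\cap A\subseteq Y_{[R_2,uN^d]}\cap A$.

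Finally I would bound the error: $Q_1[D^c]\le k^+ e^{-c_\epsilon\log^2 N}$ by Lemma~\ref{l:decouple}, $Q_1[E^c]\le 2e^{-c_{u,\epsilon}\capacity(\mathsf A)}$ by Lemma~\ref{l:poisson}, and $Q_1[F^c]\le e^{-c_{u,\epsilon}\capacity(\mathsf A)}$ by Lemma~\ref{l:poissonld}. Since $k^+\le c_{u,\epsilon}N^d$ and, by \eqref{a:capest}, $\capacity(\mathsf A)\ge c_\epsilon N^{(1-\epsilon)(d-2)}$ grows polynomially in $N$ (using $d\ge3$), a union bound yields $Q_1[(D\cap E\cap F)^c]\le e^{-c_{u,\epsilon}\log^2 N}$ for $N$ large, and the claim for all $N$ follows after adjusting constants.

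I expect the delicate point to be the bookkeeping in the second paragraph: one must make sure that every excursion relevant to the lower inclusion has terminated by time $uN^d$ — which is why one needs the strict slack $1+J^-\le k^- - 1$ rather than merely $1+J^-\le k^-$, ensuring $U_{1+J^-}\le U_{k^- - 1}\le R_{k^-}<uN^d$ — while a possible partial excursion straddling $uN^d$ only contributes extra vertices to the upper side, which is harmless. The measure-theoretic promotion of Lemma~\ref{l:decouple}'s trace-coupling to a coupling of full trajectories is routine, precisely because the event in \eqref{e:c1} depends on the trajectories only through the excursion traces in $A$ and the excursion times.
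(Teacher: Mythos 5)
Your proposal is correct and follows essentially the same route as the paper: Lemma~\ref{l:decouple} for the excursion traces, Lemma~\ref{l:poisson} to control the number of excursions started before $uN^d$, Lemma~\ref{l:poissonld} for $J^\pm$, the sandwich of $Y_{[R_2,uN^d]}\cap A$ between unions of excursion traces, and a union bound. The only (harmless) deviations are implementational: you apply Lemma~\ref{l:decouple} with $k=k^+$ rather than the paper's $k=[2u\capacity({\mathsf A})]$, which spares you the paper's extra large-deviation bound on $\{k<J^+\}$, and you promote the trace coupling to a coupling of full trajectories instead of writing the explicit gluing formula.
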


\begin{proof}
Denoting the total number of excursions started before time $uN^d$ by $K_u = \sup \{k \geq 0: R_k \leq u N^d \}$, we have
\begin{align}
\label{q10}
\cup_{i=2}^{K_u -1} Y_{[R_i,U_i]} \cap A \subseteq Y_{[R_2,uN^d]} \cap A \subseteq \cup_{i=2}^{K_u} Y_{[R_i,U_i]} \cap A.
\end{align}
By Lemma~\ref{l:decouple}, we can couple $(Y_{[R_i,U_i]} \cap A)_{i=2}^k$ under $P$ with $({\bar Y}^i_{[R_1,U_1]} \cap A)_{i=2}^k$ under ${\bar P}_\sigma$, such that these two random vectors differ with probability at most $ke^{-c_\epsilon \log^2 n}$, where we choose 
\begin{align}
\label{q11}
k = [2  u \capacity({\mathsf A}) ] \leq c_{\epsilon, u} N^{d-2}, \text{ cf.~\eqref{a:capest}.}
\end{align}
Given  $(Y_{[R_i,U_i]} \cap A)_{i=2}^k$ and $({\bar Y}^i_{[R_1,U_1]} \cap A)_{i=2}^k$, we extend this coupling with two conditionally independent random vectors $(Y_{[R_i,U_i]} \cap A)_{i=k+1}^\infty \in (2^A)^{\mathbb N}$  and $({\bar Y}^i)_{i \geq 2} \in \Gamma^{\mathbb N}$, distributed as $(Y_{[R_i,U_i]} \cap A)_{i=k+1}^\infty$ given $(Y_{[R_i,U_i]} \cap A)_{i=2}^k$ under $P$ and as $({\bar Y}^i)_{i \geq 2}$ given $({\bar Y}^i_{[R_1,U_1]} \cap A)_{i=2}^k$ under ${\bar P}_\sigma$.
Adding independent Poisson variables $J^-$ and $J^+$ as in \eqref{e:J}, we thus obtain a coupling $q$ of $(Y_{[R_i,U_i]} \cap A)_{i \geq 2}$ under $P$, $({\bar Y}^i)_{i \geq 2}$, $J^-$ and $J^+$ under ${\bar P}_\sigma$, such that
\begin{equation}
\label{q12}
\begin{split}
&q\left[ \begin{array}{c} (Y_{[R_i,U_i]} \cap A)_{i=2}^k = ({\bar Y}_{[R_1,U_1]}^i \cap A)_{i=2}^k, \\ J^- \leq k^- \leq k^+ \leq J^+ \end{array} \right] \geq 1 - e^{-c_{u, \epsilon} \log^2 N},
\end{split}
\end{equation}
where we have also used Lemma~\ref{l:poissonld} with the definition of $k^\pm$ in \eqref{e:k}. Note that $\mu_1^\pm$ and ${{\mathcal I}_1}^\pm$ can be defined under $q$ as in \eqref{d:mu1} and \eqref{d:I1} and by construction of $({\bar Y}^i)_{i \geq 2}$,  \eqref{e:mu1} applies. We now define the coupling $Q_1$ by specifying the distribution of $(Y_{[R_2,uN^d]},\mu_1^-, \mu_1^+)$ on $2^{\mathbb T} \times M(\Gamma)^2$. For any $R \subseteq {\mathbb T}$ and $M_1, M_2 \in {\mathcal F}_{M({\Gamma})}$, we set
\begin{equation}
\begin{split}
&Q_1 \left[ Y_{[R_2,uN^d]} = R, \mu_1^- \in M_1, \mu_1^+ \in M_2 \right] =\\
&\qquad \sum_{S \subseteq A} P \left[ Y_{[R_2,uN^d]} = R, \cup_{i=2}^k Y_{[R_i,U_i]} \cap A = S \right]\\
&\qquad \qquad \times q \left[\mu_1^- \in M_1, \mu_1^+ \in M_2 \Big| \cup_{i=2}^k Y_{[R_i,U_i]} \cap A = S \right],
\end{split}
\end{equation}
where the term in the sum is understood to equal $0$ if $P[\cup_{i=2}^k Y_{[R_i,U_i]} \cap A = S]=0$.
Then we have $Q_1[Y_{[R_2,uN^d]} =R] = P [Y_{[R_2,uN^d]} =R]$, as well as by \eqref{e:mu1}, $Q_1 [\mu_1^- \in M_1 ] = {\bar P}_\sigma [\mu_1^- \in M_1 ]$ and $Q_1 [\mu_1^+ \in M_2 ] = {\bar P}_\sigma [\mu_1^+ \in M_2 ]$ for any $R \subseteq {\mathbb T}$, $M_1, M_2 \in {\mathcal F}_{M({\Gamma})}$, so $Y_{[R_2,uN^d]}$, $\mu_1^-$ and $\mu_1^+$ have the correct distributions under $Q_1$. Moreover, we have by \eqref{d:mu1} and \eqref{q10},
\begin{align*}
&Q_1 \left[ \left\{{\mathcal I}_1^- \cap A \subseteq Y_{[R_2,uN^d]} \cap A \subseteq  {\mathcal I}_1^+ \cap A \right\}^c\right] \leq  q \left[ (Y_{[R_i,U_i]} \cap A)_{i=2}^k \neq ({\bar Y}^i_{[R_1,U_1]} \cap A)_{i=2}^k \right] \\
& + q \left[ k^- \leq J^- \right] + q \left[ J^+ \leq k^+ \right] + q \left[ k<J^+ \right] + P \left[ \left\{ k^-  \leq K_u - 1 \leq K_u \leq k^+ \right\}^c \right],
\end{align*}
Using \eqref{q12}, Lemma~\ref{l:poisson} together with \eqref{a:capest} and a large deviations bound on $q [k< J^+ ]$ similar to Lemma~\ref{l:poissonld}, we find that the right-hand side is bounded by $e^{-c_{u, \epsilon} \log^2 N}$, as required.
\end{proof}

The final step in this section is to modify the above coupling in such a way that the random paths in the Poisson clouds have starting points distributed according to the normalized equilibrium measure of $A$ (cf.~\eqref{e:eqfin}), as do random interlacement paths (cf.~\eqref{e:interlac}). For this purpose, we define the measure
\begin{align}
\label{d:k2}
\kappa_2  \text{ as the law on $(\Gamma,{\mathcal F}_\Gamma)$ of $(Y_{t \wedge U_1})_{t \geq 0}$ under } P_{e_A}
\end{align} 
(note that $\kappa_2(\Gamma)= \capacity ({\mathsf A})$), and in the following lemma relate $\kappa_2$ to the intensity measures of $\mu_1^\pm$ (cf.~\eqref{e:mu1}).

\begin{lemma} \label{l:intest}
For $N \geq c_{u, \epsilon}$, 
\begin{align}
\label{e:intest}
(1-3 \epsilon) u \kappa_2 \leq (1-2\epsilon)  u \capacity({\mathsf A}) \kappa_1  \leq (1+2\epsilon) u \capacity({\mathsf A}) \kappa_1 \leq (1+3 \epsilon) u \kappa_2.
\end{align}
\end{lemma}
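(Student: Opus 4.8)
The plan is to deduce \eqref{e:intest} directly from Lemma~\ref{l:quni} by decomposing the intensity measures $\kappa_1$ and $\kappa_2$ according to the point at which the underlying excursion enters $A$. For $x \in \partial_i A$, let $\mathbb{Q}_x$ denote the law on $(\Gamma, \mathcal{F}_\Gamma)$ of $(Y_{t \wedge U_1})_{t \geq 0}$ under $P_x$ (note that $R_1 = H_A = 0$, hence $U_1 = U$, under $P_x$ for $x \in A$). First I would check, using the strong Markov property applied at the entrance time $R_1 = H_A$ together with the identity $U_1 - R_1 = U \circ \theta_{R_1}$ from \eqref{e:exc}, that $\kappa_1 = \sum_{x \in \partial_i A} P_\sigma[Y_{H_A} = x]\, \mathbb{Q}_x$ (here $P_\sigma[H_A < \infty] = 1$ since the torus is finite, so $\kappa_1$ is a probability measure, as it should be in \eqref{e:mu1}), while directly from \eqref{d:k2} and the fact that $e_A$ is carried by $\partial_i A$ one has $\kappa_2 = \sum_{x \in \partial_i A} e_A(x)\, \mathbb{Q}_x$ (in particular $\kappa_2(\Gamma) = \sum_{x \in \partial_i A} e_A(x) = \capacity({\mathsf A})$, as noted in the text).

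Because every $\mathbb{Q}_x$ is a non-negative measure, domination of the intensity measures can be checked coefficient by coefficient, so \eqref{e:intest} will follow once one verifies that for every $x \in \partial_i A$,
\[ (1 - 3\epsilon)\, e_A(x) \;\leq\; (1 - 2\epsilon)\capacity({\mathsf A})\, P_\sigma[Y_{H_A} = x] \;\leq\; (1 + 2\epsilon)\capacity({\mathsf A})\, P_\sigma[Y_{H_A} = x] \;\leq\; (1 + 3\epsilon)\, e_A(x), \]
the middle inequality being trivial. By \eqref{a:escest}, $e_A(x) \geq c_\epsilon N^{\epsilon - 1} > 0$ on $\partial_i A$, so Lemma~\ref{l:quni} gives $\capacity({\mathsf A})\, P_\sigma[Y_{H_A}=x]/e_A(x) \in [\,1 - c_\epsilon \rbdd,\, 1 + c_\epsilon \rbdd\,]$ uniformly in $x \in \partial_i A$. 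Since $\epsilon < 1/3$ is fixed, for $N \geq c_{u,\epsilon}$ we have $c_\epsilon \rbdd \leq \epsilon/(1 + 2\epsilon)$, and then $(1 - 2\epsilon)(1 - c_\epsilon \rbdd) \geq 1 - 3\epsilon$ and $(1 + 2\epsilon)(1 + c_\epsilon \rbdd) \leq 1 + 3\epsilon$; multiplying through by $e_A(x)$ yields the displayed coefficient inequalities, hence \eqref{e:intest}.

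I do not anticipate a genuine obstacle: the statement is essentially a transcription of Lemma~\ref{l:quni} at the level of intensity measures. The only step needing a moment's care is the identification $\kappa_1 = \sum_x P_\sigma[Y_{H_A} = x]\,\mathbb{Q}_x$, i.e.\ checking that the excursion is a measurable functional of the post-$R_1$ trajectory alone (which is where $U_1 - R_1 = U \circ \theta_{R_1}$ enters) and that conditioning on $Y_{H_A} = x$ indeed produces the law $\mathbb{Q}_x$. Once the two decompositions are in place, the remainder is the elementary bookkeeping with the $\epsilon$'s displayed above.
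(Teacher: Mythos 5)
Your proof is correct and takes essentially the same route as the paper: the paper's proof consists of the one-line identity $\capacity({\mathsf A})\,\kappa_1 = \capacity({\mathsf A})\,P_\sigma[Y_{H_A}=w_0]\,e_A(w_0)^{-1}\,\kappa_2$ (your entrance-point decomposition, written as a density) followed by an appeal to Lemma~\ref{l:quni}. Your write-up merely makes explicit the strong Markov decomposition over $x \in \partial_i A$ and the $\epsilon$-bookkeeping that the paper leaves implicit.
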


\begin{proof}
Since $\capacity({\mathsf A}) \kappa_1 = \capacity({\mathsf A})P_\sigma[Y_{H_A}=w_0] e_A(w_0)^{-1} \kappa_2,$ the statement follows from Lemma~\ref{l:quni}.
\end{proof}

The last lemma now allows us to construct the required coupling.

\begin{proposition} \label{p:c2}
\textup{($d \geq 3$)}
There is a coupling $(\Omega_2, {\mathcal F}_2, Q_2)$ of $Y_{[R_2,uN^d]}$ under $P$ with Poisson random point measures $\mu_2^\pm$ on ${\Gamma}$ (cf.~\eqref{d:nnt}) with intensity measures $(1\pm 3\epsilon)u \kappa_2$ (cf.~\eqref{d:k2}), such that
\begin{align}
\label{e:c2}
&Q_2 \left[ {\mathcal I}_2^- \cap A \subseteq Y_{[R_2,uN^d]} \cap A \subseteq  {\mathcal I}_2^+ \cap A \right]   \geq 1 -  e^{-c_{u, \epsilon} \log^2 N}, \text{ where}
\end{align}
\begin{align}
\label{d:I2}
{\mathcal I}_2^\pm = \bigcup_{w \in \supp \mu_2^\pm} \ran (w).
\end{align}
\end{proposition}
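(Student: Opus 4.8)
The plan is to obtain $Q_2$ from the coupling $Q_1$ of Proposition~\ref{p:c1} by a single Poisson thinning step for the lower measure and a single Poisson superposition step for the upper measure, the admissibility of both being exactly what Lemma~\ref{l:intest} provides. Recall that by \eqref{e:mu1} the measure $\mu_1^-$ is Poisson on $\Gamma$ with intensity $\nu^- := (1-2\epsilon)u\capacity({\mathsf A})\kappa_1$ and $\mu_1^+$ is Poisson with intensity $\nu^+ := (1+2\epsilon)u\capacity({\mathsf A})\kappa_1$, while the targets are Poisson measures $\mu_2^\pm$ with intensities $\tilde\nu^\pm := (1\pm 3\epsilon)u\kappa_2$. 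Since the random sets ${\mathcal I}_j^\pm$ of \eqref{d:I1} and \eqref{d:I2} are monotone in the underlying point measure, it suffices to produce $\mu_2^-$ with $\mu_2^-\leq\mu_1^-$ and $\mu_2^+$ with $\mu_1^+\leq\mu_2^+$: then ${\mathcal I}_2^-\subseteq{\mathcal I}_1^-$ and ${\mathcal I}_1^+\subseteq{\mathcal I}_2^+$, and \eqref{e:c2} follows from \eqref{e:c1}.

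The one preliminary fact I would establish first is that $\kappa_1$ and $\kappa_2$ are mutually absolutely continuous. This is immediate from the identity used in the proof of Lemma~\ref{l:intest}, $\capacity({\mathsf A})\kappa_1 = \capacity({\mathsf A})\,P_\sigma[Y_{H_A}=w_0]\,e_A(w_0)^{-1}\kappa_2$, with $w_0\in\partial_i A$ the starting value of the trajectory: it gives $\d\kappa_2/\d\kappa_1(w) = e_A(w_0)/P_\sigma[Y_{H_A}=w_0]$, which is strictly positive and finite because $e_A(x)\geq c_\epsilon N^{\epsilon-1}$ on $\partial_i A$ by \eqref{a:escest} and $P_\sigma[Y_{H_A}=x]>0$ for every $x\in\partial_i A$ (the torus being connected). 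Assuming $N\geq c_{u,\epsilon}$ so that \eqref{e:intest} holds, its left inequality gives $\tilde\nu^-\leq\nu^-$ with $\tilde\nu^-\ll\nu^-$, so $p := \d\tilde\nu^-/\d\nu^-$ is a measurable function $\Gamma\to[0,1]$. I would then enlarge the probability space of $Q_1$ by an independent sequence of uniform random variables, retain each atom $w$ of $\mu_1^-$ with probability $p(w)$, and let $\mu_2^-$ be the point measure of retained atoms; by the thinning property of Poisson point processes $\mu_2^-$ is Poisson with intensity $p\,\nu^- = \tilde\nu^-$ and $\mu_2^-\leq\mu_1^-$. For the upper measure, the right inequality in \eqref{e:intest} says $\tilde\nu^+-\nu^+$ is a non-negative finite measure on $\Gamma$, so I would enlarge the space once more by an independent Poisson point measure $\mu'$ with intensity $\tilde\nu^+-\nu^+$ and set $\mu_2^+ := \mu_1^+ + \mu'$; by the superposition property $\mu_2^+$ is Poisson with intensity $\tilde\nu^+$ and $\mu_1^+\leq\mu_2^+$.

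Writing $Q_2$ for the law on this enlarged space, it then remains to check the routine points: $Y_{[R_2,uN^d]}$ still has law $P$, the $\mu_2^\pm$ have the stated intensities, and on the event of \eqref{e:c1}, which has $Q_2$-probability at least $1-e^{-c_{u,\epsilon}\log^2 N}$, one has the chain
\[ {\mathcal I}_2^-\cap A \subseteq {\mathcal I}_1^-\cap A \subseteq Y_{[R_2,uN^d]}\cap A \subseteq {\mathcal I}_1^+\cap A \subseteq {\mathcal I}_2^+\cap A, \]
which is \eqref{e:c2}; for $N<c_{u,\epsilon}$ the bound is rendered vacuous by shrinking the constant. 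I do not expect a genuine obstacle in this proposition — the substance lies in Proposition~\ref{p:c1} and Lemma~\ref{l:intest} (the latter resting on the hitting-distribution estimate of Lemma~\ref{l:quni}). The only things requiring a little care are (i) the mutual absolute continuity of $\kappa_1$ and $\kappa_2$, which is what makes the thinning legitimate and, in particular, forces $p$ to take values in $[0,1]$; and (ii) the bookkeeping needed so that the thinning marks and the extra Poisson measure $\mu'$ live on one common probability space extending that of $Q_1$, so that the high-probability event of Proposition~\ref{p:c1} and the newly built measures $\mu_2^\pm$ are jointly defined.
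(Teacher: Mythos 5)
Your proposal is correct and follows essentially the same route as the paper: the paper also uses Lemma~\ref{l:intest} to sandwich the intensities, realizing $\mu_2^-\leq\mu_1^-\leq\mu_1^+\leq\mu_2^+$ by writing them as partial sums of four independent Poisson measures with intensities $(1-3\epsilon)u\kappa_2$, $(1-2\epsilon)u\capacity({\mathsf A})\kappa_1-(1-3\epsilon)u\kappa_2$, $4\epsilon u\capacity({\mathsf A})\kappa_1$ and $(1+3\epsilon)u\kappa_2-(1+2\epsilon)u\capacity({\mathsf A})\kappa_1$, which is the same coupling in law as your thinning-plus-superposition. The only cosmetic difference is the gluing to $Q_1$: you enlarge the space of $Q_1$ with independent auxiliary randomness, while the paper re-assembles $Q_2$ via a conditional-distribution formula given $({\mathcal I}_1^-,{\mathcal I}_1^+)$; both are valid.
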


\begin{proof}
Note that for $N \geq c_{u, \epsilon}$, the inequalities in Lemma~\ref{l:intest} hold. For such $N$, we can therefore construct independent Poisson random measures $\nu_1, \nu_2, \nu_3$ and $\nu_4$ on $\Gamma$ with intensity measures $(1-3\epsilon) u \kappa_2$, $(1-2\epsilon) u \capacity ({\mathsf A})  \kappa_1 - (1-3 \epsilon) u \kappa_2 \geq 0$, $4 \epsilon u  \capacity ({\mathsf A}) \kappa_1$ and $(1+3 \epsilon) u \kappa_2 - (1+2 \epsilon) u \capacity ({\mathsf A}) \kappa_1 \geq 0$. Then $\nu_1 \leq \nu_1 + \nu_2 \leq \nu_1 + \nu_2 + \nu_3 \leq \nu_1 + \nu_2 + \nu_3 + \nu_4$ are random measures with the distributions of $\mu_2^-$, $\mu_1^-$, $\mu_1^+$ and $\mu_2^+$ (cf.~\eqref{e:mu1}). We have thus constructed a coupling $q$ of $\mu_2^\pm$ and $\mu_1^\pm$, such that (see \eqref{d:I1} and \eqref{d:I2})
\begin{align}
\label{c21}
{{\mathcal I}_2}^- \subseteq {\mathcal I}_1^- \subseteq {\mathcal I}_1^+ \subseteq {{\mathcal I}_2}^+, \text{ $q$-a.s.}
\end{align}
Together with the coupling $Q_1$ from Proposition~\ref{p:c1}, we now define the coupling $Q_2$ as follows: For any $S \subseteq {\mathbb T}$, $M_1, M_2 \in {\mathcal F}_{M({\Gamma})}$, we set
\begin{equation*}
\begin{split}
&Q_2 \left[ Y_{[R_2,uN^d]} = S, \mu_2^- \in M_1,  \mu_2^+ \in M_2 \right]= \\
&\sum_{S_1, S_2 \subseteq {\mathbb T}} Q_1 \left[ Y_{[R_2,uN^d]}  = S, {\mathcal I}_1^- =S_1, {\mathcal I}_1^+ = S_2 \right] q \left[ \mu_2^- \in M_1, \mu_2^+ \in M_2 \Big| {\mathcal I}_1^- =S_1, {\mathcal I}_1^+ = S_2 \right],
\end{split}
\end{equation*}
where the term in the sum equals $0$ by convention whenever $Q_1[{\mathcal I}_1^- = S_1, {\mathcal I}_1^+ = S_2]=0$. Then Proposition~\ref{p:c1} and the construction of $q$ imply that $Y_{[R_2,uN^d]}$, $\mu_2^-$ and $\mu_2^+$ have the correct distributions under $Q_2$. Finally, \eqref{c21} and \eqref{e:c1} together yield \eqref{e:c2}. 
\end{proof}

\section{Domination by random interlacements}
\label{s:dom+}

The purpose of this section is to prove one half of Theorem~\ref{t:dom} in Proposition~\ref{p:dom+}. This proposition shows that the random walk trajectory on $\mathbb T$ can be coupled with the trace of a random interlacement on $\mathsf A$ such that the image of the random walk trajectory in $\mathsf A$ is a subset of the random interlacement with high probability. The main work appears in Proposition~\ref{p:pdom+}, where we decompose the random set ${\mathcal I}^+_2 \cap A$ appearing in Proposition~\ref{p:c2} into two independent sets, one of which is empty with high probability, the other one of which is stochastically dominated by a random interlacement intersected with $\mathsf A$. The proof involves truncation of the trajectories of ${\mathcal I}^+_2$. A small increase of the intensity parameter from $u(1+3 \epsilon)$ to $u(1+4 \epsilon)$ in the dominating random interlacement compensates for the truncation. The arguments follow the ones of Sznitman in \cite{Szn09c}, where a similar procedure is carried out for random walk trajectories on discrete cylinders.

\medskip

In order to state the first proposition, we construct on some auxiliary probability space $(\Omega',{\mathcal F}',Q')$ for any $u>0$ and $\epsilon \in (0,1/4)$,
\begin{equation}
\label{e:tuncYi}
\begin{split}
&\text{an iid sequence $Y^i$, $i \geq 1$, of random walks with same distribution as}\\
&\text{$(Y_{t \wedge T_{C}})_{t \geq 0}$ under $P_{e_A}/\capacity({\mathsf A})$,}
\end{split}
\end{equation}  
\begin{equation}
\text{an independent Poisson variable $J$ with parameter $(1 + 4 \epsilon) u \capacity({\mathsf A})$.}
\end{equation}
This enables to define the Poisson point measure on $\Gamma$ (cf.~\eqref{d:nnt}):
\begin{equation}
\label{e:mu}
\mu = \sum_{1 \leq i \leq J} \delta_{Y^i} \in M(\Gamma).
\end{equation}
Then for $N \geq c_\epsilon$, 
\begin{equation}
\label{e:muint}
\begin{split}
&\text{$\mu$ is a Poisson point measure with intensity measure $(1 + 4 \epsilon) u \kappa$ on $\Gamma$,}\\
&\text{where $\kappa$ is the law of $(Y_{t \wedge T_{C}})_{t \geq 0}$ under $P_{e_A}$.}
\end{split}
\end{equation}
We then define
\begin{align}
{\mathcal I} = \bigcup_{w \in \supp \mu} \ran (w),
\end{align}
so that if the paths were not cut off when leaving $\mathsf C$, then $\phi({\mathcal I} \cap A)$ would have the distribution of a random interlacement ${\mathcal I}^{u(1+4 \epsilon)}$ intersected with $\mathsf A$ (see \eqref{e:interlac}). In particular, by (1.20), (1.43) and (1.45) in \cite{Szn09}, 
\begin{equation}
 \label{e:dom00}
\begin{split} 
&\phi( {\mathcal I} \cap A) \text{ is stochastically dominated by } {\mathcal I}^{u(1+4 \epsilon)} \cap {\mathsf A} \text{ under } {\mathbb P}.
\end{split}
\end{equation}

\begin{proposition} \label{p:pdom+}
\textup{($d \geq 3$)}
For any $\alpha>0$, there exist random subsets ${\mathcal I}^*$ and $\bar {\mathcal I}$ of $A$, defined on $(\Omega_2, {\mathcal F}_2, Q_2)$ of Proposition~\ref{p:c2}, such that for $N \geq c_{\epsilon, \alpha}$,
\begin{align}
\label{e:dom+1}&{\mathcal I}_2^+ \cap A = {\mathcal I}^* \cup {\bar {\mathcal I}},\\
\label{e:dom+2}&{\mathcal I}^* \text{ and } {\bar {\mathcal I}} \text{ are independent under } Q_2,\\
\label{e:dom+3}&Q_2[{\bar {\mathcal I}} \neq \emptyset] \leq c_{\epsilon, \alpha} u N^{-\alpha},\\
\label{e:dom+4}&{\mathcal I}^* \text{ is stochastically dominated by } {\mathcal I} \cap A.
\end{align}
\end{proposition}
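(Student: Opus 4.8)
The plan is to decompose the interlacement set $\mathcal{I}_2^+ \cap A$ according to whether the trajectories of the underlying Poisson cloud ever leave the large box $C$ before being stopped. Recall from Proposition~\ref{p:c2} that $\mu_2^+$ is a Poisson point measure on $\Gamma$ with intensity $(1+3\epsilon)u\kappa_2$, where $\kappa_2$ is the law of $(Y_{t \wedge U_1})_{t \geq 0}$ under $P_{e_A}$. First I would partition $\Gamma$ into $\Gamma_{\mathrm{good}} = \{w : T_C(w) = \infty\}$, i.e. trajectories that never exit $C$ before being stopped at time $U_1$, and $\Gamma_{\mathrm{bad}} = \Gamma \setminus \Gamma_{\mathrm{good}}$. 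By the superposition/restriction property of Poisson point measures, $\mu_2^+ = \mu_2^+|_{\Gamma_{\mathrm{good}}} + \mu_2^+|_{\Gamma_{\mathrm{bad}}}$ is a sum of two independent Poisson point measures. Setting ${\mathcal I}^* = \bigcup_{w \in \supp(\mu_2^+|_{\Gamma_{\mathrm{good}}})} \ran(w) \cap A$ and ${\bar{\mathcal I}} = \bigcup_{w \in \supp(\mu_2^+|_{\Gamma_{\mathrm{bad}}})} \ran(w) \cap A$ gives \eqref{e:dom+1} and \eqref{e:dom+2} immediately.

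For \eqref{e:dom+3}, the event $\{{\bar{\mathcal I}} \neq \emptyset\}$ is contained in the event that $\mu_2^+$ charges the set of trajectories in $\Gamma_{\mathrm{bad}}$ whose range meets $A$, which has probability $1 - \exp(-(1+3\epsilon)u\,\kappa_2(\{w : T_C(w) = \infty > \ldots\}^c \cap \{\ran(w)\cap A \neq \emptyset\}))$. I would bound the relevant intensity by $(1+3\epsilon)u \int_{\Gamma} \mathbf{1}\{w_0 \in \partial_i A, T_C(w) < U_1(w)\}\, \kappa_2(dw) = (1+3\epsilon)u \sum_{x \in \partial_i A} e_A(x) P_x[T_C < U]$. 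Since a trajectory started in $A$ that exits $C$ before time $U_1$ must, after exiting $C$, re-enter $B$ within time $\reg$ (otherwise $U$ would already have occurred at the exit), we can use \eqref{a:Drat'} (with the strong Markov property at $T_C$) to bound $P_x[T_C < U] \leq P_x[H_B \circ \theta_{T_C} \leq \reg] \le \sup_{z \in {\mathbb T}\setminus C} P_z[H_B \le \reg] \leq \rbdd = N^{-c_\epsilon}$; actually for the $N^{-\alpha}$ bound one iterates this excursion-counting argument, using that each failed attempt to complete the excursion costs another independent factor $\le N^{-c_\epsilon}$ via \eqref{a:Drat'}, so after a number of attempts depending on $\alpha, \epsilon$ the total is $\le c_{\epsilon,\alpha}N^{-\alpha}$. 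Multiplying by $\capacity({\mathsf A}) \leq c_\epsilon' N^{(1-\epsilon)(d-2)}$ from \eqref{a:capest} and using $1 - e^{-t} \le t$ yields \eqref{e:dom+3}, provided the exponents are chosen so that the power of $N$ is still $\le -\alpha$ (which is why the argument must be iterated to gain an arbitrarily large negative power).

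For \eqref{e:dom+4}, the point is that trajectories in $\Gamma_{\mathrm{good}}$ never leave $C$, so stopping them at $U_1$ versus stopping them at $T_C$ makes no difference to their range; hence on $\Gamma_{\mathrm{good}}$ the measure $\kappa_2$ agrees with the measure $\kappa$ of \eqref{e:muint} restricted to trajectories not leaving $C$. Therefore $\mu_2^+|_{\Gamma_{\mathrm{good}}}$ is a Poisson point measure with intensity $(1+3\epsilon)u\,\kappa|_{\Gamma_{\mathrm{good}}}$, which is dominated by the intensity $(1+4\epsilon)u\,\kappa$ of $\mu$ from \eqref{e:mu}. By the standard thinning/coupling of Poisson point measures one can couple $\mu_2^+|_{\Gamma_{\mathrm{good}}}$ with $\mu$ so that $\supp(\mu_2^+|_{\Gamma_{\mathrm{good}}}) \subseteq \supp(\mu)$, and taking unions of ranges intersected with $A$ gives ${\mathcal I}^* \subseteq {\mathcal I} \cap A$ pointwise under this coupling, hence the stochastic domination \eqref{e:dom+4}.

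The main obstacle I anticipate is \eqref{e:dom+3}: getting the probability down to $N^{-\alpha}$ for \emph{arbitrary} $\alpha$, rather than merely $N^{-c_\epsilon}$ for the fixed (and possibly small) constant $c_\epsilon$ coming from a single application of \eqref{a:Drat'}, \eqref{a:Drat'}. This forces the iterative excursion-counting: one must argue that for a trajectory started in $\partial_i A$ to reach $\partial_e C$ before time $U_1$, it must have made many successive round trips between $B$ and $C^c$ without ever accumulating a $\reg$-long stretch outside $B$, and each such round trip contributes an independent small factor. Care is needed because these are the trajectories of the Poisson cloud $\mu_2^+$ (started from $P_{e_A}$ and stopped at $U_1$), so one works directly with $P_{e_A}$ and the definition of $U$ in \eqref{d:U}, splitting $\{T_C < U\}$ into the intersection over all intermediate excursions using the strong Markov property at the successive hitting times of $B$ and exit times of $C$.
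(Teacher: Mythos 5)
Your decomposition is based on the wrong event, and this makes \eqref{e:dom+3} fail outright. You place into $\Gamma_{\mathrm{bad}}$ every trajectory that exits $C$ before the stopping time $U_1$, and you argue this is rare via \eqref{a:Drat'}. But the event $\{T_C < U\}$ is \emph{typical}, not rare: for $U$ to occur before $T_C$, the walk would have to spend a full time interval of length $\reg=(N\log N)^2$ outside $B$ while still inside $C=B(0,N/4)$, and the probability of remaining in a box of radius $N/4$ for time $(N\log N)^2$ is at most $e^{-c\log^2 N}$. Hence $P_x[T_C<U]\geq 1-e^{-c\log^2N}$ for $x\in\partial_i A$; your inequality $P_x[T_C<U]\le\sup_{z\in\mathbb{T}\setminus C}P_z[H_B\le\reg]$ is incorrect, because $\{T_C<U\}$ does not force the walk to re-enter $B$ within time $\reg$ after $T_C$ (it merely says $U$ has not yet occurred, which is almost automatic). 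Since every trajectory charged by $\mu_2^+$ starts in $\partial_i A$, its range always meets $A$, so with your choice $Q_2[\bar{\mathcal I}\neq\emptyset]=1-\exp\bigl(-(1+3\epsilon)u\,P_{e_A}[T_C<U_1]\bigr)\to 1$, since $P_{e_A}[T_C<U_1]$ is of order $\capacity(\mathsf A)\to\infty$. No iteration of excursion counting can repair this, because the event you have singled out is not small.

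The paper's proof uses a different, finer decomposition which you are missing, and which also reveals why \eqref{e:dom+4} is not ``immediate.'' Trajectories of $\mu_2^+$ typically leave $C$, wander around the torus, and can return to $A$ several times before $U_1$; these extra returns are exactly what the $T_C$-stopped trajectories of $\mu$ never make, so domination of the whole non-rare part by ${\mathcal I}\cap A$ at the same intensity is false and must be bought by sprinkling. The paper classifies each trajectory by the number $l$ of its excursions between $A$ and $\partial_e B'$ (with $B'=B(0,N^{1-\epsilon/2}/2)$), puts into $\bar\mu$ only trajectories with more than $m=[(\alpha+d)/c_{1,\epsilon}]+1$ excursions --- each additional return from $\partial_e B'$ to $A$ before $U_1$ costs an independent factor $N^{-c_{1,\epsilon}}$ by \eqref{a:gotoC} and \eqref{a:Drat'}, so $Q_2[\bar{\mathcal I}\neq\emptyset]\le c_\epsilon u\,\capacity(\mathsf A)\,N^{-c_{1,\epsilon}m}\le c_{\epsilon,\alpha}uN^{-\alpha}$ --- and for the retained trajectories it chops them into their excursions and proves the intensity bound $\xi_{+,l}\le\xi_l$ (Lemma~\ref{l:xidom}). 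The key estimate there, \eqref{e:bend}, obtained from the Harnack inequality, shows that a return to $A$ made after exiting $C$ (the long way around the torus) has intensity at most $c_\epsilon N^{-c_\epsilon}$ times that of a return made inside $C$, and the resulting factor $(1+c_\epsilon'N^{-c_\epsilon})^{l-1}$, $l\le m$, is absorbed by the increase of the interlacement parameter from $(1+3\epsilon)u$ to $(1+4\epsilon)u$. Your proposal contains neither the excursion-number truncation for the bad part nor this intensity comparison for the good part, and both are essential.
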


\begin{proof}
The decomposition \eqref{e:dom+1} will depend on the number of excursions between $A$ and the complement of the ball
\begin{align*}
 B' = B(0,N^{1-\epsilon/2}/2) \subset B, \text{ cf.~\eqref{e:boxes},}
\end{align*}
made by the random paths. Hence, we define on $\Gamma$ the return and departure times \begin{equation}
\begin{split}
&{\tilde R}_1 = H_A, \, {\tilde D}_1 = {\tilde R}_1 + T_{B'} \circ \theta_{{\tilde R}_1}, \text{ and for } l \geq 2,\\
&{\tilde R}_l = {\tilde D}_{l-1} + {\tilde R}_1 \circ \theta_{{\tilde D}_{l-1}}, \, {\tilde D}_l = {\tilde D}_{l-1} + {\tilde D}_1 \circ \theta_{{\tilde D}_{l-1}},
\end{split}
\end{equation}
where by convention, $\inf \emptyset = \infty$ in the definition of $H_A$ and $T_{B'}$, cf.~\eqref{e:H}, \eqref{e:T}. By \eqref{a:gotoC}, \eqref{a:Drat'}, and the Markov property applied at time $T_C$, we have for $U_1$ as in \eqref{e:exc},
\begin{align}
\label{e:domu-2}
\sup_{x \in \partial_e B'} P_x [ H_A < U_1 ] \leq 2 N^{-c_{1,\epsilon}},
\end{align}
for some constant $c_{1,\epsilon}>0$. We fix 
\begin{align}
\label{e:domu-1}
m =  \left[ (\alpha+d)/c_{1,\epsilon} \right] + 1,
\end{align}
and introduce the decomposition
\begin{equation}
\label{e:domu-0.1}
\begin{split}
\mu = \sum_{l \geq 1} \mu_l, \text{ where } \mu_l = \mathbf{1} \{ {\tilde D}_l < T_{C} < {\tilde R}_{l+1} \} \mu, \text{ for } l \geq 1.
\end{split}
\end{equation}
as well as
\begin{equation}
\label{e:domu0}
\begin{split}
\mu_2^+ = \sum_{1 \leq l \leq m} \mu_2^{+,l} + {\bar \mu}, \text{
where } \mu_2^{+,l} = \mathbf{1} \{{\tilde D}_l < U_1 < {\tilde
R}_{l+1} \} \mu_2^+,
\end{split}
\end{equation}
for $l \geq 1$, and $\bar \mu = \mathbf{1} \{{\tilde D}_{m+1} < U_1 \} \mu_2^+$.
Observe that
\begin{equation}
\label{e:domu0.1}
\mu_2^{+,l}, 1 \leq l \leq m, \text{ and } {\bar \mu} \text{ are independent Poisson measures under } Q_2, \text{ and}
\end{equation}
\begin{equation}
\label{e:domu0.2}
\mu_l, l \geq 1, \text{ are independent Poisson measures under } Q'.
\end{equation}
(recall the definition of $Q'$ above \eqref{e:tuncYi}). We define
\begin{equation}
\label{e:domu1}
{\mathcal I}^* = \bigcup_{1 \leq l \leq m} \left( \bigcup_{w \in \supp \mu_2^{+,l}} \ran (w) \cap A \right), \, {\bar {\mathcal I}} = \bigcup_{w \in \supp {\bar \mu}} \ran (w) \cap A,
\end{equation}
so that by \eqref{d:I2}, \eqref{e:domu0} and \eqref{e:domu0.1},
\begin{equation}
\label{e:indep}
{\mathcal I}^+_2 \cap A = {\mathcal I}^* \cup {\bar {\mathcal I}}, \text{ and } {\mathcal I}^*, {\bar {\mathcal I}} \text{ are independent under } Q_2.
\end{equation}
Moreover, one has by \eqref{e:domu-0.1}, 
\begin{equation}
\label{e:domu2}
{\mathcal I} \cap A = \bigcup_{l \geq 1} \left( \bigcup_{w \in \supp \mu_l} \ran (w) \cap A \right).
\end{equation}
For $l \geq 1$, we introduce the map $\phi_l'$ from $\{{\tilde D}_l < U_1 < {\tilde R}_{l+1} \} \subseteq \Gamma$ into $W_f^{\times l}$, where $W_f$ denotes the countable collection of finite nearest neighbor paths with values in $B' \cup \partial_e B'$, as well as the map $\phi_l$ from $\{{\tilde D}_l < T_{C} < {\tilde R}_{l+1}\} \subseteq \Gamma$ into $W^{\times l}_f$ defined by
\begin{equation}
\label{e:domu3}
\begin{split}
&\phi'_l(w) = \Bigl( \left(  w_{\tau_{n + N_{{\tilde R}_k}}} : 0 \leq n \leq N_{{\tilde D}_k}-N_{{\tilde R}_k}  \right) \Bigr)_{1 \leq k \leq l}, \text{ for } w \in \{ {\tilde D}_l < U_1 < {\tilde R}_{l+1} \}, \\
&\phi_l(w) = \Bigl( \left( w_{\tau_{n+N_{{\tilde R}_k}}} : 0 \leq n \leq N_{{\tilde D}_k}-N_{{\tilde R}_k}  \right) \Bigr)_{1 \leq k \leq l}, \text{ for } w \in \{ {\tilde D}_l < T_{C} < {\tilde R}_{l+1} \}.
\end{split}
\end{equation}
Intuitively speaking, the maps $\phi_l$ and $\phi'_l$ chop the trajectories into their successive excursions between $A$ and $(B')^c$. We can respectively view $\mu_2^{+, l}$ and $\mu_{l}$ for $l \geq 1$ as Poisson point processes on $\{{\tilde D}_l < U_1 < {\tilde R}_{l+1}\}$ and $\{{\tilde D}_l < T_{C} < {\tilde R}_{l+1}\}$. If $\rho_{+,l}$ and $\rho_{l}$ denote their respective images under $\phi'_l$ and $\phi_l$, we see from \eqref{e:domu0.1} and \eqref{e:domu0.2} that
\begin{equation}
\label{e:domu4}
\begin{split}
&\rho_{+, l}, 1 \leq l \leq m, \text{ and } {\bar \mu} \text{ are independent Poisson point processes, and}\\
&\rho_l, 1 \leq l, \text{ are independent Poisson point processes,}
\end{split}
\end{equation}
and denoting by $\xi_{+,l}$ and $\xi_l$ the intensity measures on $W^{\times l}_f$ of $\rho_{+,l}$ and $\rho_l$, we have:
\begin{equation}
\label{e:domu5}
\begin{split}
& \xi_{+,l} (dw_1, \ldots, dw_l) = (1 + 3 \epsilon) P_{e_A} \left[ \begin{array}{c} {\tilde D}_l < U_1 < {\tilde R}_{l+1}, \\
 (X_{n+N_{{\tilde R}_k}})_{0 \leq n \leq N_{{\tilde D}_k} - N_{{\tilde R}_k}} \negmedspace \in dw_k, 1 \leq k \leq l \end{array} \right],\\
& \xi_l (dw_1, \ldots, dw_l) = (1 + 4 \epsilon) P_{e_A} \left[ \begin{array}{c} {\tilde D}_l < T_{C} < {\tilde R}_{l+1}, \\
(X_{n+N_{{\tilde R}_k}})_{0 \leq n \leq N_{{\tilde D}_k} - N_{{\tilde R}_k}} \negmedspace \in dw_k,  1 \leq k \leq l \end{array} \right].
\end{split}
\end{equation}

\begin{lemma} \label{l:xidom}
For $N \geq c_{\epsilon, \alpha}$,
\begin{align}
\label{e:domu6}
\xi_{+,l} \leq \xi_l, \text{ for } 1 \leq l \leq m.
\end{align}
\end{lemma}

\begin{proof}[Proof of Lemma~\ref{l:xidom}.]
Let $x \in \partial_e B'$. By applying the strong Markov property at the times $T_{C} \leq H_{B'} \circ \theta_{T_{C}} + T_{C}$, we obtain
\begin{align*}
 P_x \left[ T_{C} < H_A < U_1, Y_{H_A} = y \right] &\leq \sup_{x' \in {\mathbb T} \setminus {C}} P_{x'} [ H_B \leq \reg ] \sup_{x'' \in \partial_e B'} P_{x''} \left[H_A < U_1, Y_{H_A} = y \right] \\
&\stackrel{\eqref{a:Drat'}}{\leq}  \rbdd \sup_{x' \in \partial_e B'} P_{x'} \left[ H_A < U_1 , Y_{H_A} = y \right].
\end{align*}
By the Markov property applied at time $\tau_1$, the mapping
$z \mapsto P_z \left[ H_A < U_1 , Y_{H_A} = y\right]$
is harmonic on the set $B \setminus A$. Applying the Harnack inequality (cf.~\cite{L91}, Theorem~1.7.2, p.~42) and a standard covering argument, we deduce from the above that, for any $x \in \partial_e B'$,
\begin{align*}
&P_x \left[ T_{C} < H_A < U_1, Y_{H_A} = y \right] \leq c_\epsilon' \rbdd \inf_{x' \in \partial_e B'} P_{x'} \left[ H_A \leq U_1, Y_{H_A}=y \right] \\
&\qquad \leq c_\epsilon' \rbdd \left( P_x \left[ T_{C} \leq H_A \leq U_1, Y_{H_A}=y \right] + P_x \left[ H_A \leq T_{C}, Y_{H_A}=y \right] \right).
\end{align*}
We have hence shown that, for $x \in \partial_e B'$,
\begin{equation} 
\label{e:bend}
 P_x \left[ T_{C} < H_A < U_1, Y_{H_A} = y \right] \leq c_\epsilon' \rbdd P_x \left[ H_A < T_{C}, Y_{H_A}=y \right].
\end{equation}
In order to prove \eqref{e:domu6}, it is sufficient to prove that for $N \geq c_\epsilon$ and $m$ as in \eqref{e:domu-1},
\begin{align}
\label{e:domu6.1}
\xi_{+,l} \leq \frac{1+ 3 \epsilon}{1+ 4\epsilon} \left( 1 + c_\epsilon' \rbdd \right)^{l-1} \xi_l, \text{ for } 1 \leq l \leq m.
\end{align}
Given $w \in W_f$, we write $w^s$ and $w^l$ for the respective starting point and endpoint of $w$. When $w_1, \ldots, w_l \in W_f$ we have 
\begin{equation}
\label{e:domu7}
\begin{split}
&\xi_{+,l} \left( (w_1, \ldots, w_l) \right) \stackrel{\eqref{e:domu5}}{=} \\
& (1+ 3 \epsilon) P_{e_A} [ {\tilde D}_l < U_1 < {\tilde R}_{l+1}, (X_{\cdot + N_{{\tilde R}_k}})_{0 \leq \cdot \leq N_{{\tilde D}_k} - N_{{\tilde R}_k}} = w_k(.), 1 \leq k \leq l ] = \\
&\sum_{I \subseteq \{1, \ldots, l-1\}} (1+ 3 \epsilon) P_{e_A} [ {\tilde D}_l < U_1 < {\tilde R}_{l+1}, (X_{\cdot + N_{{\tilde R}_k}})_{0 \leq \cdot \leq N_{{\tilde D}_k} - N_{{\tilde R}_k}} = w_k(.),  \\
&\quad  1 \leq k \leq l, \text{and } T_{C} \circ \theta _{{\tilde D}_k} + {\tilde D}_k < {\tilde R}_{k+1}, \text{ exactly for } k \in I \text{ when } 1 \leq k \leq l-1 ].
\end{split}
\end{equation}
The above expression vanishes unless $w_k^s \in \partial_i A$ and $w_k^e \in \partial_e B'$ and $w_k$ takes values in $B'$ except for the final point $w_k^e$, for $1 \leq k \leq l$. If these conditions are satisfied, applying the strong Markov property repeatedly at times ${\tilde D}_l, {\tilde R}_l, {\tilde D}_{l-1}, {\tilde R}_{l-1}, \ldots, {\tilde D}_1$, we find that the last member of \eqref{e:domu7} equals 
\begin{align*}
&\sum_{I \subseteq \{1, \ldots, l-1\}} (1+ 3 \epsilon) P_{e_A} [(X_.)_{0 \leq \cdot \leq N_{{\tilde D}_1}} = w_1(.)] E_{w^e_1} [ \mathbf{1} \{1 \notin I\} \mathbf{1} \{H_A < T_{C}\} + \\
&\mathbf{1}\{1 \in I\} \mathbf{1}\{T_{C} < H_A\}, H_A < U_1, Y_{H_A} = w_2^s ] P_{w^s_2} [ (X_.)_{0 \leq \cdot \leq N_{{\tilde D}_1}} = w_2(.)] \ldots \\
&E_{w^s_{l-1}} \left[ \mathbf{1} \{l-1 \notin I\} \mathbf{1} \{H_A < T_{C} \} + \mathbf{1} \{l-1 \in I\} \mathbf{1} \{T_{C} < H_A\}, H_A < U_1, Y_{H_A} = w^s_l \right] \\
&P_{w^s_l} [(X_.)_{0 \leq \cdot \leq N_{{\tilde D}_1}} = w_l( \cdot)] P_{w^e_l} [U_1 < H_A] \\
&\stackrel{\eqref{e:bend}, \, T_C \leq U_1}{\leq} \sum_{I \subseteq \{1, \ldots, l-1\}} (c_\epsilon' \rbdd)^{|I|} (1+3 \epsilon) P_{e_A} [(X_.)_{0 \leq \cdot \leq N_{{\tilde D}_1}} = w_1 (\cdot)] \\
&\qquad P_{w^e_1}[ H_A < T_{C}, Y_{H_A} = w^s_2] P_{w^s_2} [ (X_.)_{0 \leq \cdot \leq N_{{\tilde D}_1}} = w_2(\cdot) ] \ldots \\
&\qquad P_{w^e_{l-1}} [H_A < T_{C}, Y_{H_A} = w^s_l] P_{w^s_l} [ (X_.)_{0 \leq \cdot \leq N_{{\tilde D}_1}} = w_l(\cdot)] P_{w^e_l} [T_C< H_A],
\end{align*}
and using the binomial formula and the strong Markov property, this equals
\begin{align*}
&(1+3 \epsilon) \left( 1 + c_\epsilon' \rbdd \right)^{l-1} P_{e_A} [T_{C} \circ \theta_{{\tilde D}_k} + {\tilde D}_k > {\tilde R}_{k+1}, \text{ for } 1 \leq k \leq l-1,\\
&\qquad \qquad \qquad \qquad (X_{\cdot+ N_{{\tilde R}_k}})_{0 \leq \cdot \leq N_{{\tilde D}_k} - N_{{\tilde R}_k}}  = w_k(\cdot), \text{ for } 1 \leq k \leq l, {\tilde D}_l < T_{C} < {\tilde R}_{l+1} ] \leq \\
&(1 + 3 \epsilon) \left( 1 + c_\epsilon' \rbdd \right)^{l-1} P_{e_A} \left[ \begin{array}{c} {\tilde D}_l < T_{C} < {\tilde R}_{l+1},  (X_{\cdot+ N_{{\tilde R}_k}})_{0 \leq \cdot \leq N_{{\tilde D}_k} - N_{{\tilde R}_k}} = w_k(\cdot), \\ \text{ for } 1 \leq k \leq l \end{array} \right] \\
&= \frac{1+3\epsilon}{1+4 \epsilon} \left(1 + c_\epsilon' \rbdd \right)^{l-1} \xi_l \left( (w_1, \ldots, w_l) \right),
\end{align*}
proving \eqref{e:domu6.1}, as required.
\end{proof}

We now complete the proof of Proposition~\ref{p:pdom+}. By \eqref{e:domu1}, \eqref{e:domu2} and \eqref{e:domu3},
\begin{equation*}
{\mathcal I}^* = \bigcup_{1 \leq l \leq m} \bigcup_{(w_1, \ldots, w_l) \in \supp \rho_{+,l}} (\ran (w_1) \cup \ldots \cup \ran (w_l) ) \cap A, \text{ and}
\end{equation*}
\begin{equation*}
{\mathcal I} \cap A \supseteq \bigcup_{1 \leq l \leq m} \bigcup_{(w_1, \ldots, w_l) \in \supp \rho_l} (\ran (w_1) \cup \ldots \cup \ran (w_l) ) \cap A.
\end{equation*}
Hence, by \eqref{e:domu3} and \eqref{e:domu6}, for $N \geq c_{\epsilon, \alpha}$,
\begin{equation}
\label{e:domu8}
\text{${\mathcal I} \cap A$ under $Q'$ stochastically dominates ${\mathcal I}^*$ under $Q_2$.}
\end{equation}
Finally, by \eqref{e:domu1} and an application of the strong Markov property at the times ${\tilde D}_m,$ ${\tilde D}_{m-1}, \ldots, {\tilde D}_1$,
\begin{equation}
\label{e:domu9}
\begin{split}
Q_2[ {\bar {\mathcal I}} \neq \emptyset] &= Q_2 [ {\bar \mu} \neq 0 ] \stackrel{\eqref{e:domu0}}{=} (1+3 \epsilon)u P_{e_A} [{\tilde R}_{m+1} < U_1 ]\\
&\leq c_\epsilon u \capacity({\mathsf A}) \sup_{x \in \partial_e B'} P_x[H_A < U_1]^m \stackrel{\eqref{e:domu-2}, \eqref{e:domu-1}}{\leq} c_{\epsilon, \alpha} u N^{-\alpha}. 
\end{split}
\end{equation} 
The statements \eqref{e:indep}, \eqref{e:domu8} and \eqref{e:domu9} now complete the proof of Proposition~\ref{p:pdom+}.
\end{proof}

The following lemma will allow us to disregard the first excursion between $R_1$ and $D_1$, when constructing the required coupling (recall the paragraph before Proposition~\ref{p:c1}).

\begin{lemma} 
\label{l:f}
\textup{($d \geq 3$)}
For any $0<u_1<u_2$, there exists a coupling $\tilde q$ of a discrete-time random walk $(X_n)_{n \geq 0}$ on $\mathbb T$ under $P$ and a continuous-time random walk $(Y'_t)_{t \geq 0}$ on $\mathbb T$ under $P$, such that 
\begin{align}
{\tilde q} \left[ X_{[0,u_1 N^d]} \cap A \subseteq Y'_{[R_2,u_2 N^d]} \right] \geq 1-e^{-c_{u_1,u_2} \capacity({\mathsf A})},
\end{align}
where $R_2$ is defined as in \eqref{e:exc} with $Y'$ in place of $Y$.
\end{lemma}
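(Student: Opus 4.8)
The plan is to couple $X$ and $Y'$ so that, after its first visit to $A$, the discrete walk $X$ coincides with the discrete skeleton of $(Y'_{R_2+t})_{t\geq 0}$. Concretely, I would run $Y'$ under $P$, record the stopping time $R_2$ from \eqref{e:exc}, and observe that the sequence of vertices visited by $(Y'_{R_2+t})_{t\geq 0}$ is a discrete-time walk $X^{\mathrm{sk}}$ issued from $Y'_{R_2}\in\partial_i A$. To build $X$ with the correct law I would use the strong Markov decomposition of $P$ at $H_A$: draw an entrance point $\hat x$ with law $P_\pi[Y_{H_A}=\cdot]$ (the first-entrance-to-$A$ law of a walk started from the uniform distribution $\pi$), prepend an independent discrete walk from $\pi$ conditioned to first hit $A$ precisely at $\hat x$, and, given $\hat x$, append a discrete walk issued from $\hat x$; this produces a walk distributed as $P$. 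Now couple $\hat x$ with $Y'_{R_2}$ so that they agree with the maximal probability $1-\|P_\pi[Y_{H_A}=\cdot]-P_\mu[Y_{H_A}=\cdot]\|_{\mathrm{TV}}$, where $\mu$ denotes the law of $Y'_{U_1}$; on the event $\{\hat x=Y'_{R_2}\}$ let the appended walk be exactly $X^{\mathrm{sk}}$, and otherwise let it be an independent fresh walk from $\hat x$.

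On the event $\{\hat x=Y'_{R_2}\}$ the walk $X$ does not meet $A$ before its first entrance time, after which it agrees with $X^{\mathrm{sk}}$, so $X_{[0,u_1N^d]}\cap A$ is contained in the set of the first $u_1N^d$ vertices of $X^{\mathrm{sk}}$. Hence the inclusion in the lemma follows once I show that, with the required probability, $Y'$ visits at least $u_1N^d$ distinct positions of $X^{\mathrm{sk}}$ inside the window $[R_2,u_2N^d]$. For this I would control the timing in two steps. First, since the return times in \eqref{e:exc} are increasing, $R_2\leq R_{k^-}$ with $k^-=[(1-\epsilon)\tfrac{u_2-u_1}{2}\capacity({\mathsf A})]\geq 2$ for large $N$, so \eqref{e:poisson2} of Lemma~\ref{l:poisson}, applied with $(u_2-u_1)/2$ in place of $u$, gives $P[R_2>\tfrac{u_2-u_1}{2}N^d]\leq e^{-c_{u_1,u_2}\capacity({\mathsf A})}$. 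Second, conditionally on $R_2$ the number of jumps of $Y'$ during $[R_2,u_2N^d]$ is Poisson with mean $u_2N^d-R_2$, which on the previous event is at least $\tfrac{u_1+u_2}{2}N^d>u_1N^d$, so this number is at least $u_1N^d$ except on an event of probability $e^{-c_{u_1,u_2}N^d}$; on the intersection of all these events the desired visits occur.

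It then remains to estimate $\|P_\pi[Y_{H_A}=\cdot]-P_\mu[Y_{H_A}=\cdot]\|_{\mathrm{TV}}$, and this is the step I expect to be the main obstacle. I would bound it through the normalized equilibrium measure $e_A/\capacity({\mathsf A})$ by the triangle inequality. The distance $\|P_\mu[Y_{H_A}=\cdot]-P_\sigma[Y_{H_A}=\cdot]\|_{\mathrm{TV}}\leq 2\|\mu-\sigma\|_{\mathrm{TV}}$ is small: decomposing $Y'_{U_1}$ at the last visit to $B$ before $U_1$, exactly as in the proof of Lemma~\ref{l:quni}, and invoking Lemma~\ref{l:quasi} shows $\mu$ is within $e^{-c_\epsilon\log^2 N}$ of $\sigma$. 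The distance $\|P_\sigma[Y_{H_A}=\cdot]-e_A/\capacity({\mathsf A})\|_{\mathrm{TV}}$ is controlled directly by Lemma~\ref{l:quni}. The remaining term $\|P_\pi[Y_{H_A}=\cdot]-e_A/\capacity({\mathsf A})\|_{\mathrm{TV}}$ needs its own short argument: by \eqref{eq:I} the walk started from $\pi$ is within $e^{-c\log^2 N}$ of $\pi$ at time $\reg$, and one checks that it meets $A$ before time $\reg$ with only small probability, so that conditioning on this disturbs the entrance law only slightly and the same reasoning as for $\sigma$ places it near $e_A/\capacity({\mathsf A})$. Assembling these bounds yields the claimed estimate on $\tilde q$; the only point requiring care beyond this is to verify that the prepended segment together with the prescribed law of $\hat x$ really reconstitutes the law $P$ of a discrete walk from the uniform distribution, which is precisely the strong Markov decomposition at $H_A$.
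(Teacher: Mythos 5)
Your construction hinges on coupling the first entrance point of the discrete walk (law $P_\pi[Y_{H_A}=\cdot]$, uniform start) with $Y'_{R_2}$, and hence on showing that the uniform-start entrance law of $A$ is TV-close to $e_A/\capacity({\mathsf A})$. That is precisely the estimate the paper does not have and is engineered to avoid: the reason Lemma~\ref{l:decouple} and Propositions~\ref{p:c1}, \ref{p:c2} only treat the trajectory after $R_2$ is, as stated there, that no control on the law of the first entrance point $Y_{R_1}$ from the uniform start is provided, and Lemma~\ref{l:f} exists exactly to sidestep this. Your sketch of the missing step is not a valid argument as written: being $\pi$-distributed at time $\reg$ is just stationarity, and conditioning on not having met $A$ by time $\reg$ does not put you in the situation of Lemma~\ref{l:quni}, whose proof relies on the walk having completed a clean $\reg$-long stretch away from $B$ (so that Lemma~\ref{l:quasi} applies) and on the reversibility identity for such excursions. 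A walk from $\pi$ can keep re-entering $B\setminus A$ without ever being in that regenerated state before $H_A$; to repair this you would need, e.g., a lower-tail hitting estimate of the type $P[H_A\leq \reg\log N]\leq N^{-c}$ (a Green-function computation not in the paper) together with a bound on the first regeneration time and a redone decoupling argument. So this is a genuine missing ingredient, not a routine verification.

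Even granting that step, your route cannot deliver the bound claimed in the lemma. The coupling error you incur is at least the TV distance controlled through Lemma~\ref{l:quni}, which is only $c_\epsilon N^{-c_\epsilon}$ (and the quasistationary approximations cost $e^{-c_\epsilon\log^2N}$); both are enormously larger than the asserted $e^{-c_{u_1,u_2}\capacity({\mathsf A})}$ with $\capacity({\mathsf A})\asymp N^{(1-\epsilon)(d-2)}$, and a fixed polynomial error $N^{-c_\epsilon}$ is also insufficient for the arbitrary-$\alpha$ accuracy needed when the lemma is used in Proposition~\ref{p:dom+}. The paper's proof avoids any entrance-law comparison: with $\eta=(u_2-u_1)/2$ it places a uniform vertex at time $\eta N^d$, glues an independent backward path there (reversibility gives $Y'$ the correct law $P$), and defines $X$ as the discrete skeleton of the forward piece $(Y'_{\eta N^d+t})_{t\geq0}$; then a Poisson-clock large deviation gives $X_{[0,u_1N^d]}\subseteq Y'_{[\eta N^d,u_2N^d]}$ up to probability $e^{-cN^d}$, and \eqref{e:poisson2} with $u=\eta$ gives $R_2\leq R_{k^-}\leq\eta N^d$ up to probability $e^{-c\capacity({\mathsf A})}$, so that the whole initial segment of $X$, not just its trace in $A$, sits inside $Y'_{[R_2,u_2N^d]}$. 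Your timing step (Lemma~\ref{l:poisson} plus the jump-count bound) coincides with that part of the paper's argument; the difference is that the paper's time-shift trick makes the problematic entrance-law comparison unnecessary, which is what yields the stretched-exponential error.
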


\begin{proof}
All we need to do in the construction of $\tilde q$ is to introduce a uniformly distributed vertex $Y_0=Y'_{\eta N^d} \in {\mathbb T}$, where $\eta = (u_2-u_1)/2$, as well as two independent random walks $(Y_t = Y'_{\eta N^d +t})_{t \geq 0}$ and $(Y'_{\eta N^d -t})_{0 \leq t \leq \eta N^d}$ starting at $Y_0 = Y'_{\eta N^d}$. By reversibility, both $(Y_t)_{t \geq 0}$ and $(Y_t')_{t \geq 0}$ are distributed as continuous-time random walks under $P$. With the usual notation, we define $(X_n) = (Y_{\tau_n})_{n \geq 0}$. By an exponential bound on the event that the Poisson random variable $N_{(u_1 + \eta)N^d}$ does not take a value in $[(u_1 +\eta/2)N^d, \infty)$, we then have
\begin{align*}
{\tilde q} \left[ X_{[0,u_1 N^d]} \cap A \nsubseteq Y_{[0,(u_1+\eta)N^d]} \right] \leq P \left[ N_{(u_1+\eta)N^d} \leq u_1 N^d \right] \leq e^{-c_{u_1,u_2} N^d}.
\end{align*}
Noting that $Y_{[0,(u_1+\eta)N^d]} = Y'_{[\eta N^d, u_2 N^d]}$, \eqref{e:poisson2} of Lemma~\ref{l:poisson} applied with $u=\eta$ and $\epsilon=1/4$ implies that for $N \geq c_{u_1,u_2}$,
\begin{align*}
{\tilde q} \left[Y_{[0,(u_1+\eta)N^d]} \nsubseteq Y'_{[R_2,u_2 N^d]} \right] \stackrel{ \textup{(}2 \leq k^- \textup{)}}{\leq} P \left[R_{k^-} \geq \eta N^d \right] \leq e^{-c_{u_1, u_2} \capacity ({\mathsf A}) }.
\end{align*}
By the estimate \eqref{a:capest} on $\capacity ({\mathsf A})$, the two bounds above complete the proof.
\end{proof}

The lemma above, together with Propositions~\ref{p:c2} and \ref{p:pdom+} now yields the required coupling.

\begin{proposition} \label{p:dom+}
\textup{($d \geq 3$)}
For any $\epsilon \in (0,1), \alpha>0, u>0$, there exists a coupling $Q_3$ of $X_{[0,uN^d]}$ under $P$ with ${\mathcal I}^{u(1+\epsilon)} \cap {\mathsf A}$ under $\mathbb P$, such that for some constant $c=c(u, \epsilon, \alpha)$,
\begin{align}
\label{e:dom+} Q_3 \left[ X(u,{\mathsf A}) \subseteq {\mathcal I}^{u(1+\epsilon)} \cap {\mathsf A} \right] \geq 1 - cN^{-\alpha}.
\end{align}
(recall the definition of $X(u,{\mathsf A})$ in \eqref{e:XuA})
\end{proposition}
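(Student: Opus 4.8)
The plan is to chain together the couplings constructed so far. First I would fix $\epsilon' = \epsilon/5$ (so that $\epsilon' \in (0,1/4)$ and $5\epsilon' = \epsilon$), take $r_N = N^{1-\epsilon'}$ in the setup \eqref{e:boxes}, and apply Proposition~\ref{p:c2} with $\epsilon'$ in place of $\epsilon$, yielding a coupling $Q_2$ of $Y_{[R_2, uN^d]}$ under $P$ with Poisson point measures $\mu_2^\pm$ of intensity $(1 \pm 3\epsilon')u\kappa_2$ such that, with probability at least $1 - e^{-c_{u,\epsilon}\log^2 N}$, we have $Y_{[R_2,uN^d]} \cap A \subseteq \mathcal{I}_2^+ \cap A$. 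Next, on the same space $(\Omega_2, \mathcal{F}_2, Q_2)$, apply Proposition~\ref{p:pdom+} (with $\epsilon'$ and $\alpha$) to decompose $\mathcal{I}_2^+ \cap A = \mathcal{I}^* \cup \bar{\mathcal{I}}$, where $\mathcal{I}^*$ and $\bar{\mathcal{I}}$ are independent, $Q_2[\bar{\mathcal{I}} \neq \emptyset] \leq c_{\epsilon,\alpha} u N^{-\alpha}$, and $\mathcal{I}^*$ is stochastically dominated by $\mathcal{I} \cap A$. Combining with \eqref{e:dom00}, $\phi(\mathcal{I}^*)$ is then stochastically dominated by $\mathcal{I}^{u(1+4\epsilon')} \cap \mathsf{A}$ under $\mathbb{P}$.

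The next step is to pass from $Y_{[R_2, uN^d]}$ back to the discrete-time walk trajectory $X_{[0,uN^d]}$. Here I would invoke Lemma~\ref{l:f}: applying it with $u_1 = u$ and $u_2 = u(1 + \epsilon')$ (or any $u_2 > u$), it provides a coupling $\tilde q$ of a discrete-time walk $X$ under $P$ with a continuous-time walk $Y'$ under $P$ such that $X_{[0,uN^d]} \cap A \subseteq Y'_{[R_2, u_2 N^d]}$ with probability at least $1 - e^{-c\,\capacity(\mathsf{A})}$; by \eqref{a:capest} this error is $\leq cN^{-\alpha}$. Since the time horizon has only increased, and since the bad events of Proposition~\ref{p:c2} and Proposition~\ref{p:pdom+} are governed by the walk dynamics in the same way, I would re-run the poissonization argument on the horizon $u_2 N^d$ rather than $uN^d$; concretely, one constructs all of $Q_2$, $Q'$ with $u_2$ in place of $u$ in the relevant places, obtaining the analogue of the inclusion $X_{[0,uN^d]} \cap A \subseteq \mathcal{I}^*_2 \cap A$ with $\mathcal{I}^*_2$ still dominated by a random interlacement at level $u_2(1 + 4\epsilon')$. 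Choosing $\epsilon'$ small enough that $u_2(1+4\epsilon') = u(1+\epsilon')(1+4\epsilon') \leq u(1+\epsilon)$ (which holds once $\epsilon'$ is small, say $\epsilon' = \epsilon/10$), and using monotonicity of random interlacements in the level, we get $\phi(X(u,\mathsf{A})) = \phi(X_{[0,uN^d]} \cap A)$ stochastically dominated by $\mathcal{I}^{u(1+\epsilon)} \cap \mathsf{A}$, up to the conditioning event $\bar{\mathcal{I}} = \emptyset$.

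Finally, I would assemble the coupling $Q_3$. The domination ``$\phi(X(u,\mathsf{A}))$ is stochastically dominated by $\mathcal{I}^{u(1+\epsilon)} \cap \mathsf{A}$ on the event $E$ of probability $\geq 1 - cN^{-\alpha}$'' is made into an actual coupling by a standard Strassen-type construction: on $E$ one couples so that $\phi(X(u,\mathsf{A})) \subseteq \mathcal{I}^{u(1+\epsilon)} \cap \mathsf{A}$ holds pointwise, and off $E$ one lets the two be independent; since $E$ depends only on the pair $(\mathcal{I}^*, \bar{\mathcal{I}})$ (with $\mathcal{I}^*$ the dominated set and $\bar{\mathcal{I}}$ the rare extra piece), conditioning on $\bar{\mathcal{I}} = \emptyset$ is harmless because $\mathcal{I}^*$ and $\bar{\mathcal{I}}$ are independent, so the conditional law of $\mathcal{I}^*$ is unchanged and the domination by $\mathcal{I} \cap A$ persists. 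Collecting the error terms — $e^{-c_{u,\epsilon}\log^2 N}$ from Proposition~\ref{p:c2}, $c_{\epsilon,\alpha} u N^{-\alpha}$ from Proposition~\ref{p:pdom+}, and $cN^{-\alpha}$ from Lemma~\ref{l:f} and \eqref{a:capest} — yields \eqref{e:dom+} with $c = c(u,\epsilon,\alpha)$.

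The main obstacle I anticipate is the bookkeeping around the first excursion and the time-horizon shift: Proposition~\ref{p:c2} only controls $Y_{[R_2, uN^d]}$, not the full trajectory, so one genuinely needs Lemma~\ref{l:f} to fold the initial segment $X_{[0,uN^d]} \cap A$ into $Y'_{[R_2, u_2 N^d]}$, and one must be careful that enlarging the horizon from $uN^d$ to $u_2 N^d$ does not spoil the domination — it does not, because more excursions only enlarge $\mathcal{I}^*$, which remains dominated by $\mathcal{I}^{u_2(1+4\epsilon')} \cap \mathsf{A}$, and the slack in choosing $\epsilon'$ absorbs the extra factor $(1+\epsilon')$. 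Getting all the $\epsilon$'s to line up ($5\epsilon'$ or $10\epsilon'$ vs.\ $\epsilon$, the constraint $\epsilon' < 1/4$, and the product $(1+\epsilon')(1+4\epsilon') \leq 1+\epsilon$) is the only delicate point, but it is purely a matter of choosing $\epsilon'$ a fixed small fraction of $\epsilon$.
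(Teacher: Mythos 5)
Your proposal is correct and follows essentially the same route as the paper's proof: apply Lemma~\ref{l:f} with $u_1=u$, $u_2=u'=(1+\epsilon')u$ to absorb the first excursion and the discrete/continuous-time discrepancy, run Propositions~\ref{p:c2} and \ref{p:pdom+} with $(u',\epsilon')$, use \eqref{e:dom00} together with Strassen's theorem to turn the stochastic domination of ${\mathcal I}^*$ into an almost sure inclusion, and then chain the three couplings and union-bound the errors $e^{-c\log^2 N}$, $c_{\epsilon,\alpha}uN^{-\alpha}$ and $e^{-c\,\capacity({\mathsf A})}$. The only (cosmetic) difference is that the paper chooses $\epsilon'$ so that $(1+\epsilon')(1+4\epsilon')=1+\epsilon$ exactly, whereas you take $\epsilon'$ small and invoke monotonicity of ${\mathcal I}^v$ in the level $v$, which works just as well.
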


\begin{proof}
For $u$ and $\epsilon$ as in the statement, we choose $\epsilon' \in (0, \epsilon/4)$, such that $(1+\epsilon')(1+4 \epsilon') = 1 + \epsilon$ and set $u' = (1+ \epsilon')u$. In particular, we then have $u'(1+4 \epsilon') = u(1+\epsilon)$. We will apply Propositions~\ref{p:c2} and \ref{p:pdom+} with $u$ and $\epsilon$ replaced by $u'$ and $\epsilon'$. Using \eqref{e:dom+4} and \eqref{e:dom00}, as well as Theorem~2.4 on p.~73 in \cite{L85}, there exists a coupling $q$ of ${\mathcal I}^*$ under $Q_2$ and ${\mathcal I}^{u(1+\epsilon)} \cap {\mathsf A}$ under $\mathbb P$, such that 
\begin{align}
\label{dom+1}
q[ \phi({\mathcal I}^*) \subseteq {\mathcal I}^{u(1+\epsilon)} \cap {\mathsf A}]=1. 
\end{align}
We then define $Q_3$, using the couplings $Q_2$ from Proposition~\ref{p:c2} and $\tilde q$ from Lemma~\ref{l:f}, where we set $u_1=u$, $u_2=u'$. For finite sets $S_1 \subseteq {\mathbb T}$, $S_2 \subseteq {\mathsf A}$, we define
\begin{align*}
&Q_3 \left[X_{[0,uN^d]} = S_1, {\mathcal I}^{u(1+ \epsilon)} \cap {\mathsf A} = S_2 \right] \\
&=\sum_{S \subseteq A,S' \subseteq {\mathbb T}} q \left[  {\mathcal I}^{u(1+\epsilon)} \cap {\mathsf A} = S_2  \Big| {\mathcal I}^* = S \right] Q_2 \left[ {\mathcal I}^* = S \Big| Y_{[R_2,u' N^d]} = S'   \right] \\
&\qquad \qquad \times {\tilde q} \left[ Y_{[R_2,u' N^d]}'  = S', X_{[0,uN^d]} = S_1 \right].
\end{align*}
See Figure~\ref{f:Q3} below for an illustration of the coupling $Q_3$.

\begin{figure}[h]
\begin{equation*}
({\mathcal{I}^{u(1+\epsilon)}},{\mathbb{P}}) \quad \overset{q}{\longleftrightarrow} \quad
({\mathcal{I}^*}, Q_2) \quad \overset{Q_2}{\longleftrightarrow} \quad
({Y_{[R_2,u'N^d]}},P) \quad \overset{\tilde q}{\longleftrightarrow} \quad
({X_{[0,uN^d]}},P)
\end{equation*}
\caption{An illustration showing how the coupling $Q_3$ is defined.}\label{f:Q3}
\end{figure}

Then we have $Q_3 [X_{[0,uN^d]}=S_1] = {\tilde q} [X_{[0,uN^d]}=S_1] = P [X_{[0,uN^d]}=S_1]$ and $Q_3 [{\mathcal I}^{u(1+ \epsilon)} \cap {\mathsf A}= S_2] = q[{\mathcal I}^{u(1+ \epsilon)} \cap {\mathsf A} = S_2 ] = {\mathbb P} [{\mathcal I}^{u(1+ \epsilon)} \cap {\mathsf A} = S_2]$, as required. Finally, 
\begin{equation}
\label{dom+2}
\begin{split}
&Q_3 \left[ X(u,{\mathsf A}) \nsubseteq {\mathcal I}^{u(1+\epsilon)} \cap {\mathsf A} \right] \leq q \left[ {\mathcal I}^{u(1+ \epsilon)} \cap {\mathsf A} \nsupseteq \phi({\mathcal I}^* ) \right] + Q_2 \left[ {\mathcal I}^* \nsupseteq {\mathcal I}^+_2 \cap A \right] +\\
&+Q_2 \left[ {\mathcal I}^+_2 \nsupseteq  Y_{[R_2,u'N^d]} \cap A \right] + {\tilde q} \left[ Y'_{[R_2,u'N^d]}  \nsupseteq X_{[0,uN^d]}  \right]
\end{split}
\end{equation}
By \eqref{dom+1}, the first probability on the right-hand side equals zero. By \eqref{e:dom+1} and \eqref{e:dom+3}, the second probability is bounded from above by $c_{\epsilon, \alpha} u N^{-\alpha}$. By \eqref{e:c2}, we have
\begin{align*}
Q_2 \left[ {\mathcal I}^+_2 \nsupseteq  Y_{[R_2,u'N^d]} \cap A \right] \leq e^{-c_{u, \epsilon} \log^2 N},
\end{align*}
while according to Lemma~\ref{l:f} and \eqref{a:capest}, the last probability in \eqref{dom+2} is bounded by $e^{-c_{\epsilon, u} N^{1/2}}$. This shows \eqref{e:dom+} and completes the proof.
\end{proof}

\section{Domination by random walk}
\label{s:dom-}

In this section, we prove the other half of Theorem~\ref{t:dom}, domination of random interlacements by the random walk trajectory, and as a result prove Theorem~\ref{t:dom}. As in the previous section, the key ingredient is again a truncation argument, this time applied to the random interlacement. The argument is again due to Sznitman, given in \cite{Szn09d}, Theorem~3.1, and shows the following result similar to Proposition~\ref{p:pdom+} (recall from the beginning of Section~\ref{s:not} that random interlacements are defined on the space $(\Omega, {\mathcal F}, {\mathbb P})$):

\begin{proposition} \label{p:szn}
\textup{($d \geq 3$)}
 For $r_N = 2[N^{1-\epsilon}/8]$ (cf.~\eqref{e:rN}), $u>0$, $\epsilon \in (0,1)$ and $N \geq c(\epsilon, \alpha)$, there exist random subsets ${\mathcal I}^*, {\bar {\mathcal I}}$ of $\mathsf A$, defined under $(\Omega, {\mathcal F}, {\mathbb P})$, such that
\begin{align}
\label{e:szn1} &{\mathcal I}^{u(1-4 \epsilon)} \cap {\mathsf A} = {\mathcal I}^* \cup {\bar {\mathcal I}}, \\
\label{e:szn2} &{\mathcal I}^*, {\bar {\mathcal I}} \text{ are independent under } {\mathbb P},\\
\label{e:szn3} &{\mathbb P}[{\bar {\mathcal I}} \neq \emptyset] \leq c_{ \epsilon, \alpha} u N^{-\alpha},\\
\label{e:szn4} &{\mathcal I}^* \text{ is stochastically dominated by } \phi({\mathcal I}_2^- \cap A).
\end{align}
\end{proposition}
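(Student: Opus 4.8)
The plan is to mirror the truncation argument already carried out for the upper bound in Proposition~\ref{p:pdom+}, but now run ``in reverse'': instead of removing excursions from the random walk measure, we must remove from the random interlacement ${\mathcal I}^{u(1-4\epsilon)}$ those trajectory pieces that, once restricted to $A$, would not be produced by the walk's bounded-excursion structure, and absorb the loss by lowering the intensity from $u(1-4\epsilon)$ to $u(1-3\epsilon)$ (the intensity appearing in $\kappa_2$ and hence in ${\mathcal I}_2^-$). Since ${\mathcal I}^{u(1-4\epsilon)}\cap{\mathsf A}$ has, by \eqref{e:interlac}, the law of $\bigcup_{i\le J}\ran(X^i)\cap{\mathsf A}$ with $J$ Poisson of parameter $u(1-4\epsilon)\capacity({\mathsf A})$ and the $X^i$ started from $P_{e_{\mathsf A}}/\capacity({\mathsf A})$, and each such $X^i$ (being a genuine random walk on ${\mathbb Z}^d$) makes, with overwhelming probability, no more than $m=[(\alpha+d)/c_{1,\epsilon}]+1$ excursions between ${\mathsf A}$ and $(B')^c$ before escaping to infinity (with $r_N$ now of macroscopic order $N^{1-\epsilon}$), we can decompose $\mu$ exactly as in \eqref{e:domu0}: write $\mu^{\mathbb{Z}^d}=\sum_{1\le l\le m}\mu^{*,l}+\bar\mu$, where $\mu^{*,l}$ retains trajectories making exactly $l$ such excursions and $\bar\mu$ keeps those making more than $m$. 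These are independent Poisson measures by the Poisson thinning/colouring property; set ${\mathcal I}^*=\bigcup_{1\le l\le m}\bigcup_{w\in\supp\mu^{*,l}}\ran(w)\cap{\mathsf A}$ and $\bar{\mathcal I}=\bigcup_{w\in\supp\bar\mu}\ran(w)\cap{\mathsf A}$, which gives \eqref{e:szn1} and \eqref{e:szn2} at once.

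For \eqref{e:szn3}, the bound is the interlacement analogue of \eqref{e:domu9}: ${\mathbb P}[\bar{\mathcal I}\ne\emptyset]={\mathbb P}[\bar\mu\ne 0]=u(1-4\epsilon)P_{e_{\mathsf A}}[{\tilde R}_{m+1}<T_C^{\,\mathbb{Z}^d}]$ (or $<\infty$, since on ${\mathbb Z}^d$ the walk never re-enters after escaping), which by the strong Markov property applied successively at the departure times ${\tilde D}_m,\dots,{\tilde D}_1$ is at most $c_\epsilon u\,\capacity({\mathsf A})\sup_{x\in\partial_e B'}P_x[H_{\mathsf A}<\infty]^m$. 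The escape estimate $\sup_{x\in\partial_e B'}P_x[H_{\mathsf A}<\infty]\le N^{-c_{1,\epsilon}}$ is the ${\mathbb Z}^d$-version of \eqref{e:domu-2} and follows from \cite{L91}, Proposition~1.5.10 together with \eqref{a:capest}; combined with the choice of $m$ in \eqref{e:domu-1} and $\capacity({\mathsf A})\le c_\epsilon N^{(1-\epsilon)(d-2)}$ from \eqref{a:capest}, this yields $c_{\epsilon,\alpha}uN^{-\alpha}$.

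For \eqref{e:szn4}, the strategy is the intensity-comparison computation in the spirit of Lemma~\ref{l:xidom}. We chop the trajectories of $\mu^{*,l}$ and of the bounded-excursion measure underlying ${\mathcal I}_2^-$ into their successive excursions between $A$ and $(B')^c$ via maps analogous to $\phi_l,\phi'_l$ in \eqref{e:domu3}, obtaining Poisson point processes on $W_f^{\times l}$ with intensity measures $\xi^{\mathbb{Z}^d}_{l}$ and $\xi^{-}_{l}$ respectively; here $\xi^{\mathbb{Z}^d}_{l}$ carries the prefactor $u(1-4\epsilon)$ and the ${\mathbb Z}^d$-law (trajectory escapes after $l$ excursions), while $\xi^-_l$ carries $u(1-3\epsilon)$ and the torus-law with the bounded cut-off $U_1$. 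The key pointwise bound to establish, by exactly the telescoping strong-Markov expansion over $I\subseteq\{1,\dots,l-1\}$ used in the proof of Lemma~\ref{l:xidom}, is $\xi^{\mathbb{Z}^d}_l\le\frac{1-4\epsilon}{1-3\epsilon}(1+c'_\epsilon\rbdd)^{l-1}\xi^-_l$ for $1\le l\le m$ and $N\ge c_{\epsilon,\alpha}$; the factor $\frac{1-4\epsilon}{1-3\epsilon}<1$ absorbs the $(1+c'_\epsilon\rbdd)^{m-1}$ error since $m$ is a constant. The one new ingredient needed is the replacement for \eqref{e:bend}: for $x\in\partial_e B'$ one needs $P_x^{\mathbb{Z}^d}[H_{\mathsf A}<T_{\mathsf C},Y_{H_{\mathsf A}}=y]$ (walk on ${\mathbb Z}^d$ re-entering ${\mathsf A}$ before leaving ${\mathsf C}$) bounded below, up to a $(1-c'_\epsilon\rbdd)$ factor, by $P_x[H_A<U_1,Y_{H_A}=y]$ (walk on ${\mathbb T}$), which again follows from the Harnack inequality (\cite{L91}, Theorem~1.7.2) and a covering argument comparing the two harmonic functions on $B\setminus A$, together with \eqref{a:Drat'} to control the difference coming from excursions that leave ${\mathsf C}$. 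Once the intensity inequality holds, the standard comparison of Poisson processes (\cite{L85}, p.~73) gives that $\bigcup_{1\le l\le m}\bigcup_{(w_1,\dots,w_l)\in\supp\rho^{\mathbb{Z}^d}_l}(\ran(w_1)\cup\cdots\cup\ran(w_l))\cap{\mathsf A}$ — which contains ${\mathcal I}^*$ — is stochastically dominated by the analogous union built from $\rho^-_l$ — which is contained in $\phi({\mathcal I}_2^-\cap A)$ — giving \eqref{e:szn4}.

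The main obstacle I expect is establishing the reversed escape estimate and the reversed version of \eqref{e:bend}: in Proposition~\ref{p:pdom+} one was comparing a torus quantity against a truncated-torus quantity with the \emph{same} underlying walk, whereas here one is comparing a genuine ${\mathbb Z}^d$ excursion law against the torus excursion law cut off at the time $U_1$, so one must check carefully that over the relevant region $B'\cup\partial_e B'$ (which sits well inside the $\ell_\infty$-ball of radius $N/4$ where $\phi$ is an isometry) the transition probabilities of the two walks agree exactly up to the first time ${\mathsf C}$ is left, and that the contribution of the torus walk wrapping around — controlled by \eqref{a:gotoC} and \eqref{a:Drat'} — only perturbs the intensity by the harmless factor $(1\pm c'_\epsilon\rbdd)$. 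The bookkeeping of which walk ($Y$ on ${\mathbb T}$ versus $Y$ on ${\mathbb Z}^d$) is in force at each stage of the telescoping product, and ensuring the directions of all inequalities are consistent with the $\le$ rather than $\ge$ needed for \eqref{e:szn4}, is where the argument requires the most care.
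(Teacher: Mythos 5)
Your strategy is, in substance, the right one, but note that the paper does not carry it out: its proof of Proposition~\ref{p:szn} consists of invoking Theorem~3.1 of \cite{Szn09d} (Sznitman's truncation and sprinkling argument for the discrete cylinder) with $u,u'$ replaced by $u(1-3\epsilon),u(1-4\epsilon)$ and a larger truncation parameter to upgrade the error to $N^{-\alpha}$, and then observing that the truncated interlacement appearing there is dominated by $\bigcup_{w\in\supp\mu_2^-}\phi(\ran(w_{\cdot\wedge T_C})\cap A)\subseteq\phi({\mathcal I}_2^-\cap A)$. So you are reconstructing the cited argument rather than following the paper's one-paragraph proof, which is legitimate. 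One genuine difference in routing: the paper (via \cite{Szn09d}) compares ${\mathcal I}^*$ with the $T_{\mathsf C}$-truncated interlacement at level $u(1-3\epsilon)$ --- a purely ${\mathbb Z}^d$ object --- and only afterwards identifies those truncated walks, path by path and exactly, with the $\mu_2^-$-trajectories cut at $T_C$ (using the isomorphism $\phi$ on $C$), which are contained in ${\mathcal I}_2^-$. Your version compares directly against the $U_1$-cutoff law $\kappa_2$, which forces you to handle, inside the intensity computation, both the wrap-around corrections and the event $\{U_1\le H_A<T_C\}$ (rare only because $P[T_C\ge \reg]\le e^{-c\log^2 N}$, and it must be converted into a multiplicative $(1\pm\rbdd)$ error via a polynomial lower bound on point-entrance probabilities --- a step you do not address). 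The paper's ordering of the two comparisons avoids this bookkeeping entirely.

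Within your sketch there is one concrete error at the crux of \eqref{e:szn4}: the ``new ingredient'' is stated in the wrong direction. For $\xi^{{\mathbb Z}^d}_l\le\frac{1-4\epsilon}{1-3\epsilon}(1+c'_\epsilon\rbdd)^{l-1}\xi^-_l$, the bridge factors of $\xi^{{\mathbb Z}^d}_l$, namely $P^{{\mathbb Z}^d}_x[H_{\mathsf A}<\infty,\,Y_{H_{\mathsf A}}=y]$ for $x\in\partial_e B'$, must be bounded \emph{above} by $(1+c'_\epsilon\rbdd)\,P_x[H_A<U_1,\,Y_{H_A}=y]$; what you assert is $P^{{\mathbb Z}^d}_x[H_{\mathsf A}<T_{\mathsf C},\,Y_{H_{\mathsf A}}=y]\ge(1-c'_\epsilon\rbdd)\,P_x[H_A<U_1,\,Y_{H_A}=y]$, which bounds the torus bridge above by the ${\mathbb Z}^d$ bridge and would, if anything, serve to prove the reverse domination $\xi^-_l\le c\,\xi^{{\mathbb Z}^d}_l$. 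What is actually needed is (a) a ${\mathbb Z}^d$ analogue of \eqref{e:bend}, $P^{{\mathbb Z}^d}_x[T_{\mathsf C}<H_{\mathsf A}<\infty,\,Y_{H_{\mathsf A}}=y]\le c'_\epsilon\rbdd\,P^{{\mathbb Z}^d}_x[H_{\mathsf A}<T_{\mathsf C},\,Y_{H_{\mathsf A}}=y]$, proved by Harnack and \cite{L91}, Proposition~1.5.10 (note \eqref{a:Drat'} is a torus estimate and is not the right tool for interlacement trajectories leaving $\mathsf C$), together with (b) $P_x[H_A<U_1,\,Y_{H_A}=y]\ge(1-c'_\epsilon\rbdd)\,P_x[H_A<T_C,\,Y_{H_A}=y]$, the latter probability coinciding exactly with $P^{{\mathbb Z}^d}_x[H_{\mathsf A}<T_{\mathsf C},\,Y_{H_{\mathsf A}}=y]$; and the analogous comparison of the terminal factors $P^{{\mathbb Z}^d}_x[H_{\mathsf A}=\infty]$ versus $P_x[U_1<H_A]$, which your sketch omits, is also required. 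All of these hold and are provable with the tools you name, so the defect is one of inequality bookkeeping --- the very point you flag as delicate --- rather than of strategy; but as written the displayed comparison does not yield \eqref{e:szn4}.
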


\begin{proof}
As we now explain, the result follows from \cite{Szn09d}, Theorem~3.1 and its proof, applied with $u$ and $u'$ replaced by the above $u(1-3 \epsilon)$ and $u(1-4 \epsilon)$. By (3.4) and (3.5) in \cite{Szn09d}, the random sets ${\mathcal I}^*$ and ${\bar {\mathcal I}}$ constructed in Theorem~3.1 of \cite{Szn09d} satisfy \eqref{e:szn1} and \eqref{e:szn2} above. The estimate \eqref{e:szn3} is proved in \cite{Szn09d} with $\alpha$ replaced by $d-1$ (note that the theorem there applies to ${\mathbb Z}^{d+1}$). If, in the notation of \cite{Szn09d}, one replaces $r=[8/\epsilon]+1$ in (3.11) by $r=[8\alpha/\epsilon]+2$, however, one indeed obtains \eqref{e:szn3} above (cf. (3.17) and (3.36) in \cite{Szn09d}). Rather than $\phi({\mathcal I}_2^- \cap A)$, the statement in \cite{Szn09d} features the truncated random interlacement in the above \eqref{e:szn4}. The truncated random interlacement is defined in \cite{Szn09d}, (3.2), and, by our construction of ${\mathcal I}_2^-$ in Proposition~\ref{p:c2} and (1.31) in \cite{Szn09d}, stochastically dominated by 
\begin{equation*}
  \bigcup_{w \in \supp \mu_2^-}  \phi  \left( \ran  (w_{. \wedge  T_C})  \cap A \right) \subseteq \phi({\mathcal I}_2^- \cap A). 
\qedhere
\end{equation*}
\end{proof}

From the last proposition, we directly obtain the required coupling:

\begin{proposition} \label{p:dom-}
\textup{($d \geq 3$)}
For any $\epsilon \in (0,1), \alpha>0, u>0$, there exists a coupling $Q_4$ of $X_{[0,uN^d]}$ under $P$ with ${\mathcal I}^{u(1-\epsilon)} \cap {\mathsf A}$ under $\mathbb P$, such that for some constant $c=c(u, \epsilon, \alpha)$,
\begin{align}
\label{e:dom-} Q_4 \left[  {\mathcal I}^{u(1-\epsilon)} \cap {\mathsf A} \subseteq  X(u, {\mathsf A})   \right] \geq 1 - cN^{-\alpha}.
\end{align}
\end{proposition}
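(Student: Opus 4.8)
The plan is to mirror the proof of Proposition~\ref{p:dom+} almost verbatim, replacing the domination-from-above structure by the domination-from-below structure furnished by Proposition~\ref{p:szn}. First I would fix $\epsilon\in(0,1)$, $\alpha>0$, $u>0$, and choose a small auxiliary $\epsilon'\in(0,1/4)$ such that $(1-\epsilon')(1-4\epsilon')=1-\epsilon$; set $u'=(1-\epsilon')u$, so that $u'(1-4\epsilon')=u(1-\epsilon)$. We then apply Proposition~\ref{p:c2} and Proposition~\ref{p:szn} with $u,\epsilon$ replaced by $u',\epsilon'$, and also apply Lemma~\ref{l:f} with $u_1=u$, $u_2=u'$. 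Proposition~\ref{p:c2} provides, on $(\Omega_2,\mathcal F_2,Q_2)$, the Poisson measure $\mu_2^-$ and the set $\mathcal I_2^-$ with $\mathcal I_2^-\cap A\subseteq Y_{[R_2,u'N^d]}\cap A$ with probability $\geq 1-e^{-c_{u,\epsilon}\log^2 N}$; Proposition~\ref{p:szn} provides, on $(\Omega,\mathcal F,\mathbb P)$, the decomposition ${\mathcal I}^{u(1-\epsilon)}\cap{\mathsf A}={\mathcal I}^*\cup\bar{\mathcal I}$ with ${\mathcal I}^*$ stochastically dominated by $\phi({\mathcal I}_2^-\cap A)$, $\mathbb P[\bar{\mathcal I}\neq\emptyset]\leq c_{\epsilon,\alpha}uN^{-\alpha}$, and ${\mathcal I}^*,\bar{\mathcal I}$ independent.

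Next I would glue these couplings together exactly as in the proof of Proposition~\ref{p:dom+}, but now reading the chain of stochastic dominations in the opposite direction. Using Strassen's theorem (Theorem~2.4, p.~73 in \cite{L85}) together with \eqref{e:szn4}, there is a coupling $q$ of ${\mathcal I}^*$ under $\mathbb P$ and $\phi({\mathcal I}_2^-\cap A)$ under $Q_2$ with $q[{\mathcal I}^*\subseteq \phi({\mathcal I}_2^-\cap A)]=1$. Then, exactly in the style of the formula defining $Q_3$, I define $Q_4$ on finite sets $S_1\subseteq{\mathbb T}$, $S_2\subseteq{\mathsf A}$ by
\begin{align*}
&Q_4\left[X_{[0,uN^d]}=S_1,\ {\mathcal I}^{u(1-\epsilon)}\cap{\mathsf A}=S_2\right]\\
&=\sum_{S\subseteq {\mathsf A},\,S'\subseteq{\mathbb T}} q\left[{\mathcal I}^{u(1-\epsilon)}\cap{\mathsf A}=S_2\,\Big|\,{\mathcal I}^*=S\right]\,Q_2\left[\phi({\mathcal I}_2^-\cap A)=S\,\Big|\,Y_{[R_2,u'N^d]}=S'\right]\\
&\qquad\qquad\times\,{\tilde q}\left[Y'_{[R_2,u'N^d]}=S',\ X_{[0,uN^d]}=S_1\right],
\end{align*}
with the convention that a term vanishes whenever the conditioning event has zero probability. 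The marginal checks are identical to those in Proposition~\ref{p:dom+}: the $X_{[0,uN^d]}$-marginal is $P$ by Lemma~\ref{l:f}, the $\mathcal I^{u(1-\epsilon)}\cap{\mathsf A}$-marginal is $\mathbb P$ by the $\mathbb P$-marginal of $q$ and Proposition~\ref{p:szn}.

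Finally, I would bound the failure probability by a union bound over the four links in the chain (see Figure~\ref{f:Q3} for the analogous picture, now traversed from the random walk towards the interlacement):
\begin{align*}
&Q_4\left[{\mathcal I}^{u(1-\epsilon)}\cap{\mathsf A}\nsubseteq X(u,{\mathsf A})\right]\leq \mathbb P[\bar{\mathcal I}\neq\emptyset]+q\left[{\mathcal I}^{u(1-\epsilon)}\cap{\mathsf A}\nsubseteq{\mathcal I}^*\cup\bar{\mathcal I}\right]\\
&\quad+Q_2\left[\phi({\mathcal I}_2^-\cap A)\nsupseteq{\mathcal I}^*\right]+Q_2\left[Y_{[R_2,u'N^d]}\cap A\nsupseteq{\mathcal I}_2^-\cap A\right]+{\tilde q}\left[X_{[0,uN^d]}\cap A\nsubseteq Y'_{[R_2,u'N^d]}\right].
\end{align*}
The first term is $0$ by \eqref{e:szn1}, the second is $\leq c_{\epsilon,\alpha}uN^{-\alpha}$ by \eqref{e:szn3}, the third is $0$ by construction of $q$, the fourth is $\leq e^{-c_{u,\epsilon}\log^2 N}$ by \eqref{e:c2}, and the last is $\leq e^{-c_{\epsilon,u}N^{1/2}}$ by Lemma~\ref{l:f} combined with \eqref{a:capest}; adding these gives \eqref{e:dom-}. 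There is no real obstacle here beyond bookkeeping: the genuine content has already been placed in Propositions~\ref{p:c2} and \ref{p:szn}, so the only thing to be careful about is the choice of $\epsilon'$ making $(1-\epsilon')(1-4\epsilon')=1-\epsilon$ (which is possible since the left side is continuous in $\epsilon'$, equals $1$ at $\epsilon'=0$, and decreases), and keeping track of the direction of each stochastic domination so that the inclusions chain the right way.
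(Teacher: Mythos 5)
Your overall architecture (choose $\epsilon'$ with $(1-\epsilon')(1-4\epsilon')=1-\epsilon$, apply Propositions~\ref{p:c2} and \ref{p:szn} at level $u'=(1-\epsilon')u$, glue with Strassen's theorem and a union bound) is exactly the paper's, but the last link of your chain is oriented the wrong way, and the tool you invoke for it does not apply. To conclude ${\mathcal I}^{u(1-\epsilon)}\cap{\mathsf A}\subseteq X(u,{\mathsf A})$ you need, after ${\mathcal I}^{u(1-\epsilon)}\cap{\mathsf A}={\mathcal I}^*$ on $\{\bar{\mathcal I}=\emptyset\}$, ${\mathcal I}^*\subseteq\phi({\mathcal I}_2^-\cap A)$ and ${\mathcal I}_2^-\cap A\subseteq Y_{[R_2,u'N^d]}\cap A$, the further inclusion $Y_{[R_2,u'N^d]}\cap A\subseteq X_{[0,uN^d]}\cap A$. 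Your final term instead bounds the failure of the opposite inclusion, $X_{[0,uN^d]}\cap A\subseteq Y'_{[R_2,u'N^d]}$, which is the inclusion relevant to Proposition~\ref{p:dom+} and is useless here: knowing both $X_{[0,uN^d]}\cap A\subseteq Y'_{[R_2,u'N^d]}$ and ${\mathcal I}^{u(1-\epsilon)}\cap{\mathsf A}\subseteq\phi\bigl(Y'_{[R_2,u'N^d]}\cap A\bigr)$ does not yield ${\mathcal I}^{u(1-\epsilon)}\cap{\mathsf A}\subseteq X(u,{\mathsf A})$. Moreover, Lemma~\ref{l:f} is stated for $0<u_1<u_2$, while your choice $u_1=u$, $u_2=u'=(1-\epsilon')u$ has $u_1>u_2$, so the lemma cannot even be applied with these parameters.

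The repair is simpler than Lemma~\ref{l:f}, and it is what the paper does: in this direction the troublesome first excursion never enters, because one only has to embed a piece of the continuous-time trajectory into the full discrete trace. Under the natural coupling $Y_t=X_{N_t}$ one has $Y_{[R_2,u'N^d]}\subseteq Y_{[0,u'N^d]}=X_{[0,N_{u'N^d}]}$ deterministically, and since $u'<u$ an exponential bound on the Poisson variable $N_{u'N^d}$ exceeding $uN^d$ gives $P\bigl[Y_{[0,u'N^d]}\nsubseteq X_{[0,uN^d]}\bigr]\leq e^{-c_{u,\epsilon}N^d}$; so replace your $\tilde q$-factor in the definition of $Q_4$ by the conditional law of $X_{[0,uN^d]}$ given $Y_{[0,u'N^d]}$ under $P$, and the rest of your bookkeeping goes through. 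Two smaller points: Proposition~\ref{p:szn} is stated only for the radius $r_N=2[N^{1-\epsilon}/8]$, so one first proves the statement for this radius and gets general $r_N\leq N^{1-\epsilon}$ by monotonicity after replacing $\epsilon$ by $\epsilon/2$; and in your union bound the roles of \eqref{e:szn1} and \eqref{e:szn3} are swapped: the term $\mathbb P[\bar{\mathcal I}\neq\emptyset]$ is the one bounded by $c_{\epsilon,\alpha}uN^{-\alpha}$ via \eqref{e:szn3}, while the term involving the decomposition vanishes by \eqref{e:szn1}.
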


\begin{proof}
We will prove the statement for the specific choice $r_N = 2[N^{1-\epsilon}/8]$ for the radius of $A$. The statement for a general $r_N \leq N^{1-\epsilon}$ follows immediately by monotonicity upon replacing $\epsilon$ by $\epsilon/2$. For $\epsilon$ as in the statement, we chose $\epsilon' \in (0,\epsilon/4)$ such that $(1-\epsilon')(1-4 \epsilon')=1-\epsilon$, let $u'=(1-\epsilon')u$ and apply Propositions~\ref{p:szn} and \ref{p:c2} with $\epsilon$ and $u$ replaced by $\epsilon'$ and $u'$. By \eqref{e:szn4} and Theorem~2.4 on p.~73 in \cite{L85}, there exists a coupling $q$ of the random set ${\mathcal I}^*$ under $\mathbb P$ and ${\mathcal I}_2^- \cap A$ under $Q_2$, such that
\begin{align}
\label{dom-1}
 q \left[ {\mathcal I}^* \subseteq \phi({\mathcal I}_2^- \cap A) \right] = 1.
\end{align}
For sets $S_1 \subseteq {\mathsf A}$ and $S_2 \subseteq {\mathbb T}$, we then define
\begin{align*}
Q_4 &\left[ {\mathcal I}^{u(1- \epsilon)} \cap {\mathsf A} = S_1, X_{[0,uN^d]} = S_2  \right] = \\
&\sum_{\substack{S, S' \subseteq A \\ S'' \subseteq {\mathsf A}}} P \left[ X_{[0,uN^d]} = S_2 | Y_{[0,u'N^d]} = S \right] Q_2 \left[ Y_{[0,u'N^d]}  = S \Big|  {\mathcal I}_2^- \cap A = S' \right]\\ 
& \qquad \times q \left[ {\mathcal I}_2^- \cap A = S' \Big| {\mathcal I}^* = S'' \right] {\mathbb P} \left[ {\mathcal I}^{u'(1-4\epsilon')} \cap {\mathsf A} = S_1, {\mathcal I}^* = S'' \right]. 
\end{align*}
Then we have $Q_4 \left[{\mathcal I}^{u(1-\epsilon)} \cap {\mathsf A} = S_1 \right] = {\mathbb P}  \left[{\mathcal I}^{u(1-\epsilon)} \cap {\mathsf A} = S_1 \right]$  and $Q_4 \left[ X_{[0,uN^d]} = S_2 \right] = P \left[ X_{[0,uN^d]} = S_2 \right],$ as required for a coupling.
See Figure~\ref{f:Q4} below for an illustration of the coupling $Q_4$.

\begin{figure}[h]
\begin{equation*}
({\mathcal{I}^*},{\mathbb{P}}) \quad \overset{q}{\longleftrightarrow} \quad
({\mathcal{I}_2^- \cap A}, Q_2) \quad \overset{Q_2}{\longleftrightarrow} \quad
({X_{[0,uN^d]}},P)
\end{equation*}
\caption{An illustration showing how the coupling $Q_4$ is defined.}\label{f:Q4}
\end{figure}
Finally,
\begin{align*}
 &Q_4 \left[  {\mathcal I}^{u(1-\epsilon)} \cap {\mathsf A} \nsubseteq  X(u,{\mathsf A}) \right] \leq P \left[ X_{[0,uN^d]} \cap A \nsupseteq Y_{[0,u'N^d]} \cap A \right]\\
&\qquad + Q_2 \left[ Y_{[0,u'N^d]} \cap A \nsupseteq {\mathcal I}_2^- \cap A \right] + q \left[ \phi( {\mathcal I}_2^- \cap A) \nsupseteq {\mathcal I}^* \right] + {\mathbb P} \left[ {\mathcal I}^* \nsupseteq {\mathcal I}^{u'(1-4\epsilon')} \cap {\mathsf A} \right]\\
& \qquad \leq e^{-c_{u, \epsilon} N^d} + e^{-c_{u, \epsilon} \log^2 N} + c_{u, \epsilon, \alpha} N^{-\alpha},
\end{align*}
where we have used an exponential bound on the probability that the Poisson random variable $N_{u'N^d}$ takes a value larger than $uN^d$, \eqref{e:c2}, \eqref{dom-1}, \eqref{e:szn1} and \eqref{e:szn3} for the final estimate. Hence, \eqref{e:dom-} holds and this completes the proof of Proposition~\ref{p:dom-}.
\end{proof}

Finally, we can deduce Theorem~\ref{t:dom} from Propositions~\ref{p:dom+} and \ref{p:dom-}.

\begin{proof}[Proof of Theorem~\ref{t:dom}.]
The statement follows immediately from Propositions~\ref{p:dom+} and \ref{p:dom-}: For any sets $S_1 \subseteq {\mathsf A}$, $S_2 \subseteq {\mathbb T}$ and $S_3 \subseteq {\mathsf A}$, we define
\begin{align*}
 &\coup \left[ {\mathcal I}^{u(1-\epsilon)} \cap {\mathsf A} = S_1, X_{[0,uN^d]} =S_2, {\mathcal I}^{u(1+\epsilon)} \cap {\mathsf A} = S_3 \right] \\
& = Q_3 \left[ {\mathcal I}^{u(1-\epsilon)} \cap {\mathsf A} = S_1 \Big| X_{[0,uN^d]} = S_2 \right] Q_4 \left[ X_{[0,uN^d]} = S_2 , {\mathcal I}^{u(1+\epsilon)} \cap {\mathsf A} = S_3 \right],
\end{align*}
where the right-hand side is understood to be $0$ if $P[X_{[0,uN^d]}=S_2]=0$. As with the previous couplings, one checks that $\coup$ has the required properties.
\end{proof}

\appendix
\section*{Appendix}
\label{s:appendix}

\renewcommand{\theequation}{A.\arabic{equation}}
\renewcommand{\thetheorem}{A.\arabic{theorem}}
\setcounter{theorem}{0}

\begin{proof}[Proof of Lemma~\ref{l:basic}]
 The bound \eqref{a:gotoC} follows from \cite{L91}, Proposition 1.5.10, p.~36. In order to prove \eqref{a:Drat}, 
we write $A \cup \partial_e A = {\bar A}$. Using the canonical projection $\Pi$ from ${\mathbb Z}^d$ onto $\mathbb T$, we can bound $P_x \left[ H_{\bar A} \leq \reg \right]$ by 
\begin{align}
\label{basic1}
P_{\phi(x)} \left[ T_{B(\phi(x),N \log^2 N)} \leq \reg \right] + P_{\phi(x)} \left[ H_{\Pi^{-1} ({\bar A}) \cap B(\phi(x),N \log^2 N)} < \infty \right].
\end{align}
By Fubini's theorem and Azuma's inequality (cf.~\cite{AS92}, p.~85),
\begin{align*}
 P_{\phi(x)} \left[ T_{B(\phi(x),N \log^2 N)} \leq \reg \right] &\leq E \left[ P_{\phi(x)} \left[ |X_k| \geq N \log^2 N \text{ for some } k \leq n \right] \Big|_{n=N_\reg} \right]\\
&\leq cE \left[ \exp \left( - c (N \log^2 N)^2 /N_\reg \right) \right].
\end{align*} 
With a bound of $e^{-c \reg}$ on the probability that the Poisson random variable $N_\reg$ is larger than $2 \reg$, we deduce that
\begin{align*}
 P_{\phi(x)} \left[ T_{B(\phi(x),N \log^2 N)} \leq \reg \right] &\leq c  e^{-c \log^2 N}.
\end{align*}
The set $\Pi^{-1} ({\bar A}) \cap B(\phi(x),N \log^2 N)$ is contained in a union of the ball $B(0,N^{1-\epsilon})$ and no more than $\log^c N$ translated copies of it. By choice of $x$, $\phi(x)$ is at distance at least $c N^{1-\epsilon/2}$ from each ball. Hence, using the union bound and again the estimate in \cite{L91}, Proposition 1.5.10 on the hitting probability, we obtain
\begin{align*}
 P_{\phi(x)} \left[ H_{\Pi^{-1} ({\bar A}) \cap B(\phi(x),N \log^2 N)} < \infty \right] \leq c_\epsilon (\log N)^c N^{-c_\epsilon}.
\end{align*}
Inserting the last two estimates into \eqref{basic1}, we have shown \eqref{a:Drat}. The proof of \eqref{a:Drat'} is analogous.
\end{proof}

\begin{proof}[Proof of Lemma~\ref{l:quasi}]
Parts of the proof are contained in \cite{K79}. Since ${\mathbb T} \setminus B$ is connected, the following statement holds (see \cite{K79}, page~91, equation (6.6.3) for a proof):

\begin{lemma}
\label{l:qconv}
\textup{($d \geq 2$)}
For any vertices $x_0, x \in {\mathbb T} \setminus B$ and fixed $N \geq 1$,
\begin{align}
\lim_{t \to \infty} P_{x_0} [Y_t=x|H_B >t] = \sigma(x).
\end{align}
\end{lemma}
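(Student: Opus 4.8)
The plan is to reduce the statement to a Perron--Frobenius argument for the sub-Markovian matrix $P^B$ defined in \eqref{e:transB}. First I would observe that, since the continuous-time walk $Y_t = X_{N_t}$ visits exactly the sites $X_0, X_1, \dots, X_{N_t}$ during the time interval $[0,t]$, the event $\{H_B > t\}$ coincides with $\{X_k \notin B \text{ for all } 0 \le k \le N_t\}$. Conditioning on the value of the Poisson variable $N_t$ and using that $x_0, x \in {\mathbb T}\setminus B$, this yields the exact identity
\[
P_{x_0}[Y_t = x, H_B > t] = \sum_{n \geq 0} e^{-t}\frac{t^n}{n!}\,\bigl((P^B)^n\bigr)_{x_0,x} = e^{-t}\,\bigl(e^{t P^B}\bigr)_{x_0,x},
\]
and therefore
\[
P_{x_0}[Y_t = x \mid H_B > t] = \frac{(e^{t P^B})_{x_0,x}}{\sum_{y \in {\mathbb T}\setminus B}(e^{t P^B})_{x_0,y}}.
\]

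Next I would diagonalise $P^B$. Being symmetric, it admits an orthonormal eigenbasis $v_1, \dots, v_m$ (with $m = |{\mathbb T}\setminus B|$) and real eigenvalues $\lambda_1^B \ge \lambda_2^B \ge \dots \ge \lambda_m^B$. Since ${\mathbb T}\setminus B$ is connected, $P^B$ is irreducible with non-negative entries, so by the Perron--Frobenius theorem $\lambda_1^B$ equals the spectral radius of $P^B$, it is a simple eigenvalue, and the associated eigenvector $v_1$ may be taken with strictly positive entries and $\|v_1\|_2 = 1$; in particular $\lambda_1^B > \lambda_i^B$ for every $i \ge 2$. (It is convenient to work in continuous time here: even if $P^B$ is periodic, so that $\lambda_m^B = -\lambda_1^B$, this eigenvalue is still strictly smaller than $\lambda_1^B$ and causes no oscillation after exponentiation.) Writing $e^{t P^B} = \sum_{i=1}^m e^{t \lambda_i^B}\, v_i v_i^{T}$ and inserting this into the quotient above, I would factor $e^{t \lambda_1^B}$ out of the numerator and the denominator to get
\[
P_{x_0}[Y_t = x \mid H_B > t] = \frac{(v_1)_{x_0}(v_1)_x + \sum_{i \ge 2} e^{t(\lambda_i^B - \lambda_1^B)}(v_i)_{x_0}(v_i)_x}{(v_1)_{x_0}\,(v_1^{T}\mathbf{1}) + \sum_{i \ge 2} e^{t(\lambda_i^B - \lambda_1^B)}(v_i)_{x_0}\,(v_i^{T}\mathbf{1})}.
\]

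Finally, as $t \to \infty$ every exponential $e^{t(\lambda_i^B - \lambda_1^B)}$ with $i \ge 2$ tends to $0$, and since $(v_1)_{x_0} > 0$ the right-hand side converges to $(v_1)_x / (v_1^{T}\mathbf{1})$, which is precisely $\sigma(x)$ by the definition \eqref{d:quasi}. This proves Lemma~\ref{l:qconv}. The only points demanding care are the strict spectral gap $\lambda_1^B > \lambda_2^B$ (above all other eigenvalues, not merely above $\lambda_2^B$ in modulus) and the strict positivity of the entry $(v_1)_{x_0}$; both are supplied by the Perron--Frobenius theorem for the irreducible non-negative matrix $P^B$, so I do not anticipate any serious obstacle — the argument is essentially the classical proof that a killed irreducible chain, viewed in continuous time, has a quasistationary limit given by the Perron eigenvector.
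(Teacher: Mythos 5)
Your argument is correct. The Poissonization identity $P_{x_0}[Y_t=x,\,H_B>t]=e^{-t}\bigl(e^{tP^B}\bigr)_{x_0,x}$ is valid (the range of $Y$ on $[0,t]$ is exactly $\{X_0,\dots,X_{N_t}\}$, and $N_t$ is independent of the discrete chain), the spectral expansion of $e^{tP^B}$ is legitimate since $P^B$ in \eqref{e:transB} is symmetric, and you correctly isolate the two points that matter: strict dominance $\lambda^B_1>\lambda^B_i$ for $i\geq 2$ (simplicity of the Perron root of the irreducible nonnegative matrix $P^B$, with possible periodicity only producing $-\lambda^B_1$, which is harmless after exponentiation in continuous time) and strict positivity of $(v_1)_{x_0}$, which keeps the limiting denominator $(v_1)_{x_0}\,v_1^T\mathbf{1}$ away from zero. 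The limit $(v_1)_x/(v_1^T\mathbf{1})$ is $\sigma(x)$ by \eqref{d:quasi}, as required. The one hypothesis you use implicitly is connectedness of ${\mathbb T}\setminus B$ (needed for irreducibility); the paper makes the same assumption at this point, so this is not a gap.

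Where you differ from the paper: the paper does not prove Lemma~\ref{l:qconv} at all, but quotes it from \cite{K79} (page~91, equation~(6.6.3)), reserving its own spectral work for the quantitative statement, Lemma~\ref{l:quasi}, where it decomposes $P^B=\lambda^B_1 J+\Delta$ with $J=v_1v_1^T$ and controls the error by the spectral gap bound of Lemma~A.4 together with the lower bound on $\sigma$ of Lemma~A.2. Your proof is in effect the qualitative limit $t\to\infty$ of that same computation, carried out self-containedly instead of by citation. What your route buys is independence from the external reference and a cleaner logical order (the paper, somewhat awkwardly, uses the qualitative limit from \cite{K79} inside the proof of Lemma~A.2 to transfer an inequality to the quasistationary weights); what it does not by itself give is the rate $e^{-c_\epsilon\log^2N}$ of Lemma~\ref{l:quasi}, for which one still needs the uniform spectral gap and the entrywise lower bound on $v_1$, exactly as in the Appendix.
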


The above lemma applies for fixed $N$, but we require an estimate for all $N$ and $t_N = \reg$. To this end, we need the following lower bound on the quasistationary distribution.

\begin{lemma}
\label{l:qell}
\textup{($d \geq 3$)}
\begin{align}
\label{e:qell}
\inf_{x \in {\mathbb T} \setminus B} \sigma(x) \geq \frac{c_\epsilon}{N^{2d}}.
\end{align}
\end{lemma}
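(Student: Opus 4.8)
The plan is to bound the quasistationary distribution $\sigma$ from below by relating it to a hitting-probability quantity and then using a crude but uniform lower bound on that quantity. First I would record the standard fact that $\sigma$ is the unique invariant distribution of the substochastic kernel $P^B$ normalized so that $\sigma P^B = \lambda_1^B \sigma$; equivalently, for the random walk killed upon entering $B$, $\sigma$ is the Yaglom limit from Lemma~\ref{l:qconv}. The key point is that $\sigma$ is comparable to the equilibrium-type weight $e_B$ (or to the Green's function of the killed chain). Concretely, one has the representation $\sigma(x) \asymp v_1(x)$ with $P^B v_1 = \lambda_1^B v_1$, and since $P^B$ is irreducible on ${\mathbb T}\setminus B$ with $N - |B| \leq N^d$ states, the Perron eigenvector satisfies $v_1(x)/v_1(y) \geq (\text{min nonzero transition prob})^{\text{diam}} \geq (2d)^{-cN}$ by a chain of single-step comparisons. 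This already gives a lower bound, but an exponentially small one — too weak.

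To get the polynomial bound $c_\epsilon N^{-2d}$, I would instead argue via first-passage probabilities. The standard identity for the Yaglom/quasistationary vector gives $\sigma(x) = \frac{1}{\lambda_1^B}\sum_{y\sim x, y\notin B}\frac{1}{2d}\sigma(y)$, and dividing by $\sigma$ evaluated at a reference point reduces matters to showing that $P_y[Y_{H_{\{x\}}} = x,\ H_{\{x\}} < H_B \wedge (\text{return})]$ is not too small for well-chosen $y$. More efficiently: pick the vertex $x^* \in {\mathbb T}\setminus B$ maximizing $\sigma$, so $\sigma(x^*) \geq (N^d - |B|)^{-1} \geq N^{-d}$. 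For an arbitrary $x \in {\mathbb T}\setminus B$, connect $x$ to $x^*$ by a nearest-neighbor path staying in ${\mathbb T}\setminus B$ (possible since ${\mathbb T}\setminus B$ is connected, $B$ being a small ball; one can route around $B$ at distance $\gtrsim N^{1-\epsilon/2}$ from the center, and the path has length $\leq cN$). Using the eigenvector relation $\sigma(x) \geq \lambda_1^B\,(2d)^{-1}\sigma(x')$ for a neighbor $x'$, and iterating along the path, gives $\sigma(x) \geq (\lambda_1^B/2d)^{cN}\sigma(x^*)$, which is still exponentially small — so this naive chaining does not suffice either, and one must use that $\lambda_1^B$ is very close to $1$.

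The right approach is therefore to use $\lambda_1^B \geq 1 - cN^{-c}$ (which follows because the killed chain survives time $\reg$ with probability bounded below by, say, $1 - N^{-c_\epsilon}$ when started far from $B$, cf.~\eqref{a:Drat'}, giving $\lambda_1^B \geq (P_\sigma[H_B > \reg])^{1/\reg} \geq 1 - cN^{-c_\epsilon}/\reg = 1 - cN^{-2-c_\epsilon}/\log^2 N$). Then $(\lambda_1^B)^{cN} = \exp(cN\log\lambda_1^B) \geq \exp(-cN \cdot cN^{-2-c_\epsilon}) \to 1$, so this factor costs nothing. Meanwhile the $(2d)^{-cN}$ factor from chaining single steps is the real loss, and to avoid it I would chain not along a single path but use a Harnack-type / Green's function comparison: the function $x \mapsto \sigma(x)$, being $\lambda_1^B$-harmonic for $P^B$ and hence (since $\lambda_1^B$ is so close to $1$) nearly harmonic on ${\mathbb T}\setminus B$, satisfies an elliptic Harnack inequality on balls of radius $\sim N^{1-\epsilon/2}$ avoiding $B$ (cf.~the Harnack inequality of \cite{L91}, Theorem~1.7.2), yielding $\sigma(x) \geq c\,\sigma(x')$ whenever $|x - x'| \leq cN^{1-\epsilon/2}$; covering ${\mathbb T}\setminus B$ by $\leq N^{c\epsilon}$ such balls gives $\sigma(x) \geq c^{N^{c\epsilon}}\sigma(x^*)$, still not polynomial. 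In fact the cleanest route, and the one I expect the authors take, is the probabilistic identity $\sigma(x) = \lambda_1^B\, E_\sigma[\,\#\{0 \le n < H_B : X_n = x\}\,]$ up to normalization, combined with the bound $E_z[\#\text{visits to }x \text{ before }H_B] \geq c N^{2-d}\cdot$(something), together with $\sum_x \sigma(x) = 1$ and $|{\mathbb T}\setminus B| \leq N^d$; the extra factor $N^{-d}$ comes from the normalization and another $N^{-d}$ (really $N^{2-d}\cdot N^{-2}$) from a Green's-function lower bound, landing exactly at $N^{-2d}$.

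The main obstacle is thus bridging from a soft existence statement (positivity of $\sigma$) to the \emph{quantitative} polynomial bound: one must leverage the two facts that $\lambda_1^B$ is within $N^{-2-c}$ of $1$ and that the killed-walk Green's function between any two points of ${\mathbb T}\setminus B$ is at least $cN^{2-d}$ (a consequence of recurrence-style estimates: starting anywhere, the walk reaches the vicinity of $x$ before hitting the small ball $B$ with probability at least $cN^{-2}$ by a gambler's-ruin / capacity comparison, since $\capacity({\mathsf B}) \asymp N^{(1-\epsilon/2)(d-2)} \ll N^{d-2}$, and once near $x$ spends $\gtrsim N^{2-d}\cdot N^d$... ). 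Assembling these, via $\sigma(x)\asymp E_{x^*}[\text{number of visits to }x\text{ before }H_B]/E_{x^*}[H_B]$ with the denominator at most $cN^d$ and the numerator at least $cN^{-d}$, one obtains $\inf_x \sigma(x)\geq c_\epsilon N^{-2d}$. I would carry out the steps in the order: (i) establish $\lambda_1^B \ge 1 - N^{-2-c_\epsilon}$ from \eqref{a:Drat'}; (ii) write the occupation-measure representation of $\sigma$; (iii) lower-bound the killed Green's function $g_{{\mathbb T}\setminus B}(x^*,x) \ge cN^{2-d}$ using hitting estimates and the capacity bound \eqref{a:capestB}; (iv) upper-bound $E[H_B] \le cN^{d}$; (v) combine with $\sigma(x^*) \ge N^{-d}$ and normalization to conclude. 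Step (iii) is where the real work lies and where \eqref{a:capestB} and \eqref{a:escest}-type estimates are essential.
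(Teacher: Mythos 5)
Your final assembly rests on the representation $\sigma(x)\asymp E_{x^*}\bigl[\#\{0\le n<H_B: X_n=x\}\bigr]/E_{x^*}[H_B]$, and this is where the proposal has a genuine gap. The exact occupation-time identity holds only when the walk is started from $\sigma$ itself: since $P_\sigma[X_n=x,\,H_B>n]=(\lambda_1^B)^n\sigma(x)$, one gets $E_\sigma[\#\{n<H_B:X_n=x\}]=\sigma(x)\,E_\sigma[H_B]$, which is circular for the purpose of bounding $\sigma$ from below. Replacing the initial law $\sigma$ by the pigeonhole point $x^*$ is precisely the step that needs proof: the normalized Green's function $g_{{\mathbb T}\setminus B}(x^*,\cdot)/E_{x^*}[H_B]$ is \emph{not} the quasistationary distribution, and comparing the two requires spectral or Harnack-type control that your sketch does not supply -- indeed your own earlier attempts (eigenvector chaining, iterated Harnack over mesoscopic balls) are correctly discarded as too lossy, but nothing replaces them at this point. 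Step (i) is also both unnecessary and shakily justified: \eqref{a:Drat'} only controls starting points outside $C$, whereas $\sigma$ charges vertices adjacent to $B$, so $P_\sigma[H_B>\reg]\ge 1-N^{-c_\epsilon}$ does not follow as stated; in any case the closeness of $\lambda_1^B$ to $1$ plays no role in the actual proof of this lemma.

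The paper's argument uses a different and shorter mechanism that your proposal never articulates: reversibility of the \emph{conditioned} probabilities combined with the Yaglom limit of Lemma~\ref{l:qconv}. Choosing $x'$ with $\sigma(x')\ge c_\epsilon N^{-d}$ (your pigeonhole point), reversibility gives $P_{x'}[Y_t=x\,|\,H_B>t]=P_x[Y_t=x'\,|\,H_B>t]\cdot P_x[H_B>t]/P_{x'}[H_B>t]$, and the ratio of survival probabilities is bounded below by $P_x[H_{x'}<H_B]$ via the strong Markov property. That hitting probability is then shown to be at least $c_\epsilon N^{-d}$ using exactly the toolbox you list: a gambler's-ruin escape to $B(0,N/4)^c$ costing $c/N$ (used twice, with a reversibility swap of the roles of $x'$ and the target), the Harnack inequality of \cite{L91} with a covering argument at the macroscopic scale $N/4$ (a bounded number of balls, so no exponential loss), and the point-hitting estimate of order $N^{2-d}$. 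Letting $t\to\infty$ and invoking Lemma~\ref{l:qconv} converts the finite-$t$ inequality into $\sigma(x)\ge c_\epsilon N^{-d}\sigma(x')\ge c_\epsilon N^{-2d}$. So your hitting-estimate ingredients are the right ones, but the missing idea is this reversibility identity for the conditioned kernel plus the passage to the Yaglom limit; without it, the bridge from $x^*$-started quantities to $\sigma$ remains unproven.
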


\begin{proof}[Proof of Lemma~\ref{l:qell}.]
Let $x \in {\mathbb T} \setminus B$ and choose $x' \in {\mathbb T} \setminus B$ such that $\sigma(x') \geq 1/(N^d-|B|) \geq c_\epsilon/N^d$. By reversibility, we have, for $t>0$, 
\begin{equation} 
\label{qell1}
\begin{split}
P_{x'} [Y_t=x|H_B >t] = P_x [Y_t=x'|H_B>t] \frac{P_x[H_B>t]}{P_{x'}[H_B>t]}.
\end{split}
\end{equation}
In order to find a lower bound on the fraction, observe that
\begin{align}
\label{qell2}
P_x[H_B>t] &\geq P_x[H_{x'} < H_B, H_B \circ \theta_{H_{x'}} >t] = P_x [H_{x'} < H_B] P_{x'} [H_B >t]. 
\end{align}
We now want a lower bound on $P_x [H_{x'} < H_B]$.  For any $z \in {\mathbb T} \setminus B(0,N/4)$, the Harnack inequality (cf.~\cite{L91}, Theorem~1.7.1, p.~42), applied to the harmonic function $y \in B^c \mapsto P_y [H_z < H_B]$, together with a standard covering argument, shows that $P_y [H_z < H_B] \geq c_\epsilon \inf_{y' \in B(z,N/10)} P_{y'} [H_z < H_B]$ for any $y,z \in {\mathbb T} \setminus B(0,N/4)$. In particular, using \cite{L91}, Proposition~1.5.10, p.~36, to bound the hitting probability from below, we have
\begin{align}
\label{qell3}
 \inf_{y,z \in {\mathbb T} \setminus B(0,N/4)} P_y [H_z < H_B] \geq c_\epsilon N^{2-d}.
\end{align}
In addition, an elementary estimate on one-dimensional simple random walk shows that $P_x [T_{B(0,N/4)} < H_B] \geq c/N$, and analogously for $x'$. With the strong Markov property applied at time $T_{B(0,N/4)}$, we find that
\begin{align*}
 P_x [H_{x'} < H_B]  &\geq P_x [T_{B(0,N/4)} < H_B] \negmedspace \inf_{y \in B(0,N/4)^c}\negmedspace  P_y [H_{x'} < H_B] \geq \frac{c}{N} \inf_{y \in B(0,N/4)^c} \negmedspace  P_{x'} [H_y < H_B],
\end{align*}
using reversibility to exchange the roles of $x'$ and $y$. By \eqref{qell3} and again the strong Markov property at time $T_{B(0,N/4)}$, we find that the last probability on the right-hand side is bounded from below by $\frac{c}{N} \times c_\epsilon N^{2-d}$. Inserting into \eqref{qell2}, have $P_x[H_B>t] \geq c_\epsilon N^{-d} P_{x'}[H_B>t]$, from which we infer with \eqref{qell1} that for all $t \geq 0$,
\begin{align*}
P_{x'} [Y_t=x|H_B >t] \geq P_x [Y_t=x'|H_B>t] c_\epsilon N^{-d}.
\end{align*}
By Lemma~\ref{l:qconv}, the two sides in this inequality converge as $t \to \infty$ to $\sigma(x) \geq \sigma(x') c_\epsilon N^{-d}$,
and $x'$ was chosen such that $\sigma(x') \geq c_\epsilon /N^d$. This completes the proof of Lemma~\ref{l:qell}.
\end{proof}

Recall that $\lambda^B_1$ denotes the largest eigenvalue of $P^B$. Let $\lambda^B_2$ be the second largest eigenvalue of $P^B$ (cf.~\eqref{e:transB}). The next lemma shows that the spectral gap of $P^B$ is of at least the same order $cN^{-2}$ as the spectral gap of $P$ itself.

\begin{lemma}
\label{l:AB}
\textup{($d \geq 3$)}
\begin{align}
\label{e:AB2}
&\lambda^B_1 - \lambda^B_2 \geq c_\epsilon N^{-2}.
\end{align}
\end{lemma}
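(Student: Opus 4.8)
The plan is to control $\lambda_1^B$ and $\lambda_2^B$ separately and show the two bounds leave a gap of order $N^{-2}$. For the upper bound on $\lambda_2^B$, the key observation is that, as a quadratic form, $P^B$ is simply the restriction of $P$ to the subspace $W=\{f:\mathbb T\to\mathbb R,\ f|_B=0\}$: if $\supp f\subseteq\mathbb T\setminus B$ then $\langle P^B f,f\rangle=\langle Pf,f\rangle$. Hence, by the Courant--Fischer min-max formula, $\lambda^B_j\le\lambda_j$ for every $j$, because the subspaces over which one optimizes for $P^B$ form a subclass of those for $P$. In particular $\lambda_2^B\le\lambda_2$, and using the lower bound on the spectral gap of $\mathbb T$ recalled above \eqref{eq:I} (Remark~2.2 of \cite{W08}), this gives $\lambda_2^B\le 1-cN^{-2}$.

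For the lower bound on $\lambda_1^B$, I would estimate $1-\lambda_1^B$ through the variational identity
\[
1-\lambda_1^B=\min_{f|_B=0,\ f\ne 0}\frac{{\mathcal D}(f,f)}{\pi(f^2)},
\]
which follows from \eqref{e:transB} together with $N^d{\mathcal D}(f,f)=\langle(I-P)f,f\rangle$. As a test function I take $f=1-h$ with $h(x)=P_x[H_B\le T_C]$ (recall $C=B(0,N/4)$, cf.~\eqref{e:boxes}), so that $f$ vanishes exactly on $B$, equals $1$ on $\mathbb T\setminus C$, and ${\mathcal D}(f,f)={\mathcal D}(h,h)$. By Remark~\ref{r:Dir}, ${\mathcal D}(h,h)=N^{-d}\sum_{x\in B}P_x[T_C<{\tilde H}_B]$, and only $x\in\partial_i B$ contribute. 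Transferring to ${\mathbb Z}^d$ via $\phi$ and repeating the argument from the proof of Proposition~\ref{p:Gloc} (splitting off $\{{\tilde H}_{\mathsf B}=\infty\}$ and using $\sup_{z\in\partial_e{\mathsf C}}P_z[H_{\mathsf B}<\infty]\le c_\epsilon N^{-(\epsilon/2)(d-2)}$ from \cite{L91}, Proposition~1.5.10) yields ${\mathcal D}(h,h)\le 2\capacity({\mathsf B})/N^d$ for $N\ge c_\epsilon$.

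Since $\pi(f^2)\ge\pi(\mathbb T\setminus C)\ge 1-2^{-d}\ge c$ and $\capacity({\mathsf B})\le c'_\epsilon N^{(1-\epsilon/2)(d-2)}$ by \eqref{a:capestB}, the two previous bounds combine to
\[
1-\lambda_1^B\ \le\ c_\epsilon\,N^{(1-\epsilon/2)(d-2)-d}\ =\ c_\epsilon\,N^{-2-(\epsilon/2)(d-2)}\ \le\ c_\epsilon\,N^{-2-\epsilon/2},
\]
the last step using $d\ge 3$. Thus $1-\lambda_1^B$ is \emph{polynomially smaller} than the bulk spectral gap $N^{-2}$: so for $N\ge c_\epsilon$ we have $1-\lambda_1^B\le\tfrac c2 N^{-2}$ (with $c$ the torus-gap constant), whence
\[
\lambda_1^B-\lambda_2^B\ \ge\ \bigl(1-\tfrac c2 N^{-2}\bigr)-\bigl(1-cN^{-2}\bigr)\ =\ \tfrac c2\,N^{-2},
\]
which is \eqref{e:AB2} after absorbing the finitely many small $N$ into the constant.

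The only mildly delicate point is the estimate ${\mathcal D}(h,h)\le c_\epsilon\capacity({\mathsf B})/N^d$, but this is of exactly the same nature as the computations already carried out in Lemma~\ref{l:Hlbd} and in the proof of Proposition~\ref{p:Gloc}, so I expect no genuine difficulty there; the conceptual content is just that hitting the small box $B$ is a rare event, which makes $\lambda_1^B$ extremely close to $1$, far closer than $\lambda_2^B$ can get.
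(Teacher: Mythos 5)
Your proof is correct, and for half of the argument it follows a genuinely different route from the paper. The bound $\lambda_2^B\le\lambda_2$ is obtained in the paper exactly as you obtain it: the paper invokes the eigenvalue interlacing inequality (\cite{H95}, Corollary~2.2), which is precisely the Courant--Fischer argument you spell out, and then uses $1-\lambda_2\ge cN^{-2}$ from Remark~2.2 of \cite{W08}. The difference lies in the lower bound on $\lambda_1^B$: the paper does not use a test function at all, but quotes the identity $\lambda_1^B=1-1/E_\sigma[H_B]$ together with $E_\sigma[H_B]\ge E[H_B]$ from Aldous and Brown \cite{AB}, and then feeds in Lemma~\ref{l:Hlbd} and \eqref{a:capestB} to get $1-\lambda_1^B\le c_\epsilon N^{-2-\epsilon(d-2)/2}$. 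You instead bound $1-\lambda_1^B$ variationally, testing the Dirichlet form with the equilibrium potential $h(x)=P_x[H_B\le T_C]$, using Remark~\ref{r:Dir} and the same capacity computation as in Lemma~\ref{l:Hlbd}/Proposition~\ref{p:Gloc} to get $\mathcal D(h,h)\le c\,\capacity({\mathsf B})/N^d$, and $\pi(f^2)\ge c$ since $f=1$ outside $C$; this yields the same quantitative bound $1-\lambda_1^B\le c_\epsilon N^{-2-\epsilon(d-2)/2}$. Your route is more self-contained (no appeal to the quasistationary hitting-time identity, hence no external citation beyond interlacing), at the cost of redoing a capacity estimate; the paper's route is shorter because Lemma~\ref{l:Hlbd} is already available and the Aldous--Brown identity converts it directly into an eigenvalue bound. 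The final combination, including the absorption of small $N$ into the constant (justified since Perron--Frobenius makes $\lambda_1^B>\lambda_2^B$ for each fixed $N$), matches the paper's conclusion.
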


\begin{proof}[Proof of Lemma~\ref{l:AB}.]
Consider the complete transition matrix $((2d)^{-1} \mathbf{1}_{x \sim y})_{x,y \in {\mathbb T}}$ and let $\lambda_2$ be its second largest eigenvalue. By the eigenvalue interlacing inequality (cf.~\cite{H95}, Corollary~2.2), we have
$\lambda^B_2 \leq \lambda_2,$
while by Aldous and Brown \cite{AB}, Lemma~2, and the paragraph following equation~(12),
\begin{align*}
\lambda^B_1 = 1- \frac{1}{E_\sigma[H_B]} \geq 1- \frac{1}{E[H_B]} \stackrel{\eqref{e:Hlbd}, \eqref{a:capestB}}{\geq} 1-c_\epsilon N^{-2- \epsilon (d-2)/2},
\end{align*}
hence, using that $1-\lambda_2 \geq cN^{-2}$ (cf.~Remark~2.2 in \cite{W08}), $\lambda^B_1 - \lambda^B_2 \geq 1 - \lambda_2 - c_\epsilon N^{-2 - \epsilon (d-2) /2} \geq c_\epsilon N^{-2},$ proving Lemma~\ref{l:AB}.
\end{proof}

Using the restricted transition matrix $P^B$ defined in \eqref{e:transB}, the conditional probability in \eqref{e:quasi} is given by
\begin{align}
\label{quasi3}
P_x[Y_\reg=y|H_B>\reg] = \frac{ \delta_x^T e^{-\reg(I-P^B)} \delta_y}{\delta_x^T e^{-\reg (I-P^B)} \mathbf{1}},
\end{align}
where, for $x \in {\mathbb T} \setminus B$, $\delta_x$ denotes the vector with $x$-entry $1$ and all other entries $0$, and $\mathbf 1$ denotes the vector with all entries equal to $1$. Let now $m=N^d-|B|$, and let $\lambda^B_1 \geq \lambda^B_2 \geq \cdots \geq \lambda^B_m$ be the eigenvalues of $P^B$ in decreasing order with orthonormal eigenvectors $v_1, \ldots, v_m$. As in \cite{K79}, we now introduce the matrices $J$ and $\Delta$, 
$J=v_1 v_1^T, \, \Delta = P^B- \lambda^B_1 J.$
It is then elementary to check that $\Delta J = J \Delta = 0$ and that $J^2=J$. Hence, we have
\begin{equation}
\label{quasi3.1}
\begin{split}
e^{-\reg(I-P^B)} &= e^{-\reg I} \bigg( I+ \sum_{k \geq 1} \frac{\reg^k}{k \textrm{!}} \left( \Delta^k + (\lambda^B_1)^k J \right) \bigg) \\
&= e^{-\reg I} \left( e^{\reg \Delta} + e^{\reg \lambda^B_1} J - J \right) = e^{-\reg(I-\Delta)} + e^{-\reg(1-\lambda^B_1)} J  - e^{-\reg}J.
\end{split}
\end{equation}
Let us now write $\delta_y = \sum_{i=1}^m a_i v_i, \text{ where } a_i = v_i^T \delta_y.$
Since $\Delta v_i = \mathbf{1}_{i\neq 1} \lambda_i v_i$, \eqref{quasi3.1} implies that $e^{-\reg (I-P^B)} \delta_y$ equals
\begin{equation}
\label{quasi4}
\begin{split}
e^{-\reg(1-\lambda^B_1)} &\bigg(a_1 e^{- \lambda^B_1 \reg} v_1 + \sum_{i=2}^m a_i e^{-(\lambda^B_1-\lambda^B_i)\reg} v_i - e^{- \lambda^B_1 \reg} J \delta_y + J \delta_y  \bigg)\\
&= e^{-\reg(1-\lambda^B_1)} \Big( J \delta_y  + \sum_{i=2}^m a_i e^{-(\lambda^B_1-\lambda^B_i)\reg} v_i \Big) =e^{- \reg(1-\lambda^B_1)} \left( J \delta_y + \phi_N \right),
\end{split}
\end{equation} 
where $\phi_N$ is defined by this last equation, and by Pythagoras' theorem, \eqref{e:reg} and \eqref{e:AB2}, has $\ell^2$-norm bounded by $|\phi_N|_2 \leq e^{- \reg (\lambda^B_1 - \lambda^B_2)} \leq e^{-c_\epsilon \log^2 N}$. Similarly, we have
\begin{align}
\label{quasi5}
e^{-\reg(I-P^B)} \mathbf{1} &= e^{- \reg (1-\lambda^B_1)} \left( J \mathbf{1} + \phi_N' \right),
\end{align}
where $|\phi_N'|_2 \leq  e^{-c_\epsilon \log^2 N}.$ We have $\delta_x^T J \delta_y = (v_1)_x (v_1)_y$ and $\delta_x^T J \mathbf{1} = (v_1)_x v_1^T \mathbf{1}$. In particular, by \eqref{d:quasi} and \eqref{e:qell}, both $\delta_x^T J \delta_y$ and $\delta_x^T J \mathbf{1}$ are bounded from below by $c_\epsilon N^{-4d} \gg e^{-c_\epsilon \log^2 N}$. Inserting \eqref{quasi4} and \eqref{quasi5} into \eqref{quasi3}, we hence obtain that
\begin{align*}
P_x[Y_\reg=y|H_B>\reg]= \frac{(v_1)_y}{v_1^T \mathbf{1}} + \phi^{''}_N \stackrel{\eqref{d:quasi}}{=} \sigma(y) + \phi^{''}_N ,
\end{align*}
where $|\phi^{''}_N|$ again satisfies $|\phi_N^{''}| \leq  e^{-c_\epsilon \log^2 N}$. This
completes the proof of Lemma~\ref{l:quasi}.
\end{proof}

\bibliographystyle{plain}
\bibliography{domination}


\end{document}